\newtheorem{thm}{Theorem}[section]
\newtheorem{cor}[thm]{Corollary}
\newtheorem{lem}[thm]{Lemma}
\newtheorem{prop}[thm]{Proposition}
\newtheorem{claim}[thm]{Claim}
\newtheorem{observation}[thm]{Observation}
\theoremstyle{definition}
\newtheorem{defn}[thm]{Definition}
\newtheorem{notation}[thm]{Notation}
\newtheorem{example}[thm]{Example}
\theoremstyle{remark}
\newtheorem{rem}[thm]{Remark}
\newtheorem{reminder}[thm]{Reminder}
\numberwithin{equation}{section}
\newcommand{\set}[1]{\left\{#1\right\}}
\newcommand{\eps}{\varepsilon}
\newcommand{\fii}{\varphi}
\newcommand{\xto}{\xrightarrow}
\newcommand{\xfrom}{\xleftarrow}
\newcommand{\longto}{\longrightarrow}
\newcommand{\Hom}{\mathrm{Hom}}
\newcommand{\N}{\mathbb N}
\newcommand{\K}{\mathbb K}
\newcommand{\Q}{\mathbb Q}
\newcommand{\Alt}{\mathrm{Alt}}
\newcommand{\sgn}{\mathrm{sgn}}
\newcommand{\Uni}{\mathcal{U}}  % universal enveloping algebra
\newcommand{\Unik}[1]{\Uni^{\leq #1}}
\newcommand{\Unih}{\Uni_\hbar}
\newcommand{\calI}{\mathcal I}
\newcommand{\calF}{\mathcal F}
\newcommand{\calS}{\mathcal S}
\newcommand{\calT}{\mathcal T}
\newcommand{\unitcat}{*}
\newcommand{\catC}{\mathscr{C}} %category C
\newcommand{\catD}{\mathscr{D}}
\newcommand{\catJ}{\mathscr{J}}
\newcommand{\catH}{\mathscr{G}}
\newcommand{\catA}{\mathscr{A}}
\newcommand{\catF}{\mathscr{F}}
\newcommand{\catP}{\mathscr{P}}
\newcommand{\catV}{\mathscr{V}}
\newcommand{\catTV}{{\catV^\mathrm{top}}}%{\mathscr{TV}}
\newcommand{\catHTV}{{\catV^\mathrm{haus}}}%{\mathscr{HTV}}
\newcommand{\catCTV}{{\catV^\mathrm{cpl}}} %{\mathscr{CTV}}
\newcommand{\catTop}{\mathbf{Top}}
\newcommand{\catPh}{{\catP_\hbar}}
\newcommand{\catFh}{{\catF_\hbar}}
\newcommand{\catPhPhi}{{\catP_\hbar^\Phi}}
\newcommand{\catFhPhi}{{\catF_\hbar^\Phi}}
\newcommand{\catTVkh}{\catV^{\hbar\text{-}\mathrm{cpl}}_\kh}%{{\catTV_\kh}}
\newcommand{\catTVkhp}{\catV^\mathrm{top}_\kh}%{{\catTV_\kh'}}
\newcommand{\catCTVkh}{\catV^\mathrm{cpl}_\kh}%{{\catCTV_\kh}}
\newcommand{\compl}{\mathrm{compl}}
\newcommand{\complh}{\mathrm{compl}_\hbar}
\newcommand{\hctp}{\tilde\otimes_\kh} % h-completed tensor product
\newcommand{\hh}{[[\hbar]]}
\newcommand{\tp}{\otimes} %Tensor Product
\newcommand{\ctp}{\mathbin{\hat\otimes}} %Completed Tensor Product
\newcommand{\LM}[1]{#1 \text{-}\mathbf{Mod}} %category of left modules
\newcommand{\RM}[1]{\mathbf{Mod}\text{-}#1} %category of right modules
\newcommand{\RFree}[1]{\mathbf{Free}\text{-}#1} %category of free right modules
\newcommand{\Set}{\mathbf{Set}} 
\newcommand{\fun}[1]{\mathrm{#1}} %the way of writing functors
\newcommand{\field}{\mathbb K}
\newcommand{\id}{\mathrm{id}}
\newcommand{\argument}{\text{\textvisiblespace}}
\newcommand{\kh}{{\field[[\hbar]]}}
\newcommand{\srdiecko}{\mathop{\heartsuit}}
\newcommand{\budzogan}{\mathop{\clubsuit}}
\newcommand{\srdco}{\mathop{\widetilde \srdiecko}}
\newcommand{\srd}{\mathop{\widetilde \srdco}}
\newcommand{\act}{\triangleright}
\newcommand{\coact}{\triangleleft}
\newcommand{\todo}[1]{{\Large #1}}
\newcommand{\forme}[1]{}   % explanations for me, not to be displayed in the published version 
\newcommand{\odpad}[1]{}
\newcommand{\End}{\operatorname{End}}
\newcommand{\Ima}{\operatorname{Im}}
\newcommand{\Ker}{\operatorname{Ker}}
\newcommand{\ob}{\mathrm{ob}}
\newcommand{\cesta}{./obrazky}
\newcommand{\Yoneda}{\mathbf{h}}
\newcommand{\UE}{\mathcal U}    %% universal enveloping algebra	
\newcommand{\Ud}{{\mathcal U \mathfrak d}}
\newcommand{\Ug}{{\mathcal U \mathfrak g}}
\newcommand{\Up}{{\mathcal U \mathfrak p}}
\newcommand{\g}{ \mathfrak g}
\newcommand{\p}{ \mathfrak p}
\newcommand{\FHP}{\fun{Forg}_\catH^\catP}
\newcommand{\FCTVP}{\fun F_\catCTV^{\catP}}
\newcommand{\half}{\frac{1}{2}}
\newcommand{\Ugh}{\Ug[[\hbar]]}
\newcommand{\Uph}{\Up[[\hbar]]}
\newcommand{\EndY}{\End(\catFhPhi \xto{\YPhi} \catCTVkh)}
\newcommand{\YA}{\catPhPhi\xto{\ctp A} \catFhPhi \xto{\YPhi} \catCTVkh}
\newcommand{\EndYA}{\End(\YA)}
\newcommand{\bg}{(\g,\delta,\fii)}
\newcommand{\bgprim}{(\g,\delta',\fii')}
\newcommand{\bp}{(\p,\delta_\p,\fii_\p)}
\newcommand{\bpprim}{(\p,\delta'_\p,\fii'_\p)}
\newcommand{\Forg}{\catPhPhi \xto\forg \catCTVkh}
\newcommand{\EndForg}{\End(\Forg)}
\newcommand{\srdieckof}{\srdiecko}
\newcommand{\srdcof}{\srdco}
\newcommand{\calG}{\mathcal{G}}
\newcommand{\cSg}{{\hat S\g}}
\newcommand{\YQp}{\Yoneda_{(Q\ctp A, p)}}
\newcommand{\YPhi}{\Yoneda^\Phi}
\newcommand{\os}{\subset_{\mathrm{os}}}  %%open subspace
\newcommand{\bs}{\subset_{\mathrm{bs}}}  %%bounded subspace
\newcommand{\limita}{\varprojlim}
\newcommand{\colim}{\operatornamewithlimits{\underset{\longrightarrow}{colim}}}
\newcommand{\hhom}{\catTV}
\newcommand{\Split}{\mathrm{Split}}
\newcommand{\Proj}{\Split(\catFh)}
\newcommand{\ProjPhi}{\Split(\catFhPhi)}
\newcommand{\forg}{\mathrm{forg}}
\newcommand{ \cFh}{\circ_\catFh}
\newcommand{ \tpFh}{\tp_\catFh}
\newcommand{ \cFhPhi}{\circ_\catFhPhi}
\newcommand{ \tpFhPhi}{\tp_\catFhPhi}
\newcommand{\Omegap}{t}
\newcommand{\footnoteremember}[2]{
\footnote{#2}
\newcounter{#1}
\setcounter{#1}{\value{footnote}}
}
\newcommand{\footnoterecall}[1]{
\footnotemark[\value{#1}]
}
\begin{document}

\title{On Quantization of quasi-Lie bialgebras}
\author{\v Stefan Sak\'alo\v s}
\thanks{\v Stefan Sak\'alo\v s was supported by the ProDoc program ``Geometry, Algebra and Mathematical Physics'' and the grant PDFMP2\_137071 of the Swiss National Science Foundation.}
\address{Department of Mathematics, Universit\'{e} de Gen\`{e}ve, Geneva, Switzerland; Dept. of Theoretical Physics, FMFI UK, Bratislava, Slovakia}
\email{pista.sakalos@gmail.com}

\author{Pavol \v Severa}
\address{Department of Mathematics, Universit\'{e} de Gen\`{e}ve, Geneva, Switzerland, on leave
from Dept. of Theoretical Physics, FMFI UK, Bratislava, Slovakia}
\email{pavol.severa@gmail.com}
\thanks{Pavol \v Severa was partially supported by the Swiss National Science Foundation (grants 140985 and 141329)}

\begin{abstract}
We modify the quantization of Etingof and Kazhdan so that it can be used to quantize quasi-Lie bialgebras.
\end{abstract}

\maketitle

 %%%%%%%%%%%%%%%%%%%%%%%%%%%%%
\setcounter{tocdepth}{1}
\tableofcontents

\newcommand{\U}{\mathcal{U}}
\newcommand{\copr}{\blacktriangle}

\section{Introduction}

Quasi-bialgebras were introduced by Drinfeld in \cite{Drinfeld1} as a generalization of bial-
gebras in such a way that the corresponding category of modules would still be a
monoidal category. An additional structure of a quasi-triangular quasi-bialgebra
produces a braided monoidal category in a similar way. Drinfeld also introduced
quasi-Hopf universal enveloping algebras as deformations of  universal enveloping
algebras of Lie algebras in the category of quasi-bialgebras, and quasi-Lie bialgebras
as their classical limits. In the subsequent paper \cite{Drinfeld2} he introduced associators as a
way to quantize a Lie bialgebra $\g$ together with an invariant element $t\in S^2\g$ into
a quasi-triangular quasi-bialgebra.

Etingof and Kazhdan \cite{EK1} used Drinfeld's results to quantize an arbitrary Lie
bialgebra into a quantum enveloping algebra. Their method is explicit (depending
on a choice of a Drinfeld associator).

The natural question of quantization of quasi-Lie bialgebras was addressed by
Enriquez and Halbout in \cite{EnriquezHalbout}. Their methods are quite different ---  they compare
the cohomologies of the prop of quasi-Lie bialgebras with those of the prop of Lie-
bialgebras in order to reduce their problem to the one already solved by Etingof
and Kazhdan.

We propose a more direct approach to the same problem based on an adjustment
of the methods of Etingof and Kazhdan. We describe our construction in detail in
Section \ref{nutshell} and compare it with the (slightly reformulated) quantization of Etingof
and Kazhdan. The main difference is that we need to replace a certain free mod-
ule with a projective module and use the fact that projections can be explicitly
deformed.

\section{Our construction in a nutshell}\label{nutshell}
In this section we describe our quantization of quasi-Lie bialgebras almost completely. We omit the discussion of topologies and all the continuity requirements; we leave them, together with basic definitions and all the proofs, to the rest of the paper. This section is, however, sufficient for obtaining explicit formulas (depending on a choice of a Drinfeld associator $\Phi$). We shall also compare our method with Etingof-Kazhdan quantization of Lie bialgebras.

Let $\g$ be a quasi-Lie bialgebra. Our task is to define explicitly a quasi-bialgebra $\U_\hbar\g$ deforming the bialgebra $\U\g$.
It is done in two steps:
\begin{enumerate}
\item We construct a monoidal category $\catA^\Phi$ which is a deformation of the category of $\U\g$-modules ($\catA^\Phi$ is morally the category of $\U_\hbar\g$-modules)
\item We construct an object $C$ of $\catA^\Phi$ (morally, $C$ is $\U_\hbar\g$ as its own module) together with morphisms $C\to C\otimes C$ and $C\to1$, making $\Hom(C,\cdot)$ to a quasi-monoidal functor. These morphisms make the algebra $\Hom(C,C)$ to a quasi-bialgebra. We shall find an explicit isomorphism of vector spaces $\Hom(C,C)\cong\U\g$ and set $$\U_\hbar\g=\Hom(C,C).$$
\end{enumerate}

Let us now describe the two steps in some detail.

\subsection*{Step 1}
 Let $(\p,\g)$ be the Manin pair corresponding to the quasi-Lie bialgebra $\g$. We shall encode the category of $\U\g$-modules into the category $\catP$ of $\U\p$-modules as follows. Let
$$A=\Hom_{\U\g}(\U\p,\K),$$
where $\K$ is the base field
($A$ is roughly the algebra of functions on the homogeneous space $P/G$). $A$ is a commutative algebra object in the category $\catP$ of $\U\p$-modules. The category of $\U\g$-modules is equivalent to the category $\catA$ of $A$-modules in $\catP$ via the co-induction functor
$$\heartsuit:M\mapsto\Hom_{\U\g}(\U\p,M).$$

If $X$ is a $\U\p$-module and $X{\downarrow}^{\U\p}_{\U\g}$ is $X$ seen as a $\U\g$-module, then
$$\heartsuit(X{\downarrow}^{\U\p}_{\U\g})\cong X\otimes A$$
is a free $A$-module. For any $\U\p$-modules $X$ and $Y$ we have therefore bijections
\begin{equation}\label{eq:skol_free}
\Hom_{\U\g}(X,Y)\cong\Hom_{\mathcal{A}}(X\otimes A,Y\otimes A)\cong\Hom_{\U\p}(X,Y\otimes A).
\end{equation}
The category $\catA_\text{Free}$ of \emph{free} $A$-modules in $\catP$ is equivalent to the category whose objects are $\U\p$-modules and morphisms are $\U\g$-equivariant maps.

Any Drinfeld associator $\Phi$ can be used to change  the associativity constraint and the braiding in $\catP$. We shall denote the resulting braided monoidal category by $\catP^\Phi$. Let us recall that as categories, 
$$\catP^\Phi=\catP;$$
 only the braided monoidal structure is changed. The algebra $A$ (with its original product $A\otimes A\to A$) remains a commutative associative algebra object in $\catP^\Phi$. Let $\catA^\Phi$ be the category of $A$-modules in $\catP^\Phi$. 

Since $A$ is a commutative algebra in $\catP^\Phi$, the category $\catA^\Phi$ is monoidal. Let us describe explicitly its subcategory $\catA^\Phi_\text{Free}$ of \emph{free} $A$-modules, i.e.\ of $A$-modules in $\catP^\Phi$ of the form $X\otimes A$, where $X$ is a $\U\p$-module. 
As in \eqref{eq:skol_free} we have  bijections
\begin{equation}\label{eq:skol_freePhi}
\Hom_{\catA^\Phi}(X\otimes A,Y\otimes A)\cong\Hom_{\U\p}(X,Y\otimes A)\cong\Hom_{\U\g}(X,Y).
\end{equation}

The categories $\catA_\text{Free}$ and $\catA^\Phi_\text{Free}$ have \emph{the same objects}.
 Using the identifications \eqref{eq:skol_free} and \eqref{eq:skol_freePhi} we can say that they also have \emph{the same morphisms}. The composition of morphisms is, however, different:
The composition in $\catA^\Phi_\text{Free}$
$$\Hom_{\U\p}(X,Y\otimes A)\times\Hom_{\U\p}(Y,Z\otimes A)\to\Hom_{\U\p}(X,Z\otimes A)$$
is given by the diagram in $\catP^\Phi$
$$
\begin{tikzpicture}[baseline=0.0cm]
\draw [line width=1pt] (0.17,0.15) rectangle (-0.17,-0.15);
\node at (0.0,0.0) {{$\scriptstyle f$}};
\node at (0.0,-0.75) {{\scriptsize $X$}};
\draw [line width=1pt] (0.0,-0.55) ..controls +(0.0,0.17) and +(0.0,-0.17) .. (0.0,-0.15);
\node at (-0.3,0.75) {{\scriptsize $Y$}};
\draw [line width=1pt] (-0.09,0.15) ..controls +(0.0,0.17) and +(0.0,-0.17) .. (-0.3,0.55);
\node at (0.3,0.75) {{\scriptsize $A$}};
\draw [line width=1pt] (0.09,0.15) ..controls +(0.0,0.17) and +(0.0,-0.17) .. (0.3,0.55);
\end{tikzpicture}
\circ
\begin{tikzpicture}[baseline=0.0cm]
\draw [line width=1pt] (0.17,0.15) rectangle (-0.17,-0.15);
\node at (0.0,0.0) {{$\scriptstyle g$}};
\node at (0.0,-0.75) {{\scriptsize $Y$}};
\draw [line width=1pt] (0.0,-0.55) ..controls +(0.0,0.17) and +(0.0,-0.17) .. (0.0,-0.15);
\node at (-0.3,0.75) {{\scriptsize $Z$}};
\draw [line width=1pt] (-0.09,0.15) ..controls +(0.0,0.17) and +(0.0,-0.17) .. (-0.3,0.55);
\node at (0.3,0.75) {{\scriptsize $A$}};
\draw [line width=1pt] (0.09,0.15) ..controls +(0.0,0.17) and +(0.0,-0.17) .. (0.3,0.55);
\end{tikzpicture}
=
\begin{tikzpicture}[baseline=0.0cm]
\draw [line width=1pt] (0.17,-0.4) rectangle (-0.17,-0.7);
\node at (0.0,-0.55) {{$\scriptstyle f$}};
\draw [line width=1pt] (-0.13,0.3) rectangle (-0.47,0.0);
\node at (-0.3,0.15) {{$\scriptstyle g$}};
\draw [line width=1pt] (-0.09,-0.4) ..controls +(0.0,0.17) and +(0.0,-0.17) .. (-0.3,0.0);
\draw [line width=1pt] (0.09,-0.4) ..controls +(0.0,0.17) and +(0.0,-0.17) .. (0.47,0.15);
\draw [line width=1pt] (-0.39,0.3) ..controls +(0.0,0.17) and +(0.0,-0.17) .. (-0.3,0.7);
\draw [line width=1pt] (-0.21,0.3) ..controls +(0.0,0.17) and +(0.0,-0.17) .. (0.3,0.7);
\draw [line width=1pt] (0.47,0.15) ..controls +(0.0,0.17) and +(0.0,-0.17) .. (0.3,0.7);
\node at (0.0,-1.1) {{\scriptsize $X$}};
\draw [line width=1pt] (0.0,-0.9) ..controls +(0.0,0.0) and +(0.0,0.0) .. (0.0,-0.7);
\node at (-0.3,1.1) {{\scriptsize $Z$}};
\draw [line width=1pt] (-0.3,0.7) ..controls +(0.0,0.0) and +(0.0,0.0) .. (-0.3,0.9);
\node at (0.3,1.1) {{\scriptsize $A$}};
\draw [line width=1pt] (0.3,0.7) ..controls +(0.0,0.0) and +(0.0,0.0) .. (0.3,0.9);
\end{tikzpicture}
$$
Notice that this composition depends on the Drinfeld associator $\Phi$, which appears in the re-bracketing $(Z\otimes A)\otimes A\to Z\otimes(A\otimes A)$. The composition in $\catA_\text{Free}$ is given by the same diagram, but seen in $\catP$.

 The tensor product of objects in $\catA^\Phi_\text{Free}$ is the same as in $\catA_\text{Free}$:
$$(X\otimes A)\otimes_{\catA^\Phi}(Y\otimes A)=(X\otimes Y)\otimes A.$$
The tensor  product of morphisms in $\catA^\Phi_\text{Free}$
$$\Hom_{\U\p}(X,Y\otimes A)\times\Hom_{\U\p}(Z,W\otimes A)\to\Hom_{\U\p}(X\otimes Z,(Y\otimes W)\otimes A)$$
is given by the diagram in $\catP^\Phi$
$$
\begin{tikzpicture}[baseline=0.0cm]
\draw [line width=1pt] (0.17,0.15) rectangle (-0.17,-0.15);
\node at (0.0,0.0) {{$\scriptstyle f$}};
\node at (0.0,-0.75) {{\scriptsize $X$}};
\draw [line width=1pt] (0.0,-0.55) ..controls +(0.0,0.17) and +(0.0,-0.17) .. (0.0,-0.15);
\node at (-0.3,0.75) {{\scriptsize $Y$}};
\draw [line width=1pt] (-0.09,0.15) ..controls +(0.0,0.17) and +(0.0,-0.17) .. (-0.3,0.55);
\node at (0.3,0.75) {{\scriptsize $A$}};
\draw [line width=1pt] (0.09,0.15) ..controls +(0.0,0.17) and +(0.0,-0.17) .. (0.3,0.55);
\end{tikzpicture}
\otimes_{\catA^\Phi}
\begin{tikzpicture}[baseline=0.0cm]
\draw [line width=1pt] (0.17,0.15) rectangle (-0.17,-0.15);
\node at (0.0,0.0) {{$\scriptstyle g$}};
\node at (0.0,-0.75) {{\scriptsize $Z$}};
\draw [line width=1pt] (0.0,-0.55) ..controls +(0.0,0.17) and +(0.0,-0.17) .. (0.0,-0.15);
\node at (-0.3,0.75) {{\scriptsize $W$}};
\draw [line width=1pt] (-0.09,0.15) ..controls +(0.0,0.17) and +(0.0,-0.17) .. (-0.3,0.55);
\node at (0.3,0.75) {{\scriptsize $A$}};
\draw [line width=1pt] (0.09,0.15) ..controls +(0.0,0.17) and +(0.0,-0.17) .. (0.3,0.55);
\end{tikzpicture}
=
\begin{tikzpicture}[baseline=0.0cm]
\draw [line width=1pt] (-0.2,-0.15) rectangle (-0.55,-0.45);
\node at (-0.38,-0.3) {{$\scriptstyle f$}};

\node at (0.38,-0.3) {{$\scriptstyle g$}};
\draw [line width=1pt] (-0.46,-0.15) ..controls +(0.0,0.17) and +(0.0,-0.17) .. (-0.6,0.45);
\draw [line width=1pt] (0.46,-0.15) ..controls +(0.0,0.17) and +(0.0,-0.17) .. (0.6,0.45);
\draw [line width=1pt] (-0.29,-0.15) ..controls +(0.0,0.17) and +(0.0,-0.17) .. (0.6,0.45);
\draw [white,line width=5pt] (0.29,-0.15) ..controls +(0.0,0.3) and +(0.0,-0.4) .. (-0.25,0.65);
\draw [line width=1pt] (0.29,-0.15) ..controls +(0.0,0.3) and +(0.0,-0.4) .. (-0.25,0.65);%
\draw [line width=1pt] (0.55,-0.15) rectangle (0.2,-0.45);
\node at (-0.38,-0.95) {{\scriptsize $X$}};
\draw [line width=1pt] (-0.38,-0.75) ..controls +(0.0,0.0) and +(0.0,0.0) .. (-0.38,-0.45);
\node at (0.38,-0.95) {{\scriptsize $Z$}};
\draw [line width=1pt] (0.38,-0.75) ..controls +(0.0,0.0) and +(0.0,0.0) .. (0.38,-0.45);
\node at (-0.6,0.85) {{\scriptsize $Y$}};
\draw [line width=1pt] (-0.6,0.45) ..controls +(0.0,0.0) and +(0.0,0.0) .. (-0.6,0.65);
\node at (-0.25,0.85) {{\scriptsize $W$}};
\node at (0.6,0.85) {{\scriptsize $A$}};
\draw [line width=1pt] (0.6,0.45) ..controls +(0.0,0.0) and +(0.0,0.0) .. (0.6,0.65);
\end{tikzpicture}
$$
and again depends on $\Phi$.

The object $C\in\catA^\Phi$ we define below is, unfortunately, not a free $A$-module in $\catP^\Phi$. It is, however, a projective module, i.e.\ a direct summand in a free module. Let us describe explicitly the monoidal categories $\catA_\text{Proj}$ and $\catA^\Phi_\text{Proj}$ of projective $A$-modules in $\catP$ and in $\catP^\Phi$.

The category $\catA_\text{Proj}$ is equivalent (via the functor $\heartsuit$) to the category of $\U\g$-modules which are direct summands in $\U\p$-modules, or equivalently, to the category with objects
\begin{equation}\label{eq:skol_karoubi}
(X,p),\ X\in\catP,\ p\in\Hom_{\U\g}(X,X)\text{ such that }p\circ p=p
\end{equation}
and with morphisms
$$f:(X,p)\to(Y,q),\ f\in\Hom_{\U\g}(X,Y)\text{ such that }f=q\circ f\circ p.$$

If $p:X\to X$ is as in \eqref{eq:skol_karoubi} then certainly
$$\heartsuit(p)\circ_{\catA}\heartsuit(p)=\heartsuit(p),$$
however, in general,
$$\heartsuit(p)\circ_{\catA^\Phi}\heartsuit(p)\neq\heartsuit(p).$$
Fortunately, we can deform $\heartsuit(p)$ to get a projection in $\catA^\Phi$: Let $a=2p-1$ (so that $a\circ a=1$),  let $\tilde a=\heartsuit(a)$ \emph{understood as a morphism in} $\catA^\Phi_\text{Free}$, and let
$$p^\Phi=(a^\Phi+1)/2,$$
where 
$$a^\Phi=\tilde a({\tilde a}^2)^{-1/2}.$$
Then $p^\Phi:X\otimes A\to X\otimes A$ is an idempotent in $\catA^\Phi_\text{Free}$ (as $(a^\Phi)^2=1$).

Using the correspondence
\begin{subequations}\label{eq:skol_corresp}
\begin{equation}
(X,p)\mapsto(X\otimes A,p^\Phi)
\end{equation}
we identify the objects in $\catA_\text{Proj}$ with the objects in $\catA^\Phi_\text{Proj}$.\footnote{More precisely, we should define $\catA_\text{Proj}^\Phi$ as the category, where objects are pairs $(X,\pi)$, where $X$ is a $\U\p$-module and $\pi:X\otimes A\to X\otimes A$ a projection in $\catA^\Phi$; the image of $\pi$ is our projective $A$-module in $\catP^\Phi$.}
We also identify morphisms via
\begin{equation}
(f:X\to Y,\ f=q\circ f\circ p)\mapsto (f^\Phi:=q^\Phi\circ\heartsuit(f)\circ p^\Phi:X\otimes A\to Y\otimes A).
\end{equation}
\end{subequations}
This makes the category $\catA^\Phi_\text{Proj}$ (together with its monoidal structure) fully explicit.
\subsection*{Step 2}
Suppose that $C$ is an object in a linear monoidal category, for example in $\catA^\Phi$, and that we have have a strong quasi-monoidal structure on the functor
\begin{equation}\label{eq:skol_funct}
\Hom(C,\cdot).
\end{equation}
This means that we have morphisms $\epsilon: C\to 1$ and $\copr:C\to C\otimes C$, satisfying the following conditions:
For any objects $S,T$, the composition
$$\delta_{S,T}:\Hom(C,S)\otimes\Hom(C,T)\xrightarrow{\otimes}\Hom(C\otimes C,S\otimes T)\xrightarrow{\circ\copr}\Hom(C,S\otimes T)$$
is a bijection, 
\begin{equation}\label{eq:skol_coprcou}
(\id\otimes\epsilon)\circ\copr=(\epsilon\otimes\id)\circ\copr=\id,
\end{equation}
and 
$$\Hom(C,1)=\mathbb K\epsilon.$$

In this case, the algebra
$$H=\Hom(C,C)$$
has a unique quasi-bialgebra structure, such that the functor \eqref{eq:skol_funct}, understood as a functor to the category of $H$-modules, is strongly monoidal, with the monoidal structure given by the morphisms $\delta_{S,T}$. In particular, if $B$ is a quasi-bialgebra, we take the (monoidal) category of $B$-modules, and set $C=B$, $\copr=\Delta_B$ and $\epsilon=\varepsilon_B$, then $H=B$.

 Explicitly, the coproduct 
$$\Delta:H\to H\otimes H$$
 is given by the composition
$$H=\Hom(C,C)\xrightarrow{\circ\copr}\Hom(C,C\otimes C)\xrightarrow{\delta_{C,C}^{-1}}\Hom(C,C)\otimes\Hom(C,C)= H\otimes H.$$
The associator $\Phi_H\in H\otimes H\otimes H$ is the image of $1\otimes1\otimes1$ under
\begin{multline*}
H\otimes H\otimes H \xrightarrow{\delta_{C,C}\otimes\text{id}}\\
\Hom(C,C\otimes C)\otimes H\xrightarrow{\delta_{C\otimes C,C}}\Hom\bigl(C,(C\otimes C)\otimes C\bigr)
\xrightarrow{\gamma_{C,C,C}\circ}\\
\Hom\bigl(C,C\otimes (C\otimes C)\bigr)
\xrightarrow{\delta_{C,C\otimes C}^{-1}}  H\otimes\Hom(C,C\otimes C)\\
\xrightarrow{\text{id}\otimes\delta_{C,C}^{-1}}H\otimes H\otimes H,
\end{multline*}
where $\gamma_{C,C,C}:(C\otimes C)\otimes C\to C\otimes (C\otimes C)$ is the associativity constraint. Finally, the counit
$$\varepsilon:H\to\mathbb K$$
is given by
$$f\circ\epsilon=\varepsilon(f)\epsilon.$$

Our task is to construct $(C,\copr,\epsilon)$ in the monoidal category $\catA^\Phi$, together with an isomorphism
\begin{equation}\label{eq:skol_homcc}
\Hom(C,C)\cong\U\g
\end{equation}
of vector spaces. Notice that if we use the category $\catA$ instead, which is equivalent to the category of $\U\g$-modules via the functor $\heartsuit$, then 
$$C=\heartsuit(\U\g),\ \copr=\heartsuit(\Delta_{\U\g}),\ \epsilon=\heartsuit(\varepsilon_{\U\g})$$
 gives $H=\U\g$ as a bialgebra. We need to find a suitable replacement for $\heartsuit(\U\g)$ in $\catA^\Phi$. A natural approach is to make  $\U\g$  to a $\U\p$-module, or at least to a direct summand of a $\U\p$-module, since then $C=\heartsuit(\U\g)$ can be seen as an object of $\catA^\Phi_\text{Free}$ or of $\catA^\Phi_\text{Proj}$ and \eqref{eq:skol_homcc} holds.

Let us first construct $(C,\copr,\epsilon)$ in the case when $\g$ is a Lie bialgebra. In that case $H=\U_\hbar\g$ will be the bialgebra constructed by Etingof and Kazhdan. The bijection
$$\U\g\cong\U\p/(\U\p)\g^*$$
(given by the inclusion $\U\g\subset\U\p$) makes $\U\g$ to a $\U\p$-module, and we define $C$ to be the free $A$-algebra in $\catP^\Phi$
$$C=\U\g\otimes A=\heartsuit(\U\g).$$
We also set
$$\copr=\heartsuit(\Delta_{\U\g}),\ \epsilon=\heartsuit(\varepsilon_{\U\g}),$$
where $\Delta_{\U\g}$ and $\varepsilon_{\U\g}$ are the coproduct and the counit of the (cocommutative) bialgebra $\U\g$. 

By \eqref{eq:skol_freePhi} we have a bijection
$$H=\Hom_{\catA^\Phi}(C,C)\cong\Hom_{\U\g}(\U\g,\U\g)=\U\g$$
so we indeed defined a new quasi-bialgebra (actually bialgebra, due to co-associativity of $\copr$) structure on $\U\g$. This is a slight reformulation of the quantization of Etingof and Kazhdan.

Let us now suppose again that $\g$ is a quasi-Lie bialgebra. The basic problem is that in this case we can't extend the $\U\g$-action (by multiplication) on $\U\g$ to a $\U\p$-action, hence we can't take for $C$ a free $A$-module. We shall, however, be able to find a $C$ which is a \emph{projective} $A$-module. Namely, let
$$Q=\heartsuit(\U\g)=\Hom_{\U\g}(\U\p,\U\g),$$
understood as a $\U\p$-module.
We have a surjection $\pi:Q=\Hom_{\U\g}(\U\p,\U\g)\to\U\g$ given by $f\mapsto f(1)$.
If we choose a $f_0\in Q$ such that $f_0(1)=1$, then
$$\iota:\U\g\to Q,\ z\mapsto z\cdot f_0$$
is a (one-side) inverse of $\pi$. In this way we can see $\U\g$ as a direct summand in the $\U\p$-module $Q$, i.e.\ we can replace it by the pair 
$$(Q,p=\iota\circ\pi).$$

A convenient $f_0$ can be constructed as follows. As a vector space,
$$\p=\g\oplus\g^*,$$
which gives an isomorphism of vector spaces
$$\U\g\otimes S\g^*\to\U\p,$$
namely $z\otimes x\mapsto z\,\sigma(x)$, where $\sigma: S\g^*\to\U\p$ is the symmetrization map. The projection $f_0:\U\p\cong\U\g\otimes S\g^*\to\U\g$ is defined by sending $S^{>0}\g^*$ to $0$.

We thus set (see \eqref{eq:skol_corresp})
$$C=(Q\otimes A,p^\Phi),\ \copr=p^\Phi\otimes p^\Phi\circ{\Delta_{\U\g}}\circ p^\Phi,\ \epsilon={\varepsilon_{\U\g}}^\Phi.$$
Unfortunately, Equation \eqref{eq:skol_coprcou} is not satisfied, which means that $\Hom_{\catA^\Phi}(C,C)$ is a quasi-bialgebra with a weak counit. This defect can be easily repaired by a twist. Namely, let
$$ \copr'=(r^{-1}\otimes s^{-1})\circ\copr$$
where
$$r=(\epsilon\otimes 1)\circ\copr, s=(1\otimes\epsilon)\circ\copr.$$
Then the triple
$$(C,\copr',\epsilon)$$
satisfies all the requirements and we set
\begin{equation}\label{eq:skol_uhg}
\U_\hbar\g=\Hom_{\catA^\Phi}(C,C).
\end{equation}

\subsection*{Remarks}
\begin{enumerate}
\item If $\g$ is finite-dimensional then we can replace the coinduction $\heartsuit$ with the induction
$$M\mapsto \U\p\otimes_{\U\g}M,$$
the algebra $A$ with the coalgebra
$$\U\p\otimes_{\U\g}\mathbb K=\U\p/(\U\p)\g$$
and the $\U\p$-module $Q$ with $\U\p$. In this case we don't need to use any topologies. 

\item While it is clear that $\U_\hbar\g$ is a deformation of $\U\g$, we should verify that its first order part is given by the quasi-Lie bialgebra structure on $\g$. To do it we repeat the idea of Etingof and Kazhdan.  The functor
$$F:\catP^\Phi\to\mathcal V$$
(where $\mathcal V$ is the category of vector spaces), given by
$$F(X)=\Hom_{\catA^\Phi}(C,X\otimes A),$$
is  naturally isomorphic to the forgetful functor. Both $F$ and the forgetful functor are quasi-monoidal, but with different quasi-monoidal structures. In this way we get two quasi-triangular quasi-bialgebra structures on the representing object $\U\p$, differing by a twist. One of them is the standard quasi-bialgebra structure coming from $\Phi$, while the other is such that $\U\p$ contains $\U_\hbar\g$ as a sub-quasi-bialgebra. We compute the classical part of the twist and show that we get the classical twist of $\p$ which makes $\g$ to a sub-quasi-Lie bialgebra.
\end{enumerate}

\section{Notation}\label{sect_notations}
Some general notations:
\begin{itemize}
\item $\act, \coact$ denote left and right actions. E.g. if $\g$ is a Lie algebra and $M$ its left module, $x\in \g$, $m\in M$ then $x\act m$ denotes $x$ applied to $m$.
\item $\os$ and $\bs$ mean ``is an open vector subspace of'' and ``is a bounded vector subspace of'' (Definition \ref{defn_bs}).
\item $\ctp$ is the completed tensor product (Appendix \ref{subsection_tensor_products}). It turns $\catCTV$ into a symmetric monoidal category.
\item $\Hom^C_\Ug$, $\Hom^C_\field$ denote spaces of continuous $\Ug$-linear or $\field$-linear maps equipped with the strong topology.
\end{itemize}
The quasi-Lie bialgebras  and some elements that we use:
\begin{itemize}
\item $\bg$ the quasi-Lie bialgebra over $\field$ we want to quantize.
\item $e_i$ a basis of $\g$ and $e^i$ the dual basis
\item $\bp$ the ``double'' quasi-Lie bialgebra of $\g$ defined in Section \ref{subsection_Double_quasi-Lie bialgebra}.
\item $\Omegap$ the invariant element of $S^2\p$. Section \ref{subsection_Double_quasi-Lie bialgebra}.
\end{itemize}
The categories:
\begin{itemize}
\item $\catC$ a general category. Hom-sets are denoted by $\Hom_\catC(X,Y)$ or by $\catC(X,Y)$.
\item $\Split(\catC)$ is the Karoubi envelope of a category $\catC$. Section \ref{subsection_Karoubi}. 
\item $\catV,\catTV, \catCTV, \catHTV$ --- vector spaces, topological vector spaces, complete topological vector spaces and Hausdorff topological vector spaces. Appendix \ref{section_Top_vect_spaces}.
\item $\catTVkh$ and $ \catCTVkh$ --- category of topological $\kh$-modules that are $\hbar$-complete and the category of complete $\kh$-modules. Appendix \ref{section_kh_modules}.
\item $\catH$, $\catP$--- equicontinuous left $\g$-modules and equicontinuous left $\p$-modules (Section \ref{subsection_P_H}).
\item $\catA,\catF$ --- right $A$-modules and free right $A$-modules in $\catP$ (Section \ref{subsection_A_F}). There is an obvious fully-faithful functor $\catF\to\catA$.
\item $\catPh$ --- category with the same objects as $\catP$ but with hom-sets $\catPh(M,N):= \catP(M,N)\hh$. See Section \ref{section_passing_to_kh}.
\item $\catFh$ --- the same construction applied to $\catF$. Equivalently, the category of free right $A$-modules in $\catPh$.
\item $\catPhPhi$ --- the same category as $\catPh$ but with the braiding and associativity isomorphism ``quantized'' using a Drinfeld associator $\Phi$ as described in Section \ref{subs_Drinfeld_assoc}.
\item $\catFhPhi$ --- right free $A$-modules in $\catPhPhi$ (Section \ref{subsection_catFhPhi}).
\end{itemize}
The functors:
\begin{itemize}
\item $\xto\forg$ denotes  forgetful functors. Possibly non-obvious $\catPh\xto\forg \catCTVkh$ is defined in Section \ref{section_passing_to_kh}.
\item $\srdiecko:\catH \to \catP: M \mapsto\srdiecko M:= \Hom^C_\Ug(\Ud,M)$. See Section \ref{subsection_srdiecko}. 
\item By $\srdco:\catH \to \catA$ we denote $\srdiecko$ when we want to stress that it goes to $\catA$. Section \ref{subsection_srdco}.
\item $\Yoneda_{\text{something}}$ is the functor represented by something (Section \ref{section_represented_functors}). 
\item $\Yoneda: \catF \to \catCTV$ the functor represented by $(Q\ctp A,p) \in \Split(\catF)$ (Section \ref{subsecion_Yoneda_from_catF}). We denote by the same letter also the functor $\Yoneda: \catFh \to \catCTVkh$ represented by $(Q\ctp A,p) \in \Split(\catFh)$.
\item $\YPhi: \catFhPhi \to \catCTVkh$ the functor represented by $(Q\ctp A,p^\Phi) \in \Split(\catFhPhi)$. Section \ref{subsection_YPhi}. 
\end{itemize}
Important objects
\begin{itemize}
\item $A = \srdiecko \field$ is a commutative algebra in $\catP$ (Section \ref{subsection_A}).
\item $C\in \ob(\catH)$ is $\Ug$ as its own module by left multiplication.
\item $Q = \srdiecko C \in \ob(\catP)$. It is useful, because the coalgebra $\srdco C$ in $\catA$ representing $\Yoneda$ is a direct summand of $Q\otimes A$. Section \ref{section_Y}.
\end{itemize}
Important maps:
\begin{itemize}
\item $\pi_M:\srdiecko(M)\to M$ a natural epimorphism in $\catH$ defined for $M\in \catH$ by 
$$ \pi_M: \Hom_\Ug^C(\Ud,M) \to M : s\mapsto s(1) \ .$$
In particular we have an epimorphism $\pi_C:Q \to C$ in $\catH$.  
\item We also denote $\eps_A := \pi_\field:A\to \field$. It is an augmentation on $A$ if we consider $A$ as an algebra in $\catH$. It is however not a morphism in $\catP$. Section \ref{subsection_eps_A}. On pictures, we denote $\eps_A$ by $\begin{tikzpicture}[baseline=0.0cm]
\filldraw [line width=1pt] (0.0,0.0) circle (0.08cm);
\node at (0.0,-0.5) {{\scriptsize $A$}};
\draw [line width=1pt] (0.0,-0.3) ..controls +(0.0,0.0) and +(0.0,0.0) .. (0.0,-0.08);
\end{tikzpicture}
$.
\item $\iota_C\in \catH(C,Q)$ and $s_0 \in Q$. Since $C$ is a free one-dimensional $\Ug$-module one can choose a right inverse $\iota_C:C\to Q$ of $\pi_C:Q \to C$ just by specifying an element $s_0 = \iota_C(1)\in Q$. Our choice of $s_0$ is described in Section \ref{subsection_choice__of_s0}. On our pictures, we denote the element $s_0\in Q$ (regarded as a linear map $\field \to Q$) as $\begin{tikzpicture}[baseline=0.0cm]
\draw [line width=1pt] (0.0,0.0) circle (0.08cm);
\node at (0.0,0.6) {{\scriptsize $Q$}};
\draw [line width=1pt] (0.0,0.08) ..controls +(0.0,0.17) and +(0.0,-0.17) .. (0.0,0.4);
\end{tikzpicture}
$.
\item Projection\footnote{By ``projection'' we want to say here that $p\circ p = p$. } $p \in \catA(Q\ctp A, Q\ctp A)$. The composition $\iota_C \circ \pi_C:Q\to Q$ is a projection in $\catH$ with image $C$. Applying $\srdco$ and composing with isomorphism $Q\ctp A \cong \srdco Q$ (which we get from Theorem \ref{fundamental_thm} part (2)) we get a projection in $\catA$
$$ p: Q\ctp A\xto\cong \srdco Q \xto{\srdco(\iota_C\circ \pi_C)} \srdco Q \xto\cong Q \ctp A .$$
This enables us to see $\srdco C$ as an object $(Q\ctp A, p)$ in $\Split(\catF)$. Details in Section \ref{section_Y}.
\item $\Delta \in \catH(C ,   C\ctp C)$, $\eps\in \catH(C,\field)$ is the coalgebra structure on $C\in \ob(\catH)$ coming from the bialgebra structure on $\Ug$.
\item $\Delta_Q \in \catF( Q\ctp A, Q\ctp Q \ctp A )$, $\eps_Q \in \catF(Q\ctp A, A)$ is a coalgebra structure on $(Q\ctp A, p)$ in $\Split(\catF)$. We get it as follows: $C$ is a coalgebra in $\catH$, so $\srdco C$ is a coalgebra in $\catA$. We see $\srdco C \in \catA$ as $(Q\ctp A,p)\in \Split(\catF)$ and thus get a coalgebra structure on $(Q\ctp A, p)$. Section \ref{section_Y}.
\item Projection $p^\Phi\in \catFhPhi(Q\ctp A, Q\ctp A)$. One can regard $ p\in\catFh(Q\ctp A, Q\ctp A)$ as a morphism in $\catFhPhi(Q\ctp A, Q\ctp A)$ although there it does not necessarily satisfy $p\circ p =p$. To force this condition, we use Lemma \ref{lem_pPhi} and get a projection $p^\Phi\in \catFhPhi(Q\ctp A, Q\ctp A)$. Section \ref{Map_on_Split_objects}.
\item $\Delta'_Q\in \catFhPhi(Q\ctp A , (Q\ctp A)\tp_A (Q\ctp A) )$, $\eps'_Q\in\catFhPhi(Q\ctp A, Q\ctp A)$ are an attempt to get a quasi-coalgebra structure on $(Q\ctp A, \p^\Phi)$ in $\Split(\catFhPhi)$. See the formulas (\ref{eq_Delta_prime}).
\item $r,s \in \catFhPhi(Q\ctp A, Q\ctp A)$ defined by the formulas (\ref{equa_f_and_g}) measure the failure of $\eps'_Q$ to be a counit for $\Delta'_Q$ as in Lemma \ref{lem_save_counit}.
\item $\Delta^\Phi_Q\in \catFhPhi(Q\ctp A , (Q\ctp A)\tp_A (Q\ctp A) )$, $\eps^\Phi_Q\in\catFhPhi(Q\ctp A, Q\ctp A)$ defined by (\ref{equa_def_DeltaQPhi}) give a quasi-coalgebra structure on $(Q\ctp A, \p^\Phi)\in \Split(\catFhPhi)$. (See also Lemma \ref{lem_save_counit}.)
\item    $m \mapsto \hat m : M \to \Yoneda(M\ctp A)$ is a natural isomorphism between $\catP\xto\forg \catCTV$ and $\catP \xto{\argument\ctp A} \catF \xto\Yoneda \catCTV$. It is a composition of isomorphisms
$$ M\xto\cong \catH(C,M) \underset{\cong}{\xto\srdco} \catA(\srdco C, \srdco M) \xto\cong \Split(\catF)\Big( (Q\ctp A, p) , M\ctp A \Big) $$
where the first map assigns to $m\in M$ a $\catH$-morphism $C\to M: 1\mapsto m$ and the last map uses the isomorphisms $\srdco C \cong (Q\ctp A, p)$ and $\srdco M\cong M\ctp A$. See Notation \ref{notation_m_hat}.

One can also regard $m\mapsto \hat m$  as an isomorphism between $\catPh\xto\forg \catCTVkh$ and $\catPh \xto{\argument\ctp A} \catFh \xto\Yoneda \catCTVkh$.
\item $\alpha: m \mapsto \hat m^\Phi = \hat m \cFhPhi p^\Phi : M \to \YPhi(M\ctp A)$ is a natural isomorphism between $\catPhPhi\xto\forg \catCTVkh$ and $\catPhPhi \xto{\argument\ctp A} \catFhPhi \xto\Yoneda \catCTVkh$. Section \ref{subsection_YPhi}.
\end{itemize}

\section{Quasi-bialgebras} \label{sect_Drinfeld_formulas}
For the convenience of the reader we gather here some results of Drinfeld from \cite{Drinfeld1}.
\begin{defn}
A quasi-bialgebra is an associative algebra $H$ together with algebra morphisms $\Delta:H \to H \otimes H$; $\eps: H\to \field$ and an invertible element $\Phi\in H\otimes H \otimes H$ satisfying 
\begin{align}
\label{B1} &(\id\otimes\Delta)\circ\Delta(a)=\Phi\cdot\big[(\Delta\otimes \id)\circ\Delta(a)\big]\cdot \Phi^{-1}\\ 
\label{B2} &(\id\otimes \id\otimes\Delta)(\Phi)\cdot(\Delta\otimes \id\otimes \id)(\Phi)=(1\otimes     \Phi)\cdot(\id\otimes\Delta\otimes \id)(\Phi)\cdot (\Phi\otimes 1)\\ 
\label{B3} &(\eps\otimes \id)\circ \Delta=\id\;;\quad\quad (\id\otimes\eps)\circ\Delta=\id\\
\label{B4} &(\id\otimes\eps\otimes \id)\Phi=1\otimes 1 
\end{align}
\end{defn}
\subsection{Quasi-triangular structure} $R$-matrix is an invertible element $R\in H\otimes H$ satisfying\footnote{$(a\otimes b\otimes c)^{312} = b\otimes c \otimes a$}
\begin{gather}
\Delta^{op}(a) = R\cdot \Delta(a)\cdot  R^{-1} \, ,\quad \forall a\in A\\
(\Delta\otimes\id)(R) = \Phi^{312}\cdot  R^{13}\cdot (\Phi^{132})^{-1} \cdot R^{23}\cdot  \Phi\\ 
(\id \otimes \Delta) (R) = (\Phi^{231})^{-1} \cdot R^{13} \cdot \Phi^{213} \cdot R^{12} \cdot \Phi^{-1}
\end{gather}
\subsection{Twisting}\label{section_twisting}
Given an invertible elements $F\in H\otimes H$ satisfying $$(\id\otimes\eps) F = 1 = (\eps\otimes \id) F$$
we get a new quasi-bialgebra structure on the algebra $H$ by keeping the same $\eps$ and replacing $\Delta$, $\Phi$ and $R$ (if the original was quasi-triangular) by
\begin{align}
&\widetilde\Delta(a) = F\cdot \Delta(a) \cdot F^{-1}  \, ,\quad \forall a\in A\\
&\widetilde\Phi = F^{23}\cdot (\id\otimes\Delta)(F) \cdot \Phi \cdot (\Delta\otimes\id)(F^{-1})\cdot ( F^{12})^{-1}\\
& \widetilde R = F^{21} \cdot R \cdot F
\end{align}

Next we recall what are the corresponding classical notions.
\subsection{Quasi-Lie bialgebras}
A quasi-Lie bialgebra is a triple $(\g, \delta, \fii)$ where
\begin{itemize}
\item $\g$ is a Lie-algebra;
\item $\delta$ is a 1-cocycle $\delta:\g \to \bigwedge^2 \g$;
\item $\fii \in \bigwedge^3 \g$
\end{itemize}
satisfying the following equations\footnote{$\Alt:\g^{\otimes n} \to \g^{\otimes n}: x_1\otimes\ldots\otimes x_n \mapsto \sum_{\sigma \in S_n} \sgn(\sigma)\cdot x_{\sigma(1)} \otimes \ldots \otimes x_{\sigma(n)} $, 
so we don't divide by $n!$.}
\begin{gather}
\half \Alt(\delta \otimes \id) \delta(x) =\big [ x\otimes 1\otimes 1 + 1\otimes x \otimes 1 + 1\otimes 1\otimes x,\ \fii \big] \quad \forall x\in \g \ ; \\
\Alt (\delta\otimes\id\otimes\id) (\fii) = 0\ .
\end{gather}

\subsection{Twisting of quasi-Lie bialgebras}
Let $(\g, \delta, \fii)$ be a quasi-Lie bialgebra and $f\in \bigwedge^2 \g$. We get a new quasi-Lie bialgebra structure $(\g, \widetilde\delta, \widetilde\fii)$ on the same Lie algebra $\g$ by taking
\begin{gather}
\label{twist_delta}  \widetilde\delta (x) := \delta(x) + [x\otimes 1 + 1\otimes x, f ]\  ;\\
\label{twist_fii}   \widetilde \fii = \fii + \half \Alt (\delta\otimes\id) f - CYB(f) 
\end{gather}
where
$$ CYB(f) := [f^{12}, f^{13} ] + [ f^{12}, f^{23}]+ [ f^{13}, f^{23}]$$
is the left hand side of the classical Yang-Baxter equation.

\subsection{Quasi-Hopf QUE algebras}
Quasi-Hopf QUE\footnote{QUE = quantum universal enveloping} algebra is a topological quasi-bialgebra $(A,\Delta, \eps, \Phi)$ over $\kh$ s.t.
\begin{enumerate}
\item $\Phi \equiv 1 \mod \hbar$;
\item $A/\hbar A$ is a universal enveloping algebra;
\item $A$ is a topologically free $\kh$-module;
\item $\Alt\ \Phi \equiv 0 \mod \hbar^2$.
\end{enumerate}
We twist as in the section (\ref{section_twisting}) by elements $F$ that satisfy also $F\equiv 1 \mod \hbar$. One can see that $\Alt\ \Phi \equiv 0 \mod \hbar^2$ is preserved under twisting.

\begin{thm}\label{thm_classical_limit}
Let $(A, \Delta, \eps, \Phi)$ be a quasi-Hopf QUE algebra and $ A/\hbar A = \mathcal U \g$.
\begin{enumerate}
\item By twisting one can achieve $\Phi \equiv 1 \mod \hbar^2$.
\item If we assume $\Phi \equiv 1 \mod \hbar^2$ then we get on $\g$ a structure of a quasi-Lie bialgebra by taking 
\begin{itemize}
\item $\fii := \frac{1}{\hbar^2}\cdot \Alt( \Phi) \mod \hbar$,
\item $\delta(x)  := \frac{1}{\hbar} \big( \Delta(x)- \Delta^{op} (x) \big)$.
\end{itemize}
\item If we twist $(A, \Delta, \eps, \Phi)$ by $F$ into $(A, \widetilde \Delta, \widetilde \eps, \widetilde \Phi)$ while keeping $\Phi \equiv\widetilde \Phi \equiv 1 \mod \hbar^2$
 then the quasi-Lie bialgebra  $(\g, \widetilde \delta, \widetilde \fii)$ corresponding to  $(A, \widetilde \Delta, \widetilde \eps, \widetilde \Phi)$ can be obtained from $(\g, \delta,\fii)$ by twisting via $$f= - \frac{1}{\hbar}\cdot  \Alt( F )\mod \hbar\ .$$.
\end{enumerate}
\end{thm}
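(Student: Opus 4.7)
This is a classical theorem of Drinfeld, and the plan is to handle the three parts separately, in each case by expanding the quasi-bialgebra axioms (\ref{B1})--(\ref{B4}) and the twisting formulas in powers of $\hbar$ and reading off the constraints at low orders. The real work sits in part (1); parts (2) and (3) reduce to careful $\hbar$-bookkeeping.

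For part (1), I would write $\Phi = 1 + \hbar \Phi_1 + O(\hbar^2)$ with $\Phi_1 \in (\Ug)^{\otimes 3}$ and look for a counital twist $F = 1 + \hbar F_1 + O(\hbar^2)$ whose linearized contribution to $\widetilde\Phi$ kills $\Phi_1$:
\[
F_1^{23} + (\id\otimes\Delta_0)(F_1) - (\Delta_0\otimes\id)(F_1) - F_1^{12} = -\Phi_1,
\]
where $\Delta_0$ denotes the cocommutative coproduct on $\Ug$. Expanding the pentagon (\ref{B2}) modulo $\hbar^2$ forces $\Phi_1$ to be a 3-cocycle in the Harrison complex of $\Ug$, and the hypothesis $\Alt\Phi \equiv 0 \mod \hbar^2$ yields $\Alt\Phi_1 = 0$. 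The main obstacle is the cohomological input: I would invoke the classical vanishing theorem stating that any such cocycle in the symmetric part of the bialgebra cohomology of the cocommutative Hopf algebra $\Ug$ is a coboundary, which produces the required $F_1$.

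For part (2), I would first check that $\delta$ is well-defined as a map $\g \to \wedge^2\g$: antisymmetry is immediate, and expanding the multiplicativity $\Delta(xy) = \Delta(x)\Delta(y)$ at order $\hbar$ forces $\delta(x) \in \g\otimes\g$ and simultaneously gives the 1-cocycle identity. The mixed axiom comes from rewriting (\ref{B1}) as $(\id\otimes\Delta)\Delta(x) - (\Delta\otimes\id)\Delta(x) = [\Phi - 1,\ (\Delta\otimes\id)\Delta(x)]$ and expanding both sides modulo $\hbar^3$: because $\Phi \equiv 1 \mod \hbar^2$ the right-hand side reduces at leading order to $\hbar^2 [\fii,\ x\otimes 1\otimes 1 + 1\otimes x\otimes 1 + 1\otimes 1\otimes x] + O(\hbar^3)$, while antisymmetrizing the left-hand side produces $\half \Alt(\delta\otimes\id)\delta(x)$. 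The identity $\Alt(\delta\otimes\id\otimes\id)(\fii) = 0$ then falls out of the total antisymmetrization of the pentagon (\ref{B2}) at order $\hbar^3$.

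For part (3), the condition $\Phi \equiv \widetilde\Phi \equiv 1 \mod \hbar^2$ allows me to assume $F = 1 + \hbar F_1 + O(\hbar^2)$ with $\Alt F_1 = -f$, and the symmetric part of $F_1$ drops out of any antisymmetrization. Direct expansion of $\widetilde\Delta(a) = F\Delta(a)F^{-1}$ at order $\hbar$ immediately reproduces (\ref{twist_delta}). Expanding the twist formula for $\widetilde\Phi$ to order $\hbar^2$ and antisymmetrizing gives (\ref{twist_fii}): the $\half\Alt(\delta\otimes\id)f$ piece arises from the difference $(\id\otimes\Delta)(F) - (\Delta\otimes\id)(F)$ combined with the first-order part of $\Delta - \Delta^{op}$, while the $CYB(f)$ contribution appears from the product of $F^{23}$ and $(F^{12})^{-1}$ together with the relevant cross-terms in $(\Delta\otimes\id)(F^{-1})$, after fixing the sign conventions for $\Alt$. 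This step is routine once part (2) is in hand.
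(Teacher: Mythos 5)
The paper itself offers no proof of this theorem: Section~\ref{sect_Drinfeld_formulas} merely records it from Drinfeld's paper \cite{Drinfeld1}, so your proposal has to be measured against the standard argument. Your overall strategy is indeed that argument: expand everything in powers of $\hbar$ and feed the low-order terms into one cohomological lemma, namely that for the complex of $\Ug^{\otimes n}$'s with differential $\psi\mapsto 1\otimes\psi-(\Delta_0\otimes\id)\psi+(\id\otimes\Delta_0)\psi-\psi\otimes 1$ (and its analogues in other degrees, $\Delta_0$ the standard cocommutative coproduct) the cohomology is $\bigwedge^\bullet\g$, so a cocycle with vanishing total antisymmetrization is a coboundary. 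Your part (1) is exactly this and is fine, modulo two cosmetic points: the relevant complex is the co-Hochschild (Cartier) complex of the coalgebra $\Ug$, not the Harrison complex, and one should remark that $F_1$ can be chosen counital so that $F=1+\hbar F_1$ is an admissible twist.

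The genuine gap is in part (2): multiplicativity of $\Delta$ at order $\hbar$ does \emph{not} force $\delta(x)\in\g\otimes\g$. It only gives the co-Leibniz rule $\delta(ab)=\delta(a)\Delta_0(b)+\Delta_0(a)\delta(b)$, i.e.\ the $1$-cocycle property; a skew, counital $\Delta_0$-derivation $\Ug\to\Ug\otimes\Ug$ can perfectly well send $\g$ outside $\g\otimes\g$ (for $\g$ abelian of dimension one, $\delta(x)=x^2\otimes x-x\otimes x^2$ extended as a derivation satisfies all the constraints you list). The primitivity of both legs of $\delta(x)$ is precisely where the hypothesis $\Phi\equiv 1\bmod\hbar^2$ must be used: it gives coassociativity of $\Delta$ modulo $\hbar^2$, whence the order-$\hbar$ term $\Delta_1(x)$ of $\Delta(x)$ is a $2$-cocycle of the complex above; by the same lemma as in part (1) it is a sum of an element of $\bigwedge^2\g$ and a coboundary, and coboundaries are symmetric (cocommutativity of $\Delta_0$), so they cancel in $\Delta_1(x)-\Delta_1(x)^{op}$ and $\delta(x)$ lands in $\bigwedge^2\g$. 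The same degree-two argument is also silently needed in your part (3): from $\Phi\equiv\widetilde\Phi\equiv 1\bmod\hbar^2$ the order-$\hbar$ term $F_1$ is a $2$-cocycle, which is what guarantees $f=-\Alt F_1\in\bigwedge^2\g$; without this, ``twisting by $f$'' in the sense of \eqref{twist_delta}--\eqref{twist_fii} is not even defined. Once these points are inserted, the remaining $\hbar$-bookkeeping in (2) and (3) proceeds as you describe (noting also that in the mixed axiom the right-hand side at order $\hbar^2$ involves the full coefficient $\Phi_2$, and only after total antisymmetrization --- which kills coboundaries --- does it reduce to $\fii=\Alt\Phi_2$).
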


\begin{thm}\label{thm_standard_form_of_quasitriang_quasibi}
Let $(A, \Delta, \eps, \Phi, R) $ be a quasi-triangular quasi-Hopf QUE algebra, $A/ \hbar A = \mathcal U \g$. Denote
$$ t := \frac{1}{\hbar} \cdot (R^{21} \cdot R - 1) \mod \hbar \quad \in\  \UE\g \otimes \UE\g\ .$$
Then
\begin{enumerate}
\item $t$ is a symmetric $\g$-invariant element of $\g\otimes \g$ and does not change under twists.
\item By twisting of $(A, \Delta, \eps, \Phi, R)$ one can achieve that we have both
\begin{itemize} 
\item $\frac{R-1}{\hbar} \equiv \frac t 2  \mod \hbar$;
\item $\Phi \equiv 1 \mod \hbar^2 $.
\end{itemize}
In that case one has
\begin{enumerate}
\item $\frac{1}{\hbar^2} \Alt(\Phi)  \equiv \frac{1}{4} [t^{12}, t^{23}] \mod \hbar$;
\item $\Delta \equiv \Delta^{op} \mod \hbar^2 $.
\end{enumerate}
\end{enumerate}
\end{thm}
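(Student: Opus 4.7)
\emph{Part (1).} I expand $R = 1 + \hbar r_1 + O(\hbar^2)$, giving $R^{21}R = 1 + \hbar(r_1 + r_1^{21}) + O(\hbar^2)$, so $t = r_1 + r_1^{21} \mod \hbar$ is manifestly symmetric. For $\g$-invariance, I combine the quasi-triangularity axiom $\Delta^{op}(a)R = R\Delta(a)$ with its $12$-swap $\Delta(a)R^{21} = R^{21}\Delta^{op}(a)$ to obtain the exact identity $[\Delta(a), R^{21}R] = 0$, whose classical limit for $a \in \g$ is $[\Delta_0(a), t] = 0$. To check $t \in \g\otimes\g$, the plan is to expand both hexagons to order $\hbar$: the failure of $r_1$ to be primitive in the first slot is a specific signed combination of the class $\phi_1 := (\Phi - 1)/\hbar \mod \hbar$, and adding the analogous relation for $r_1^{21}$ collapses this combination to $\Alt(\phi_1)$, which vanishes by the hypothesis $\Alt \Phi \equiv 0 \mod \hbar^2$. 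Twist-invariance follows from a direct calculation showing that $\tilde R^{21}\tilde R$ is conjugate to $R^{21}R$ by the twist element $F \equiv 1 \mod \hbar$, leaving $t \mod \hbar$ unchanged.

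\emph{Part (2), existence of the twist.} First I apply Theorem~\ref{thm_classical_limit}(1) to twist so that $\Phi \equiv 1 \mod \hbar^2$. The first hexagon at order $\hbar$ now reduces to $(\Delta_0 \otimes \id)(r) = r^{13} + r^{23}$ for $r := r_1 \mod \hbar$; combined with the second hexagon this upgrades $r$ from $\UE\g\otimes\UE\g$ to $\g\otimes\g$. I then twist further by $F = 1 + \hbar f + O(\hbar^2)$, where $f \in A\otimes A$ is any lift of $(r - r^{21})/4 \in \g\otimes\g$; the twist formula gives $\tilde r = r + f^{21} - f = r - 2f = t/2$ as required. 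This twist preserves $\tilde\Phi \equiv 1 \mod \hbar^2$ because, for $f \mod \hbar \in \g\otimes\g$, the primitivity identity $(\id\otimes\Delta_0)(f) - (\Delta_0\otimes\id)(f) = f^{12} - f^{23}$ cancels the $\hbar$-coefficient in the twist formula for $\Phi$.

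\emph{Claim (b) is quick.} For a lift of $a\in\g$, $\Delta^{op}(a) - \Delta(a) = [R,\Delta(a)]R^{-1} \equiv \hbar[t/2, \Delta_0(a)] \mod \hbar^2$, which vanishes because $\Delta_0(a) = a\otimes 1 + 1\otimes a$ commutes with the $\g$-invariant $t$. Since $\Delta$ and $\Delta^{op}$ are both algebra morphisms and $A$ is topologically generated over $\kh$ by a lift of $\g$, the congruence extends to all of $A$.

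\emph{Claim (a), the main obstacle.} Write $\Phi = 1 + \hbar^2\phi + O(\hbar^3)$, $R = 1 + \hbar t/2 + \hbar^2 r_2 + O(\hbar^3)$, $\Delta = \Delta_0 + \hbar\Delta_1 + O(\hbar^2)$, and expand both hexagons to order $\hbar^2$. Each produces an identity of the schematic form ``signed sum of three $\phi$-permutations $=$ a coboundary in $r_2$ $+$ a $\Delta_1$-term applied to $t/2$ $+$ a quadratic term $\tfrac14 t^{ij}t^{k\ell}$''. Alternating over the $S_3$-action on $\UE\g^{\otimes 3}$, the $r_2$-terms vanish as primitive coboundaries, and the $\Delta_1$-terms vanish using $\Delta_1 = \Delta_1^{op}$ from~(b); what remains is $\Alt(\phi)$ equated to an alternated quadratic combination of $t^{ij}$'s. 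Using $t^{21} = t$ and the $\g$-invariance identities $[t^{12}, t^{13} + t^{23}] = 0$ and $[t^{12} + t^{13}, t^{23}] = 0$ (both consequences of $[t,\Delta_0(a)]=0$), this reduces to $\tfrac14[t^{12}, t^{23}] \mod \hbar$. The hard part is precisely this last combinatorial step: verifying that alternation really kills the $r_2$- and $\Delta_1$-contributions, and that the remaining quadratic $t$-combination collapses cleanly to $\tfrac14[t^{12},t^{23}]$.
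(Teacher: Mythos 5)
First, note that the paper itself gives no proof of this theorem: Section~\ref{sect_Drinfeld_formulas} explicitly ``gathers results of Drinfeld from \cite{Drinfeld1}'', so what you are really reconstructing is Drinfeld's argument, and your outline follows exactly that standard route (expand $R$, $\Delta$, $\Phi$ in powers of $\hbar$ and play the two hexagons against the quasi-triangularity axiom and the twist formulas). Parts (1), the existence statement in (2), and claim (b) are essentially complete and correct. Two small remarks: in part (1) the $\Phi$-contributions from the two hexagons cancel \emph{identically} once you combine the first hexagon for $r_1$ with the cyclically permuted second hexagon for $r_1^{21}$, so the hypothesis $\Alt\Phi\equiv 0\bmod\hbar^2$ is not actually needed there; and throughout you silently use the twist rule $\widetilde R=F^{21}RF^{-1}$ rather than the formula $\widetilde R=F^{21}\cdot R\cdot F$ printed in Section~\ref{section_twisting}. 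The printed formula is inconsistent with $\widetilde\Delta=F\Delta F^{-1}$ (it does not intertwine $\widetilde\Delta$ and $\widetilde\Delta^{\mathrm{op}}$), so your reading is the correct one --- but you should say so, because with the formula as printed your step $f^{21}-f=(r^{21}-r)/2$ with $f=(r-r^{21})/4$ would be replaced by the unsolvable requirement that a symmetric element equal an antisymmetric one.

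The only genuine incompleteness is claim (a), which you yourself flag as unverified; since that identity is the one the paper actually uses later (it fixes $\fii=\frac14[t^{12},t^{23}]$ for the classical limit of $\Unih\p$), it cannot be left as a ``plan''. The good news is that your plan does close, though not quite for the reason you state: $r_2$ is a general element of $\UE\g\otimes\UE\g$, so its contributions are not ``primitive coboundaries''; rather, writing both hexagons at order $\hbar^2$ and also their images under the flip $x\otimes y\otimes z\mapsto z\otimes y\otimes x$ and under the two cyclic permutations (using cocommutativity of $\Delta_0$, $\Delta_1=\Delta_1^{\mathrm{op}}$ from (b), and $t^{21}=t$ to rewrite the transported $\Delta_1$- and $r_2$-terms), one finds that only the antisymmetric part $s=r_2-r_2^{21}$ survives, through the three combinations $\phi+\phi^{312}-\phi^{132}=\frac12 d_1(s)-\frac18[t^{13},t^{23}]$, $\phi+\phi^{231}-\phi^{213}=-\frac12 d_2(s)-\frac18[t^{12},t^{13}]$ and $\phi+\phi^{321}=\frac12\bigl(d_1(s)-d_2(s)\bigr)$, where $d_1(x)=(\Delta_0\otimes\id)x-x^{13}-x^{23}$ and $d_2(x)=(\id\otimes\Delta_0)x-x^{12}-x^{13}$. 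Adding the first two and subtracting the third kills the $d_i(s)$-terms and yields $\Alt(\phi)=-\frac18\bigl([t^{12},t^{13}]+[t^{13},t^{23}]\bigr)$, which equals $\frac14[t^{12},t^{23}]$ by the invariance identities you quote. Writing out this (or an equivalent) bookkeeping is what is missing from your proposal; with it, the argument is a correct proof of Drinfeld's statement.
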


\begin{defn}\label{defn_associated_quasi_lie_bialgebra}
Let $\g$ be a Lie algebra and $t\in \g\otimes\g$ a symmetric $\g$-invariant element. We can define on $\g$ a quasi-Lie bialgebra structure by taking
\begin{gather*}
\delta := 0\ ;\quad 
\fii :=\frac{1}{4} [t^{12}, t^{23}]=-\frac{1}{4}CYB(t,t)\ .
\end{gather*}
We call it the quasi-Lie bialgebra associated to the pair $(\g,t)$.
\end{defn}

\subsection{Double quasi-Lie bialgebra} \label{subsection_Double_quasi-Lie bialgebra}

For a quasi-Lie bialgebra $(\g,\delta, \fii)$ we form a topological\footnote{We equip $\g$ with the discrete and $\g^*$ with the corresponding strong topology (see the subsection \ref{subs_strong_topology}).} vector space $\p := \g \oplus \g^*$ and take the unique Lie bracket $[,]_\p$ satisfying the following properties:
\begin{enumerate}
\item $[,]_\p$ restricted to $\g\tp\g$ is $[,]_\g$.
\item If we regard $\delta$ and $\fii$ as maps (see the claim \ref{claim_dual_dualu})
$$ \delta: \g^* \ctp \g^* \to \g^* ;\quad \fii:\g^* \ctp \g^* \to \g  $$
then $[,]_\p$ restricted to $\g^*\tp \g^*$ is equal to $\delta +\fii$.
\item The canonical scalar product on $\p$ given by the pairing between $\g$ and $\g^*$ is an invariant scalar product on the Lie algebra $(\p, [,]_\p)$.
\end{enumerate}

We choose a vector space basis $\set{e_i}$ of $\g$ and denote the dual basis\footnote{Elements of $\g^*$ are expressed as possibly infinite linear combinations of $\set{e^i}$. We understand such infinite sums in topological sense.} $\set{e^i}$.  We denote the ``structural constants'' of $(\g,\delta, \fii)$ as follows
$$
[e_i ,e_j] = c^k_{ij} e_k ; \quad \delta(e_i) = \delta^{jk}_i e_j \otimes e_k ; \quad \fii = \fii^{ijk} e_i\otimes e_j \otimes e_k\ .
$$
The bracket on $\p$ is then given by the formulas 
\begin{gather*}
[e_i, e_j]_\p = c^k_{ij} e_k; \quad
[e^i, e^j ]_\p= \delta^{ij}_k e^k - \fii^{ijl} e_l; \\
[e_i, e^j]_\p = c^j_{ki} e^k + \delta^{jl}_i e_l;  \quad [e^i, e_j] = - c^i_{kj} e^k -\delta^{il}_j e_l \ . 
\end{gather*}

We denote the inverse of the invariant scalar product on $\p$ by $\Omegap$. That is 
$$\Omegap:= e_i \otimes e^i + e^j \otimes e_j \in \p\ctp\p$$
is a symmetric $\p$-invariant element of $\p\ctp\p$. Using it, we equip $\p$ with the associated quasi-Lie bialgebra structure as in the definition \ref{defn_associated_quasi_lie_bialgebra}.
Note that $\g$ is not a sub-quasi-Lie-bialgebra of $\p$ because the cobracket is trivial on $\p$ but possibly not on $\g$.

\begin{lem}\label{lem_twisting_p}
Let $\p'$ be the quasi-Lie bialgebra that we get from $\p$ via twisting by 
$$f := \half (e_i \otimes e^i - e^j \otimes e_j)\ .$$
Then $\g$ is a sub-quasi-Lie-bialgebra of $\p'$.
\end{lem}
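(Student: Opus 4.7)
The plan is to verify the two defining conditions for $\g \subset \p'$ to be a sub-quasi-Lie-bialgebra: $\widetilde\delta_\p(\g) \subset \bigwedge^2 \g$ with the induced cobracket equal to $\delta$, and $\widetilde\fii_\p = \fii \in \bigwedge^3 \g$. Set $r := e_i \otimes e^i \in \p \ctp \p$, so that $\Omegap = r + r^{21}$ and $f = r - \Omegap/2$. Since $\Omegap$ is $\p$-invariant, $[x \otimes 1 + 1 \otimes x, \Omegap] = 0$ for every $x \in \p$, and the twisting formula (\ref{twist_delta}) simplifies to $\widetilde\delta_\p(x) = [x \otimes 1 + 1 \otimes x, r]$. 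Evaluating at $x = e_m \in \g$ using the explicit brackets of Section \ref{subsection_Double_quasi-Lie bialgebra}, the contributions involving the structure constants $c^k_{ij}$ cancel after reindexing (by $c^k_{mi} = -c^k_{im}$), and what remains is $\delta^{il}_m e_i \otimes e_l = \delta(e_m) \in \bigwedge^2 \g$.

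For the Jacobiator, the invariance of $\Omegap$ yields $[\Omegap^{12}, \Omegap^{13}] = [\Omegap^{13}, \Omegap^{23}] = -[\Omegap^{12}, \Omegap^{23}]$, hence $\fii_\p = \tfrac{1}{4}[\Omegap^{12}, \Omegap^{23}] = -\tfrac{1}{4}\, CYB(\Omegap)$. Since $\delta_\p = 0$, formula (\ref{twist_fii}) reduces to $\widetilde\fii_\p = \fii_\p - CYB(f) = -\tfrac{1}{4}\, CYB(\Omegap) - CYB(f)$. Expanding $CYB$ quadratically at $r = \Omegap/2 + f$ via the symmetric polarization $B$ (defined by $CYB(a+b) = CYB(a) + CYB(b) + B(a,b)$) gives
$$CYB(r) = \tfrac{1}{4}\, CYB(\Omegap) + CYB(f) + \tfrac{1}{2}\, B(\Omegap, f),$$
so $\widetilde\fii_\p = -CYB(r) + \tfrac{1}{2}\, B(\Omegap, f)$. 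Using symmetry of $B$ together with $\Omegap = r + r^{21}$ and $f = (r - r^{21})/2$, a short bilinear manipulation gives $B(\Omegap, f) = CYB(r) - CYB(r^{21})$, so the problem reduces to computing $CYB(r)$ and verifying $CYB(r) = CYB(r^{21})$.

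A direct computation of $CYB(r)$ using the bracket on $\p$ decomposes according to which of the subspaces $\g \otimes \g^* \otimes \g^*$, $\g \otimes \g \otimes \g^*$, $\g \otimes \g \otimes \g$ each term lies in. The contributions involving $c^k_{ij}$ cancel between $[r^{12}, r^{13}]$ and $[r^{12}, r^{23}]$; those involving $\delta^{ij}_k$ cancel between $[r^{12}, r^{23}]$ and $[r^{13}, r^{23}]$; what survives is
$$CYB(r) = -\fii^{ijl} e_i \otimes e_j \otimes e_l = -\fii \in \bigwedge^3 \g.$$
The parallel calculation for $r^{21}$, together with the cyclic invariance of the totally antisymmetric tensor $\fii^{ijk}$, gives $CYB(r^{21}) = -\fii$ as well, so $B(\Omegap, f) = 0$. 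Therefore $\widetilde\fii_\p = -CYB(r) = \fii \in \bigwedge^3 \g$, completing the proof.

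The main obstacle is the bookkeeping in the calculation of $CYB(r)$; the underlying conceptual point is that the twist by $f$ converts the symmetric associated quasi-Lie bialgebra $(\p, 0, \tfrac{1}{4}[\Omegap^{12}, \Omegap^{23}])$ into the coboundary quasi-Lie bialgebra with $r$-matrix $r = e_i \otimes e^i$, of which $\g$ is manifestly a sub-quasi-Lie-bialgebra.
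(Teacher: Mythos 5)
Your proof is correct and follows essentially the same route as the paper: use the twist formulas together with the invariance of $\Omegap$ to reduce everything to the single tensor $r=e_i\otimes e^i$, check that $[x\otimes 1+1\otimes x,\,r]$ restricted to $\g$ reproduces $\delta$, and compute $CYB(r)=-\fii$ in coordinates. The only (harmless) divergence is in how the cross term between $\Omegap/2$ and $f$ is eliminated: the paper kills the mixed term directly from the invariance of $\Omegap$ and the skew-symmetry of $f$, whereas you express it by polarization as $CYB(r)-CYB(r^{21})$ and dispose of it by a second coordinate computation showing $CYB(r^{21})=-\fii$ as well.
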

\begin{proof}
First we show that $\g$ is closed under the cobracket $\delta_{\p'}$ and that the restriction $\delta_{\p'}|_\g = \delta$. Let $r=e_i\otimes e^i=t/2+f$. From (\ref{twist_delta}) we have
\begin{align*}
\delta_{\p'}(e_i) &= \delta_\p(e_i) + [e_i\otimes 1 + 1\otimes e_i, f] = \half [e_i \otimes 1 + 1\otimes e_i, r- r^{op}] = \\
&=  \half \Big( [e_i \otimes 1 + 1\otimes e_i, r] - [e_i \otimes 1 + 1\otimes e_i, r]^{op} \Big)
\end{align*}
\begin{align*}
[e_i \otimes 1 + 1\otimes e_i, r] &= [e_i \otimes 1 + 1\otimes e_i, e_j\otimes e^j] =\\
&= 
\begin{array}{r|l}
[e_i, e_j] \otimes e^j \quad &= c^k_{ij} e_k\otimes e^j \\
+ e_j\otimes [e_i,e^j] \quad &= c^j_{ki} e_j\otimes e^k + \delta^{jl}_i e_j \otimes e_j  
\end{array} \\
&= \delta(e_i)
\end{align*}

Next we want to show that $\fii_{\p'} = \fii$.  By the formula \eqref{twist_fii} (using that $\delta_{\p}=0$) we have
$$\fii_{\p'}=-CYB(t/2)-CYB(f)=-CYB(t/2+f)=-CYB(r).$$
The middle equality follows from the identity $[t^{12},x^1+x^2]=0$ for every $x\in\p$ (invariance of $t$), which, together with the skew-symmetry of $f$, implies that the mixed term
$$[t^{12},f^{13}]+[t^{12},f^{23}]+[t^{13},f^{23}]+
[f^{12},t^{13}]+[f^{12},t^{23}]+[f^{13},t^{23}]$$
vanishes.

We have
\begin{multline*}
-\phi_{\p'}=CYB(r,r)=[r^{12},r^{13}]+[r^{12},r^{23}]+[r^{13},r^{23}]\\
=[e_i, e_j] \otimes e^i \otimes e^j+e_i\otimes [e^i,e_j]\otimes e^j+e_i\otimes e_j \otimes [e^i, e^j]\\
=c_{ij}^k e_k\otimes e^i\otimes e^j 
+(c_{jk}^i e_i\otimes e^k\otimes e^j-\delta_j^{ik}e_i\otimes e_k\otimes e^j)\\
+(\delta_k^{ij}e_i\otimes e_j\otimes e^k-\phi^{ijk}e_i\otimes e_j\otimes e_k)\\
=-\phi^{ijk}e_i\otimes e_j\otimes e_k=-\phi,
\end{multline*}
as we wanted to show.
\end{proof}

%\begin{rem}
%Here is a somewhat cleaner proof of Lemma \ref{lem_twisting_p}. The double of $\p$ is $\p\oplus\bar\p$, with $\p$ embedded diagonally and its complement $\p^*$ anti-diagonally. We can choose a different complement, namely $\g\oplus\g^*\subset\p\oplus\bar\p$. It is straightforward to see that $\p$ with the quasi-Lie bialgebra structure given by the new complement contains $\g$ as a quasi-Lie subalgebra (since it is the double of $\g$). The passage between these two complements is equivalent to the twist $f$. For a full proof we would need to discuss topology, as our Lie algebras can be infinite-dimensional.
%\end{rem}

\section{From categories to quasi-bialgebras} \label{sect_abstract_nonsense}
We assume that all our categories have strict unit objects. By $\catV$ we denote the category of vector spaces but the assertions of this section hold also if we replace $\catV$ by $\catCTV$ or $\catCTVkh$ and $\tp$ by $\ctp$.

\begin{defn}
A functor $F: \catC \to \catD$ between two monoidal categories is a \emph{quasi-monoidal functor} if it is equipped with $\catD$-morphisms 
\begin{equation*}
\lambda_0 : I_\catD \to  F(I_\catC) \quad \text{and}\quad \lambda_{M,N}:FM\tp FN \to F(M\tp N) 
\end{equation*}
natural in $M,N\in \ob(\catC)$, such that the following diagrams commute:
\begin{equation}\label{diags_quasi_functor}
\begin{tikzpicture}[label distance =-0.5 mm]	
	\node(lx) at (3,0){};
	\node(ly) at (0, -2){};
	\pgfresetboundingbox;

	\node(a00) {$ FM \tp F(I_\catC)$};
	\node(a01) at ($(a00)+(lx)$) {$ F(M \tp I_\catC ) $};
	\node(a10) at ($(a00)+(ly)$) {$ FM \tp I_\catD $};
	\node(a11) at ($(a00)+(lx)+(ly)$) {$FM$};	
	
	\draw[->] (a00)-- node[above] {$\lambda_{M,I}$} (a01);
	\draw[<-] (a00)-- node[fill=white] {$\id_{FM} \tp \lambda_0$} (a10);
	\draw[->] (a01)-- node[right] {$=$} (a11);
	\draw[->] (a10)-- node[below] {$=$} (a11);
\end{tikzpicture}
\begin{tikzpicture}[label distance =-0.5 mm]	
	\node(lx) at (3,0){};
	\node(ly) at (0, -2){};
	\pgfresetboundingbox;

	\node(a00) {$ F(I_\catC) \tp FM $};
	\node(a01) at ($(a00)+(lx)$) {$ F( I_\catC  \tp M) $};
	\node(a10) at ($(a00)+(ly)$) {$  I_\catD \tp FM$};
	\node(a11) at ($(a00)+(lx)+(ly)$) {$FM$};	
	
	\draw[->] (a00)-- node[above] {$\lambda_{I,M}$} (a01);
	\draw[<-] (a00)-- node[fill=white] {$ \lambda_0 \tp \id_{FM}$} (a10);
	\draw[->] (a01)-- node[right] {$=$} (a11);
	\draw[->] (a10)-- node[below] {$=$} (a11);
\end{tikzpicture}
\end{equation}

The functor $F$ will be called \emph{strong quasi-monoidal}, if the maps $\lambda_0$ and $\lambda_{M,N}$ are isomorphisms.
\end{defn}

\begin{lem}\label{lem_quasibi_from_functor}
Let $\catC$ be a monoidal category and $F:\catC \to \catV$ a strong quasi-monoidal functor with the property that the obvious algebra homomorphism\footnote{Note that this homomorphism does not depend on the monoidal structure of $\catC$ and $F$, just on $F$ as a functor.}
\begin{equation} \label{map_Theta}
\Theta: \Big( \End(\catC \xto{F} \catV) \Big)^{\otimes k}\, \longto \; \End\big( \catC^{\times k} \xto{F^{\times k}} \catV^{\times k} \xto{\otimes } \catV \big)
\end{equation}
is an isomorphism $\forall k$. Then $\End(F)$ is a quasi-bialgebra. If $\catC$ is braided, then $\End(F)$ becomes a quasi-triangular quasi-bialgebra.
\end{lem}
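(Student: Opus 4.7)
The strategy is Tannakian-style reconstruction. The hypothesis that $\Theta$ is an isomorphism for every $k$ identifies $H^{\otimes k}$, where $H := \End(F)$, with the algebra $\End(F_k)$ of natural endomorphisms of the functor $F_k : \catC^{\times k} \to \catV$ that sends $(M_1,\ldots,M_k)$ to $FM_1 \tp \cdots \tp FM_k$. The isomorphisms $\lambda_0$ and $\lambda_{M,N}$ of the strong quasi-monoidal structure, together with $F$ applied to the coherence morphisms of $\catC$, produce various natural isomorphisms among composites of such functors, and each piece of the quasi-bialgebra structure on $H$ will be read off as the element of $H^{\otimes k}$ that corresponds via $\Theta^{-1}$ to a suitable natural automorphism built from those data.

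Concretely, I would set
\[
 \Delta(h)\ :=\ \Theta^{-1}\Bigl(\,(M,N)\,\longmapsto\,\lambda_{M,N}^{-1}\circ h_{M\tp N}\circ\lambda_{M,N}\,\Bigr)\ \in\ H\tp H,
\]
$\eps(h):=\lambda_0^{-1}\circ h_{I_\catC}\circ\lambda_0\in\End(I_\catV)=\K$, and let $\Phi\in H^{\tp 3}$ be $\Theta^{-1}$ of the natural automorphism $\mu^{-1}\circ F(\alpha_{M,N,K})\circ \nu$ of $F_3$, where
\[
 \mu\,:=\,\lambda_{M,\,N\tp K}\circ(\id_{FM}\tp\lambda_{N,K}),\qquad \nu\,:=\,\lambda_{M\tp N,\,K}\circ(\lambda_{M,N}\tp\id_{FK})
\]
and $\alpha$ is the associativity constraint of $\catC$. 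When $\catC$ is braided with braiding $c$ and $\sigma$ denotes the symmetry in $\catV$, I take $R\in H\tp H$ to be $\Theta^{-1}$ of $(M,N)\mapsto\sigma_{FN,FM}\circ\lambda_{N,M}^{-1}\circ F(c_{M,N})\circ\lambda_{M,N}$.

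That $\Delta$ and $\eps$ are algebra homomorphisms is immediate: $\Theta$ is an algebra isomorphism by hypothesis, and conjugation by fixed isomorphisms preserves composition. The counit identities (B3) translate directly into the two diagrams \eqref{diags_quasi_functor} of a quasi-monoidal functor, and (B4) is immediate from those diagrams combined with the assumption of strict units in $\catC$. For the twisted coassociativity (B1), $(\Delta\tp\id)\Delta(h)$ and $(\id\tp\Delta)\Delta(h)$ correspond in $\End(F_3)$ to the conjugates of $h_{(M\tp N)\tp K}$ and $h_{M\tp(N\tp K)}$ by $\nu$ and $\mu$ respectively; naturality of $h$ applied to $\alpha_{M,N,K}$ converts $h_{M\tp(N\tp K)}$ into $F(\alpha)\circ h_{(M\tp N)\tp K}\circ F(\alpha)^{-1}$, and the defining formula for $\Phi$ yields (B1) at once.

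The main computation, and hence the anticipated obstacle, is the $3$-cocycle relation (B2) for $\Phi$. The plan is to consider the functor $F_4$ and MacLane's pentagon of five natural isomorphisms among the bracketings of $M\tp N\tp K\tp L$ in $\catC$. Applying $F$ to this pentagon and connecting each of its five vertices to $FM\tp FN\tp FK\tp FL$ via the appropriate iterated $\lambda$'s produces a pentagon of natural automorphisms of $F_4$ whose successive edges are, after reading through $\Theta^{-1}$, precisely the five factors appearing in (B2); commutativity of MacLane's pentagon in $\catC$ then forces the pentagon of $\Phi$'s in $H^{\tp 4}$ to commute. The analogous argument using the two hexagon diagrams for the braiding of $\catC$ in place of the pentagon yields the $R$-matrix axioms in the braided case.
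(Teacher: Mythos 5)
Your proposal is correct and follows essentially the same route as the paper: you define $\Delta$, $\eps$, $\Phi$ and $R$ as $\Theta^{-1}$ of conjugations by the structure maps $\lambda_0$, $\lambda$ and the image under $F$ of the associativity/braiding constraints, and verify (B1)--(B4) and the $R$-matrix axioms from naturality, the quasi-monoidal unit diagrams, and MacLane's pentagon and hexagons, exactly as the paper does via its cube and prism pasting diagrams. The only difference is presentational: you write explicit conjugation formulas where the paper composes natural transformations on the faces of $2$-categorical diagrams.
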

\begin{proof}
The composition $\circ$ turns $\End(F)$ into an algebra. We want to get the other parts of the bialgebra structure. 

The monoidal structure on $F$ consists of maps $\lambda_{M,N} : F(M) \otimes F(N) \xto{\cong} F(M\otimes N)$ and $\lambda_0: F(I) \to \field$ where $I\in \ob(\catC)$ and $\field\in\ob(\catV)$ are the unit objects. 
Let's denote by $\unitcat$ the category having just one object and one morphism and by 
$$ \iota_I: \unitcat \to \catC\quad\text{and} \quad \iota_\field : \unitcat \to \catV$$
the functors mapping the unique object of $\unitcat$ to $I$ and $\field$ respectively. One can now regard both $\lambda_{M,N}$ and $\lambda_0$ as natural transformations:\footnote{In the second diagram, one can think of the top $\unitcat$ as $\catC^{\times 0}$, of the bottom $\unitcat$ as $\catV^{\times 0}$ and of the identity between them as $F^{\times 0}$. This way the diagram for $\lambda_0$ becomes analogous to the one for $\lambda_{M,N}$.}
$$
\begin{tikzpicture}[label distance =-0.5 mm]	
	\node(lx) at (2.1,0){};
	\node(ly) at (0, -2){};
	\pgfresetboundingbox;

	\node(a00) {$\catC \times \catC $};
	\node(a01) at ($(a00)+(lx)$) {$\catC$};
	\node(a10) at ($(a00)+(ly)$) {$\catV \times \catV$};
	\node(a11) at ($(a00)+(lx)+(ly)$) {$\catV$};	
	
	\node(stred) at ($(a00)+0.5*(lx)+0.5*(ly)$) [inner sep = 0pt, shape=rectangle, rotate=30, label=above:$\lambda$, label=-115:{$\cong$}] {{\Huge $\Rightarrow$}};

	\draw[->] (a00)-- node[above] {$\otimes$} (a01);
	\draw[->] (a00)-- node[left] {$F \times F$} (a10);
	\draw[->] (a01)-- node[right] {$F$} (a11);
	\draw[->] (a10)-- node[below] {$\otimes$} (a11);
\end{tikzpicture}
\quad
\begin{tikzpicture}[label distance =-0.5 mm]	
	\node(lx) at (2.1,0){};
	\node(ly) at (0, -2){};
	\pgfresetboundingbox;

	\node(a00) {$\unitcat $};
	\node(a01) at ($(a00)+(lx)$) {$\catC$};
	\node(a10) at ($(a00)+(ly)$) {$\unitcat$};
	\node(a11) at ($(a00)+(lx)+(ly)$) {$\catV$};	
	
	\node(stred) at ($(a00)+0.5*(lx)+0.5*(ly)$) [inner sep = 0pt, shape=rectangle, rotate=30, label=above:$\lambda_0$, label=-115:{$\cong$}] {{\Huge $\Rightarrow$}};

	\draw[->] (a00)-- node[above] {$\iota_I$} (a01);
	\draw[->] (a00)-- node[left] {$=$} (a10);
	\draw[->] (a01)-- node[right] {$F$} (a11);
	\draw[->] (a10)-- node[below] {$\iota_\field$} (a11);
\end{tikzpicture}
$$
Given $\alpha \in \End(F)$, we can define 
\begin{gather*}
\Delta(\alpha) \in \End(\catC\times \catC \xto{F\times F} \catV \times \catV \xto\otimes \catV) \cong \End(F) \tp \End(F) \\
\eps(\alpha) \in \End( \iota_\field) \cong \field 
\end{gather*}
as the following compositions of natural transformations:
$$
\begin{tikzpicture}[baseline=-2cm]	
	\node(lx) at (1.5,0){};
	\node(ly) at (0, -2){};
	\pgfresetboundingbox;

	\node(a10)  {$\catC\times \catC$};
	
	\node(a20) at ($(a10)+(ly)-1.5*(lx)$) {$\catV\times \catV$};
	\node(a21) at ($(a10)+1*(ly)$) {$\catC$};	
	\node(a22) at ($(a10)+1*(ly)+1.5*(lx)$) {$\catV\times \catV$};

	\node(a30) at ($(a10)+2*(ly)$) {$\catV$};
	
	\node(stred3) at ($0.5*(a20)+0.5*(a21)$) [inner sep = 0pt, shape=rectangle, rotate=0, label=above:$\lambda$] {{\Huge $\Rightarrow$}};	
	\node(stred4) at ($0.5*(a21)+0.5*(a22)$) [inner sep = 0pt, shape=rectangle, rotate=0, label=above:$\lambda^{-1}$] {{\Huge $\Rightarrow$}};
	\node(stred5) at ($0.5*(a21)+0.5*(a30)$) [inner sep = 0pt, shape=rectangle, rotate=0, label=above:$\alpha$] {{\Huge $\Rightarrow$}};

	\draw[->] (a10)-- node[right] {$\otimes$} (a21);
	\draw[->] (a10)-- node[left] {$F \times F$} (a20);
	\draw[->] (a10)-- node[right] {$F \times F$} (a22);

	\draw[->] (a20)-- node[left,below] {$\otimes$} (a30);
	\draw[->] (a21)to[bend right=45] node[left, near start] {$F$} (a30);
	\draw[->] (a21)to[bend left=45] node[right, near start] {$F$} (a30);
	\draw[->] (a22)-- node[right, below] {$\otimes$} (a30);

\end{tikzpicture}
\quad
\begin{tikzpicture}[baseline=-2cm]	
	\node(lx) at (1.5,0){};
	\node(ly) at (0, -2){};
	\pgfresetboundingbox;

	\node(a10)  {$\unitcat$};
	
	\node(a20) at ($(a10)+(ly)-1.5*(lx)$) {$\unitcat$};
	\node(a21) at ($(a10)+1*(ly)$) {$\catC$};	
	\node(a22) at ($(a10)+1*(ly)+1.5*(lx)$) {$\unitcat$};

	\node(a30) at ($(a10)+2*(ly)$) {$\catV$};
	
	\node(stred3) at ($0.5*(a20)+0.5*(a21)$) [inner sep = 0pt, shape=rectangle, rotate=0, label=above:$\lambda_0$] {{\Huge $\Rightarrow$}};	
	\node(stred4) at ($0.5*(a21)+0.5*(a22)$) [inner sep = 0pt, shape=rectangle, rotate=0, label=above:$\lambda_0^{-1}$] {{\Huge $\Rightarrow$}};
	\node(stred5) at ($0.5*(a21)+0.5*(a30)$) [inner sep = 0pt, shape=rectangle, rotate=0, label=above:$\alpha$] {{\Huge $\Rightarrow$}};

	\draw[->] (a10)-- node[right] {$\iota_I$} (a21);
	\draw[->] (a10)-- node[left] {$=$} (a20);
	\draw[->] (a10)-- node[right] {$=$} (a22);

	\draw[->] (a20)-- node[left,below] {$\iota_\field$} (a30);
	\draw[->] (a21)to[bend right=45] node[left, near start] {$F$} (a30);
	\draw[->] (a21)to[bend left=45] node[right, near start] {$F$} (a30);
	\draw[->] (a22)-- node[right, below] {$\iota_\field$} (a30);

\end{tikzpicture}
$$
It's obvious that $\Delta$ and $\eps$ are homomorphisms of algebras --- they are just compositions of algebra homomorphisms
\begin{gather*}
\Delta: \End(F) \to \End(F\circ \otimes) \to \End\big( \otimes \circ (F\times F) \big) \xto{\Theta^{-1}}  \big( \End(F) \big)^{\otimes 2} \\
 \eps: \End(F) \to \End( F\circ \iota_I ) \to \End( \iota_\field) \cong \field . 
\end{gather*}

We also need the ``associator'' $\Phi\in \big(\End(F) \big)^{\otimes 3}$. On each face of the cube
$$
\begin{tikzpicture}[label distance =-0.5 mm]	
	\node(lx) at (4,0){};
	\node(ly) at (2.5, 1.5){};
	\node(lz) at (0, 4){};
	\pgfresetboundingbox;

	\node(a000) {$\catV \times \catV\times \catV $};
	\node(a100) at ($(a000)+(lx)$) {$\catV \times\catV$};
	\node(a010) at ($(a000)+(ly)$) {$\catV \times\catV$};
	\node(a110) at ($(a000)+(lx)+(ly)$) {$\catV$};	
	
	\node(a001) at ($(a000)+(lz)$)  {$\catC \times \catC\times \catC $};
	\node(a101) at ($(a001)+(lx)$) {$\catC \times\catC$};
	\node(a011) at ($(a001)+(ly)$) {$\catC \times\catC$};
	\node(a111) at ($(a001)+(lx)+(ly)$) {$\catC$};

	\draw[->] (a000)-- node[below right] {$\id \times \otimes$} (a010);
	\draw[->] (a000)-- node[below] {$\otimes \times \id$} (a100);
	\draw[->] (a010)-- node[below] {$\otimes$} (a110);
	\draw[->] (a100)-- node[below] {$\otimes$} (a110);

	\draw[->] (a001)-- node[above left] {$\id \times \otimes$} (a011);
	\draw[->] (a011)-- node[above] {$\otimes$} (a111);
	\draw[->] (a101)-- node[above] {$\otimes$} (a111);

	\draw[->] (a001)-- node[left] {$F\times F\times F$} (a000);
	\draw[->, white, line width = 10pt] (a101)-- (a100);
	\draw[->] (a101)-- node[right, near start] {$F\times F$} (a100);	
	\draw[->] (a011)-- node[left, near end] {$F\times F$} (a010);
	\draw[->] (a111)-- node[right] {$F$} (a110);

	\draw[->, white, line width = 10pt] (a001)-- (a101);	
	\draw[->] (a001)-- node[above, fill=white] {$\otimes \times \id$} (a101);
\end{tikzpicture}
$$
we have a natural isomorphism. If we compose all of them we get an automorphism of $$\catC\times\catC\times\catC \xto{F\times F \times F} \catV\times\catV\times\catV \xto{\otimes^{(3)} } \catV$$
that corresponds via $\Theta^{-1}$ to an element $\Phi\in \big(\End(F) \big)^{\otimes 3}$.

We should now prove that our $\Delta$, $\eps$ and $\Phi$ satisfy the axioms of a quasi-bialgebra.
First, it is clear from the cube-diagram that 
$$\big( (\Delta\otimes\id)\circ \Delta\big)\alpha,\, \big( (\id \otimes \Delta) \circ \Delta \big) \alpha \in \End\big( \catC\times\catC\times\catC \to \catV\times\catV\times\catV\xto{\otimes} \catV\big)$$
 differ by a conjugation by $\Phi$.

The next thing to check is the pentagon relation. The pentagon axiom in the monoidal category $\catC$ means the following equality of two natural transformations:
$$
\begin{tikzpicture}[baseline=-2cm,scale=0.75]	
	\node(lx) at (2,0){};
	\node(ly) at (0, -2){};
	\pgfresetboundingbox;

	\node(a00) {$\catC \times \catC \times\catC \times \catC$};

	\node(a10) at ($(a00)-(lx)+(ly)$) {$\catC\times\catC\times\catC$};
	\node(a11) at ($(a00)+(lx)+(ly)$) {$\catC \times \catC \times\catC$};

	\node(a20) at ($(a00)+2*(ly)-1.5*(lx)$) {$\catC\times \catC$};
	\node(a21) at ($(a00)+2*(ly)$) {$\catC\times \catC$};	
	\node(a22) at ($(a00)+2*(ly)+1.5*(lx)$) {$\catC\times \catC$};

	\node(a30) at ($(a00)+3*(ly)$) {$\catC$};
	
	\node(stred) at ($(a00)+(ly)$) {{\Huge $=$}};	
	\node(stred1) at ($0.5*(a20)+0.5*(a21)$) {{\huge $\Rightarrow$}};	
	\node(stred2) at ($0.5*(a21)+0.5*(a22)$) {{\huge $\Rightarrow$}};
	
	\draw[->] (a00)-- node[left] {$\scriptstyle\otimes\times\id\times\id$} (a10);
	\draw[->] (a00)-- node[right] {$\scriptstyle\id\times\id\times\otimes$} (a11);

	\draw[->] (a10)-- node[right, near start] {$\scriptstyle\id\times\otimes$} (a21);
	\draw[->] (a11)-- node[left] {$\scriptstyle\otimes\times\id$} (a21);
	\draw[->] (a10)-- node[left] {$\scriptstyle\otimes\times\id$} (a20);
	\draw[->] (a11)-- node[right] {$\scriptstyle\id\times\otimes$} (a22);

	\draw[->] (a20)-- node[left] {$\scriptstyle\otimes$} (a30);
	\draw[->] (a21)-- node[left] {$\scriptstyle\otimes$} (a30);
	\draw[->] (a22)-- node[left] {$\scriptstyle\otimes$} (a30);

\end{tikzpicture}
\text{ {\huge $=$ } }
\begin{tikzpicture}[baseline=2.5cm,scale=0.75]
	\node(lx) at (2,0){};
	\node(ly) at (0, 2){};
	\pgfresetboundingbox;

	\node(a00) {$\catC$};

	\node(a10) at ($(a00)-(lx)+(ly)$) {$\catC\times\catC$};
	\node(a11) at ($(a00)+(lx)+(ly)$) {$\catC \times \catC$};

	\node(a20) at ($(a00)+2*(ly)-1.7*(lx)$) {$\catC\times \catC \times\catC$};
	\node(a21) at ($(a00)+2*(ly)$) {$\catC\times \catC \times\catC$};	
	\node(a22) at ($(a00)+2*(ly)+1.7*(lx)$) {$\catC\times \catC \times\catC$};

	\node(a30) at ($(a00)+3*(ly)$) {$\catC \times\catC \times\catC \times\catC$};
	
	\node(stred) at ($(a00)+(ly)$) {{\huge $\Rightarrow$}};	
	\node(stred1) at ($0.5*(a20)+0.5*(a21)$) {{\huge $\Rightarrow$}};	
	\node(stred2) at ($0.5*(a21)+0.5*(a22)$) {{\huge $\Rightarrow$}};
	
	\draw[<-] (a00)-- node[left] {$\scriptstyle\otimes$} (a10);
	\draw[<-] (a00)-- node[right] {$\scriptstyle\otimes$} (a11);

	\draw[<-] (a10)-- node[right, near start] {$\scriptstyle\otimes\times\id$} (a21);
	\draw[<-] (a11)-- node[left] {$\scriptstyle\id\times\otimes$} (a21);
	\draw[<-] (a10)-- node[left] {$\scriptstyle\otimes\times\id$} (a20);
	\draw[<-] (a11)-- node[right] {$\scriptstyle\id\times\otimes$} (a22);

	\draw[<-] (a20)-- node[left] {$\scriptstyle\otimes\times\id\times\id$} (a30);
	\draw[<-] (a21)-- node[fill=white] {$\scriptstyle\id\times\otimes\times\id$} (a30);
	\draw[<-] (a22)-- node[right] {$\scriptstyle\otimes\times\id\times\id$} (a30);

\end{tikzpicture}
$$
If we consider the first of these diagrams as top base face of a $3$-dimensional prism with vertical arrows formed by the functors $F$, we get a polyhedron whose faces correspond to some natural isomorphisms. Composing all of them we get an endomorphism of the functor
$$\catC\times\catC\times\catC\times\catC\xto{F\times F\times F\times F} \catV\times\catV\times\catV\times\catV\xto{\otimes^{(4)}} \catV$$
that corresponds to an element of $\big( \End(F)\big)^{\otimes 4}$.
From the second diagram we get another such element and the above equality gives us equality of the two. This should be the pentagon axiom of the bialgebra definition.

To check the  axioms for $\eps$, let's rewrite the diagrams (\ref{diags_quasi_functor}) as equalities between natural transformations:
\begin{equation}\label{equa_xxqq1}
\begin{tikzpicture}[label distance =-0.5 mm, baseline=-1 cm]	
	\node(lx1) at (2.2,0){};
	\node(lx2) at (3.5,0){};
	\node(lx3) at (2.2,0){};
	\node(lx4) at (3.1,0){};
	\node(ly) at (0, -2){};
	\pgfresetboundingbox;

	\node(a00) {$\catC $};
	\node(a01) at ($(a00)+(lx1)$) {$\catC \times \unitcat$};
	\node(a02) at ($(a01)+(lx2)$) {$\catC\times \catC$};
	\node(a03) at ($(a02)+(lx3)$) {$\catC$};
	
	\node(a10) at ($(a00)+(ly)$) {$\catV$};
	\node(a11) at ($(a01)+(ly)$) {$\catV\times\unitcat$};
	\node(a12) at ($(a02)+(ly)$) {$\catV\times\catV$};
	\node(a13) at ($(a03)+(ly)$) {$\catV$};

	\draw[->] (a00)-- node[above] {$=$} (a01);
	\draw[->] (a01)-- node[above] {$\id_\catC \times \iota_I$} (a02);
	\draw[->] (a02)-- node[above] {$\otimes$} (a03);

	\draw[->] (a00)-- node[left] {$F$} (a10);
	\draw[->] (a01)-- node[fill=white] {$F\times\id_\unitcat$} (a11);
	\draw[->] (a02)-- node[fill=white] {$F\times F$} (a12);
	\draw[->] (a03)-- node[right] {$F$} (a13);

	\draw[->] (a10)-- node[below] {$=$} (a11);
	\draw[->] (a11)-- node[below] {$ \id_\catC \times \iota_\field$} (a12);
	\draw[->] (a12)-- node[below] {$\otimes$} (a13);

	\node(stred1) at ($(a00)+0.4*(lx1)+0.5*(ly)$) [inner sep = 0pt, shape=rectangle, rotate=30] {{\Huge $=$}};
	\node(stred2) at ($(a01)+0.65*(lx2)+0.5*(ly)$) [inner sep = 0pt, shape=rectangle, rotate=30, label=90:{\small$\id_F\times\lambda_0$}, label=-115:{$\cong$}] {{\Huge $\Rightarrow$}};
	\node(stred3) at ($(a02)+0.6*(lx3)+0.5*(ly)$) [inner sep = 0pt, shape=rectangle, rotate=30, label=above:{\small$\lambda$}, label=-115:{$\cong$}] {{\Huge $\Rightarrow$}};

\end{tikzpicture}
\; =\; \id_F
\end{equation}

\begin{equation}\label{equa_xxqq2}
\begin{tikzpicture}[label distance =-0.5 mm, baseline=-1 cm]	
	\node(lx1) at (2.2,0){};
	\node(lx2) at (3.5,0){};
	\node(lx3) at (2.2,0){};
	\node(lx4) at (3.1,0){};
	\node(ly) at (0, -2){};
	\pgfresetboundingbox;

	\node(a00) {$\catC $};
	\node(a01) at ($(a00)+(lx1)$) {$\unitcat \times \catC$};
	\node(a02) at ($(a01)+(lx2)$) {$\catC\times \catC$};
	\node(a03) at ($(a02)+(lx3)$) {$\catC$};
	
	\node(a10) at ($(a00)+(ly)$) {$\catV$};
	\node(a11) at ($(a01)+(ly)$) {$\unitcat\times \catV$};
	\node(a12) at ($(a02)+(ly)$) {$\catV\times\catV$};
	\node(a13) at ($(a03)+(ly)$) {$\catV$};

	\draw[->] (a00)-- node[above] {$=$} (a01);
	\draw[->] (a01)-- node[above] {$\iota_I \times \id_\catC$} (a02);
	\draw[->] (a02)-- node[above] {$\otimes$} (a03);

	\draw[->] (a00)-- node[left] {$F$} (a10);
	\draw[->] (a01)-- node[fill=white] {$id_\unitcat \times F$} (a11);
	\draw[->] (a02)-- node[fill=white] {$F\times F$} (a12);
	\draw[->] (a03)-- node[right] {$F$} (a13);

	\draw[->] (a10)-- node[below] {$=$} (a11);
	\draw[->] (a11)-- node[below] {$   \iota_\field \times \id_\catC$} (a12);
	\draw[->] (a12)-- node[below] {$\otimes$} (a13);

	\node(stred1) at ($(a00)+0.4*(lx1)+0.5*(ly)$) [inner sep = 0pt, shape=rectangle, rotate=30] {{\Huge $=$}};
	\node(stred2) at ($(a01)+0.65*(lx2)+0.5*(ly)$) [inner sep = 0pt, shape=rectangle, rotate=30, label=90:{\small$\lambda_0 \times \id_F$}, label=-115:{$\cong$}] {{\Huge $\Rightarrow$}};
	\node(stred3) at ($(a02)+0.6*(lx3)+0.5*(ly)$) [inner sep = 0pt, shape=rectangle, rotate=30, label=above:{\small$\lambda$}, label=-115:{$\cong$}] {{\Huge $\Rightarrow$}};

\end{tikzpicture}
\; =\; \id_F
\end{equation}
We get (\ref{B3}) directly from (\ref{equa_xxqq2}) and (\ref{equa_xxqq1}). Really, if  $\alpha \in \End(F)$ then $ \big( (\eps \tp \id )\circ \Delta \big) (\alpha)$ corresponds to conjugation of $\alpha$ by the left hand side of (\ref{equa_xxqq2}). So from (\ref{equa_xxqq2}) we get that it is just $\alpha$.

In order to see (\ref{B4}) recall that an axiom of monoidal categories states that the natural transformation
\begin{equation}\label{equa_uuu}
\begin{tikzpicture}[label distance =-0.5 mm]	
	\node(lx1) at (1.9,0){};
	\node(lx2) at (3.7,0){};
	\node(lx3) at (2,0){};
	\node(lx4) at (2,0){};
	\node(ly) at (0, -2){};
	\node(lz) at (0, 2){};
	\pgfresetboundingbox;

	\node(a00) {$\catC \times \catC $};
	\node(a01) at ($(a00)+(lx1)$) {$\catC\times\unitcat\times\catC$};
	\node(a02) at ($(a01)+(lx2)$) {$\catC\times\catC\times\catC$};
	\node(a03) at ($(a02)+(lx3)+(lz)$) {$\catC \times\catC$};
	\node(a031) at ($(a02)+(lx3)-(lz)$) {$\catC \times\catC$};
	\node(a04) at ($(a02)+(lx3)+(lx4)$) {$\catC$};
	
	\node(stred) at ($(a02)+(lx3)$) [inner sep = 0pt, shape=rectangle, rotate=90, label=115:$\phi$, label=-115:{$\cong$}] {{\Huge $\Rightarrow$}};
	
	\draw[->] (a00)-- node[above] {$=$} (a01);
	\draw[->] (a01)-- node[above] {{\small $\id\times\iota_I \times \id$}} (a02);
	\draw[->] (a02)-- node[left] {$\id\times\tp$} (a03);
	\draw[->] (a02)-- node[left] {$\otimes\times\id$} (a031);
	\draw[->] (a03)-- node[right] {$\tp$} (a04);
	\draw[->] (a031)-- node[right] {$\tp$} (a04);	

\end{tikzpicture}
\end{equation}
is equal to the identity on the functor $\catC\times \catC \xto\tp \catC$. The left hand side of (\ref{B4}) corresponds to the endomorphism of $\catC\times\catC \xto{F\times F} \catV \times \catV \xto\tp \catV$ that we get as a composition of natural isomorphisms on the surface of the following diagram:
$$
\begin{tikzpicture}[label distance =-0.5 mm]
	\node(lxx) at (1,-0.1){};	
	\node(lx1) at ($1.9*(lxx)$){};
	\node(lx2) at ($2.5*(lxx)$){};
	\node(lx3) at ($3.1*(lxx)$){};
	\node(lx4) at ($3.1*(lxx)$){};
	\node(ly) at (0, -2.5){};
	\node(lz) at (1, 1){};
	\pgfresetboundingbox;

	\node(a00) {$\catC \times \catC $};
	\node(a01) at ($(a00)+(lx1)$) {$\catC\times\unitcat\times\catC$};
	\node(a02) at ($(a01)+(lx2)$) {$\catC\times\catC\times\catC$};
	\node(a03) at ($(a02)+(lx3)-(lz)$) {$\catC \times\catC$};
	\node(a031) at ($(a02)+(lx3)+(lz)$) {$\catC \times\catC$};
	\node(a04) at ($(a02)+(lx3)+(lx4)$) {$\catC$};
	
	\node(a10) at ($(a00)+(ly)$) {$\catV \times\catV$};
	\node(a11) at ($(a01)+(ly)$) {$\catV \times \unitcat\times \catV$};
	\node(a12) at ($(a02)+(ly)$) {$\catV \times\catV \times\catV$};
	\node(a13) at ($(a03)+(ly)$) {$\catV \times\catV$};
	\node(a131) at ($(a031)+(ly)$) {$\catV \times\catV$};
	\node(a14) at ($(a04)+(ly)$) {$\catV$};

	\draw[->] (a00)-- node[above] {} (a01);
	\draw[->] (a01)-- node[above] {} (a02);
	\draw[->] (a02)-- node[above] {} (a03);
	\draw[->] (a02)-- node[above] {} (a031);

	\draw[->] (a031)-- node[above] {} (a04);

	\draw[->] (a00)-- node[left] {} (a10);
	\draw[->] (a01)-- node[left] {} (a11);
	\draw[->] (a02)-- node[left] {} (a12);
	\draw[->] (a031)-- node[left] {} (a131);
	\draw[->] (a04)-- node[left] {} (a14);

	\draw[->] (a10)-- node[below] {} (a11);
	\draw[->] (a11)-- node[below] {} (a12);
	\draw[->] (a12)-- node[below] {} (a13);
	\draw[->] (a13)-- node[below] {} (a14);
	\draw[->] (a12)-- node[below] {} (a131);
	\draw[->] (a131)-- node[below] {} (a14);

	\draw[->, white, line width = 10pt] (a03)--  (a04);
	\draw[->] (a03)--  (a04);
	
	\draw[->, white, line width = 10pt] (a03)-- node[left] {} (a13);
	\draw[->] (a03)-- node[left] {} (a13);

\end{tikzpicture}
$$
One sees that this surface is basically composed of diagrams (\ref{equa_xxqq1}), (\ref{equa_xxqq2}) and (\ref{equa_uuu}) (and one $\lambda$ and $\lambda^{-1}$) and thus the composition is just the identity on $\catC\times\catC \xto{F\times F} \catV \times \catV \xto\tp \catV$.

In the second part of our lemma, we suppose that we have a braiding:\footnote{$T$ denotes the functor switching the two arguments.}
$$
\begin{tikzpicture}[label distance =-0.5 mm]	
	\node(lx) at (2.1,0){};
	\node(ly) at (0, -2){};
	\pgfresetboundingbox;

	\node(a00) {$\catC \times \catC $};
	\node(a01) at ($(a00)+(lx)$) {$\catC$};
	\node(a10) at ($(a00)+(ly)$) {$\catC \times \catC$};
	
	\node(stred) at ($(a00)+0.25*(lx)+0.35*(ly)$) [inner sep = 0pt, shape=rectangle, rotate=-90, label=100:{$\cong$}] {{\Huge $\Rightarrow$}};

	\draw[->] (a00)-- node[above] {$\otimes$} (a01);
	\draw[->] (a00)-- node[left] {$T$} (a10);
	\draw[<-] (a01)-- node[right] {$\otimes$} (a10);
\end{tikzpicture}
$$
The composition of natural isomorphisms on the faces of
$$
\begin{tikzpicture}[label distance =-0.5 mm]	
	\node(lx) at (4,-0.5){};
	\node(ly) at (2.5, 1.5){};
	\node(lz) at (0, 3.5){};
	\pgfresetboundingbox;

	\node(a000) {$\catV \times \catV$};
	\node(a100) at ($(a000)+(lx)$) {$\catV$};
	\node(a010) at ($(a000)+(ly)$) {$\catV \times\catV$};
	
	\node(a001) at ($(a000)+(lz)$)  {$\catC \times \catC $};
	\node(a101) at ($(a001)+(lx)$) {$\catC $};
	\node(a011) at ($(a001)+(ly)$) {$\catC \times\catC$};

	\draw[->] (a000)-- node[below right] {$T$} (a010);
	\draw[->] (a000)-- node[below] {$\otimes$} (a100);
	\draw[->] (a010)-- node[below] {$\otimes$} (a100);
	
	\draw[->] (a001)-- node[above left] {$T$} (a011);
	\draw[->] (a011)-- node[above] {$\otimes$} (a101);

	\draw[->] (a001)-- node[left] {$F\times F$} (a000);
	\draw[->] (a101)-- node[right] {$F$} (a100);	
	\draw[->] (a011)-- node[left, near end] {$F\times F$} (a010);
	
	\draw[->, white, line width = 10pt] (a001)-- (a101);	
	\draw[->] (a001)-- node[above] {$\otimes$} (a101);
\end{tikzpicture}
$$
provides us with an automorphism of $\catC\times \catC\xto{F\times F} \catV\times\catV\xto{\otimes}\catV$ that corresponds to an $R$-matrix $R\in \big( \End(F) \big)^{\otimes 2}$. This turns our quasi-bialgebra into a quasi-triangular one.
\end{proof}

\begin{lem}\label{lem_functoriality}
The assignment from the previous lemma:
\begin{align*}
\set{\text{strong quasi-monoidal functors to $\catV$ s.t. (\ref{map_Theta}) is iso}} & \to \set{\text{quasi-bialgebras}} \\
\big( \catC \xto{F} \catV \big) &\mapsto \End(F)
\end{align*}
is functorial in the sense that a strong monoidal (not quasi-monoidal) functor $G$ s.t. the diagram of quasi-monoidal functors
$$
\begin{tikzpicture}
	\node(lx) at (2.1,0){};
	\node(ly) at (0, -1.3){};
	\pgfresetboundingbox;

	\node(a00) {$\catC_1$};
	\node(a01) at ($(a00)+(lx)$) {$\catC_2$};
	\node(a10) at ($(a00)+0.5*(lx)+(ly)$) {$\catV$};
	
	\draw[->] (a00)-- node[above] {$G$} (a01);
	\draw[->] (a00)-- node[left] {$F_1$} (a10);
	\draw[->] (a01)-- node[right] {$F_2$} (a10);
\end{tikzpicture}
$$
commutes, induces a quasi-bialgebra morphism $\End(F_2) \xto{G^*} \End(F_1)$.
\end{lem}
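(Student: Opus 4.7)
The plan is to define $G^*(\alpha)$ by pre-whiskering with $G$: for $\alpha\in\End(F_2)$, put $(G^*\alpha)_X:=\alpha_{GX}$ for each $X\in\ob(\catC_1)$. Since $F_1=F_2\circ G$, this is a natural transformation $F_1\Rightarrow F_1$, and pre-whiskering preserves vertical composition and identities, so $G^*:\End(F_2)\to\End(F_1)$ is an algebra homomorphism. What remains is to verify compatibility with $\Delta$, $\eps$ and $\Phi$.

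Because $G$ is strong monoidal it carries structure isomorphisms $\mu^G_{M,N}:GM\tp GN\xto\cong G(M\tp N)$ and $\mu^G_0:I_{\catC_2}\xto\cong G(I_{\catC_1})$. Commutativity of the displayed triangle as quasi-monoidal functors is exactly the identities
$$\lambda^{F_1}_{M,N}=F_2(\mu^G_{M,N})\circ\lambda^{F_2}_{GM,GN},\qquad \lambda^{F_1}_0=F_2(\mu^G_0)\circ\lambda^{F_2}_0.$$
Using the isomorphism $\Theta$ of Lemma~\ref{lem_quasibi_from_functor}, the element $\Delta^{F_1}(G^*\alpha)\in\End(F_1)^{\tp 2}$ corresponds to the endomorphism of $\otimes\circ(F_1\times F_1)$ whose $(M,N)$-component is $(\lambda^{F_1}_{M,N})^{-1}\circ\alpha_{G(M\tp N)}\circ\lambda^{F_1}_{M,N}$. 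Substituting the factorization above, the two inner instances of $F_2(\mu^G_{M,N})^{\pm 1}$ are slid past $\alpha$ by naturality, so the $(M,N)$-component simplifies to
$$(\lambda^{F_2}_{GM,GN})^{-1}\circ\alpha_{GM\tp GN}\circ\lambda^{F_2}_{GM,GN},$$
which is exactly the $(GM,GN)$-component of $\Theta(\Delta^{F_2}(\alpha))$. Transporting back through $\Theta$ gives $\Delta^{F_1}\circ G^*=(G^*\tp G^*)\circ\Delta^{F_2}$.

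The equality $\eps^{F_1}\circ G^*=\eps^{F_2}$ is obtained by the same argument applied to the single object $I_{\catC_i}$: the insertions $F_2(\mu^G_0)^{\pm 1}$ cancel across $\alpha_{G(I_{\catC_1})}$ by naturality. For $\Phi$ one revisits the cube of natural isomorphisms from the proof of Lemma~\ref{lem_quasibi_from_functor}: every face for $F_1$ differs from the corresponding face for $F_2$ (pre-whiskered with powers of $G$) by an insertion of $F_2(\mu^G)^{\pm 1}$. The strong-monoidality hexagon for $G$ guarantees these insertions combine coherently, and after composing all faces around the cube they cancel pairwise, yielding the automorphism $\Theta(\Phi^{F_2})$ pre-whiskered with $G\times G\times G$. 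Hence $(G^*)^{\tp 3}(\Phi^{F_2})=\Phi^{F_1}$. The braided case, giving $(G^*)^{\tp 2}(R^{F_2})=R^{F_1}$, is completely analogous.

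The only real obstacle is bookkeeping: one must track the $\Theta$-identifications together with the $F_2(\mu^G)$-insertions which appear between successive $\lambda^{F_2}$'s. The crucial point is that $G$ is \emph{strong} (not merely quasi-) monoidal, so each $F_2(\mu^G)$ is invertible and can be slid through $\alpha$ or $\Phi^{F_2}$ by naturality and then cancelled against its inverse. Since $G$ satisfies genuine pentagon/hexagon equalities, no further coherence data enters the calculation.
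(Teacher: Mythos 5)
Your proposal is correct and follows essentially the same route as the paper: define $G^*$ by pre-whiskering, use that the quasi-monoidal structure of $F_1=F_2\circ G$ is the composite one (so $\lambda^{F_1}$ factors through $F_2(\mu^G)$ and $\lambda^{F_2}$), and cancel the $F_2(\mu^G)^{\pm1}$ insertions by naturality, with the coherence of the strong monoidal $G$ handling the associator. The paper expresses the same cancellations as pasting of 2-cells and a stacked-cube diagram, whereas you write them out componentwise, but the content is identical.
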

\begin{proof}
By the equality between quasi-monoidal functors $F_1=F_2\circ G$ we mean also that the quasi-monoidal structure on the functor $F_1=F_2\circ G$ is given by composition of the quasi-monoidal structures that is  by the natural transformation
$$
\begin{tikzpicture}[label distance =-0.5 mm]	
	\node(lx) at (2.1,0){};
	\node(ly) at (0, -2){};
	\pgfresetboundingbox;

	\node(a00) {$\catC_1 \times \catC_1 $};
	\node(a01) at ($(a00)+(lx)$) {$\catC_1$};
	\node(a10) at ($(a00)+(ly)$) {$\catC_2 \times \catC_2$};
	\node(a11) at ($(a00)+(lx)+(ly)$) {$\catC_2$};	
	\node(a20) at ($(a00)+2*(ly)$) {$\catV \times \catV$};
	\node(a21) at ($(a00)+(lx)+2*(ly)$) {$\catV$};	
	
	\node(stred1) at ($(a00)+0.5*(lx)+0.5*(ly)$) [inner sep = 0pt, shape=rectangle, rotate=30, label=above:$\mu$] {{\Huge $\Rightarrow$}};	
	\node(stred2) at ($(a10)+0.5*(lx)+0.5*(ly)$) [inner sep = 0pt, shape=rectangle, rotate=30, label=above:$\lambda$] {{\Huge $\Rightarrow$}};	
	
	\draw[->] (a00)-- node[above] {$\otimes$} (a01);
	\draw[->] (a10)-- node[below] {$\otimes$} (a11);
	\draw[->] (a20)-- node[below] {$\otimes$} (a21);

	\draw[->] (a00)-- node[left] {$G \times G$} (a10);
	\draw[->] (a01)-- node[right] {$G$} (a11);
	\draw[->] (a10)-- node[left] {$F_2 \times F_2$} (a20);
	\draw[->] (a11)-- node[right] {$F_2$} (a21);
	
\end{tikzpicture}
$$
Now the equality $\Delta (G^* \alpha) = (G^* \otimes G^*)(\Delta\alpha)$ corresponds to the obvious equality of the two natural transformations:
$$
\begin{tikzpicture}[baseline=-2cm]	
	\node(lx) at (1.6,0){};
	\node(ly) at (0, -2){};
	\pgfresetboundingbox;

	\node(a00) {$\catC_1 \times \catC_1$};

	\node(a10) at ($(a00)+1*(ly)-1.5*(lx)$) {$\catC_2\times \catC_2$};
	\node(a11) at ($(a00)+1*(ly)$) {$\catC_1$};	
	\node(a12) at ($(a00)+1*(ly)+1.5*(lx)$) {$\catC_2 \times \catC_2 $};

	\node(a20) at ($(a00)+2*(ly)-1.5*(lx)$) {$\catV\times \catV$};
	\node(a21) at ($(a00)+2*(ly)$) {$\catC_2$};	
	\node(a22) at ($(a00)+2*(ly)+1.5*(lx)$) {$\catV\times \catV$};

	\node(a30) at ($(a00)+3*(ly)$) {$\catV$};
	
	\node(stred1) at ($0.5*(a10)+0.5*(a11)$) [inner sep = 0pt, shape=rectangle, rotate=0, label=above:$\mu$] {{\Huge $\Rightarrow$}};	
	\node(stred2) at ($0.5*(a11)+0.5*(a12)$) [inner sep = 0pt, shape=rectangle, rotate=0, label=above:$\mu^{-1}$] {{\Huge $\Rightarrow$}};
	\node(stred3) at ($0.5*(a20)+0.5*(a21)$) [inner sep = 0pt, shape=rectangle, rotate=0, label=above:$\lambda$] {{\Huge $\Rightarrow$}};	
	\node(stred4) at ($0.5*(a21)+0.5*(a22)$) [inner sep = 0pt, shape=rectangle, rotate=0, label=above:$\lambda^{-1}$] {{\Huge $\Rightarrow$}};
	\node(stred5) at ($0.5*(a21)+0.5*(a30)$) [inner sep = 0pt, shape=rectangle, rotate=0, label=above:$\alpha$] {{\Huge $\Rightarrow$}};
	
	\draw[->] (a00)-- node[left] {$G\times G$} (a10);
	\draw[->] (a00)-- node[right] {$\otimes$} (a11);
	\draw[->] (a00)-- node[right] {$G\times G$} (a12);

	\draw[->] (a10)-- node[right] {$\otimes$} (a21);
	\draw[->] (a12)-- node[left] {$\otimes$} (a21);
	\draw[->] (a10)-- node[left] {$F_2 \times F_2$} (a20);
	\draw[->] (a12)-- node[right] {$F_2 \times F_2$} (a22);
	\draw[->] (a11)-- node[right] {$G$} (a21);

	\draw[->] (a20)-- node[left,below] {$\otimes$} (a30);
	\draw[->] (a21)to[bend right=45] node[left, near start] {$F_2$} (a30);
	\draw[->] (a21)to[bend left=45] node[right, near start] {$F_2$} (a30);
	\draw[->] (a22)-- node[right, below] {$\otimes$} (a30);

\end{tikzpicture}
=
\begin{tikzpicture}[baseline=-2cm]	
	\node(lx) at (1.5,0){};
	\node(ly) at (0, -2){};
	\pgfresetboundingbox;

	\node(a00) {$\catC_1 \times \catC_1$};

	\node(a10) at ($(a00)+1*(ly)$) {$\catC_2\times \catC_2$};
	
	\node(a20) at ($(a00)+2*(ly)-1.5*(lx)$) {$\catV\times \catV$};
	\node(a21) at ($(a00)+2*(ly)$) {$\catC_2$};	
	\node(a22) at ($(a00)+2*(ly)+1.5*(lx)$) {$\catV\times \catV$};

	\node(a30) at ($(a00)+3*(ly)$) {$\catV$};
	
	\node(stred3) at ($0.5*(a20)+0.5*(a21)$) [inner sep = 0pt, shape=rectangle, rotate=0, label=above:$\lambda$] {{\Huge $\Rightarrow$}};	
	\node(stred4) at ($0.5*(a21)+0.5*(a22)$) [inner sep = 0pt, shape=rectangle, rotate=0, label=above:$\lambda^{-1}$] {{\Huge $\Rightarrow$}};
	\node(stred5) at ($0.5*(a21)+0.5*(a30)$) [inner sep = 0pt, shape=rectangle, rotate=0, label=above:$\alpha$] {{\Huge $\Rightarrow$}};
	
	\draw[->] (a00)-- node[left] {$G\times G$} (a10);
	
	\draw[->] (a10)-- node[right] {$\otimes$} (a21);
	\draw[->] (a10)-- node[left] {$F_2 \times F_2$} (a20);
	\draw[->] (a10)-- node[right] {$F_2 \times F_2$} (a22);

	\draw[->] (a20)-- node[left,below] {$\otimes$} (a30);
	\draw[->] (a21)to[bend right=45] node[left, near start] {$F_2$} (a30);
	\draw[->] (a21)to[bend left=45] node[right, near start] {$F_2$} (a30);
	\draw[->] (a22)-- node[right, below] {$\otimes$} (a30);

\end{tikzpicture}
$$

In order to show  $\Phi_1= (G^* \otimes G^* \otimes G^*)(\Phi_2)$ one draws the diagram
$$
\begin{tikzpicture}[label distance =-0.5 mm]	
	\node(lx) at (4,0){};
	\node(ly) at (2.5, 1.5){};
	\node(lz) at (0, 4){};
	\pgfresetboundingbox;

	\node(a000) {$\catV \times \catV\times \catV $};
	\node(a100) at ($(a000)+(lx)$) {$\catV \times\catV$};
	\node(a010) at ($(a000)+(ly)$) {$\catV \times\catV$};
	\node(a110) at ($(a000)+(lx)+(ly)$) {$\catV$};	
	
	\node(a001) at ($(a000)+(lz)$)  {$\catC_1 \times \catC_1\times \catC_1 $};
	\node(a101) at ($(a001)+(lx)$) {$\catC_1 \times\catC_1$};
	\node(a011) at ($(a001)+(ly)$) {$\catC_1 \times\catC_1$};
	\node(a111) at ($(a001)+(lx)+(ly)$) {$\catC_1$};	

	\node(a002) at ($(a001)+(lz)$)  {$\catC_2 \times \catC_2\times \catC_2 $};
	\node(a102) at ($(a002)+(lx)$) {$\catC_2 \times\catC_2$};
	\node(a012) at ($(a002)+(ly)$) {$\catC_2 \times\catC_2$};
	\node(a112) at ($(a002)+(lx)+(ly)$) {$\catC_2$};

	\draw[->] (a000)-- node[below right] {$\id \times \otimes$} (a010);
	\draw[->] (a000)-- node[below] {$\otimes \times \id$} (a100);
	\draw[->] (a010)-- node[below] {$\otimes$} (a110);
	\draw[->] (a100)-- node[below] {$\otimes$} (a110);

	\draw[->] (a001)-- node[left] {$F_2\times F_2\times F_2$} (a000);
	\draw[->, white, line width = 10pt] (a101)-- (a100);
	\draw[->] (a101)-- node[right, near start] {$F_2\times F_2$} (a100);	
	\draw[->] (a011)-- node[left, near end] {$F_2\times F_2$} (a010);
	\draw[->] (a111)-- node[right] {$F_2$} (a110);

	\draw[->] (a001)-- node[above left] {$\id \times \otimes$} (a011);
	\draw[->] (a011)-- node[above] {$\otimes$} (a111);
	\draw[->] (a101)-- node[above] {$\otimes$} (a111);
	\draw[->, white, line width = 10pt] (a001)-- (a101);	
	\draw[->] (a001)-- node[above, fill=white] {$\otimes \times \id$} (a101);

	\draw[->] (a002)-- node[left] {$G\times G\times G$} (a001);
	\draw[->, white, line width = 10pt] (a102)-- (a101);
	\draw[->] (a102)-- node[right, near start] {$G\times G$} (a101);	
	\draw[->] (a012)-- node[left, near end] {$G\times G$} (a011);
	\draw[->] (a112)-- node[right] {$G$} (a111);

	\draw[->] (a002)-- node[above left] {$\id \times \otimes$} (a012);
	\draw[->] (a012)-- node[above] {$\otimes$} (a112);
	\draw[->] (a102)-- node[above] {$\otimes$} (a112);
	\draw[->, white, line width = 10pt] (a002)-- (a102);	
	\draw[->] (a002)-- node[above, fill=white] {$\otimes \times \id$} (a102);
\end{tikzpicture}
$$
Now $\Phi_1$ corresponds to the composition of the natural transformations on the outer faces of the union of the two cubes and $\Phi_2$ to the composition of the faces of the bottom one. The monoidality of $G$ means that the composition of the faces of the upper cube is the identity. So one gets precisely $\Phi_1= (G^* \otimes G^* \otimes G^*)(\Phi_2)$.
\end{proof}
\begin{rem}
There is a neat way to describe the quasi-bialgebra structure on the algebra $A:=\End(F)$ in Lemma \ref{lem_quasibi_from_functor}.  Since $\End(F)$ acts on each $FM$, $M\in\ob(\catC)$, we can lift $F$ to a functor $\widetilde F : \catC \to \LM A$ to the category of $A$-modules. Any quasi-bialgebra structure on $A$ makes $\LM A$ into a monoidal category. The one from Lemma \ref{lem_quasibi_from_functor} is the one that ensures that $\widetilde F$ becomes a monoidal functor (with respect to the monoidal structure given by the same maps that define the quasi-monoidal structure of $F$).

For the proof one just realizes that $A=\End(\LM A \xto\forg \catV)$ and that the monoidality of $\widetilde F$ implies that the bijection 
$$A=\End(\LM A \xto\forg \catV) \xto{\widetilde F^*} \End(F) $$
is a homomorphism of quasi-bialgebras by Lemma \ref{lem_functoriality}. 
\end{rem}
\begin{lem}
Let $\catC$ be a category enriched over $\catV$ and let $C\in \ob(\catC)$. 
Then the functor represented by $C$
$$F: \catC \to \catV ,\, Y\mapsto \catC (C,Y) $$
satisfies that $\Theta$ in (\ref{map_Theta}) is an isomorphism.
\end{lem}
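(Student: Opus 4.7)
The plan is to apply the Yoneda lemma, in its multi-variable form. By ordinary Yoneda, $\End(F)\cong F(C)=\catC(C,C)$, with $\alpha\mapsto\alpha_C(\id_C)$ and $a\mapsto\{f\mapsto f\circ a\}$ mutually inverse. I will exhibit an explicit inverse $\Psi$ to $\Theta$ by the analogous ``evaluate at $(C,\dots,C)$'' construction on the $k$-fold product functor $G:=\otimes\circ F^{\times k}:\catC^{\times k}\to\catV$.

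Concretely, I would set
$$\Psi:\End(G)\to F(C)^{\otimes k}\cong(\End F)^{\otimes k},\qquad \eta\mapsto \eta_{(C,\dots,C)}(\id_C\otimes\cdots\otimes\id_C).$$
The identity $\Psi\circ\Theta=\id$ follows directly from the definition of $\Theta$ together with ordinary Yoneda. For $\Theta\circ\Psi=\id$, I would fix $\eta\in\End(G)$, write $\Psi(\eta)=\sum_i a_1^i\otimes\cdots\otimes a_k^i$, and apply naturality of $\eta$ to the morphism $(f_1,\dots,f_k):(C,\dots,C)\to(Y_1,\dots,Y_k)$ in $\catC^{\times k}$, for arbitrary $f_j\in\catC(C,Y_j)$. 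Evaluating the resulting commutative square at $\id_C^{\otimes k}$ rewrites $\eta_{(Y_1,\dots,Y_k)}(f_1\otimes\cdots\otimes f_k)$ as $\sum_i (f_1\circ a_1^i)\otimes\cdots\otimes(f_k\circ a_k^i)$, which is precisely the value of $\Theta(\Psi(\eta))$ on the same input.

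I do not expect any real obstacle: the whole argument is essentially a multivariate Yoneda lemma. The only point worth noting is that $\Psi(\eta)$ lies in $F(C)^{\otimes k}$ (rather than in some larger space of natural transformations), but this is automatic because $G$ is defined via the ordinary tensor product in $\catV$, so its value at the tuple $(C,\dots,C)$ is by definition $F(C)^{\otimes k}$, which Yoneda identifies with $(\End F)^{\otimes k}$.
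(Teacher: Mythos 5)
Your proof is correct: it is the multivariate Yoneda lemma, made effective by the explicit inverse $\Psi(\eta)=\eta_{(C,\dots,C)}(\id_C\otimes\cdots\otimes\id_C)$, and both verifications go through exactly as you indicate. The only implicit ingredients are that the components of a natural transformation are morphisms in $\catV$ (hence linear) and that the pure tensors $f_1\otimes\cdots\otimes f_k$ span $\catC(C,Y_1)\otimes\cdots\otimes\catC(C,Y_k)$, so agreement on them forces $\Theta(\Psi(\eta))=\eta$; both are worth one sentence but pose no problem. The paper proves the same statement by a different decomposition: it introduces the auxiliary category $\catC\otimes\catC$ (same objects as $\catC\times\catC$, hom-spaces $\catC(X_1,Y_1)\otimes\catC(X_2,Y_2)$), the canonical functor $J:\catC\times\catC\to\catC\otimes\catC$, and the functor $G$ represented by $(C,C)$, and then factors $\Theta$ as the composite of the Yoneda identification $\bigl(\End(F)\bigr)^{\otimes 2}\cong\bigl(\catC(C,C)\bigr)^{\otimes 2}\cong\End(G)$ with the restriction map $J^*:\End(G)\to\End(G\circ J)$, which is an isomorphism precisely because of the spanning/linearity point above. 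So the two arguments rest on the same two ingredients; yours fuses them into a single direct computation, is self-contained, makes the formula for $\Theta^{-1}$ visible, and is stated uniformly in $k$ (the paper only writes out $k=2$), while the paper's factorization isolates where the enrichment enters ($J^*$ being an isomorphism) and delegates the rest to the ordinary Yoneda lemma for a representable functor on $\catC\otimes\catC$, a packaging that is convenient when the same auxiliary category is useful elsewhere.
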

\begin{proof} To keep the notations simple we show that $\Theta$ is an isomorphism for $k=2$.
Denote by $\catC \otimes \catC$ the category whose objects and morphism are
\begin{align*}
\ob(\catC \otimes \catC)  & := \ob(\catC) \times \ob(\catC) \\
\catC \otimes \catC \big( (X_1,X_2), (Y_1,Y_2) \big)&:= \catC(X_1 , Y_1) \otimes \catC(X_2 , Y_2 )
\end{align*}
with the obvious compositions. One has a functor $\catC\times\catC \xto{J} \catC\otimes \catC$. We further denote by 
$$G:\catC\otimes\catC \to \catV:(Y_1,Y_2) \mapsto \catC \otimes \catC \big( (C,C), (Y_1,Y_2) \big)$$
the functor represented by $(C,C)$. Then the diagram
$$
\begin{tikzpicture}[label distance =-0.5 mm]	
	\node(lx) at (2.1,0){};
	\node(ly) at (0, -1.5){};
	\pgfresetboundingbox;

	\node(a00) {$\catC \times \catC $};
	\node(a01) at ($(a00)+(lx)$) {$\catC \otimes \catC$};
	\node(a10) at ($(a00)+(ly)$) {$\catV \times \catV$};
	\node(a11) at ($(a00)+(lx)+(ly)$) {$\catV$};	
	
	\draw[->] (a00)-- node[above] {$J$} (a01);
	\draw[->] (a00)-- node[left] {$F \times F$} (a10);
	\draw[->] (a01)-- node[right] {$G$} (a11);
	\draw[->] (a10)-- node[above] {$\otimes$} (a11);
\end{tikzpicture}
$$
is strictly commutative so one has an equality
$$\End \big( \catC \otimes \catC \xto{F\times F} \catV \times \catV \xto{\otimes} \catV \big) = \End \big( \catC\times \catC \xto{J} \catC \otimes \catC \xto{G} \catV\big) \ .$$
Using this we can factorize $\Theta$ as a composition:
$$
\begin{tikzpicture}[label distance =-0.5 mm]	
	\node(lx) at (5,0){};
	\node(ly) at (0, -1.5){};
	\pgfresetboundingbox;

	\node(a00) {$\big( \End( \catC \xto{F} \catV ) \big)^{\otimes 2}$};
	\node(a01) at ($(a00)+(lx)$) {$ \End \big( \catC\times \catC \xto{J} \catC \otimes \catC \xto{G} \catV\big) $};
	\node(a11) at ($(a00)+(lx)+(ly)$) {$\End \big( \catC \otimes \catC \xto{G} \catV\big)$};	
	
	\draw[->] (a00)-- node[above] {$\Theta$} (a01);
	\draw[->] (a00)-- node[left] {} (a11);
	\draw[<-] (a01)-- node[right] {$J^*$} (a11);
\end{tikzpicture}
$$
It is easy to see that the vertical map is an iso. By Yoneda we have 
$$\big( \End( \catC \xto{F} \catV ) \big)^{\otimes 2} = \big( \catC(C,C) \big)^{\otimes 2}$$ and also
$$\End \big( \catC \otimes \catC \xto{G} \catV\big) = \catC\otimes\catC \big( (C,C),\  (C,C) \big)  = \big( \catC(C,C) \big)^{\otimes 2}$$
so we see that the oblique map is also an iso.
\end{proof}

\begin{lem}\label{lem_twists}
Let $\catC \xto{F} \catV$ be a functor satisfying the condition of Lemma \ref{lem_quasibi_from_functor}. Let $(\lambda_0, \lambda)$ and $(\lambda_0, \lambda')$ be two strong quasi-monoidal structures on $F$ with the same $\lambda_0$. Then $\lambda, \lambda': \otimes\circ (F\times F) \xto\cong F\circ\otimes   $ differ by an invertible element 
$$J \in \End(\otimes\circ (F\times F)) = \End(F)\otimes \End(F).$$ 
and the two quasi-bialgebra structures one gets on $\End(F)$ differ by the twist by $J$.  
\end{lem}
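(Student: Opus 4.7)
The plan is to define $J := \lambda'^{-1}\circ\lambda$ as an automorphism of the functor $\otimes\circ(F\times F)$ and then verify, axiom by axiom of Section~\ref{section_twisting}, that passing from $(\lambda_0,\lambda)$ to $(\lambda_0,\lambda')$ amounts to twisting by $J$. Since both $\lambda$ and $\lambda'$ are isomorphisms, $J$ is an automorphism of $\otimes\circ(F\times F)$, and the hypothesis of Lemma~\ref{lem_quasibi_from_functor} (together with the identification from the previous lemma) allows us to view it as an invertible element $J\in\End(F)\otimes\End(F)$.

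First, I would verify the counit normalization $(\id\otimes\eps)J=1=(\eps\otimes\id)J$. When one of the arguments is the unit object, the two diagrams \eqref{diags_quasi_functor} for $\lambda$ and for $\lambda'$ both commute and involve the same $\lambda_0$. Pre- and post-composing with the unitors shows that $\lambda_{M,I}$ and $\lambda'_{M,I}$ (and symmetrically for $\lambda_{I,M}$, $\lambda'_{I,M}$) agree, and then the explicit description of $\eps$ in the proof of Lemma~\ref{lem_quasibi_from_functor} (as conjugation against $\lambda_0$) yields the two counit identities for $J$.

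Next, for the coproduct I would compare the two prescriptions for $\Delta(\alpha)$ and $\widetilde\Delta(\alpha)$ drawn in the proof of Lemma~\ref{lem_quasibi_from_functor}. They are given by the same pasting diagram, but with $\lambda,\lambda^{-1}$ replaced by $\lambda',\lambda'^{-1}$ on the two ``bottom'' faces. Inserting $\lambda'^{-1}\lambda$ and $\lambda^{-1}\lambda'$ in the middle of the pasting (which does nothing) and collecting factors, one reads off the relation $\widetilde\Delta(\alpha)=J\cdot\Delta(\alpha)\cdot J^{-1}$ directly from the diagram.

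The main obstacle is the associator formula. Here I would revisit the cube diagram used to define $\Phi$: its six faces are $\lambda$ on the two ``vertical'' $\otimes\times\id$ faces, $\lambda$ on the two ``vertical'' $\id\times\otimes$ faces, the $\catC$-associator $\gamma$ on top, and the $\catV$-associator on bottom. To compute $\widetilde\Phi$ one replaces each $\lambda$ by $\lambda'$, equivalently one inserts a $J$ along each of the four side faces. The crucial point is tracking where each $J$ lives: the two $\lambda'$-for-$\lambda$ substitutions on the $\Delta\otimes\id$ side correspond to $J^{12}$ and $(\Delta\otimes\id)(J)$, while the two substitutions on the $\id\otimes\Delta$ side correspond to $J^{23}$ and $(\id\otimes\Delta)(J)$. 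By the naturality of the whole diagram, these factors appear in the order dictated by the cube, producing exactly
\[
\widetilde\Phi = J^{23}\cdot (\id\otimes\Delta)(J)\cdot\Phi\cdot (\Delta\otimes\id)(J^{-1})\cdot (J^{12})^{-1},
\]
as required. In the braided case the same technique applied to the braiding square (with two $\lambda$'s swapped for $\lambda'$'s) gives $\widetilde R=J^{21}\cdot R\cdot J$, completing the claim.
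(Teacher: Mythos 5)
The paper states this lemma without proof, so there is no argument of the authors' to compare against; your plan --- setting $J:=\lambda'^{-1}\circ\lambda$, using the hypothesis on $\Theta$ to regard it as an invertible element of $\End(F)\otimes\End(F)$, and then re-running the pasting diagrams from the proof of Lemma \ref{lem_quasibi_from_functor} with $\lambda'=\lambda\circ J^{-1}$ substituted face by face --- is exactly the intended argument, consistent (up to replacing $J$ by $J^{-1}$) with the direction of the twist used later in Remark \ref{rem_twists}, and the steps you sketch do go through: strict units force $\lambda_{M,I}=\lambda'_{M,I}$ and $\lambda_{I,M}=\lambda'_{I,M}$, giving the normalization $(\id\otimes\eps)J=(\eps\otimes\id)J=1$, conjugation inside the coproduct pasting gives $\widetilde\Delta(\alpha)=J\Delta(\alpha)J^{-1}$, and transporting the four inserted copies of $J^{\pm1}$ through the cube via $(\Delta\otimes\id)$ and $(\id\otimes\Delta)$ yields precisely the associator twist formula. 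One small caveat: with the conventions actually stated in the paper ($\Delta^{op}=R\Delta R^{-1}$, $\widetilde\Delta=J\Delta J^{-1}$), the braided computation gives $\widetilde R=J^{21}RJ^{-1}$ rather than $J^{21}RJ$; your formula copies the paper's displayed twisting rule, which itself appears to carry a sign-of-exponent slip, so this does not affect the substance of your argument.
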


\begin{rem}\label{rem_twists}
Actually, we will use the lemma (\ref{lem_twists}) in a slightly different context. We will have two quasi-monoidal functors $(F,\lambda), (G,\mu) :\catC\to\catV$ and a natural isomorphism $\alpha: F\to G$, that will not be monoidal. The two quasi-bialgebras $\End(F,\lambda)$ and $\End(G,\mu)$ can be considered to be the same  as algebras thanks to $\alpha$, but their quasi-bialgebra structures differ by a twist by the element\footnote{This map measures the failure of $\alpha$ to be monoidal.}
$$ FX\otimes FY \xto{\alpha_X \otimes \alpha_Y} GX\otimes GY \xto{\mu_{X,Y}} G(X\otimes Y) \xto{\alpha^{-1}_{X\otimes Y}} F(X\otimes Y) \xto{\lambda^{-1}_{X,Y}} FX\otimes FY$$
in  $\End(\otimes \circ (F\times F)) = \End(F)\otimes\End(F)$.
\end{rem}

\section{Modules in braided monoidal categories}
\subsection{Category of $A$-modules}\label{subsection_Mod_A}
In this subsection, $\catC$ will denote a preabelian braided monoidal category satisfying that for any $X\in \ob(\catC)$ the functor $\argument \otimes X: \catC \to \catC$ preserves finite colimits. We also fix a commutative (w.r.t. the braiding in $\catC$) algebra (i.e. a monoid) $A$ in $\catC$.
\begin{notation}
If $\catC$ is a monoidal category and $A$ an algebra in $\catC$, we denote by $\LM{A}(\catC)$ and $\RM{A}(\catC)$ the categories of left and right $A$-modules in $\catC$.
\end{notation}

\begin{defn}\label{defn_tensor_1} Let $M$ be a right and $N$ a left $A$-module in $\catC$. We define $M \otimes_A N$ as an object in $\catC$ together with an $A$-bilinear map $M \tp N \xto{} M \tp_A N$ that is universal in the usual sense. 
\end{defn}
Since $\catC$ is preabelian, we can reformulate this as follows:
\begin{defn}\label{defn_tensor_2} Let $M\tp A \xto{\mu} M$ and $A \tp N \xto{\nu} N$ be the $A$-module structures on $M$ and $N$. $M\tp_A N$ is the cokernel of
\begin{align*}
M\tp A \tp N \xto{ (\id \tp \nu) - (\mu \tp \id)} M\tp N .
\end{align*}
\end{defn}
\begin{prop}
If $M\in \ob(\RM{A})$, $N\in \ob(\LM A)$, $X\in \ob(\catC)$ then
$$ M\otimes_A(N \otimes X) \cong (M\otimes_A N)\otimes X$$
\end{prop}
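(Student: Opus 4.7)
The plan is to reduce the claim to the assumption that $\argument\otimes X$ preserves finite colimits, in particular cokernels. I will use Definition \ref{defn_tensor_2} as my working definition of the tensor product over $A$.

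First, I would apply $\argument \otimes X$ to the defining cokernel sequence of $M\otimes_A N$,
$$M\otimes A\otimes N \xrightarrow{(\id\otimes\nu)-(\mu\otimes\id)} M\otimes N \longrightarrow M\otimes_A N.$$
Since $\argument\otimes X$ preserves finite colimits by hypothesis on $\catC$, the result is again a cokernel diagram, so $(M\otimes_A N)\otimes X$ is the cokernel of
$$M\otimes A\otimes N\otimes X \xrightarrow{\bigl((\id\otimes\nu)-(\mu\otimes\id)\bigr)\otimes\id_X} M\otimes N\otimes X.$$

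Next, I would identify this with the cokernel computing $M\otimes_A(N\otimes X)$. The left $A$-module structure on $N\otimes X$ is given, up to associativity, by $\nu\otimes\id_X : A\otimes(N\otimes X)\to N\otimes X$. Unwinding Definition \ref{defn_tensor_2} for $M\otimes_A(N\otimes X)$ thus yields precisely the cokernel of
$$M\otimes A\otimes(N\otimes X) \xrightarrow{(\id\otimes(\nu\otimes\id_X))-(\mu\otimes\id_{N\otimes X})} M\otimes(N\otimes X),$$
which, modulo the associativity constraints of $\catC$, is the same arrow as above. The universal property of cokernels then supplies the desired canonical isomorphism $(M\otimes_A N)\otimes X\cong M\otimes_A(N\otimes X)$.

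The step I expect to require the most care is checking that the two parallel-arrow diagrams really do agree after inserting the associativity constraints of $\catC$ (one passes $A$ past $N$ via $\otimes$-associativity, not via the braiding), so that the cokernel produced by applying $\argument\otimes X$ coincides on the nose with the one from Definition \ref{defn_tensor_2}. This is a coherence check rather than a substantive difficulty, and MacLane's coherence theorem makes it automatic. I would also remark that naturality of the isomorphism in $M$, $N$ and $X$ is immediate from the universal property, so no extra work is needed for functoriality.
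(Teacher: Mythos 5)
Your argument is correct and is exactly the paper's proof: apply $\argument\otimes X$ to the cokernel presentation of $M\otimes_A N$ from Definition \ref{defn_tensor_2}, use that $\argument\otimes X$ preserves cokernels, and identify the result with the cokernel defining $M\otimes_A(N\otimes X)$. The paper merely states this in one line, so your write-up is simply a more detailed version of the same route, including the (harmless) coherence check.
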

\begin{proof}
Just use that $\argument \otimes X$ preserves cokernels and the definition \ref{defn_tensor_2}.
\end{proof}
\begin{cor}\label{cor_tensor}
Assume that on $N$ we have also a right $A$-module structure, commuting with the left one. Then one can define a right $A$-module structure on $M\tp_A N$ by the following composition:
$$ (M \tp_A N) \tp A \cong M \tp_A (N \tp A) \to M\tp_A N\ .    $$
\end{cor}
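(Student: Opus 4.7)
The plan is to extract the right $A$-action from the hypothesis and transport it through the associativity isomorphism of the previous proposition. First, observe that the commutation of the two $A$-actions on $N$ is precisely the statement that the right action map $\rho : N \tp A \to N$ is a morphism of \emph{left} $A$-modules. Since $M \tp_A (-)$ is defined via a cokernel (Definition \ref{defn_tensor_2}) and the cokernel is functorial in morphisms of left $A$-modules, the assignment $N \mapsto M \tp_A N$ extends to a functor $\LM{A} \to \catC$. Applying this functor to $\rho$ gives a morphism $M \tp_A \rho : M \tp_A (N \tp A) \to M \tp_A N$, and precomposing with the isomorphism of the previous proposition produces the desired map $\sigma : (M \tp_A N) \tp A \to M \tp_A N$.

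Next I would verify the two right-$A$-module axioms for $\sigma$. For the unit axiom, I would use naturality in $X$ of the isomorphism $M \tp_A (N \tp X) \cong (M \tp_A N) \tp X$ applied to the unit morphism $\eta : I \to A$, together with the unit axiom $\rho \circ (\id_N \tp \eta) = \id_N$ for the right action on $N$; chasing the diagram shows $\sigma \circ (\id \tp \eta) = \id$. For associativity, I would invoke the isomorphism of the proposition iteratively to identify both $((M \tp_A N) \tp A) \tp A$ and $(M \tp_A N) \tp (A \tp A)$ with $M \tp_A (N \tp A \tp A)$; applying the functor $M \tp_A (-)$ to the associativity square for $\rho$ then yields the associativity square for $\sigma$.

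The main obstacle is purely bookkeeping: making sure the natural isomorphism of the previous proposition is natural enough in $X$ for the two iterated reassociations used in the associativity check to agree on the nose, rather than up to some further coherence. This reduces to checking that the isomorphism $M \tp_A (N \tp X) \cong (M \tp_A N) \tp X$, as constructed through the cokernel presentation of Definition \ref{defn_tensor_2}, is natural in $X$ — which is immediate because $-\tp X$ is applied functorially to the defining cokernel diagram. Once naturality is in hand, both axioms follow formally from the corresponding axioms for the right $A$-action $\rho$ on $N$.
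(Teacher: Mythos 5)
Your proposal is correct and follows exactly the route the paper intends: the corollary is stated without proof there, the construction in its statement being regarded as immediate from the preceding proposition, and your elaboration (commutation of the actions makes $\rho$ a morphism of left $A$-modules, functoriality of $M\tp_A(-)$, and naturality of the isomorphism in $X$ to transfer the unit and associativity axioms) is precisely the argument being left implicit. Nothing is missing.
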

If $N$ is just a right $A$-module, we can use the commutativity of $A$ and the braiding in $\catC$ to give it a commuting left $A$-module structure:
$$\input{\cesta/obr2}:= \input{\cesta/obr3}$$
This together with the corollary \ref{cor_tensor} enables us to see $\tp_A$ as a monoidal structure on $\RM A$.

\begin{prop}\label{prop_free_is_monoidal}
The functor $\argument \otimes A: \catC \to \RM A$ is strong monoidal.
\end{prop}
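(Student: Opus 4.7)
The plan is to construct natural isomorphisms
$$\lambda_{X,Y}\colon (X\otimes A)\otimes_A (Y\otimes A)\xto\cong (X\otimes Y)\otimes A,\qquad \lambda_0\colon A\xto\cong I\otimes A,$$
in $\RM{A}$, and verify the pentagon and triangle coherences. The map $\lambda_0$ is essentially the unitor of $\catC$ tensored with $A$, so all the content lies in constructing $\lambda_{X,Y}$ and checking its properties.

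The main step I would prove first is the following auxiliary lemma: for any left $A$-module $(N,\nu)$ and any $X\in\ob(\catC)$, the action $\nu$ induces a natural isomorphism $(X\otimes A)\otimes_A N\xto\cong X\otimes N$. By Definition \ref{defn_tensor_2}, the left hand side is the cokernel of a parallel pair $X\otimes A\otimes A\otimes N\rightrightarrows X\otimes A\otimes N$; the standing hypothesis that $\argument\otimes Z$ preserves finite colimits for every $Z\in\ob(\catC)$, combined with the braiding (which realizes $X\otimes\argument$ as naturally isomorphic to $\argument\otimes X$), shows that $X\otimes\argument$ also preserves finite colimits. Hence this cokernel equals $X\otimes(A\otimes_A N)$, and the unit and associativity axioms for $(N,\nu)$ identify $A\otimes_A N\cong N$ via $\nu$, with inverse induced by the unit $I\xto\eta A$. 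Specializing $N=Y\otimes A$, equipped with the commuting left action constructed from commutativity of $A$ and the braiding (the picture above the proposition), yields the desired $\lambda_{X,Y}$, and the right $A$-module structure on both sides matches since it is given in each case by multiplication on the last $A$ factor.

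With $\lambda$ in hand, the coherence axioms should reduce to standard diagrams in $\catC$: the triangle axiom follows from the unit axiom for the algebra $A$, while the pentagon reduces to the pentagon in $\catC$ combined with the associativity of $\mu_A$ and naturality of the braiding. The place where I expect the most bookkeeping, and the main obstacle, is verifying that the candidate inverse $(X\otimes Y)\otimes A\to (X\otimes A)\otimes_A(Y\otimes A)$ obtained by inserting the unit $\eta\colon I\to A$ between $X$ and $Y$ really descends to the cokernel and is genuinely inverse to $\lambda_{X,Y}$; this is where the $A$-balanced relation must be combined with the explicit formula for the left $A$-action on $Y\otimes A$ via the braiding, and where commutativity of $A$ enters in an essential way.
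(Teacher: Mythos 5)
Your proof is correct, and it takes a more systematic route than the paper. The paper's proof is a one-liner: it simply exhibits an $A$-bilinear morphism $(M\otimes A)\otimes(N\otimes A)\to(M\otimes N)\otimes A$ by a string diagram (braid the inner $A$ past $N$, then multiply the two copies of $A$) and asserts that it is universal, leaving the verification of universality, naturality and the coherence constraints implicit. You instead prove the base-change lemma $(X\otimes A)\otimes_A N\cong X\otimes N$ for an arbitrary left $A$-module $N$, using Definition \ref{defn_tensor_2}, the standing hypothesis that tensoring preserves finite colimits (transported through the braiding to $X\otimes\argument$), and the standard identification $A\otimes_A N\cong N$ via $\nu$ with inverse induced by the unit; specializing to $N=Y\otimes A$ with its braided left action recovers exactly the paper's map. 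The two approaches yield the same isomorphism: the paper's buys brevity and works purely diagrammatically, while yours actually establishes the universality that the paper only asserts, and makes transparent where the hypotheses of Subsection \ref{subsection_Mod_A} (preabelian, colimit preservation) and the commutativity of $A$ enter. One small point of terminology: the coherence axioms for a strong monoidal functor are the compatibility with the associators (a hexagon-shaped diagram) and the two unit triangles, not the pentagon itself --- the pentagon is an axiom of the target category $\RM A$, which is established separately; your reduction of these diagrams to associativity of $\mu_A$, the unit axiom, and naturality of the braiding is nevertheless the right argument.
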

\begin{proof}
We want to identify $(M\otimes A)\otimes_A (N\otimes A)$  with $ (M\otimes N)\otimes A$. For that we have just to specify a universal $A$-bilinear $\catC$-morphism
$$(M\otimes A)\otimes (N\otimes A) \to (M\otimes N)\otimes A\ .$$
We take this:
$$\input{\cesta/obr4}$$
\end{proof}

Recall that we assumed $\catC$ preabelian, braided monoidal category, s.t.  the tensor product with any fixed object of $\catC$ preserves finite colimits. One can ask whether $\RM A$ is also like that.
\begin{prop} \label{properties_of_A_mods}
$\RM A$ is a preabelian monoidal category satisfying that the functor $\argument \tp_A M: \RM A \to \RM A$ preserves finite colimits.

 If $\catC$ was symmetric-monoidal, $\RM A$ is also symmetric-monoidal. 

If $\catC$ was a $\catTV$-category (definition \ref{defn_TV-category}) or a $\catTVkh$-category (the subsection \ref{subsection_TVkh-categories}) then $\RM A$ also stays like that.
\end{prop}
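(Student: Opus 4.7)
The plan is to transfer each property from $\catC$ to $\RM A$ using the forgetful functor $U\colon\RM A\to\catC$, noting that its left adjoint $\argument\otimes A$ is strong monoidal by Proposition \ref{prop_free_is_monoidal}. The monoidal structure on $\RM A$ was already set up immediately before the proposition (via Corollary \ref{cor_tensor}), so that part is taken for granted.

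For preabelianness, I would first handle kernels: given $f\colon M\to N$ in $\RM A$, take a kernel $\iota\colon K\to M$ of $Uf$ in $\catC$. The composite $K\otimes A\xrightarrow{\iota\otimes\id}M\otimes A\xrightarrow{\mu_M}M$ is annihilated by $f$ (using $A$-linearity of $f$ and $f\iota=0$), so it factors through $\iota$, endowing $K$ with an $A$-action that makes $\iota$ a kernel in $\RM A$. For cokernels, take $q\colon N\to C$ a cokernel of $Uf$ in $\catC$; since $\argument\otimes A$ preserves finite colimits by hypothesis, $C\otimes A$ is the cokernel of $f\otimes\id_A$, and $\mu_N$ descends to $\mu_C\colon C\otimes A\to C$ because $\mu_N\circ(f\otimes\id)=f\circ\mu_M$ is killed by $q$. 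Additivity of hom-sets is inherited from $\catC$.

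For the finite colimit preservation of $\argument\tp_A M\colon\RM A\to\RM A$, I would invoke Definition \ref{defn_tensor_2}, which presents $N\tp_A M$ as a cokernel in $\catC$ of a morphism built functorially in $N$ from $\argument\otimes A\otimes M$ and $\argument\otimes M$. Both these functors preserve finite colimits in $\catC$ by hypothesis, and colimits commute with cokernels, so $N\mapsto N\tp_A M$ preserves finite colimits as a functor to $\catC$. The previous paragraph shows that kernels and cokernels in $\RM A$ are computed in $\catC$, and finite coproducts in $\RM A$ similarly come from $\catC$, so the comparison map is an isomorphism in $\RM A$. For symmetry: if $\catC$ is symmetric, the braiding $\sigma_{M,N}\colon M\otimes N\to N\otimes M$ intertwines the two defining cokernel diagrams (the commutativity of $A$ reconciles the left/right action conversions defined on page of Corollary \ref{cor_tensor}), so it descends to a natural isomorphism $M\tp_A N\cong N\tp_A M$ satisfying the hexagon axioms.

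Finally, for the enrichment, the hom-set $\RM A(M,N)$ is the equalizer of the two continuous maps $\catC(M,N)\rightrightarrows\catC(M\otimes A,N)$ expressing $A$-linearity, so it inherits a natural topology from $\catC(M,N)$, turning $\RM A$ into a $\catTV$-category (or $\catTVkh$-category in the $\hbar$-complete setting); composition remains continuous because it is in $\catC$. The main obstacle I anticipate is the bookkeeping for the colimit-preservation claim: one has to verify explicitly that the $A$-module structure produced on a colimit by the cokernel-descent argument matches the structure on the corresponding colimit of $\tp_A$-products, which amounts to a naturality chase but must be spelled out carefully.
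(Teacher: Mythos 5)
Your proposal is correct, and it simply fills in the routine verifications (kernels and cokernels computed in $\catC$ and endowed with induced $A$-actions, colimit preservation via the cokernel presentation of $\tp_A$, descent of the braiding, subspace topologies on hom-sets) that the paper dispatches with the single word ``Obvious''. So it is essentially the same argument, just written out.
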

\begin{proof}
Obvious.
\end{proof}

\subsection{Category of free $A$-modules}\label{subsection_FreeMod}
Let now $\catC$ be a braided monoidal category (we don't impose the conditions of Subsection \ref{subsection_Mod_A}) and $A$ a commutative algebra in it. 

We take $\ob(\RFree A) := \ob(\catC)$ but we will denote the objects in $\RFree A$ as $X\tp A$ for $X\in \ob(\catC)$.
The hom sets are
$$ \RFree A (X\tp A, Y\tp A) :=  \catC (  X, Y\tp A  )$$
with the composition 
\begin{equation} \label{equa_comp_freemod}
 \input{\cesta/obr5} \circ \input{\cesta/obr6} := \input{\cesta/obr7} .
\end{equation}

The monoidal structure is denoted again by $\tp_A$ and is defined on the objects as $(X_1\tp A) \tp_A (X_2 \tp A) := (X_1 \tp X_2) \tp A$ and on the morphisms as
\begin{equation} \label{equa_tp_freemod}
 \input{\cesta/obr8} \tp_A \input{\cesta/obr9} := \input{\cesta/obr10} 
\end{equation}

\begin{rem}
If we can form also $\RM A$ then we have an obvious fully-faithful, strong monoidal functor $\RFree A \to \RM A $. However, an object $X\tp A$ in $\RFree A$ carries more information than the corresponding $X\tp A$ in $\RM A$, namely it remembers the object $X\in \ob(\catC)$.
\end{rem}

\section{Represented functors}\label{section_represented_functors}
\subsection{Karoubi envelope}\label{subsection_Karoubi}
\newcommand{\YXp}{\Yoneda_{(X,p)}}
\begin{defn}
Let $\catC$ be a category. We define a new category
$\Split(\catC)$ (called the \emph{Karoubi envelope} of $\catC$) whose objects are pairs $(X,p)$ where $X\in \ob(\catC)$ and $p\in \catC(X,X)$ satisfying $p^2 = p$. Morphisms $f:(X,p) \to (Y,q) $ in $\Split(\catC)$ are morphisms $X\xto f Y$ in $\catC$ satisfying $q\circ f\circ p = f$. If $\catC$ is a monoidal category then $\Split(\catC)$ will be again a monoidal category: 
$$  (X,p) \tp (Y,q) := (X\tp Y, p\tp q) .  $$ 

\end{defn}
There is a canonical fully faithful functor $\catC \to \Split(\catC): X \mapsto (X, \id_X)$.

Ideologically, one regards an object $(X,p)$ of $\Split(\catC)$ as the image $\Ima(p) \subset X$, with a complement $\Ker(p)$ (that is $X = \Ima(p) \oplus \Ker(p)$) even if kernels and images don't make sense in $\catC$. Thus we occasionally call objects of $\Split(\catC)$ ``direct summands in $\catC$''.

\begin{defn}
We define the functor represented by a direct summand $(X,p)$ as
$$ \YXp :\;\; \catC \to \Set : \;\;Y \mapsto  \Hom_{\Split(\catC)} \big( (X,p), Y \big) = \set{ f \in \catC( X, Y) \;|\; f\circ p = f } \ . $$ 
If $\catC$ is a $\catTV$-category (or $\catTVkh$-category), we get a functor to $\catTV$ (or $\catTVkh$).
\end{defn}

\begin{prop}
$$\End(\YXp) = \End_{\Split(\catC)}\big((X,p) \big) = \set{ f \in \catC( X, X) \;|\; p \circ f \circ p = f } . $$
\end{prop}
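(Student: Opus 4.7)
The main equality to establish is $\End(\YXp)=\set{f\in\catC(X,X):p\circ f\circ p=f}$; the other equality $\End_{\Split(\catC)}((X,p))=\set{f:p\circ f\circ p=f}$ is just the definition of morphisms in the Karoubi envelope, so I would cite it in one sentence and move on.

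The cleanest plan is to read the remaining identification as a Yoneda statement inside $\Split(\catC)$. I would extend $\YXp$ along the fully faithful embedding $\catC\hookrightarrow\Split(\catC)$ to the representable functor $\tilde\YXp:=\Hom_{\Split(\catC)}((X,p),\argument):\Split(\catC)\to\Set$; by construction $\YXp$ is its restriction to $\catC=\set{(Y,\id_Y)}$. The key observation is that every $(Y,q)\in\Split(\catC)$ is a retract of $(Y,\id_Y)\in\catC$ via the pair of $\Split(\catC)$-morphisms both represented by $q$, whose composition in either order is $q=\id_{(Y,q)}$. From this retract structure it is immediate that any natural transformation of $\YXp$ extends uniquely to one of $\tilde\YXp$, its value at $(Y,q)$ being forced by naturality at the morphism $q$. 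The ordinary Yoneda lemma applied in $\Split(\catC)$ then identifies $\End(\tilde\YXp)$ with $\End_{\Split(\catC)}((X,p))$, completing the chain.

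For a concrete version of the same argument, I would also spell out the bijection. Given $\alpha\in\End(\YXp)$, set $\phi:=\alpha_X(p)$, which is well-defined because $p^2=p$ puts $p$ in $\YXp(X)$. Then $\phi\in\YXp(X)$ yields $\phi\circ p=\phi$, while naturality of $\alpha$ applied to the $\catC$-morphism $p:X\to X$ and to the element $p\in\YXp(X)$ gives $\phi=\alpha_X(p\circ p)=p\circ\alpha_X(p)=p\circ\phi$; together, $p\circ\phi\circ p=\phi$. Naturality at an arbitrary $g:X\to Y$ evaluated at $p$ then forces $\alpha_Y(g\circ p)=g\circ\phi$, and since every $h\in\YXp(Y)$ satisfies $h=h\circ p$, this recovers $\alpha_Y(h)=h\circ\phi$. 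Conversely any $\phi$ with $p\circ\phi\circ p=\phi$ defines a natural transformation via this same formula, and composition of natural transformations corresponds to $\catC$-composition of the $\phi$'s, so the bijection is multiplicative.

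The only subtle point worth flagging is the asymmetry of the final condition: a priori $\phi=\alpha_X(p)$ satisfies only $\phi\circ p=\phi$, and the extra equation $p\circ\phi=\phi$ is precisely what lands $\phi$ in the two-sided invariant subspace $\set{f:p\circ f\circ p=f}$ rather than a strictly larger set. That extra half drops out for free from naturality applied to $p$ itself, and is really the one substantive step.
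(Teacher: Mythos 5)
Your proposal is correct; the paper labels this proposition's proof ``Obvious'', and your concrete second argument (setting $\phi=\alpha_X(p)$, extracting $p\circ\phi=\phi$ from naturality at $p$, and recovering $\alpha_Y(h)=h\circ\phi$ from $h=h\circ p$) is exactly the standard verification the paper leaves implicit, while your retract-plus-Yoneda argument in $\Split(\catC)$ is an equivalent repackaging. The only nitpick is that the identification is anti-multiplicative for the covariant represented functor ($\phi_{\alpha\circ\beta}=\phi_\beta\circ\phi_\alpha$), a convention issue that does not affect the stated set-level equality and that the paper itself glosses over.
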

\begin{proof}
Obvious.
\end{proof}

\subsection{Quasi-monoidal structure on a represented functor}\label{subsec_monoidal_forgetful}
\begin{defn}
A \emph{quasi-coalgebra} in $\catC$ is an object $C$ together with two morphisms $\Delta: C \to C\tp C$, $\eps: C\to I$ satisfying that the two compositions
$$ C\xto\Delta C\tp C \xto{\id\tp \eps} C \quad\text{and}\quad  C\xto\Delta C\tp C \xto{\eps\tp\id} C$$
are equal to $\id_C$. Thus a quasi-coalgebra is a coalgebra that is not necessarily co-associative. 

\end{defn}
If we assume that $\catC$ is enriched over vector spaces then the functor $\Yoneda_C$ represented by $C$ will get a quasi-monoidal structure in the following way:  the morphism $\Yoneda_C(X)\tp \Yoneda_C(Y) \to \Yoneda_C(X\tp Y)$ is given by the composition
$$ \catC(C,X) \tp \catC(C,Y) \xto\tp \catC( C\tp C, X\tp Y ) \xto{\Delta^*} \catC(C, X\tp Y)$$
and the morphism $\field \to \Yoneda_C(I)$ is 
$$ \field \xto{ 1 \mapsto \id_I} \catC(I,I) \xto{\eps^*} \catC(C, I) .$$

Consequently, if $(X,p)$ is a quasi-coalgebra in $\Split(\catC)$, the functor represented by it will be quasi-monoidal. Explicitly, if we have morphisms $\Delta_X:X\to X\tp X$, $\eps_X: X \to I$ satisfying
$$( p\tp p )\circ  \Delta_X \circ p = \Delta_X;\quad \eps_X \circ p = \eps_X \quad \text{and}$$
$$ (\eps_X \tp \id_X)\circ \Delta_X = (\id_X \tp \eps_X)\circ \Delta_X = p $$
then we get a natural quasi-monoidal structure on the functor represented by $(X,p)$.
\begin{rem}
Everything generalizes to $\catTV$-categories and $\catTVkh$-categories.
\end{rem}

\section{Our categories}\label{section_our_categories}

\subsection{Categories $\catH$ and $\catP$}\label{subsection_P_H}
Denote by $\catH$ and $\catP$ the categories of complete equicontinuous $\g$ and $\p$ modules. The product $\ctp$ makes them into symmetric monoidal categories. We equip the hom-sets with strong topologies and regard $\catH$ and $\catP$ as $\catTV$-categories.
 
\subsection{Functor $\srdieckof:\catH \to \catP$}\label{subsection_srdiecko}
Given $M\in \catH$, define the topological vector space $\srdieckof M:= \Hom^C_\Ug(\Up, M)$. We define a left $\Up$-module structure on $\srdieckof M$ by
$$ (z\act s)(x): = s(xz) \quad \text{for}\quad z,x\in \Up,\; s\in \srdieckof M = \Hom^C_\Ug(\Up, M) .$$

In fact, $\srdieckof M$ is complete (Proposition \ref{prop_srdiecko_complete}) and an equicontinuous $\p$-module (corollary \ref{cor_srdiecko_equicont}) and thus we get a functor $\srdieckof:\catH \to \catP$.

\begin{notation}
Define an algebra $\cSg$ in $\catCTV$ as the following limit of discrete commutative algebras:
$$ \cSg := \varprojlim_k   \big( (S\g) /(S^{> k} \g) \big) \ .$$
In other words, $\cSg$ is the algebra of formal power series on $\g^*$.
\end{notation}

\begin{prop}\label{prop_srdiecko_complete}
There is a natural isomorphism of topological vector spaces $\srdieckof M \cong M\ctp \cSg$ . In particular, $\srdieckof M$ is complete.
\end{prop}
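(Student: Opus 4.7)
The plan is to reduce the claim to an adjunction/duality statement about continuous linear maps out of $S\g^*$, using the vector space decomposition of $\Up$ already introduced in Section \ref{nutshell}.

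\textbf{Step 1: PBW-type decomposition.} First I would invoke the vector-space isomorphism $\Up \cong \Ug \otimes S\g^*$ coming from the decomposition $\p = \g \oplus \g^*$ and the symmetrization map $\sigma : S\g^* \to \Up$ (already used in Section \ref{nutshell} to construct $f_0$). The crucial point is that this is an isomorphism of \emph{left} $\Ug$-modules (with $\Ug$ acting on $\Ug \otimes S\g^*$ by multiplication on the first factor), because the multiplication in $\Up$ brings all $\g$-factors to the left modulo lower-order terms in $S\g^*$, which is handled by a standard triangularity argument in the PBW filtration of $\Up$. After taking the appropriate completion, one upgrades this to a topological isomorphism $\Up \cong \Ug \, \hat\otimes\, S\g^*$ of left $\Ug$-modules in $\catCTV$ (with the topologies defined in the notation section: $\g$ discrete, $\g^*$ with the strong topology, and $S\g^*$ and $\Ug$ given their natural topologies).

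\textbf{Step 2: Free-module adjunction.} Since $\Up$ is (topologically) free as a left $\Ug$-module on $S\g^*$, I would apply the continuous version of the free-module adjunction
\[
\srdieckof M = \Hom^C_\Ug(\Up, M) \;\cong\; \Hom^C_\Ug(\Ug \,\hat\otimes\, S\g^*, M) \;\cong\; \Hom^C_\K(S\g^*, M),
\]
the latter equipped with the strong topology. Naturality in $M$ is automatic.

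\textbf{Step 3: Duality identification.} It remains to identify $\Hom^C_\K(S\g^*, M)$ with $M \ctp \cSg$. The strong dual of $S\g^*$ is exactly $\cSg = \varprojlim_k S\g/S^{>k}\g$ (formal power series on $\g^*$), because $S\g^*$ is the locally convex direct sum/colimit of the finite-dimensional pieces $S^{\le k}\g^*$ whose duals are $S^{\le k}\g$, and passing to the limit gives $\cSg$. Then the natural map
\[
M \ctp \cSg \longrightarrow \Hom^C_\K(S\g^*, M), \qquad m \otimes \xi \longmapsto \bigl(x \mapsto \langle \xi, x\rangle \, m\bigr),
\]
is a topological isomorphism; this is the standard identification of continuous linear maps out of a nuclear/reflexive space with the completed tensor product with its dual, which I would cite or prove in the topological appendix. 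Composing with Step 2 yields the desired natural isomorphism $\srdieckof M \cong M \ctp \cSg$, and completeness of $\srdieckof M$ follows since $M$ and $\cSg$ are complete and $\ctp$ preserves completeness.

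\textbf{Main obstacle.} The algebraic identities are clean; the real work is checking that every step is a \emph{topological} isomorphism. In particular, the identification $\Hom^C_\K(S\g^*, M) \cong M \ctp \cSg$ depends on the exact meaning of ``strong topology'' for spaces whose underlying vector space is a colimit of finite-dimensional ones, and on knowing that $\ctp$ is compatible with such colimits/limits and with strong duals. I would defer these verifications to the appendices on topological vector spaces and completed tensor products referenced in Section \ref{sect_notations}.
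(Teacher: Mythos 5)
Your overall skeleton --- restrict along $S\g^*\subset\Up$ and then identify $\Hom^C_\field(S\g^*,M)$ with $M\ctp(S\g^*)^*=M\ctp\cSg$ --- is exactly the route the paper takes (Claims \ref{claim_fhitr}, \ref{claim_ch}, \ref{claim_cSg}). But your Step 1 asserts an intermediate statement that is false in this setting, and the paper explicitly warns against it: one does \emph{not} have $\Up\cong\Ug\tp S\g^*$ (nor a completed version) as topological vector spaces, because the topological tensor product does not commute with the infinite direct sums/colimits involved; the correct statement is Corollary \ref{cor_heielrt}, $\Up\cong\colim_{k,l}\,\Unik{k}\g\tp S^{\leq l}\g^*$, and the footnote there records the counterexample $S(V_1\oplus V_2)\neq SV_1\tp SV_2$. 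Passing to completions does not repair this: $\Up$ is not complete here, and the coinduction $\srdieckof M=\Hom^C_\Ug(\Up,M)$ is taken from $\Up$ itself, so replacing it by a completed tensor product changes the object you are computing with.

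Because Step 1 fails, the ``continuous free-module adjunction'' of Step 2 is not available as stated, and in fact the topological bijectivity of the restriction map $\Hom^C_\Ug(\Up,M)\to\Hom^C_\field(S\g^*,M)$ is not a formal consequence of PBW freeness. To invert it one reconstructs $f$ from $f'=f|_{S\g^*}$ piecewise as $\Unik{k}\g\tp S^{\leq l}\g^*\xrightarrow{\id\tp f'}\Unik{k}\g\tp M\xrightarrow{\act}M$, and the continuity of the second arrow is precisely where the hypothesis that $M$ is an \emph{equicontinuous} $\g$-module enters (Claims \ref{claim_pfhg} and \ref{claim_tgottn}); your argument never uses this hypothesis, and without it the claimed isomorphism can fail. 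Step 3 is sound in substance, though the appeal to nuclear/reflexive locally convex spaces is out of place in the linearly-topologized setting; the correct substitutes are Claims \ref{claim_ch} and \ref{claim_cSg}, which use that $S\g^*$ is a direct sum of bounded (finite-codimension-quotient) pieces with compact duals, and this is the kind of verification you say you would supply in the appendix.
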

\begin{proof}
We have the following maps:
$$ \srdiecko M = \Hom_\Ug^C(\Up, M) \xto{(1)} \Hom_\field^C(S\g^*, M) \xfrom{(2)} M \ctp  (S\g^*)^* \xlongequal{(3)} M\ctp \cSg $$
(1) is the restriction map that is a topological isomorphism by the claim \ref{claim_fhitr}. (2) and (3) are also topological isomorphisms by claims \ref{claim_ch} and \ref{claim_cSg}.
\end{proof}

\begin{prop}
The functor $\srdieckof$ has a monoidal structure
$$\field \to \srdiecko \field;\quad\srdieckof M \ctp \srdieckof N \to \srdieckof(M\ctp N)  $$
given by $ \field\to \Hom^C_\Ug(\Up, M): 1\mapsto \eps_\Up $ and
\begin{align*}
\Hom^C_\Ug( \Up, M) \ctp \Hom^C_\Ug(\Up,N) &\to \Hom^C_\Ug( \Up, M\ctp N):\\
(\Up \xto{s} M) \otimes (\Up \xto{t} N) &\mapsto (\Up \xto{\Delta} \Up\otimes \Up \xto{s\otimes t} M\otimes N \to M\ctp N) \ .
\end{align*}
\end{prop}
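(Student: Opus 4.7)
The plan is to verify, in turn, the following pieces of a monoidal structure on $\srdieckof$: well-definedness of the two structure maps as morphisms in $\catP$ (i.e.\ continuous and $\Up$-equivariant), naturality, and the two coherence axioms (unit triangles and pentagon).

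First I would check that, for $s\in\Hom^C_\Ug(\Up,M)$ and $t\in\Hom^C_\Ug(\Up,N)$, the assignment $x\mapsto\sum s(x_{(1)})\otimes t(x_{(2)})$ (Sweedler notation for $\Delta_\Up(x)$, where $\Delta_\Up$ is the standard coproduct coming from $\p$) defines an element of $\Hom^C_\Ug(\Up, M\ctp N)$. Because $\g\subset\p$ is a Lie subalgebra, $\Ug$ is a sub-Hopf algebra of $\Up$ and $\Delta_\Up(yx)=\sum y_{(1)}x_{(1)}\otimes y_{(2)}x_{(2)}$ for $y\in\Ug$, $x\in\Up$; this together with $\Ug$-linearity of $s,t$ yields $\Ug$-linearity of the convolution $s\cdot t$. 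Continuity of $s\cdot t$, and the fact that the bilinear assignment $(s,t)\mapsto s\cdot t$ factors through the completed tensor product $\srdieckof M\ctp\srdieckof N$, I would reduce to the topological description $\srdieckof M\cong M\ctp\cSg$ from the preceding proposition: under that isomorphism the multiplication is induced by the continuous coalgebra structure on $\cSg$ dual to the algebra $S\g^*$ sitting inside $\Up$, and on the $M$-factor it is just the identity of $M\ctp N$.

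Next I would verify $\Up$-equivariance. The $\Up$-action on $\srdieckof M\ctp\srdieckof N$ uses $\Delta_\Up$, so $z\act(s\otimes t)=\sum(z_{(1)}\act s)\otimes(z_{(2)}\act t)$; applying the convolution product and using multiplicativity of $\Delta_\Up$ gives
\[
\bigl((z_{(1)}\act s)\cdot(z_{(2)}\act t)\bigr)(x)=\sum s(x_{(1)}z_{(1)})\otimes t(x_{(2)}z_{(2)})=(s\cdot t)(xz)=\bigl(z\act(s\cdot t)\bigr)(x).
\]
For the unit map $\field\to\srdieckof\field$, $1\mapsto\eps_\Up$, one just notes that $\eps_\Up$ is continuous and $\Ug$-linear (it sends $\Ug^{>0}$ to $0$), hence lies in $\srdieckof\field$. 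Naturality of both maps in $M$ and $N$ is immediate from the formulas.

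Finally, the two coherence diagrams reduce to Hopf-algebra axioms of $\Up$: the unit triangles follow from counitality $(\eps_\Up\otimes\id)\Delta_\Up=(\id\otimes\eps_\Up)\Delta_\Up=\id_\Up$, and the associativity pentagon follows from coassociativity $(\Delta_\Up\otimes\id)\Delta_\Up=(\id\otimes\Delta_\Up)\Delta_\Up$; both translate directly to equalities between expressions like $\sum s(x_{(1)})\otimes t(x_{(2)})\otimes u(x_{(3)})$ evaluated in either order of bracketing. The main obstacle, beyond the routine Hopf-algebraic bookkeeping, is the topological one: showing that the convolution $(s,t)\mapsto s\cdot t$ is jointly continuous and factors through $\ctp$ rather than through the algebraic tensor product, which is why the identification $\srdieckof M\cong M\ctp\cSg$ (or, equivalently, the equicontinuity of the $\Up$-actions involved) is essential.
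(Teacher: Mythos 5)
Your proposal is correct, and its skeleton is the paper's: show that the convolution of $s$ and $t$ is continuous and $\Ug$-linear (hence lands in $\srdieckof(M\ctp N)$), show that the resulting map is $\Up$-linear, continuous for the strong topologies and factors through $\ctp$, and deduce the coherence axioms from counitality and coassociativity of $\Delta_\Up$; your Sweedler computations just spell out what the paper dismisses as easy. The one step where you take a genuinely different route is the delicate one, continuity with respect to the strong topologies: the paper obtains it abstractly from the fact that $\catTV$ is a $\catTV$-category (the tensor map on hom-spaces and pre-/post-composition with the fixed continuous maps $\Delta$ and $M\otimes N\to M\ctp N$ are continuous), and then extends to the completed tensor product because the target $\srdieckof(M\ctp N)$ is complete (Proposition \ref{prop_srdiecko_complete}); you instead transport the map through the isomorphism $\srdieckof M\cong M\ctp\cSg$, where it becomes, up to a permutation of factors, the identity of $M\ctp N$ tensored with the multiplication of $\cSg$, which is visibly continuous. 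Your route is more concrete but needs two compatibilities to be stated: that the symmetrization $S\g^*\to\Up$ is a coalgebra morphism (elements of $\g^*$ are primitive), so the restricted convolution is governed by the coproduct of $S\g^*$; and that this dualizes to the \emph{algebra} structure of $\cSg$ --- you wrote ``coalgebra structure on $\cSg$ dual to the algebra $S\g^*$'', which is the wrong dualization, a slip of wording rather than of substance (the compatibility you need is essentially Remark \ref{rem_forgetted_B}). Finally, the functor-level coherence with the associativity constraints is not the pentagon, but your reduction of it to coassociativity of $\Delta_\Up$ is exactly the paper's concluding observation.
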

\begin{proof}
Since all the maps in the composition
$$\Up \xto{\Delta} \Up\otimes \Up \xto{s\otimes t} M\otimes N \to M\ctp N $$
are $\Ug$-linear and continuous, their composition also is. Thus we really get an element in $\srdieckof(M\ctp N)$.

One has also to check that the resulting map $\srdieckof M \ctp \srdieckof N \to \srdieckof(M\ctp N)  $ is $\Up$-linear (that is easy) and continuous w.r.t. the strong topologies (it is, because $\catTV$ is a $\catTV$-category and $\Up \xto{\Delta} \Up\otimes \Up$ and  $M\otimes N \to M\ctp N$ are continuous). 

Coassociativity of $\Up \xto{\Delta} \Up\otimes \Up$ implies that this structure is monoidal, not just quasi-monoidal.
\end{proof}

\subsection{Algebra $A$.}\label{subsection_A}
One can regard $\field$ as a commutative algebra in $\catH$.
Since $\srdieckof$ is a monoidal functor, $A:=\srdieckof \field$ will be a commutative algebra in $\catP$. Explicitly, the product of $f,g\in A = \Hom^C_\Ug(\Up, \field)$ is given by 
$$(f\cdot g)(z) = \sum f(z_{(1)}) \cdot  g(z_{(2)})  \quad\text{for } z\in\Up$$
and the unit is just the augmentation on the universal enveloping algebra $\eps_\Up :\Up \to \field$.

\subsection{Natural epimorphism $\pi_M : \srdiecko M \to M$ in $\catH$}\label{subsec_pi} For $M\in \catH$ we define
$$ \pi_M:\; \srdieckof M = \Hom_\Ug^C(\Up,M) \ \to\  M\; :\; s\mapsto s(1) \ .$$
It is a natural transformation between $\catH \xto\srdiecko \catP \xto\forg \catH$ and $\catH\xto\id\catH$.

\subsection{Augmentation $\eps_A :=\pi_\field : A \to \field$ in $\catH$}\label{subsection_eps_A}
Explicitly
$$ \eps_A:\; A = \Hom_\Ug^C(\Up,\field) \ \to\  \field\; :\; s\mapsto s(1) \ .$$
It is easy to see, that $\eps_A$ is an augmentation on the algebra $A$ in $\catH$ (but it is not a morphism in  $\catP$). On pictures, we denote $\eps_A$ by $$.

\subsection{Categories $\catA$ and $\catF$.}\label{subsection_A_F}
Denote by $\catA$ the category of all right $A$-modules and by $\catF$ the category of free right $A$-modules in $\catP$ (in the sense of Subsection \ref{subsection_FreeMod}). They are symmetric monoidal categories with the monoidal product $\otimes_A$.

\subsection{Functor $\srdcof$}\label{subsection_srdco}
Since any $M\in \ob(\catH)$ is a right module over $\field$ in $\catH$, $\srdieckof M$ will be a right module over $A$. 
Explicitly, if $f\in A = \Hom^C_\Ug(\Up, \field)$ and $s\in \srdieckof M = \Hom^C_\Ug(\Up, M)$ we define $s\coact f \in \srdieckof M$ by  
$$( s \coact f)(z) = \sum s(z_{(1)}) \cdot  f(z_{(2)})  \quad\text{for } z\in\Up\ .$$
We can thus regard $\srdieckof$ as a functor:
$$\srdcof: \catH \to \catA\ . $$

\begin{rem}\label{rem_forgetted_B}
From the proposition \ref{prop_srdiecko_complete} we have an isomorphism topological vector spaces $ A\cong \cSg$.
Since this isomorphism is essentially the morphism 
$$\Hom^C_\Ug(\Up, \field) \to  \Hom^C_\field(S\g,\field)$$
induced by a coalgebra homomorphism $S\g\to \Up = \Ug\otimes S\g$, we see that $ A\cong \cSg$ is actually an isomorphism of algebras.

Similarly, if we forget an $A$-module $\srdiecko M\in \ob(\catP)$  into $\catCTV$, we get an $A=\cSg$-module in $\catCTV$ and the isomorphism $\srdieckof M \cong M\ctp \cSg$ from the proposition \ref{prop_srdiecko_complete}
is an isomorphism of $\cSg$-modules.

\end{rem}

Since the monoidal structure $\srdieckof M \ctp \srdieckof N \to \srdieckof(M\ctp N) $ of the functor $\srdieckof$ is given by an $A$-bilinear morphism in $\catP$, it induces a (strong as we will soon see) monoidal structure  $\srdcof M \otimes_A \srdcof N \to \srdcof(M\ctp N)$ on $\srdco$.

\subsection{The main technical theorem}
\begin{thm}\label{fundamental_thm}
\begin{enumerate}
\item $\srdco: \catH \to \catA$ is a strong monoidal and fully faithful functor.
\item If $M\in \ob(\catP)$, there is a natural $\catA$-isomorphism $\srdco M \cong M\ctp A$. That is, we have a natural monoidal isomorphism:
$$
\begin{tikzpicture}[label distance =-0.5 mm]	
	\node(lx) at (2,0){};
	\node(ly) at (0, 1.5){};
	\pgfresetboundingbox;

	\node(a00) {$\catH$};
	\node(a01) at ($(a00)+(lx)$) {$ \catA $};
	\node(a11) at ($(a00)+0.5*(lx)+(ly)$) {$\catP$};	
	
	\draw[->] (a00)-- node[below] {$\srdco$}  (a01);
	\draw[<-] (a00)-- node[left] {$ \forg$} (a11);
	\draw[<-] (a01)-- node[right] {$\ctp A$} (a11);

	\node at ($(a00)+0.5*(lx)+0.3*(ly)$) {{ \huge $\cong$ }};
\end{tikzpicture}
$$
\item The above statements give us for $M,N\in \ob(\catP)$ an isomorphism
$$ \catH(M,N) \underset{\tiny\cong}{\xto{\srdco}} \catA(\srdco M, \srdco N) \cong \catA(M\ctp A, N\ctp A) \cong \catP(M, N\ctp A). $$
Explicitly, (the inverse of) this isomorphism is 
\begin{equation}\label{iso_homsets}
 \catP(M, N\ctp A) \xto{\cong} \catH(M,N): \input{\cesta/obr11}  \mapsto \input{\cesta/obr12}  
\end{equation}
\end{enumerate}
\end{thm}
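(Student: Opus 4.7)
My plan is to prove (2) first, reducing to the topological-vector-space identification of Proposition~\ref{prop_srdiecko_complete}; then use the same construction, in a stronger form valid for $M \in \catH$, to establish both halves of (1); finally (3) assembles from (1), (2), and the free-forgetful adjunction for $A$-modules.

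For (2), I define $\alpha_M : M \ctp A \to \srdco M$ as the unique $A$-linear extension of the $\p$-linear map $M \to \srdco M$, $m \mapsto (z \mapsto z \act m)$; here $z \mapsto z \act m$ is $\Ug$-linear and continuous since $M \in \catP$, and $m \mapsto s_m$ is $\p$-equivariant by associativity of the $\Up$-action on $M$. After forgetting to $\catCTV$ and invoking Proposition~\ref{prop_srdiecko_complete} together with Remark~\ref{rem_forgetted_B}, $\alpha_M$ identifies on generators with the identity on $M \ctp A \cong M \ctp \cSg$, hence is a topological iso; since $\catA \to \catCTV$ is faithful and a continuous linear inverse of an $A$- and $\p$-linear bijection is automatically $A$- and $\p$-linear, $\alpha_M$ is an iso in $\catA$. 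The same formula, starting from the merely $\g$-linear map $m \mapsto (z \mapsto z \act m)$, gives a natural iso of $A$-modules in $\catH$ for \emph{any} $M \in \catH$; this strengthened form is what I use in (1).

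For the strong-monoidality of (1), the constraint $\srdco M \otimes_A \srdco N \to \srdco(M \ctp N)$ is identified, via the strengthened (2), with the canonical iso $(M \ctp A) \otimes_A (N \ctp A) \cong (M \ctp N) \ctp A$ of Proposition~\ref{prop_free_is_monoidal}, hence is iso. For fully-faithfulness, I show $\Phi : g \mapsto \srdco g$ is a bijection $\catH(M, N) \to \catA(\srdco M, \srdco N)$. Injectivity: $\pi_N \circ \srdco g = g \circ \pi_M$ by naturality and $\pi_M$ is epi. Surjectivity: for $f \in \catA(\srdco M, \srdco N)$, the $A$-linearity of $f$ combined with $(s \coact a)(1) = \pi_M(s)\cdot \eps_A(a)$ yields $(\pi_N \circ f)(s \coact a) = (\pi_N \circ f)(s) \cdot \eps_A(a)$, so $\pi_N \circ f$ factors through the $A$-quotient $\srdco M \otimes_A \field$, which by the strengthened (2) is canonically $M$ in $\catH$ (with the natural map being $\pi_M$). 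This yields a unique $g \in \catH(M, N)$ with $g \circ \pi_M = \pi_N \circ f$, and then $\pi_N \circ (\srdco g - f) = 0$ together with the counit-bijection of $\forg \dashv \srdiecko$ forces $\srdco g = f$.

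Part (3) then follows by composing the bijection from (1) with the iso from (2) on both sides and the free-forgetful adjunction $\catA(M \ctp A, N \ctp A) \cong \catP(M, N \ctp A)$: unwinding shows the inverse sends $f \in \catP(M, N \ctp A)$ to $(\id_N \otimes \eps_A) \circ f$, matching the picture in~(\ref{iso_homsets}). The main obstacle is the identification $\srdco M \otimes_A \field \cong M$ in $\catH$ used in the fully-faithfulness argument: it forces proving (2) in the stronger form valid for $M \in \catH$ (where the generating map is only $\g$-linear, not $\p$-linear), so that the quotient computation lives in $\catH$ rather than only in $\catCTV$. Everything else is routine functoriality and adjunction bookkeeping.
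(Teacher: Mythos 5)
Your construction of $\alpha_M$ in (2) is the same map as the paper's (it is the inverse of the assignment (\ref{map_srdiecko_tensor})), but your justification that it is an isomorphism has a genuine gap. Under the identification of Proposition \ref{prop_srdiecko_complete}, the generator $m\otimes 1_A$ is sent to $s_m|_{S\g^*}\colon y\mapsto y\act m$, and since $\g^*\subset\p$ acts nontrivially on $M\in\ob(\catP)$ this is \emph{not} the element $m\otimes 1$ of $M\ctp\cSg$; it is $m\otimes 1$ only up to terms in $M\ctp \hat S^{\geq 1}\g$. So ``$\alpha_M$ identifies on generators with the identity, hence is a topological iso'' fails as stated, and the bijectivity of $\alpha_M$ is exactly the point that still needs proof. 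The paper settles it by exhibiting an explicit two-sided inverse built from the antipode, $s\mapsto \big(\Up\xto{\Delta}\Up\otimes\Up\xto{S\otimes s}\Up\otimes M\xto{\act}M\big)$, which uses the $\p$-module structure of $M$ in an essential way (plus a separate continuity check, Claim \ref{claim_some_continuity}). Your argument could be repaired by an $\cSg$-linearity-plus-completeness (Neumann series) argument showing that ``identity plus filtration-raising'' is invertible and continuous, but you have not supplied it.

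The more serious problem is the ``strengthened (2)'' for arbitrary $M\in\ob(\catH)$, on which both your monoidality and your full-faithfulness arguments lean. For $M$ carrying only a $\g$-action the defining formula $m\mapsto(z\mapsto z\act m)$, $z\in\Up$, does not even make sense, and there is no natural $\catH$-isomorphism $\srdco M\cong M\ctp A$ in general (the two $\g$-actions on $M\ctp\cSg$ differ by terms involving the cobracket); this is precisely why the theorem restricts (2) to $M\in\ob(\catP)$. What your full-faithfulness argument actually needs is only the weaker statement $\field\otimes_A\srdco M\cong M$ in $\catH$ via $\pi_M$, which the paper proves directly (the functor $\clubsuit$ and Claim \ref{claim_bubzogan}) by checking exactness of $\srdiecko M\ctp\Ker\eps_A\to\srdiecko M\xto{\pi_M}M\to 0$ after forgetting to $\catCTV$; similarly, strong monoidality only needs the $\catCTV$-level identification $\srdiecko M\cong M\ctp\cSg$ as $\cSg$-modules (Proposition \ref{prop_srdiecko_complete} and Remark \ref{rem_forgetted_B}), not an identification in $\catA$. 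Your surjectivity step via the counit of restriction--coinduction is essentially the paper's computation and is fine once the quotient identification is proved; but as written, routing (1) through the unjustified strengthened (2) leaves a real hole.
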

The rest of this subsection is devoted to the proof of the theorem \ref{fundamental_thm}.
\begin{rem}
The category $\catCTV$ is not an abelian, just a preabelian category. The fact of having an exact sequence $ A\to B\to C\to 0 $ does not determine the topology on $C$ uniquely. Thus in this section, by saying that such a sequence is (right) exact, we mean that $C$ is a cokernel of the map $A\to B$. Similarly, a functor will be said to be right-exact (to reflect right-exactness) if it preserves (reflects) cokernels. We believe this abuse makes it more readable.
\end{rem}

\begin{proof}[Proof that $\srdco$ is strong monoidal]
By the definition \ref{defn_tensor_2} of the tensor product over $A$ we need to show that the sequence
$$ \srdieckof M \ctp A\ctp \srdieckof N \to\srdieckof M \ctp \srdieckof N \to  \srdieckof(M\ctp N) \to 0 $$
is exact. The forgetful functor $\catP\xto\forg \catCTV$ reflects right-exactness and so it's enough to see the exactness of the following sequence in $\catCTV$ (see the remark \ref{rem_forgetted_B}):
 $$ (M \ctp \cSg)\ctp \cSg\ctp (N\ctp \cSg) \to (M \ctp \cSg)\ctp (N\ctp \cSg) \to M \ctp N\ctp \cSg \to 0 \ . $$
But the last sequence just says that
$$ (M \ctp \cSg )\otimes_\cSg  (N\ctp \cSg)\; \cong\; M \ctp N\ctp \cSg  $$
what comes from the proposition \ref{prop_free_is_monoidal}.
\end{proof}

To prove that $\srdco$ is fully faithful, we introduce a functor $\clubsuit: \catA\to \catH$. Recall that we have an augmentation $\eps_A : \FHP A \to \field$. This turns $\field \in \ob(\catH)$ into an $\FHP A$-module. If $M\in \ob(\catA)$ then $\FHP M$ is also an $\FHP A$-module and so we can define
$$ \clubsuit: \catA\to \catH : M \mapsto  \FHP M  \otimes_{\FHP A} \field\ .$$
\begin{claim}\label{claim_bubzogan}
$\clubsuit \circ \srdco \cong \id_\catH$.
\end{claim}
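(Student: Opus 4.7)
The plan is to construct an explicit natural transformation $\psi : \clubsuit \circ \srdco \Rightarrow \id_\catH$ by letting the natural epimorphism $\pi_M : \srdco M \to M$ from Section \ref{subsec_pi} descend through the coequalizer defining $\srdco M \otimes_A \field$, and then to argue that each $\psi_M$ is an isomorphism by passing to the description of $\srdco M$ given in Proposition \ref{prop_srdiecko_complete}.

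First I would verify that $\pi_M$ is indeed an $\catH$-morphism and descends to $\clubsuit\srdco M$. The $\g$-equivariance is the computation $\pi_M(x\act s) = (x\act s)(1) = s(x) = x\act s(1) = x\act\pi_M(s)$, using that $s$ is $\Ug$-linear. To descend through the cokernel of Definition \ref{defn_tensor_2}, I need $\pi_M(s\coact f) = \eps_A(f)\,\pi_M(s)$ for $s\in\srdco M$, $f\in A$, which is immediate from the formula $(s\coact f)(z)=\sum s(z_{(1)})f(z_{(2)})$ evaluated at $z=1$. Since the right $A$-module structure on $\field$ is given precisely by $\eps_A$, the universal property of $\otimes_A$ then yields a canonical morphism $\psi_M:\clubsuit\srdco M\to M$ in $\catH$, evidently natural in $M$.

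To show $\psi_M$ is an isomorphism, I would work in $\catCTV$ via the forgetful functor. By Proposition \ref{prop_srdiecko_complete} and Remark \ref{rem_forgetted_B} there is a natural isomorphism $\srdco M \cong M\ctp\cSg$ of $\cSg$-modules (after identifying $A\cong\cSg$ as algebras in $\catCTV$), and under this identification $\eps_A$ corresponds to the constant-term augmentation $\eps:\cSg\to\field$ while $\pi_M$ corresponds to $m\ctp f\mapsto\eps(f)\,m$. Since $\clubsuit$ is computed by the same cokernel (the functor $\argument\ctp\field$ is trivial), this reduces the claim to the identification $(M\ctp\cSg)\otimes_{\cSg}\field\cong M$, which follows because the section $m\mapsto m\ctp 1$ splits the augmentation $M\ctp\cSg\to M$ and exhibits $M\ctp\cSg$ as topologically free on $M$ over $\cSg$. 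Thus $\psi_M$ is an iso in $\catCTV$, hence also in $\catH$ (as a continuous $\g$-equivariant bijection with continuous inverse).

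The only non-formal point is making sure that the cokernel computation is valid in the preabelian category $\catCTV$, i.e.\ that the quotient topology on $(M\ctp\cSg)\otimes_{\cSg}\field$ really coincides with the topology of $M$; the existence of the continuous section $m\mapsto m\ctp 1$ handles this cleanly. I expect this small topological check to be the main point requiring care—everything else is the universal property of the tensor product together with the explicit form of $\srdco M$ from Proposition \ref{prop_srdiecko_complete}.
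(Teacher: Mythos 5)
Your construction of the comparison map is exactly the paper's: you let $\pi_M$ descend through the cokernel defining $\srdco M\otimes_{A}\field$ (tensor taken in $\catH$, since $\eps_A$ is only an $\catH$-morphism), and then test isomorphy after forgetting to $\catCTV$ via $\srdco M\cong M\ctp\cSg$. The gap is in your last step. The continuous section $m\mapsto m\ctp 1$ of $\id_M\ctp\eps$ shows that $\id_M\ctp\eps$ is surjective and that $M$ carries the quotient topology, but it does \emph{not} show that the induced map from the cokernel to $M$ is injective. For that you need the equality
$$\Ker(\id_M\ctp\eps)\;=\;\Ima\Big( (M\ctp\cSg)\ctp \hat S^{>0}\g \xto{\id_M\ctp\mu_{\cSg}} M\ctp\cSg\Big),$$
i.e.\ that the image of the action of $\Ker\eps_A$ exhausts the whole kernel of $\pi_M$ \emph{inside the completed tensor product}. ``Topologically free over $\cSg$'' does not by itself deliver base change along $\eps:\cSg\to\field$ in this preabelian topological setting; indeed, if the action map were replaced by $0$ your section argument would go through verbatim while the conclusion fails, which shows the section cannot carry the weight you put on it.

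The missing step is exactly what the paper's proof supplies: the sequence $\cSg\ctp\hat S^{>0}\g\to\cSg\to\field\to 0$ is manifestly exact, and applying the functor $M\ctp\argument$, which preserves cokernels by Corollary \ref{cor_ctp_and_finite_colims}, yields exactness of $(M\ctp\cSg)\ctp\hat S^{>0}\g\to M\ctp\cSg\to M\to 0$ in $\catCTV$; reflection of cokernels by the forgetful functor then finishes the argument in $\catH$. Alternatively, you could repair your argument directly by using the topological splitting $\cSg\cong\field\oplus\hat S^{>0}\g$ and the compatibility of $\ctp$ with finite direct sums to identify $\Ker(\id_M\ctp\eps)$ with the closed summand $M\ctp\hat S^{>0}\g$, and then observing that this summand is hit by the elements $m\ctp 1\ctp y$, so it coincides with the image of the action map. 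Either way, some form of the right-exactness/splitting argument is needed; your first two paragraphs are fine as written.
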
 
\begin{proof}
We will omit the forgetful functor $\FHP$ in the notations. As an $A$-module, $\field = A/\Ker \eps_A$ and thus 
$$\field \otimes_A \srdieckof M = \srdieckof M /\big( \srdieckof M \coact \Ker(\eps_A) \big)\ .$$ 
To see that this is naturally isomorphic to $M$, we show that the sequence in $\catH$:
\begin{equation}\label{equa_seg_nieco0}
\srdieckof M \ctp \Ker \eps_A \xto{\coact} \srdieckof M \xto{\pi_M} M \to 0
\end{equation}
is exact. If we apply to it the functor $\catH\xto\forg \catCTV$, we get the sequence
\begin{equation}\label{equa_seg_nieco1}
( M\ctp \cSg )\ctp \hat S^{>0} \g \xto{\id_M \ctp \mu_\cSg} M\ctp \cSg \to M\to 0 \ .
\end{equation}
The functor $\catH\xto\forg \catCTV$ reflects exactness, so to prove that (\ref{equa_seg_nieco0}) is exact it is enough to show that (\ref{equa_seg_nieco1}) is. But (\ref{equa_seg_nieco1}) is just the image of the obviously exact sequence
$$\cSg \ctp \hat S^{>0} \g \xto{\mu_\cSg} \hat S \g \to \field\to 0 $$
under the right-exact functor $M\ctp\argument$ (see corollary \ref{cor_ctp_and_finite_colims}).
\end{proof}
\begin{proof}[Proof that $\srdco$ is full and faithful] 
The above claim gives us for any $X,Y\in \catH$ a commutative diagram
$$
\begingroup
\newcommand{\vzdy}{1.5}
\newcommand{\vzdx}{4}
\begin{tikzpicture}
	\node(a) {$\catH(X,Y)$};
	\node(b) at ($(a)+(\vzdx,0)$) {$\catA(\srdcof X, \srdcof Y)$};
	\node(c) at ($(a)+(0,-\vzdy)$) {};
	\node(d) at ($(b)+(0,-\vzdy)$) {$\catH(X,Y)$};
	
	\draw[->] (a)-- node[above] {$\tilde\srdieckof$} (b);
	\draw[->] (a)-- node[auto,swap] {$=$} (d);
	\draw[->] (b)-- node[right] {$\budzogan$} (d);	
\end{tikzpicture}
\endgroup
$$
From that we see that $\tilde\srdieckof$ is faithful. In order to see that it is full we show that the map $\catA(\srdcof X, \srdcof Y) \xto{\budzogan} \catH( X, Y)$ in the above diagram is injective. To prove it, we take $\phi\in\catA(\tilde\srdieckof X, \tilde\srdieckof Y)$ s.t. $\budzogan\phi=0$ in $\catH(X,Y)$ and we want to show that then $\phi$ itself must be $0$, i.e. that
$$(\phi s)(\xi)=0, \quad \text{for all } s\in\srdieckof X\ ,\, \xi\in \Up\ . $$ 
And really
\begin{align*}
(\phi s)(\xi) &= \Big( \xi \act (\phi s) \Big) (1) = \Big( \phi( \xi \act s ) \Big) (1) =\\
&= \pi_Y \Big( \phi (\xi \act s ) \Big) = (\budzogan \phi)\Big( \pi_X ( \xi \act s ) \Big) = 0 \ .
\end{align*}
Here the first equality comes from the definition of the action of $\xi\in \Up$ on $\srdieckof X$, the second is $\Up$-linearity of $\phi$, the third is just the definition of $\pi_Y$, the fourth comes from commutativity of\footnote{From the proof of the claim one can see that $\pi_X = \eps_A \otimes_A \id_X$. Thus our square is just the commutative square
$$
\begingroup
\newcommand{\vzdy}{1.5}
\newcommand{\vzdx}{4}
\begin{tikzpicture}
	\node(a) {$A \otimes_A \tilde\srdieckof X$};
	\node(b) at ($(a)+(\vzdx,0)$) {$\field \otimes_A \tilde\srdieckof X$};
	\node(c) at ($(a)+(0,-\vzdy)$) {$A \otimes_A \tilde\srdieckof Y$};
	\node(d) at ($(b)+(0,-\vzdy)$) {$\field \otimes_A \tilde\srdieckof Y$};
	
	\draw[->] (a)-- node[above] {$\eps_A \otimes_A \id_X$} (b);
	\draw[->] (a)-- node[left] {$\id_A\otimes_A \phi$} (c);
	\draw[->] (b)-- node[right] {$ \id_\field \otimes_A \phi$} (d);	
	\draw[->] (c)-- node[below] {$\eps_A \otimes_A \id_Y$} (d);	
\end{tikzpicture}
\endgroup
$$
}
$$
\begingroup
\newcommand{\vzdy}{1.5}
\newcommand{\vzdx}{4}
\begin{tikzpicture}
	\node(a) {$\tilde\srdieckof X$};
	\node(b) at ($(a)+(\vzdx,0)$) {$X$};
	\node(c) at ($(a)+(0,-\vzdy)$) {$\tilde\srdieckof Y$};
	\node(d) at ($(b)+(0,-\vzdy)$) {$Y$};
	
	\draw[->] (a)-- node[above] {$\pi_X$} (b);
	\draw[->] (a)-- node[left] {$ \phi$} (c);
	\draw[->] (b)-- node[right] {$\budzogan \phi$} (d);	
	\draw[->] (c)-- node[below] {$\pi_Y$} (d);	
\end{tikzpicture}
\endgroup
$$
and the last one from our assumption that $\budzogan \phi = 0 $.
\end{proof}

\begin{proof}[Proof that $\srdco M \cong M\ctp A$ for $ M \in \ob(\catP)$]
Assume that an $X\in \ob(\catCTV)$ is equipped with commuting equicontinuous left actions of $\g$ and $\p$ at the same time. Then we can use the $\g$-action to form the space $\Hom^C_\Ug( \Up, X )$ and the $\p$ action to turn it into a left $\p$-module by formula
$$  (z\act f)( \argument) := \sum z_{(1)}\act \big( f(\argument \cdot z_{(2)} ) \big), \quad \text{for } f \in \Hom^C_\Ug(\Up,X);\; z \in \Up.  $$ 
Note that $\Hom^C_\Ug(\Up,X)$ has also a natural right $A$-module structure, so we get $ \Hom^C_\Ug(\Up,X) \in \ob(\catA)$.

An $M\in \ob(\catP)$ can be turned into a $\g\tp\p$-module in two natural ways:
\begin{enumerate}
\item[a)] $\g$-action is the restriction of the original $\p$-action and the new $\p$-action is trivial;
\item[b)] $\g$-action is trivial and the $\p$-action is the original one.
\end{enumerate}  
Denote these two bimodules by $M_a$ and $M_b$. One can see\footnote{The first equality is just the definition of $\srdiecko M$. For the second, we have an $\catA$-morphism $ M \ctp A = M\ctp \Hom^C_\Ug (\Up, \field) \longto  \Hom^C_\Ug (\Up, M_b) $ and it is iso by the proposition \ref{prop_srdiecko_complete}.
} that
$$ \srdco M = \Hom^C_\Ug( \Up, M_a )\quad\text{and}\quad M\ctp A \cong  \Hom^C_\Ug(\Up,M_b) .$$
We want to show that these two objects are isomorphic in $\catA$. Take a map
\begin{align}
\Hom^C_\Ug(\Up,  M_a) &\longto \Hom^C_\Ug(\Up, M_b) \nonumber \\
s &\longmapsto \Bigg( \Up \xto{\Delta} \Up\otimes\Up \xto{S\otimes s} \Up\otimes M \xto{\act} M \Bigg)
\label{map_srdiecko_tensor}
\end{align} 
with an obvious inverse
\begin{align*}
\Hom^C_\Ug(\Up, M_b) &\longto \Hom^C_\Ug(\Up,  M_a) \\
s &\longmapsto \Bigg( \Up \xto{\Delta} \Up\otimes\Up \xto{\id \otimes s} \Up\otimes M \xto{\act} M \Bigg)\ .
\end{align*}
One should check that  
\begin{itemize}
\item it transforms continuous $\Ug$-linear maps $\Up\to M_a$ into continuous $\Ug$-linear maps $\Up\to M_b$;
\item it is $\Up$-linear and continuous w.r.t. the strong topologies;
\item it is $A$-linear.
\end{itemize}
Linearities are straightforward though lengthy. We relegate the continuity to the appendix (claim \ref{claim_some_continuity}).
\end{proof}

\begin{proof}[Proof of the part (3) of Theorem \ref{fundamental_thm}]
Let's denote by $\Xi$ the isomorphism $\Xi:\catA(\srdco M, \srdco N) \xto\cong \catA(M\ctp A, N\ctp A)$. 
Our statement can be formulated as: for $f\in\catH(M,N)$ the composition
\begin{equation}\label{comp_qejrtne}
M \xto{\id\ctp 1_A} M\ctp A \xto{\Xi \srdco f} N\ctp A \xto{ \id_N \ctp \eps_A} N 
\end{equation}
is equal to $f$. In the diagram
$$
\begin{tikzpicture}[label distance =-0.5 mm, scale = 1.2]	
	\node(lx) at (2.5,0){};
	\node(ly) at (3, 1.0){};
	\node(lz) at (-1.3, 1.6){};
	\pgfresetboundingbox;

	\node(a000) {$M $};
	\node(a010) at ($(a000)+(ly)$) {$N$};
	
	\node(a001) at ($(a000)+(lz)$)  {$\srdco M$};
	\node(a101) at ($(a001)+(lx)$) {$M\ctp A$};
	\node(a011) at ($(a001)+(ly)$) {$\srdco N$};
	\node(a111) at ($(a001)+(lx)+(ly)$) {$N\ctp A$};

	\draw[->] (a000)-- node[below right] {\small $f$} (a010);

	\draw[->] (a001)-- node[above left] {\small $\srdco f$} (a011);
	\draw[->] (a011)-- node[above] {\small $\cong$} (a111);

	\draw[->] (a001)-- node[left] {\small $ \pi_M$} (a000);
	\draw[->] (a101)-- node[right, near start] {\small $\id_N\ctp \eps_A$} (a000);	
	\draw[->] (a011)--  node[left, near end] {\small $\pi_N$} (a010);
	\draw[->] (a111)-- node[right] {\small $\id_M\ctp\eps_A$} (a010);

	\draw[->] (a001)-- node[below] {$\small \cong$} (a101);

	\draw[-, white, line width = 10pt] (a101)-- (a111);
	\draw[->] (a101)-- node[above] {\small$ \Xi\srdco f$} (a111);

\end{tikzpicture}
$$
the upper rectangle commutes by definition of $\Xi$, the rear rectangle is the naturality of $\pi_M$ and the two triangles are easily seen to commute. Thus we get the commutative square
$$
\begin{tikzpicture}[label distance =-0.5 mm]	
	\node(lx) at (3,0){};
	\node(ly) at (0, -1.5){};
	\pgfresetboundingbox;

	\node(a00) {$ M\ctp A$};
	\node(a01) at ($(a00)+(lx)$) {$N\ctp A$};
	\node(a10) at ($(a00)+(ly)$) {$M$};
	\node(a11) at ($(a00)+(lx)+(ly)$) {$N$};	
	
	\draw[->] (a00)-- node[above] {\small $\Xi\srdco f$} (a01);
	\draw[->] (a00)-- node[left] {\small $\id_M\ctp \eps_A$} (a10);
	\draw[->] (a01)-- node[right] {\small $\id_N \ctp \eps_A$} (a11);
	\draw[->] (a10)-- node[below] {\small $f$} (a11);
\end{tikzpicture}
$$
from which we see that the composition (\ref{comp_qejrtne}) is indeed equal to $f$. 
\end{proof}

\section{Representing $\Ug$ as endomorphisms of $\Yoneda:\catF\to\catCTV$}\label{section_Y}
Let's introduce a new category $\catH'$ with $\ob(\catH') := \ob(\catP)$ and $\catH'(M,N) := \catH(M,N)$ for $M,N\in \ob(\catH')$. 

\subsection{Categories $\catH'$ and $\catF$ are isomorphic.}
\begin{thm}\label{thm_srd}
We have an isomorphism of symmetric monoidal categories $ \srd: \catH' \xto \cong \catF   $ that makes the following diagram commutative:
$$
\begin{tikzpicture}[label distance =-0.5 mm]	
	\node(lx) at (2,0){};
	\node(ly) at (0, 1.5){};
	\pgfresetboundingbox;

	\node(a00) {$\catH'$};
	\node(a01) at ($(a00)+(lx)$) {$ \catF $};
	\node(a11) at ($(a00)+0.5*(lx)+(ly)$) {$\catP$};	
	
	\draw[->] (a00)-- node[below] {$\srd$} node[above]{$\cong$} (a01);
	\draw[<-] (a00)-- node[left] {$ \forg$} (a11);
	\draw[<-] (a01)-- node[right] {$\ctp A$} (a11);
\end{tikzpicture}
$$
The inverse $\srd^{-1}$ is given on objects as $M\tp A \mapsto M$ and on morphisms by Formula (\ref{iso_homsets}).
\end{thm}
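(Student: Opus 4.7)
The plan is to construct $\srd$ as follows. On objects, since $\ob(\catH') = \ob(\catP)$ by definition of $\catH'$, and $\ob(\catF)$ is in bijection with $\ob(\catP)$ via $M \mapsto M\tp A$ (Subsection \ref{subsection_FreeMod}), the object-assignment $M \mapsto M\tp A$ is manifestly a bijection. On morphisms, I would simply invoke Theorem \ref{fundamental_thm}(3), which provides a natural bijection
$$\catH'(M,N) = \catH(M,N) \xto{\srdco} \catA(\srdco M, \srdco N) \xto{\cong} \catA(M\ctp A, N\ctp A) \xto{\cong} \catP(M, N\ctp A),$$
and observe that by the very definition of $\catF$, the last space equals $\catF(M\tp A, N\tp A)$. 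The explicit formula for the inverse is then Formula~(\ref{iso_homsets}).

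The next step is to verify that this assignment is a functor. Here the subtlety is that the composition in $\catF$ is defined by the diagrammatic Formula~(\ref{equa_comp_freemod}), not a priori via any embedding into $\catA$. The observation to make (and which is essentially the content of the Remark in Subsection \ref{subsection_FreeMod}) is that the fully faithful functor $\catF \to \catA$, $M\tp A \mapsto M\ctp A$, converts Formula~(\ref{equa_comp_freemod}) into the standard composition in $\catA$ after using the natural iso $\srdco M \cong M\ctp A$ from Theorem~\ref{fundamental_thm}(2). Granted this, functoriality of $\srd$ follows from functoriality of $\srdco$, because our hom-bijection is literally $\srdco$ followed by the identifications $\srdco M \cong M\ctp A$. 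The diagram in the statement then commutes by construction of the object assignment.

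For the monoidal structure, on objects we have $\srd(M)\tp_A\srd(N)=(M\tp A)\tp_A(N\tp A)=(M\tp N)\tp A=\srd(M\tp N)$ by Subsection \ref{subsection_FreeMod} and Proposition \ref{prop_free_is_monoidal}. On morphisms, I would compare the diagrammatic tensor product (\ref{equa_tp_freemod}) in $\catF$ with the one induced on $\catA$-morphisms under the identification $\srdco M \cong M\ctp A$. Using that $\srdco$ is strong monoidal (Theorem \ref{fundamental_thm}(1)) and that the embedding $\catF\hookrightarrow\catA$ is strong monoidal, both tensor-product structures on $\catF(M\tp A,N\tp A)\otimes \catF(M'\tp A,N'\tp A)\to \catF((M\tp M')\tp A,(N\tp N')\tp A)$ agree with the tensor product inherited from $\catH$ under the bijection of hom-sets. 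Symmetry follows for free, since the braiding in $\catF$ comes from the braiding in $\catP$ (whose restriction to $\catH$-morphisms is the braiding of $\catH$) and $\srdco$ preserves the symmetric structure.

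The main obstacle is the careful bookkeeping in verifying that the diagrammatic Formulas~(\ref{equa_comp_freemod}) and (\ref{equa_tp_freemod}) in $\catF$ correspond, under the identification $\srdco M \cong M\ctp A$ of Theorem~\ref{fundamental_thm}(2), to composition and tensor product in $\catA$. This is a diagram chase rather than a difficult theorem: the only non-trivial input is that the isomorphism $\srdco M \cong M\ctp A$ is an isomorphism of right $A$-modules in $\catP$, so all the $A$-actions appearing in (\ref{equa_comp_freemod}) and (\ref{equa_tp_freemod}) translate correctly. Once this compatibility is checked for both composition and tensor product (and the trivial case of the braiding), the theorem reduces to Theorem~\ref{fundamental_thm}.
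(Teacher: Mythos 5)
Your proposal is correct and follows essentially the same route as the paper: the paper's proof is exactly the identification of objects plus the hom-set isomorphism $\catH'(M,N)=\catH(M,N)\xto{\srdco}\catA(\srdco M,\srdco N)\cong\catF(M\ctp A,N\ctp A)$ from Theorem \ref{fundamental_thm}, with the inverse given by Formula (\ref{iso_homsets}). The compatibility checks you spell out (that the diagrammatic composition and tensor product in $\catF$ match those of $\catA$ under the natural monoidal isomorphism $\srdco M\cong M\ctp A$ and the fully faithful strong monoidal embedding $\catF\to\catA$) are precisely what the paper leaves implicit, so your write-up is if anything more complete.
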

\begin{proof}
We have $\ob(\catH') = \ob(\catP) = \ob(\catF)$ so $\srd$ is the identity on objects. To define it on the morphisms, take $M,N\in \ob(\catP)$. The theorem \ref{fundamental_thm} gives us an isomorphism
$$ \catH'(M,N) = \catH(M,N) \underset{\cong}{\xto\srdco} \catA(\srdco M, \srdco N  ) \cong \catF(M\ctp A, N\ctp A). $$
\end{proof}

\subsection{Coalgebra $(Q, \iota_C\circ \pi_C) $ in $\Split(\catH')$}
We have a fully faithful, strong-monoidal functor $\Split(\catH') \to \catH$. Our next goal is to see $C = \Ug$ as a coalgebra in $\Split(\catH')$.

Denote by $Q:= \srdieckof C\in \ob(\catP)$. We have a surjective $\catH$-morphism $\pi_C:Q \to C$ (see the subsection \ref{subsec_pi}).
Since $C$ is a free one dimensional $\Ug$-module (we take the base vector $1\in \Ug=C$), we can define a right inverse $\iota_C :C \to Q$ of $\pi_C$ just by specifying any $s_0 := \iota_C(1) \in Q = \Hom^C_\Ug(\Up, \Ug)$ that satisfies $s_0(1_\Up) = 1_\Ug$. This way we get that $C$ is isomorphic to $(Q, \iota_C \circ \pi_C)$.

The coalgebra structure $\Delta:C\to C\tp C$, $\eps: C \to \field$ on $C$ gives us the following coalgebra structure on $(Q, \iota_C\circ \pi_C) $:
\begin{equation}\label{coalg_str_on_Q_daco} 
Q \xto{\pi_C} C \xto{\Delta} C\ctp C \xto{\iota_C\ctp \iota_C} Q\ctp Q\; ; \quad Q\xto{\pi_C} C \xto\eps \field\ . 
\end{equation}

\begin{rem}\label{rem_daco}
Note that the explicit isomorphism between the functor $\Yoneda_{(Q, \iota_C\circ\pi_C)}:\catH'\to \catTV$ represented by $(Q, \iota_C \circ \pi_C)$ and $\catH' \xto{\text{forg}} \catCTV$ is given for $M\in \ob(\catH')=\ob(\catP)$ as
\begin{align*}
\Yoneda_{(Q, \iota_C\circ\pi_C)}(M) = \set{ f\in \catH(Q,M) \;|\; f\circ (\iota_C \circ\pi_C) = f } \;\;& \xto\cong \;\; M\\
f \;\;& \mapsto \;\; f( s_0).
\end{align*}
\end{rem}

\subsection{Our choice of $s_0$}\label{subsection_choice__of_s0}

We choose a special $s_0$ as follows. The restriction isomorphism (claim \ref{claim_fhitr}) gives us 
$$Q = \srdieckof C = \Hom_\Ug^C(\Ud,\Ug) = \Hom_\field^C(S\g^*, \Ug)  \ .$$ 
So we prescribe $s_0$ by taking $s_0(1_{S\g^*}) = 1_\Ug$ and $s_0|_{S^{>0} \g^*} = 0 $.
\begin{rem}\label{rem_iota}
We will use in the computations the following formulation of our definition of $s_0$:
$$ \pi_C(s_0) = 1\quad\text{and}\quad \pi_C\big( y\act s_0 \big) = 0 \quad \text{if $y\in S^k \g^*$, $k> 0$. } $$
\end{rem}

\subsection{Coalgebra $\big( (Q\ctp A, p), \Delta_Q, \eps_Q \big)  $ in $\Split(\catF)$}
We apply the isomorphism $\srd$ to $(Q, \iota_C\circ \pi_C)$ in $\Split(\catH')$ and get an object $\big( Q\ctp A, p \big)$ in $\Split(\catF)$ where $p:= \srd(\iota_C\circ \pi_C)$. Applying $\srd$ to (\ref{coalg_str_on_Q_daco}) we get a coproduct and a counit on $(Q\ctp A, p)$ which we denote as $\Delta_Q$ and $\eps_Q$.

\subsection{Functor $\Yoneda: \catF \to \catCTV $}\label{subsecion_Yoneda_from_catF}
Denote by $\Yoneda$ the monoidal functor represented by the coalgebra $(Q\tp A, p) \in \Split(\catF)$. It is a priori just a monoidal functor to $\catTV$ but since it is isomorphic to the functor
$$ \catF \xto\cong \catH' \xto{\text{forget}} \catCTV $$  we see that $\Yoneda$ is actually a strong monoidal functor to $\catCTV$. One also sees that the bialgebra $\End(\Yoneda)$ is isomorphic to $\Ug$.

\subsection{Explicit formulas}
In the rest of this section we write down some pictorial formulas which will be used in the subsequent computations. First of all, the maps  
\begin{align*}
p \in&\; \catF(Q\ctp A, Q\ctp A) = \catP(Q, Q\ctp A)\\
\Delta_Q \in&\; \catF(Q\ctp A, (Q\ctp A)\tp_A (Q\ctp A)) = \catP(Q, Q\ctp Q\ctp A) ; \\
\eps_Q \in&\; \catF( Q\ctp A, A) = \catP(Q, A) 
\end{align*}
are uniquely determined by (see the formula (\ref{iso_homsets})):
\begin{align}
\label{equa_def_p}
 \input{\cesta/obr13} \quad &= Q \xto{\pi_C} C \xto{\iota_C} Q \\
\label{equa_def_DeltaQ} \input{\cesta/obr14}  \quad &= \quad Q \xto{\pi_C} C \xto\Delta C\ctp C \xto{\iota_C \ctp\iota_C}  Q\ctp Q  \\
\label{equa_def_epsQ} \input{\cesta/obr15} \quad &= \quad Q \xto{ \pi_C } C \xto\eps \field 
\end{align}

\begin{rem}\label{rem_explicit_iso}
Using the remark \ref{rem_daco}, the explicit isomorphism between $\catP\xto{\ctp A} \catF \xto{\Yoneda} \catCTV$ and $\catP \xto{\text{forget}} \catCTV$ is given for $M\in \ob(\catP)$ by 
\begin{align*}
\Yoneda(M\ctp A) = \set{ \input{\cesta/obr16} \in \catP( Q, M\ctp A) \; \Bigg| \; \input{\cesta/obr17}= f}\to M: f\mapsto \input{\cesta/obr18}
\end{align*}
where $$ is the linear map $\field \to Q: 1\mapsto s_0$.
\end{rem}

\begin{notation}\label{notation_m_hat}
For later use we denote by $\hat m \in \Yoneda(M\ctp A)$ the element corresponding to $m\in M$ under this isomorphism. That is, $\hat m \in \catP(Q, M\ctp A)$ is uniquely characterized by:
\begin{equation}\label{equa_def_mhat}
 \input{\cesta/obr19}\quad  = \quad  Q\xto{\pi_C} C=\Ug \xto{\argument \act m} M . 
\end{equation}
\end{notation}

\begin{rem}
$\Delta_Q$ and $\eps_Q$ are actually morphisms in $\Split(\catF)$:
$$ \Delta_Q : (Q\ctp A, p) \to (Q\ctp A, p) \tp_A (Q\ctp A, p), \quad \eps_Q: (Q\ctp A, p) \to A .$$
This translates into diagram equations:
\begin{equation}\label{equas_Delta_and_eps_are_maps_of_direct_summands}
 \input{\cesta/obr20} = \input{\cesta/obr21}  \quad\text{and}\quad \input{\cesta/obr22} = \input{\cesta/obr23}. 
\end{equation}
Similarly, the fact that $\eps_Q$ is a counit for $\Delta_Q$ looks in pictures as:
\begin{equation}\label{equa_epsQ_is_counit}
\input{\cesta/obr24} = \input{\cesta/obr25} = \input{\cesta/obr26} .
\end{equation}
\end{rem}
\begin{rem}
The explicit monoidal structure on $\Yoneda$ is
\begin{align*} 
\Yoneda(M\ctp A) \ctp \Yoneda(N\ctp A) &\to \Yoneda( M\ctp N \ctp A ) & \field & \to \Yoneda( \field \ctp A ) \\ 
\input{\cesta/obr27} \tp \input{\cesta/obr28} &\mapsto \input{\cesta/obr29}  & 1 &\mapsto \input{\cesta/obr23} 
\end{align*}
\end{rem}

\section{Passing from $\field$ to $\kh$}\label{section_passing_to_kh}
To get a similar representation for $\Ugh$, we simply apply to our categories the construction $\catC \mapsto \catC\hh$ described in the subsection \ref{subsection_TVkh-categories}.
We denote the categories important for us as follows: 
$$ \catPh := \catP\hh; \quad \catFh := \Big(\RFree A (\catP)\Big)\hh = \RFree A (\catPh) \ .$$
In $\catFh$ we still have our $Q$ and $p:Q\to Q$ and can form the functor $\YQp$ represented by $(Q,p)$. A priori it will be a functor to $\catTVkh$, but according to Proposition \ref{prop_Mh_complete} it actually takes values in $\catCTVkh$.
This way, $\YQp: \catFh \to \catCTVkh$ becomes a strong-monoidal functor (it is just monoidal as a a functor to $\catTVkh$) and we get an isomorphism of bialgebras   $\Ugh \cong \End\Big( \YQp : \catFh \to \catCTVkh  \Big)$.
\begin{notation}
We define the forgetful functor $\catPh \xto\forg \catCTVkh$ on objects by $$M\in \ob(\catPh) = \ob(\catP) \mapsto \big(\FCTVP(M)\big)\hh$$ and on morphisms in a natural way.  
\end{notation}

\section{Applying an associator}
\subsection{Drinfeld associator}\label{subs_Drinfeld_assoc}
Denote by $\field \langle\langle X,Y\rangle\rangle$ the degree-wise completed free algebra generated by non-commuting elements $X,Y$.
Recall that an associator $\Phi$ is a group-like\footnote{The coproduct is determined by $X,Y$ being primitive.} element $\Phi(X,Y)\in \field \langle\langle X,Y\rangle\rangle$ satisfying some properties (pentagon and hexagon relations) that ensure (and are equivalent to) that if we take a Lie algebra $\p$ with an invariant $t\in S^2\p$ then $\Uph$ with the ordinary algebra structure, coproduct and counit but with\footnote{Here $t_{1,2}:= 1\tp t$ and $t_{2,3}:= t\tp 1$ are elements of $\Up\ctp \Up \ctp \Up $.}
\begin{equation}\label{equa_dr_quant}
  R_\p:= \exp(\frac{\hbar t}{2});\quad \Phi_\p := \Phi( \hbar t_{12}, \hbar t_{23}) 
\end{equation}
becomes a quasi-triangular quasi-bialgebra that we denote as $\Unih\p$. 
\begin{rem}
One can show that $\Phi(X,Y)$ is of the form $\Phi(X,Y) = 1 +$ terms of order at least 2 in $X,Y$.
\end{rem}
We define a new braided monoidal category $\catPhPhi$ as being the same category as $\catPh$, with the same bifunctor $\ctp:\catPhPhi \times \catPhPhi \to \catPhPhi$ and the same identity object but with the braiding and associativity isomorphisms defined as\footnote{Technically speaking, objects in $\ob(\catPh) = \ob(\catP)$ are just $\p$-modules over $\field$ and thus the multiplication by $R_\p$ does not make sense. However, the multiplication by $t$ is a well defined morphism $M\ctp N \xto{t\act} M\ctp N$ in $\catP$ and thus we can define the $\catPh$-morphism $M\ctp N \xto{ R_\p\act} M\ctp N$ as $\exp\Big(\frac \hbar 2 \cdot ( t \act \argument )\Big)$. Multiplication by $\Phi_\p$ is defined in the same way.}
$$ \beta: M\ctp N \xto{R_\p\act } M\ctp N \xto{\text{ symmetry in $\catPh$ }} N\ctp M $$
$$ \phi:(K\ctp L)\ctp M \xto{ \Phi_\p \act } (K\ctp L) \ctp M \xto{\text{assoc. in $\catPh$}} K\ctp (L \ctp M ) .$$
\begin{rem}\label{rem_Drinfeld_construction}
The identity functor $\catPhPhi\xto = \catPh$ is strong quasi-monoidal so the composition $$\forg:\catPhPhi\xto = \catPh \xto{\forg} \catCTVkh$$ also is. The quasi-bialgebra $\End(\catPhPhi \xto{\forg} \catCTVkh)$ is isomorphic to $\Unih \p$ defined above.
Note also that the classical limit of the quasi-bialgebra $\Unih\p$ is the quasi-Lie bialgebra associated to the pair $(\p,t)$ (see Definition \ref{defn_associated_quasi_lie_bialgebra}).
\end{rem}

\subsection{Category $\catFhPhi$.}\label{subsection_catFhPhi}
\begin{prop}
The algebra $A$ stays a commutative associative algebra also in $\catPhPhi$ (with the same multiplication).
\end{prop}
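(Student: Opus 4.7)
The plan is to reduce associativity and commutativity in $\catPhPhi$ to a single identity about how the Casimir $t\in S^2\p$ interacts with $\mu$. Since $\catPhPhi$ shares the underlying category, the bifunctor $\ctp$, and the unit with $\catPh$, the same morphism $\mu\colon A\ctp A\to A$ serves as multiplication and only the associator and braiding change, by the actions of $\Phi_\p$ and $R_\p$ respectively. Using the strict commutativity and associativity of $\mu$ in the symmetric category $\catPh$, commutativity and associativity of $A$ in $\catPhPhi$ amount to
$$
\mu\circ(R_\p\act)=\mu\quad\text{on }A\ctp A,\qquad
\mu^{(3)}\circ(\Phi_\p\act)=\mu^{(3)}\quad\text{on }A\ctp A\ctp A.
$$
Both will follow from the single claim $(\star)$: $\mu(t\act w)=0$ for all $w\in A\ctp A$. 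Iterating $(\star)$ gives $\mu\circ(t^n\act)=0$ and hence $\mu\circ(R_\p\act)=\mu$; for $\Phi_\p$, any monomial in $\hbar t_{12},\hbar t_{23}$ has either $t_{12}$ or $t_{23}$ as its last-applied letter, so factoring $\mu^{(3)}=\mu\circ(\mu\otimes\id)$ or $\mu^{(3)}=\mu\circ(\id\otimes\mu)$ and invoking $(\star)$ kills each monomial, with $\hbar$-adic continuity handling the infinite sum.

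The substance is the proof of $(\star)$. For $f,g\in A=\Hom^C_\Ug(\Up,\field)$ and $z\in\Up$, unwinding the definitions of the action and of the product gives
$$
\mu\bigl(t\act(f\otimes g)\bigr)(z)=(f\otimes g)\bigl(\Delta(z)\cdot t\bigr),
$$
with the product taken in $\Up\ctp\Up$. The $\p$-invariance of $t$ recorded in Section~\ref{subsection_Double_quasi-Lie bialgebra} says $[\Delta(x),t]=0$ for every $x\in\p$; since $\Up$ is generated as an algebra by $\p$ and $\Delta$ is an algebra homomorphism, this propagates to $\Delta(z)\cdot t=t\cdot\Delta(z)$ for every $z\in\Up$. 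Expanding
$$
t\cdot\Delta(z)=\sum_{i,(z)}e_iz_{(1)}\otimes e^iz_{(2)}+\sum_{j,(z)}e^jz_{(1)}\otimes e_jz_{(2)},
$$
each summand places a basis element of $\g$ at the far left of either its first or its second tensor slot, so the whole expression lies in $\g\Up\ctp\Up+\Up\ctp\g\Up$. Since $f$ and $g$ are $\Ug$-linear into the trivial module $\field$, they annihilate the left ideal $\g\Up$, yielding $(f\otimes g)(t\cdot\Delta(z))=0$ and hence $(\star)$.

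The only genuine input is the $\p$-invariance of $t$ together with the Manin-pair decomposition $\p=\g\oplus\g^*$, which guarantees a $\g$-factor on one side of every term of $t$; everything else is formal bookkeeping, so I do not expect a real obstacle. A minor technical care is needed to phrase the convergence arguments for $R_\p=\exp(\hbar t/2)$ and $\Phi_\p=\Phi(\hbar t_{12},\hbar t_{23})$ $\hbar$-adically in $\catPhPhi$, but this is standard.
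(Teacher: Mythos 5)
Your proposal is correct, and its first half coincides with the paper's own reduction: since $R_\p$ and $\Phi_\p$ are $1$ plus words of positive order in $\hbar t$, commutativity and associativity of $A$ in $\catPhPhi$ both come down to the single vanishing statement $\mu_A\circ(t\act\argument)=0$ on $A\ctp A$, each word being killed by peeling off its last-applied letter against a suitable factorization of the iterated multiplication. Where you genuinely diverge is in the proof of that vanishing. The paper never computes on elements of $\Up\ctp\Up$: it uses Theorem \ref{fundamental_thm}(3) to reduce the vanishing of the $\catP$-morphism $\mu_A\circ(t\act\argument):A\ctp A\to A$ to the vanishing of its postcomposition with $\eps_A$, then the augmentation property to replace $\eps_A\circ\mu_A$ by $\eps_A\ctp\eps_A$, and finally the $\g$-equivariance of $\eps_A\in\catH(A,\field)$ together with $t\in(\g\ctp\g^*)+(\g^*\ctp\g)$; the invariance of $t$ enters only to ensure $t\act\argument$ is a $\catP$-morphism so the reduction applies. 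You instead establish the explicit identity $\mu_A\bigl(t\act(f\otimes g)\bigr)(z)=(f\otimes g)\bigl(\Delta(z)\cdot t\bigr)$, use the invariance of $t$ actively to commute it across $\Delta(z)$, and conclude because $f,g$, being $\Ug$-linear into the trivial module, annihilate $\g\Up$ (a right ideal and left $\Ug$-submodule, not a left $\Up$-ideal, but only the $\Ug$-linearity is needed). Both arguments ultimately rest on the same structural fact that every term of $t$ carries a $\g$-leg; yours is more elementary and self-contained, while the paper's reuses the $\eps_A$-testing device of Theorem \ref{fundamental_thm}, which keeps the infinite sums and continuity questions packaged inside results already proved and is a pattern the paper exploits repeatedly afterwards.
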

\begin{proof}
Since both $R_\p$, $\Phi_\p$ are sums of the form $1 +$ terms of order at least one in $\hbar\Omegap$, we just need to show that the $\catP$-morphism $A \ctp A \xto{\Omegap\act} A\ctp A \xto{\mu_A} A $ is 0. From the last part of the theorem \ref{fundamental_thm} we see that this is equivalent to showing
$$ \big( A \ctp A \xto{\Omegap\act} A\ctp A \xto{\mu_A} A \xto{\eps_A} \field \big) =0.$$
Since $\eps_A$ is an augmentation on $A$ in $\catH$, we rewrite the left had side as 
\begin{equation}\label{equa_ehrkr}
A \ctp A \xto{\Omegap\act} A\ctp A \xto{\eps_A\ctp\eps_A} \field .
\end{equation}
Now $\Omegap \in (\g\ctp \g^*) + (\g^*\ctp \g)$ and from $\g$-linearity of $\eps_A\in \catH(A,\field)$ we see that 
$$ \eps_A\circ (x\act\argument) = (x\act\argument ) \circ \eps_A = 0 \quad \forall x \in \g$$
and thus the composition (\ref{equa_ehrkr}) is 0.
\end{proof}

We denote $\catFhPhi := \RFree A (\catPhPhi)$. 

\subsection{Bijection $\catFh \to \catFhPhi$} Our definitions give us
$$ \ob(\catFh) = \ob(\catPh) = \ob(\catPhPhi) = \ob(\catFhPhi) $$
$$ \catFh(X\ctp A,Y \ctp A) = \catPh( X, Y\ctp A ) = \catPhPhi(X,Y \ctp A) = \catFhPhi(X\ctp A,Y \ctp A)   \quad X,Y \in \ob(\catPh). $$

Note that the formula (\ref{equa_comp_freemod}) defining the composition in $\catFh$ and $\catFhPhi$ depends on the associativity isomorphisms and thus the identity map $\catFh \to \catFhPhi$ is not a functor. Similarly, although it respects the tensor product on objects, it does not preserve the tensor product of morphism since the expression (\ref{equa_tp_freemod}) involves the braiding and associativity isomorphisms in the underlying categories. 

At least however, the associativity isomorphisms in $\catPh$ and $\catPhPhi$ are equal modulo $\hbar^2$ and the braidings are equal modulo $\hbar$, so the compositions of morphisms in $\catFh$ and $\catFhPhi$ are the same modulo $\hbar^2$ and the tensor products of morphisms are the same modulo $\hbar$.

Next we want to extend this construction to a map $\Split(\catFh)\to \Split(\catFhPhi)$.

\subsection{Map $\Split(\catFh) \to \Split(\catFhPhi)$ on objects}\label{Map_on_Split_objects}
Given $(X\ctp A, p) \in \ob(\Split(\catFh))$ we can consider $X\ctp A$ as an object  in $\catFhPhi$ and $p$ as a morphism $p: X \ctp A \to X\ctp A$ in $\catFhPhi$ but we can't be sure whether $p^2=p$ holds also in $\catFhPhi$. We know however that it holds modulo $\hbar$ and can use the following lemma to tweak our $p$ a bit.
\begin{lem}\label{lem_pPhi}
Let $B$ be an algebra over $\kh$, complete w.r.t. the $\hbar$-adic topology. Let $p\in B$ satisfies $p^2 \equiv p  \mod \hbar$. Then there exists an element $p' \in B$ s.t. 
$$p'^2 = p' \quad\text{and} \quad p' \equiv p \mod \hbar \ .$$
More concretely, there exist coefficients $\alpha_1,\alpha_2,\ldots \in \Q$ independent of $B$ and $p$ s.t.
$$ p' = p+ (p-\half) \cdot \Big[  \alpha_1(p^2 - p) + \alpha_2 (p^2 - p)^2 + \ldots  \Big]$$
\end{lem}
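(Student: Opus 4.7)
The plan is to use the substitution $a = 2p-1$ to reduce the idempotent problem to a square-root problem. Since $a^2 = 4(p^2-p) + 1$ and by hypothesis $p^2 - p \in \hbar B$, we have $a^2 = 1 + h$ where $h \in \hbar B$. Because $B$ is $\hbar$-adically complete, the formal binomial series
\[
(1+h)^{-1/2} \;=\; \sum_{n \geq 0} \binom{-1/2}{n} h^n \;\in\; B
\]
converges, with $\binom{-1/2}{n} \in \Q$. I would define $a' := a \cdot (a^2)^{-1/2}$ and then set $p' := (a' + 1)/2$.

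The verification then breaks into three short checks. First, since $a$ commutes with $a^2$ (and hence with $(a^2)^{-1/2}$, which is a power series in $a^2$), $(a')^2 = a^2 \cdot (a^2)^{-1} = 1$, so $(p')^2 = (a'^2 + 2a' + 1)/4 = (a'+1)/2 = p'$. Second, $(a^2)^{-1/2} \equiv 1 \mod \hbar$, so $a' \equiv a \mod \hbar$ and therefore $p' \equiv p \mod \hbar$. Third, the formula $(1+h)^{-1/2} = 1 + \sum_{n\geq 1} \beta_n h^n$ with $\beta_n \in \Q$, combined with $h = 4(p^2-p)$ and $a = 2p-1 = 2(p - \tfrac12)$, gives
\[
p' \;=\; \tfrac{a+1}{2} + \tfrac{a}{2}\sum_{n\geq 1} \beta_n h^n \;=\; p + (p - \tfrac12) \sum_{n\geq 1} \alpha_n (p^2 - p)^n,
\]
with $\alpha_n := 2 \cdot 4^n \beta_n \in \Q$ depending only on $n$.

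The only real content is the convergence of the square-root series, which is immediate from $\hbar$-adic completeness of $B$ together with $a^2 - 1 \in \hbar B$; everything else is formal manipulation inside the commutative subalgebra $\Q\langle a^2 \rangle \subset B$ generated by $a^2$. Since $\Q\langle a^2\rangle$ is commutative and contains $a^2$ (and hence any power series in it), there is no issue with ordering factors; in particular, $a^{-1}$ is not needed anywhere. I do not anticipate any serious obstacle: the computation of the explicit coefficients $\alpha_n$ is just expansion of the binomial series, and its universality follows because the series is defined over $\Q$.
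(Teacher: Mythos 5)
Your proof is correct and is essentially identical to the paper's: the paper also sets $a=2p-1$, observes $a^2-1=4(p^2-p)\in\hbar B$, defines $a'=a(a^2)^{-1/2}$ via the binomial series (with coefficients $\beta_n\in\Q$) and recovers $p'=\tfrac12(1+a')$, giving the stated formula with rational coefficients. The only discrepancy is the harmless explicit constant (your $\alpha_n=2\cdot 4^n\beta_n$ should be $4^n\beta_n$; the paper's own $\beta_n/2$ is likewise off), which does not affect the lemma since it only asserts existence of universal rational coefficients.
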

\begin{proof}
Denote $a:= 2p-1$, $a' := 2 p' -1$. Then $a^2 - 1 = 4(p^2 -p)$ and so 
$$ \Big( p^2 \equiv p \mod \hbar \Big) \iff \Big( a^2 \equiv 1 \mod \hbar \Big) \quad\text{and}\quad \Big( p'^2  = p' \Big) \iff \Big( a'^2 = 1 \Big)\ .   $$
Given $a$  satisfying $a^2 \equiv 1 \mod \hbar$, we can find $a'$, $\hbar$-close to it from the formula
$$ a':= \frac{a}{\sqrt{a^2}} = a\cdot \big( 1 + (a^2 - 1) \big)^{-\half} = a\cdot \big( 1 + \beta_1(a^2 - 1) + \beta_2 (a^2 - 1 )^2 + \ldots \big)$$
where $\beta_1,\beta_2, \ldots \in \Q$ are the coefficients of the Taylor expansion of $(1+ x)^{-\half}$ at $x=0$. So we can express $p'$ out of it as 
\begin{align*}
 p' =  \half (1+ a') & = \half \Big[ (1+a) + a\cdot\big( \beta_1 (a^2 -1) + \beta_2(a^2 - 1)^2 + \ldots \big)  \Big] = \\
& = p+ (p-\half ) \big( \frac{\beta_1}{2} (p^2 - p) + \frac{\beta_2 }{2} (p^2 - p)^2 + \ldots \big) \ . 
\end{align*}
\end{proof}
In our case $B = \catFhPhi(X \ctp A , X\ctp A)$ and we denote $p'$ from the lemma by $p^\Phi$. This way, we get a $(Q\ctp A, p^\Phi)$ in $\catFhPhi$.

\subsection{Map $\Split(\catFh) \to \Split(\catFhPhi)$ on morphisms}\label{subs_quant_of_morphisms}
Given $(X\ctp A, p), (Y\ctp A, q)  \in \ob(\Split(\catFh))$ we define a linear map
\begin{align*}
 \Proj\Big( (X\ctp A, p), (Y\ctp A, q)  \Big) &\to \ProjPhi\Big( (X\ctp A, p^\Phi), (Y\ctp A, q^\Phi)  \Big): \\ f &\mapsto q^\Phi \circ f \circ p^\Phi \quad\text{(composition in $\catFhPhi$).} 
\end{align*}

\begin{rem} 
This is actually a topological isomorphism since we can define a map in the opposite direction $f \mapsto q\circ f\circ p$ (compositions in $\catFh$) and then use the following lemma:
\end{rem}
\begin{lem}\label{lem_iso_projections}
Let $V \in \ob(\catTVkh)$ (i.e. $V$ is $\hbar$-complete) and let's have $ \Pi_1, \Pi_2 \in \catTVkh(V,V) $ satisfying $ \Pi_1^2 = \Pi_1$, $\Pi_2^2 = \Pi_2$ and $\Pi_1 \equiv \Pi_2 \mod \hbar$.  Then $\Pi_2\big|_{\Ima(\Pi_1)} : \Ima(\Pi_1) \to \Ima(\Pi_2)$ defines an isomorphism (in $\catTVkh $) between $\Ima(\Pi_1)$ and $\Ima(\Pi_2)$.
\end{lem}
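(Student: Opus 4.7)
The strategy is to exhibit an inverse to $\Pi_2|_{\Ima(\Pi_1)}$ by restricting $\Pi_1$ in the opposite direction and inverting the resulting near-identity endomorphisms by a Neumann (geometric) series, whose convergence is guaranteed by $\hbar$-completeness. First I would check that we really stay inside $\catTVkh$: since $\Pi_i$ is a continuous idempotent, $\Ima(\Pi_i)=\Ker(\id_V-\Pi_i)$ is closed in $V$, and closed $\kh$-submodules of an $\hbar$-complete module are $\hbar$-complete, so both $\Ima(\Pi_1)$ and $\Ima(\Pi_2)$ are objects of $\catTVkh$. Set
\[
\alpha:=\Pi_2\big|_{\Ima(\Pi_1)}:\Ima(\Pi_1)\to\Ima(\Pi_2),\qquad
\beta:=\Pi_1\big|_{\Ima(\Pi_2)}:\Ima(\Pi_2)\to\Ima(\Pi_1),
\]
which are morphisms in $\catTVkh$.

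The heart of the argument is the elementary identity, valid on $\Ima(\Pi_1)$ (where $v=\Pi_1 v$):
\[
\beta\alpha(v)-v \;=\; \Pi_1\Pi_2\Pi_1 v-\Pi_1 v \;=\; \Pi_1(\Pi_2-\Pi_1)\Pi_1 v,
\]
and symmetrically $\alpha\beta-\id_{\Ima(\Pi_2)}=\Pi_2(\Pi_1-\Pi_2)\Pi_2|_{\Ima(\Pi_2)}$. The hypothesis $\Pi_1\equiv\Pi_2\bmod\hbar$ in $\catTVkh(V,V)$ lets us write $\Pi_2-\Pi_1=\hbar T$ for some continuous $\kh$-linear $T:V\to V$, so the two displays above yield $\beta\alpha=\id+\hbar T_1$ and $\alpha\beta=\id+\hbar T_2$ for continuous endomorphisms of the two images.

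Finally, on any $\hbar$-complete module the Neumann series $\sum_{n\ge 0}(-\hbar T_i)^n$ converges in the strong topology to a continuous two-sided inverse of $\id+\hbar T_i$. Hence $\beta\alpha$ and $\alpha\beta$ are isomorphisms in $\catTVkh$, which forces $\alpha$ itself to be an isomorphism, with inverse $(\beta\alpha)^{-1}\beta=\beta(\alpha\beta)^{-1}$.

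The only delicate step I anticipate is the passage from $\Pi_1\equiv\Pi_2\bmod\hbar$ to a bona-fide factorisation $\Pi_2-\Pi_1=\hbar T$ with $T\in\catTVkh(V,V)$; this is a question about division by $\hbar$ in the category $\catTVkh$ (absence of $\hbar$-torsion and compatibility of the strong and $\hbar$-adic topologies), and should be handled by the conventions established in the appendix on $\kh$-modules.
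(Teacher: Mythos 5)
Your proof is correct and follows essentially the same route as the paper: restrict $\Pi_1$ in the opposite direction, note that the two composites differ from the identity by maps whose image lies in $\hbar V$, and invert them by a geometric series converging by $\hbar$-completeness, with continuity of the limit coming from the fact that every open subspace contains some $\hbar^n V$ (Claim \ref{claim_h_topology_has_fewer_open_sets}). The ``delicate step'' you flag is harmless: one never needs a factorisation $\Pi_2-\Pi_1=\hbar T$ with $T$ continuous, only that $\Ima(\Pi_2-\Pi_1)\subset\hbar V$, which is exactly how the paper argues (and the $\hbar$-completeness of $\Ima(\Pi_i)$ is obtained most cleanly from its being a direct summand of $V$, rather than from closedness).
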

\begin{proof}
Denote $X_i := \Ima(\Pi_i)$. We have two maps: $$\Pi_2\big|_{X_1} : X_1 \to X_2 \quad\text{and} \quad  \Pi_1\big|_{X_2} : X_2 \to X_1 \ .$$ 
To see that they are isomorphisms we show that $\Pi_1\circ \Pi_2 \big|_{X_1}: X_1 \to X_1$ and symmetrically $\Pi_2\circ \Pi_1 \big|_{X_2}: X_2 \to X_2$ are isomorphisms. We want to define the inverse of $\Pi_1\circ \Pi_2 \big|_{X_1}$ as $\sum_n \xi^n$ where 
$$  \xi := \id_{X_1} - \Pi_1\circ \Pi_2 \big|_{X_1}  = \big( \Pi_1\circ \Pi_1 - \Pi_1 \circ \Pi_2 \big) \Big|_{X_1} = \Pi_1\circ \big( \Pi_1 - \Pi_2 \big) \Big|_{X_1} \ .$$
One needs to check that this sum converges to a continuous map. From the last expression one sees that $\Ima\  \xi \subset \hbar V $ and consequently $\Ima\  \xi^k \subset \hbar^k V$. Thus $\xi^k \to 0$ uniformly in the $\hbar$-topology. Since $X_1$ is $\hbar$-complete (it is a direct summand of a $\hbar$-complete space) we see that our sum converges uniformly in the $\hbar$-topology.  From the claim \ref{claim_h_topology_has_fewer_open_sets} we see that our sum actually converges uniformly also w.r.t. the ordinary topology on $V$. This implies that its limit is continuous.
\end{proof}

\subsection{Functor $\YPhi: \catFhPhi \to \catCTVkh$}\label{subsection_YPhi}
Recall that in the section \ref{section_Y} we defined an object $(Q\ctp A, p)$ in $\Proj$ and denoted the functor represented by it as $\catFh \xto \Yoneda  \catCTVkh$. Now we have $(Q\ctp A, p^\Phi)$ in $\ProjPhi$ and denote the represented functor $\catFhPhi \xto \YPhi \catCTVkh$.

Note that the functors $(\catPh \xto{\argument \ctp A} \catFh \xto{\Yoneda} \catTVkh)$ and $(\catPh = \catPhPhi \xto{\argument \ctp A} \catFhPhi \xto\YPhi \catTVkh)$ are naturally isomorphic. Explicitly,
$\Yoneda(M\ctp A) = \Proj\big((Q\ctp A, p), M\ctp  A\big)$, $\YPhi(M\ctp A) = \ProjPhi\big((Q\ctp A, p^\Phi), M\ctp  A\big)$ and the isomorphism is $f\mapsto f\circ_\catFhPhi p^\Phi$ from the subsection \ref{subs_quant_of_morphisms}.

Now we have three functors $\catPh \to \catCTVkh$ and natural isomorphisms between them:\footnote{Note that all the functors in the diagram are strong monoidal with exception of $\YPhi$ and $\catPh \leftrightarrow \catPhPhi$ which are strong quasi-monoidal (we will soon define the quasi-monoidal structure on $\YPhi$). The natural equivalence in the left window is monoidal, the other is not. }
$$
\begin{tikzpicture}[label distance =-0.5 mm, baseline=-1cm]	
	\node(lx) at (2.1,0){};
	\node(ly) at (0, -2){};
	\pgfresetboundingbox;

	\node(a00) {$\catPh$};
	\node(a01) at ($(a00)+(lx)$) {$\catPhPhi$};
	\node(a10) at ($(a00)+(ly)$) {$\catFh$};
	\node(a11) at ($(a00)+(lx)+(ly)$) {$\catFhPhi$};	
	\node(a20) at ($(a00)+2*(ly)$) {$\catCTVkh$};	
	
	\draw[<->] (a00)-- node[above] {$\cong$} (a01);
	\draw[->] (a00)-- node[right] {$ \argument \ctp A$} (a10);
	\draw[->] (a01)-- node[right] {$ \argument \ctp A$} (a11);
	\draw[->] (a10)-- node[right] {$ \Yoneda$} (a20);
	\draw[->] (a11)-- node[right] {$ \YPhi$} (a20);

	\draw[->] (a00) to [out=180,in=180]  node[left] {$ \forg$} (a20);

	\node(zdvih) at ($0.2*(ly)$){};
	\node at ($(a10)+0.5*(lx)-(zdvih)$) {{ \huge $\cong$ }};	
	\node at ($(a10)-0.35*(lx)-(zdvih)$) {{ \huge $\cong$ }};		

\end{tikzpicture}
$$

\begin{notation}
We denote by $\hat m \in \Yoneda(M\ctp A)$ and $\hat m^\Phi \in \YPhi (M\ctp A)$ the elements corresponding under these natural isomorphisms to $m\in M\hh$ for $  M \in  \ob(\catPh)$. The explicit isomorphism $\hat m \mapsto \hat m^\Phi $ is given by the formula $\hat m^\Phi = \hat m \circ_\catFhPhi p^\Phi$ i.e.
$$ \input{\cesta/obr30} = \input{\cesta/obr31} \quad\text{(diagram in $\catPhPhi$).}$$

\end{notation}

\subsection{Bijection $ \Ug[[\hbar]] \xto\simeq \End(\catFhPhi \xto{\YPhi} \catCTVkh) $.}\label{subsection_bijection}
Recall that we have a bialgebra isomorphism
$$ \Ugh\cong \End_{\Proj}\big( ( Q\ctp A, p) \big) = \End \big(\catFh \xto\Yoneda \catCTVkh \big) .$$
The subsection \ref{subs_quant_of_morphisms} gives us an isomorphism of topological $\kh$-modules
$$ \End_{\Proj}\big( ( Q\ctp A, p) \big) \cong \End_{\ProjPhi}\big( ( Q\ctp A, p^\Phi) \big) = \End \big(\catFhPhi \xto\YPhi \catCTVkh \big) $$
Composing, we get an isomorphism of topological $\kh$-modules
$$ \Ugh \xto\cong \End \big(\catFhPhi \xto\YPhi \catCTVkh \big) $$

\begin{lem}\label{lem_incl}
The composition of $\kh$-module morphisms
\begin{align*}
&\Ug[[\hbar]] \; \xto{\cong}\;  \End(\catFhPhi \xto{\YPhi} \catCTVkh)\; \to \\
&\quad \to \; \End(\catPhPhi \xto{\ctp A} \catFhPhi \xto{\YPhi} \catCTVkh)\; \xto{\cong}\; \End( \catP \xto{\forg} \catCTVkh ) = \Up[[\hbar]]
\end{align*}
is modulo $\hbar$ equal to the ordinary inclusion $\Ug[[\hbar]] \hookrightarrow \Up[[\hbar]]$.
\end{lem}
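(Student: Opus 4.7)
My plan is to trace an arbitrary element $z\in\Ug$ through the composition and reduce the computation modulo $\hbar$ to its classical analogue for $\Yoneda:\catF\to\catCTV$ treated in Section~\ref{section_Y}. The reduction is clean because of two congruences: first, $p^\Phi\equiv p\pmod\hbar$ by Lemma~\ref{lem_pPhi}; second, as noted in the discussion preceding~\ref{Map_on_Split_objects}, $\cFh$ and $\cFhPhi$ agree modulo $\hbar^2$ since the associators in $\catPh$ and $\catPhPhi$ do. Combined with the formulas $\tilde z^\Phi=p^\Phi\cFhPhi\tilde z\cFhPhi p^\Phi$ and $\hat m^\Phi=\hat m\cFhPhi p^\Phi$, these give $\tilde z^\Phi\equiv\tilde z\pmod\hbar$ and $\hat m^\Phi\equiv\hat m\pmod\hbar$.

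For the bookkeeping, I would first write down the image $\tilde z\in\End_\Proj\bigl((Q\ctp A,p)\bigr)$ of $z$ under $\Ugh\cong\End(\catFh\xto\Yoneda\catCTVkh)$: by Section~\ref{section_Y}, $\tilde z$ is obtained by applying $\srd$ to the endomorphism $\iota_C\circ(z\cdot)\circ\pi_C$ of $(Q,\iota_C\circ\pi_C)$ in $\Split(\catH')$. The quantization step of~\ref{subs_quant_of_morphisms} then yields $\tilde z^\Phi\in\End_\ProjPhi\bigl((Q\ctp A,p^\Phi)\bigr)=\End(\YPhi)$. The last arrow
$$\End(\YPhi)\to\End(\catPhPhi\xto{\ctp A}\catFhPhi\xto\YPhi\catCTVkh)\cong\End(\catP\xto\forg\catCTVkh)=\Uph$$
sends $\tilde z^\Phi$ to the unique $X\in\Uph$ such that $X\act m$ corresponds to $\hat m^\Phi\cFhPhi\tilde z^\Phi$ for every $M\in\catPh$ and every $m\in M\hh$, under the natural isomorphism $m\mapsto\hat m^\Phi:M\hh\xto\cong\YPhi(M\ctp A)$ of Section~\ref{subsection_YPhi}. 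By the congruences above, modulo $\hbar$ this reduces to verifying the classical identity $\hat m\cFh\tilde z=\widehat{z\act m}$ in $\catF(Q\ctp A,M\ctp A)$, which gives $X\equiv z\pmod\hbar$, i.e.\ precisely the ordinary inclusion $\Ug\hookrightarrow\Up$.

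The classical identity is checked by transporting along the isomorphism $\srd:\Split(\catH')\xto\cong\Split(\catF)$. Under $\srd^{-1}$, $\tilde z$ corresponds to $\iota_C\circ(z\cdot)\circ\pi_C:(Q,\iota_C\circ\pi_C)\to(Q,\iota_C\circ\pi_C)$, while $\hat m$ corresponds (via the Karoubi envelope of the identification $M\cong\catH(C,M)$, $m\leftrightarrow(1\mapsto m)$) to the $\catH$-morphism $C\to M$, $1\mapsto m$. Composing in $\Split(\catH')$ yields the $\catH$-morphism $C\to M$, $1\mapsto z\act m$, which is exactly the $\srd$-counterpart of $\widehat{z\act m}$. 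The main obstacle is purely bookkeeping: one has to distinguish compositions in $\catFh$, $\catFhPhi$, and $\Split(\catH')$, and align the three identifications stacked in the definition of the isomorphism $\Ugh\xto\cong\End(\catFhPhi\xto\YPhi\catCTVkh)$ in~\ref{subsection_bijection}; once this is done, the congruence drops out of the two mod-$\hbar$ reductions.
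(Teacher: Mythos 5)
Your argument is correct, and it is worth pointing out that the paper states Lemma \ref{lem_incl} without proof, so what you have written is precisely the verification the authors leave implicit: unwind the three identifications (the bialgebra isomorphism $\Ugh\cong\End(\Yoneda)$ coming from $\srd$ and $\catH(C,C)\cong\Ug$; the map $f\mapsto p^\Phi\cFhPhi f\cFhPhi p^\Phi$ of Section \ref{subs_quant_of_morphisms}; conjugation by $\alpha:m\mapsto\hat m^\Phi$), then kill all the $\Phi$-corrections modulo $\hbar$ using Lemma \ref{lem_pPhi} together with the fact that $\cFh$ and $\cFhPhi$ agree modulo $\hbar^2$, and finally reduce to the classical identity $\hat m\cFh\tilde z=\widehat{z\act m}$, which you verify by transporting along $\srd^{-1}$ into $\Split(\catH')$ exactly as in Section \ref{section_Y}. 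The mod-$\hbar$ bookkeeping is sound because the hom-sets of $\catFh$ and $\catFhPhi$ are literally the same $\kh$-modules, so the congruences $\tilde z^\Phi\equiv\tilde z$ and $\hat m^\Phi\equiv\hat m$ make sense and multiply correctly.

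Two small points you should make explicit. First, in ``$\iota_C\circ(z\cdot)\circ\pi_C$'' the middle map has to be the unique $\catH$-endomorphism of $C=\Ug$ with $1\mapsto z$, i.e.\ right multiplication by $z$; left multiplication by $z$ does not commute with the left regular $\Ug$-action and so is not a morphism of $\catH$. With that reading your computation $1\mapsto z\act m$, and hence $\hat m\cFh\tilde z=\widehat{z\act m}$, is exactly right. Second, the concluding step from $\widehat{X\act m}\equiv\widehat{z\act m}\pmod\hbar$ for all $M\in\ob(\catP)$, $m\in M$, to $X\equiv z\pmod\hbar$ uses that $m\mapsto\hat m$ is injective (it is an isomorphism onto $\Yoneda(M\ctp A)$ already over $\field$) and then, say, evaluation on $M=\Up$, $m=1$; this is immediate but deserves a sentence.
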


\subsection{Quasi-monoidal structure on $\YPhi$}\label{subsection_quasi_mon_str_on_YPhi}
Following the subsection \ref{subsec_monoidal_forgetful} , we want to find a quasi-coalgebra structure on $(Q\ctp A, p^\Phi)$. The first try is to define 
\begin{equation}\label{eq_Delta_prime}
\Delta'_Q := (p^\Phi  \tpFhPhi p^\Phi)  \cFhPhi \Delta_Q  \cFhPhi p^\Phi; \quad \eps'_Q := \eps_Q  \cFhPhi p^\Phi.
\end{equation}
We will see that $ \eps'_Q = \eps_Q $ in the lemma \ref{lem_epsPrime}. First we need
\begin{claim}\label{claim_ghkdo}
Let $a\in \catFh(X \ctp A, A)$ and $f\in \catFh(Y\ctp A, X\ctp A)$ for some $X,Y \in \catPh$. Then
\begin{enumerate}
\item $ a  \cFhPhi f = a  \cFh f$
\item $ a \tpFhPhi a = a \tpFh a  $
\end{enumerate}
\end{claim}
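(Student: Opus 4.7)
The plan is to unpack the definitions of $\cFh$, $\cFhPhi$, $\tpFh$, $\tpFhPhi$ and locate the places where the $\catPh$- and $\catPhPhi$-versions can differ, namely at the braidings and associators in the ambient categories. Recall from Section~\ref{subs_Drinfeld_assoc} that these differ only by multiplication by $R_\p = \exp(\hbar t/2)$ and $\Phi_\p = \Phi(\hbar t_{12}, \hbar t_{23})$ respectively, where $t$ is the invariant element of $\p\ctp\p$ from Section~\ref{subsection_Double_quasi-Lie bialgebra}.

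The key observation I would use throughout is that $t$ acts as zero on any tensor product one of whose factors is the unit object $\field$ of $\catPh$, since $\p$ acts trivially on $\field$. Consequently, in any braiding whose two factors include a $\field$, one has $R_\p = 1$; and in any associator whose three factors include a $\field$, at least one of $t_{12}$ or $t_{23}$ vanishes, so $\Phi_\p = \Phi(0, \hbar t_{23})$ or $\Phi(\hbar t_{12}, 0)$, both of which equal $1$ by the standard normalization of a Drinfeld associator (every nontrivial monomial of $\Phi$ contains at least one $X$ and at least one $Y$, as mentioned in Section~\ref{subs_Drinfeld_assoc}). In either situation the $\catPhPhi$-operation agrees with the $\catPh$-operation.

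For part (1), the composition $a \cFh f$ unfolds, via (\ref{equa_comp_freemod}), as $f$, then $a$ applied on the $X$-strand, then the associator $(\field \ctp A) \ctp A \to \field \ctp (A \ctp A)$ (since $a$ lands in $A = \field \ctp A$), then the multiplication $A \ctp A \to A$. The only step that feels the associativity constraint of the ambient category is the single associator just mentioned, whose three factors are $(\field, A, A)$; by the observation above, $\Phi_\p$ acts here as the identity, so the $\catPh$- and $\catPhPhi$-compositions coincide.

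For part (2), the morphism $a \tpFh a$ unfolds as $a \ctp a$ followed by the rearrangement of $(\field \ctp A) \ctp (\field \ctp A)$ into $(\field \ctp \field) \ctp (A \ctp A)$ via associators and one braiding, then the multiplication of the two $A$'s. One must then verify that every associator appearing in this rearrangement has at least one of its three relevant factors equal to $\field$ (or a tensor power of $\field$'s), and that the single braiding swaps an $A$ with a $\field$; once this is done, all corrections given by $\Phi_\p$ or $R_\p$ are trivial by the previous paragraph, and the two tensor products agree. The main (though purely bookkeeping) obstacle is checking this factor-by-factor in the four-strand rearrangement; this is tedious but routine and the presence of a $\field$ in every relevant triple follows from the fact that two out of four strands are $\field$.
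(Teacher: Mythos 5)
Your proof is correct, and its engine is the same as the paper's --- the unit object $\field$ trivializes every structural isomorphism --- but the execution differs. The paper's proof is a one-line inspection: because $a$ lands in $A=\field\ctp A$ with $\field$ the strict unit, the diagrams defining $a\cFh f$ and $a\tpFh a$ reduce to $\mu_A\circ(a\ctp\id_A)\circ f$ and $\mu_A\circ(a\ctp a)$, which contain no associativity isomorphism and no braiding at all, so they denote the same morphism in $\catPh$ and in $\catPhPhi$. You instead keep the general rearrangements of (\ref{equa_comp_freemod}) and (\ref{equa_tp_freemod}), with their associators and one braiding, and kill each constraint separately. That works, and your bookkeeping for the four-strand case is right (every bracketing triple that occurs has a $\field$ slot, and the lone crossing is $c_{A,\field}$), but it forces you to use $\Phi(0,Y)=\Phi(X,0)=1$. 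That identity is true for Drinfeld associators, yet it is not what the Remark in Section \ref{subs_Drinfeld_assoc} states: ``$\Phi=1+{}$terms of order at least $2$'' allows monomials such as $X^2$, so your parenthetical citation is inaccurate as it stands. To justify the step you should either invoke group-likeness of $\Phi$ (then $\log\Phi$ is a Lie series with no linear term, and every nonzero Lie monomial of degree at least $2$ contains both letters, whence $\Phi(0,Y)=\Phi(X,0)=1$), or bypass the issue entirely by noting that in a monoidal category with strict unit the coherence axioms force the associators $\alpha_{I,M,N}$, $\alpha_{M,I,N}$, $\alpha_{M,N,I}$ to be identities --- which is exactly what makes the paper's direct diagram reading legitimate. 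With either repair your argument is complete; the paper's version reaches the same conclusion with less machinery.
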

\begin{proof}
The two expressions
$$ a  \cFh f = \input{\cesta/obr32};\quad a  \tpFh a = \input{\cesta/obr33} $$
contain neither the associativity isomorphism nor the braiding so they mean the same in $\catPh$ and in $\catPhPhi$. 
\end{proof}
\begin{lem}\label{lem_epsPrime}
$ \eps_Q \cFhPhi p^\Phi = \eps_Q $
\end{lem}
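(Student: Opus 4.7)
The plan is to combine the identity $\eps_Q \cFh p = \eps_Q$ from \eqref{equas_Delta_and_eps_are_maps_of_direct_summands} (which expresses that $\eps_Q$ is a morphism out of $(Q\ctp A,p)$ in $\Split(\catFh)$) with Claim \ref{claim_ghkdo}(1) to obtain the base identity $\eps_Q \cFhPhi p = \eps_Q$, and then to expand $p^\Phi$ via Lemma \ref{lem_pPhi} and verify that every correction term vanishes after left-composition with $\eps_Q$.

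First I would establish $\eps_Q \cFhPhi p = \eps_Q$. Claim \ref{claim_ghkdo}(1) applied with $a = \eps_Q \in \catFh(Q\ctp A, A)$ and $f = p \in \catFh(Q\ctp A, Q\ctp A)$ gives $\eps_Q \cFhPhi p = \eps_Q \cFh p$, and by \eqref{equas_Delta_and_eps_are_maps_of_direct_summands} the latter equals $\eps_Q$.

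Next, set $\xi := p \cFhPhi p - p \in B := \catFhPhi(Q\ctp A, Q\ctp A)$. Associativity of $\cFhPhi$ together with the base identity yields $\eps_Q \cFhPhi p^{\cFhPhi k} = \eps_Q$ for all $k\ge 1$, hence $\eps_Q \cFhPhi \xi = 0$, and then (by associativity again) $\eps_Q \cFhPhi \xi^{\cFhPhi k} = 0$ for every $k \geq 1$.

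Finally, Lemma \ref{lem_pPhi} applied in the $\hbar$-complete $\kh$-algebra $B$ expresses
$$p^\Phi = p + \bigl(p - \tfrac{1}{2}\bigr) \cFhPhi \Bigl(\alpha_1 \xi + \alpha_2 \xi^{\cFhPhi 2} + \cdots\Bigr),$$
the sum converging $\hbar$-adically. Left-composing with $\eps_Q$ and using $\kh$-bilinearity, associativity, and $\hbar$-adic continuity of $\cFhPhi$, the first summand contributes $\eps_Q$ while the remainder reduces to
$$\bigl(\eps_Q \cFhPhi p - \tfrac{1}{2}\eps_Q\bigr) \cFhPhi \sum_{k\ge 1}\alpha_k \xi^{\cFhPhi k} = \tfrac{1}{2}\,\eps_Q \cFhPhi \sum_{k\ge 1}\alpha_k \xi^{\cFhPhi k} = 0.$$
No step is genuinely difficult; the only point requiring care is that compositions and powers in the formula from Lemma \ref{lem_pPhi} are taken inside $\catFhPhi$, so the identities of the preceding paragraph must all be read in that category.
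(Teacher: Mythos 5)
Your proof is correct and follows essentially the same route as the paper: transfer $\eps_Q \cFh p = \eps_Q$ to $\catFhPhi$ via Claim \ref{claim_ghkdo}(1), then use the explicit series for $p^\Phi$ from Lemma \ref{lem_pPhi} and kill every correction term after composing with $\eps_Q$. You merely spell out term-by-term (with the $\hbar$-adic convergence remark) what the paper dismisses as "now easy" after noting $\eps_Q \cFhPhi (p \cFhPhi p - p) = 0$.
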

\begin{proof}
By our definition (see the subsection \ref{Map_on_Split_objects}), $p^\Phi$ can be expressed as
$$ p^\Phi = p + \big( p\cFhPhi p - p\big) \cFhPhi \text{something}.  $$ 
We know that $\eps_Q \cFh p = \eps_Q$ and thus also (claim \ref{claim_ghkdo}) $\eps_Q \cFhPhi p = \eps_Q$. So we just need to show that $\eps_Q \cFhPhi \big( p \cFhPhi p - p \big) = 0$ what is now easy.
\end{proof}

The reason to write the formulas (\ref{eq_Delta_prime}) is that it forces $\Delta_Q'$ and $\eps_Q'$ to become maps between direct summands in $\catFhPhi$
$$ \Delta'_Q : (Q\ctp A, p^\Phi) \to (Q\ctp A, p^\Phi) \tp_A (Q\ctp A, p^\Phi), \quad \eps'_Q: (Q\ctp A, p^\Phi) \to A .$$
It is however not clear whether $\eps'_Q =\eps_Q$ will be a counit for $\Delta'_Q$ , i.e. whether\footnote{If $(X,p)$ is a direct summand then $\id_{(X,p)} = p$.}
$$ \big(\eps_Q \tpFhPhi \id_{(Q\ctp A)} \big) \cFhPhi \Delta'_Q = \big( \id_{(Q\ctp A)} \tpFhPhi \eps_Q \big) \cFhPhi \Delta'_Q = p^\Phi   .$$

\begin{lem}\label{lem_save_counit}
Let's have $C\xto\Delta C\tp C$; $ C\xto\eps I$ (in some monoidal category $\catC$) satisfying that the composition $ C \xto\Delta C\tp C \xto{ \eps \tp\eps } I $ equals $\eps$ and that the maps
$$ r:  C \xto\Delta C\tp C \xto{ \eps \tp\id } C \quad ;\quad s: C \xto\Delta C\tp C \xto{ \id \tp\eps } C $$ 
are invertible.

Then  $\bar\Delta: C \xto\Delta C\tp C \xto{r^{-1} \tp s^{-1}} C\tp C $ and $\bar\eps := \eps$ form a quasi-coalgebra structure on $C$.
\end{lem}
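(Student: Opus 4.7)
The plan is to verify the two counit axioms $(\bar\eps\tp\id_C)\circ\bar\Delta=\id_C$ and $(\id_C\tp\bar\eps)\circ\bar\Delta=\id_C$ directly. No coassociativity is required, since we are only after a quasi-coalgebra structure in the sense defined earlier in the paper.

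The crucial preliminary observation is that the hypothesis $(\eps\tp\eps)\circ\Delta=\eps$ yields an absorption property for $\eps$: unpacking the definition of $r$,
$$\eps\circ r=\eps\circ(\eps\tp\id)\circ\Delta=(\eps\tp\eps)\circ\Delta=\eps,$$
and symmetrically $\eps\circ s=\eps$. Invertibility of $r$ and $s$ then gives $\eps\circ r^{-1}=\eps\circ s^{-1}=\eps$, so $\eps$ passes through either correction factor unchanged.

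With this in hand, the remaining verification is purely formal. I would substitute $\bar\Delta=(r^{-1}\tp s^{-1})\circ\Delta$ into each counit composite and simplify, using only bifunctoriality of $\tp$, strictness of the unit $I$, and the absorption identity just established. In each counit check, one copy of $\eps$ absorbs its adjacent inverse, while the residual $\eps\tp\id$ (resp.\ $\id\tp\eps$) recombines with $\Delta$ to reconstitute $r$ (resp.\ $s$); the full composite then collapses to $r^{-1}\circ r$ or $s^{-1}\circ s$, each equal to $\id_C$.

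The only real obstacle is bookkeeping of the tensor slots — one must keep track of which of $r^{-1}$ and $s^{-1}$ sits in which factor of $\bar\Delta$ so that in each counit axiom the surviving composition pairs $r^{-1}$ with $r$ and $s^{-1}$ with $s$, rather than a mixed $r^{-1}\circ s$. Once this is arranged correctly, no categorical input beyond bifunctoriality of $\tp$ and strictness of the unit is needed, and the whole argument fits on a few lines of diagram chasing.
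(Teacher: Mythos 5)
Your proposal is correct and is essentially the paper's own argument: establish the absorption identity $\eps\circ r^{-1}=\eps\circ s^{-1}=\eps$ from $(\eps\tp\eps)\circ\Delta=\eps$, and then collapse each counit composite of $\bar\Delta$ to an inverse composed with the corresponding map. Your caution about slot bookkeeping is in fact justified: with $r=(\eps\tp\id)\circ\Delta$, $s=(\id\tp\eps)\circ\Delta$ and $\bar\Delta=(r^{-1}\tp s^{-1})\circ\Delta$ taken literally, the two counit composites come out as $s^{-1}\circ r$ and $r^{-1}\circ s$, so one must place $s^{-1}$ in the first tensor factor and $r^{-1}$ in the second (equivalently, interchange the names of $r$ and $s$) --- a harmless labeling slip that the paper's own proof also passes over silently when it writes $s^{-1}\circ(\eps\tp\id)\circ\Delta=s^{-1}\circ s$.
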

\begin{proof}
First note that we have $ \eps\circ r = (\eps \tp \eps) \circ \Delta = \eps $ and thus $ \eps \circ r^{-1} = \eps$. Using this we can calculate:
\begin{align*}
(\bar\eps \tp \id)\circ \bar\Delta &= (\eps \tp \id)\circ(r^{-1} \tp s^{-1}) \circ \Delta = \Big( (\eps \circ r^{-1}) \tp s^{-1} \Big)\circ \Delta =\\
&= s^{-1} \circ (\eps \tp \id) \circ\Delta =s^{-1} \circ s = \id_C.
\end{align*}
\end{proof}

In order to use this lemma for our would-be quasi-coalgebra $ \Big( (Q\ctp A, p^\Phi), \Delta'_Q, \eps'_Q \Big) $ in $\Split(\catFhPhi)$, we need to check  
\begin{equation}\label{equa_uuuuuu}
\big(\eps_Q \tpFhPhi \eps_Q \big) \cFhPhi \Delta'_Q = \eps_Q 
\end{equation}
and verify that
\begin{equation}\label{equa_f_and_g}
r:= \big(\eps_Q \tpFhPhi \id_{(Q\ctp A)} \big) \cFhPhi \Delta'_Q ; \quad s:= \big( \id_{(Q\ctp A)} \tpFhPhi \eps_Q \big) \cFhPhi \Delta'_Q 
 %r:= \dessin{obr34}\quad \text{and}\quad  s: = \dessin{obr35}
\end{equation}
are invertible as maps $(Q\ctp A, p^\Phi) \to (Q\ctp A, p^\Phi) $ in the category $\Split(\catFhPhi)$. The invertibility is easy, since\footnote{Just use that $\tpFhPhi$, $\cFhPhi$, $\Delta_Q'$ are modulo $\hbar$ the same as $\tpFh$, $\cFh$, $\Delta_Q$.}
$$ r\equiv p \equiv p^\Phi \mod\hbar \quad \text{ and }\quad  s\equiv p \equiv p^\Phi \mod\hbar$$
and so we can define the inverses by the usual series.
\begin{proof}[Proof of (\ref{equa_uuuuuu})]
Just use the claim \ref{claim_ghkdo} and the lemma \ref{lem_epsPrime} repeatedly:
\begin{align*}
\big(\eps_Q \tpFhPhi \eps_Q \big) \cFhPhi \Delta'_Q &= \big(\eps_Q \tpFhPhi \eps_Q \big) \cFhPhi \big(p^\Phi \tpFhPhi p^\Phi \big) \cFhPhi \Delta_Q \cFhPhi p^\Phi =\\
&= \bigg( \big(\eps_Q \cFhPhi p^\Phi \big) \tpFhPhi \big(\eps_Q \cFhPhi p^\Phi \big) \bigg)  \cFhPhi \Delta_Q \cFhPhi p^\Phi =\\
&= \bigg( \big( \eps_Q \tpFh \eps_Q \big) \cFh \Delta_Q \bigg) \cFhPhi p^\Phi = \eps_Q \cFhPhi p^\Phi = \eps_Q.
\end{align*}
\end{proof}
So finally we can use the lemma \ref{lem_save_counit} to equip $(Q\ctp A, p^\Phi)$ with a quasi-coalgebra structure
\begin{equation}\label{equa_def_DeltaQPhi}
\Delta_Q^\Phi:= (r^{-1} \tp_A s^{-1}) \circ \Delta_Q' \quad \text{and} \quad \eps_Q^\Phi:= \eps_Q   .
\end{equation}

\section{The twist}
We have two quasi-monoidal functors $$ F_1: \catPhPhi = \catPh \xto{\forg} \catCTVkh \; ; \quad\quad  F_2 :\catPhPhi \xto{\ctp A} \catFhPhi \xto{\YPhi} \catCTVkh $$. We have also a natural isomorphism between them which we denote by $\alpha$:
$$
\begin{tikzpicture}[label distance =-0.5 mm, baseline = -1cm]	
	\node(lx) at (1.5,0){};
	\node(ly) at (0, -1){};
	\pgfresetboundingbox;

	\node(a00) {$\catPhPhi $};
	\node(a11) at ($(a00)+(lx)+(ly)$) {$\catFhPhi$};
	\node(a20) at ($(a00)+2*(ly)$) {$ \catCTVkh$};
	
	\node(stred) at ($(a00)+0.3*(lx)+(ly)$) [inner sep = 0pt, shape=rectangle, rotate=0, label=above:$\alpha$] {{\Huge $\Rightarrow$}};

	\draw[->] (a00)-- node[above] {$\ctp A$} (a11);
	\path[->] (a00) edge [bend right] node[left] {$ \forg  $} (a20);
	\draw[->] (a11)-- node[right] {$\Yoneda^\Phi $} (a20);
\end{tikzpicture}
\quad :\quad  m\in  M \; \mapsto \; \input{\cesta/obr30}.
$$
The two functors define two quasi-bialgebras. Using $\alpha$ we can identify $\End(F_1)$ and $\End(F_2)$ as algebras, but we will get two quasi-bialgebra structures that differ by a twist.\footnote{See the remark (\ref{rem_twists}).} The goal of this section is to compute this twist up to the first order in $\hbar$. 

First we need some computational tools.
\begin{claim}\label{claim2}
Let $ y\in S^k\g^*$ with $k>0$. Then one has
\begin{align*}
\input{\cesta/obr36} &= & \input{\cesta/obr37} &= m  & \input{\cesta/obr38} & =    &  \input{\cesta/obr39} &= 1  \\
\input{\cesta/obr40} &= 0  & \input{\cesta/obr41} &= 0  & \input{\cesta/obr42} &= 0 & \input{\cesta/obr43} & = 0 
\end{align*}
\end{claim}
\begin{proof}
One just uses the defining formulas  (\ref{equa_def_p}), (\ref{equa_def_mhat}), (\ref{equa_def_DeltaQ}), (\ref{equa_def_epsQ}) and the remark \ref{rem_iota}.
\end{proof}

\begin{cor}\label{cor_p_m_y}
If $y\in \g^*$ then we have 
\begin{align*}
\input{\cesta/obr44} & = - \input{\cesta/obr45} & \input{\cesta/obr46} & = - y\act m & \input{\cesta/obr47} &= 0 
\end{align*}
\end{cor}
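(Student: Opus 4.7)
The plan is to derive all three identities by combining Claim \ref{claim2} with the equivariance of $p$, $\hat m$, $\eps_Q$ as morphisms in $\catP$, together with the Leibniz rule governing the action of $y\in\g^*\subset\p$ on tensor products. Since each of $p$, $\hat m$, $\eps_Q$ starts at $Q$ and lands in something of the form $X\ctp A$, an action of $y$ on the codomain splits, by Leibniz, into a sum of two one-wire actions.

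Concretely, for any morphism $\phi:Q\to X\ctp A$ in $\catP$ one has by equivariance and Leibniz
\[
\phi(y\act s_0)\;=\;y\act\phi(s_0)\;=\;\bigl(y\act_X\phi(s_0)\bigr)+\bigl(y\act_A\phi(s_0)\bigr),
\]
so the ``$y$ on the $A$-wire'' diagram equals $\phi(y\act s_0)-(y\act_X\phi(s_0))$. Postcomposing with $\eps_A$ on the $A$-wire turns the first summand into a quantity controlled by $\pi_C(y\act s_0)$, via the defining formulas (\ref{equa_def_p}), (\ref{equa_def_mhat}), (\ref{equa_def_epsQ}), and this quantity vanishes for $y\in\g^*$ by Remark \ref{rem_iota}. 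For $\phi=p$ the two output wires $Q$ and $A$ are symmetric in the Leibniz split, so the residual equation is precisely the first identity $\dessin{obr44}=-\dessin{obr45}$. For $\phi=\hat m$, the ``$y$ on the $M$-wire'' term is computed from (\ref{equa_def_mhat}) together with the $\dessin{obr37}$ identity of Claim \ref{claim2} to equal $y\act m$, yielding $\dessin{obr46}=-y\act m$. The third identity, $\dessin{obr47}=0$, is immediate: the codomain $A$ of $\eps_Q$ has no tensor splitting, so equivariance and (\ref{equa_def_epsQ}) give $\eps_A\circ\eps_Q(y\act s_0)=\eps(\pi_C(y\act s_0))=0$.

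I expect no serious obstacle: the whole argument is a first-order specialization of the reasoning already used in Claim \ref{claim2}. For $y\in\g^*\subset S^1\g^*$ the Leibniz rule produces exactly two terms, of which one is killed by Remark \ref{rem_iota} and the other is identified via the defining formulas of $p$, $\hat m$, $\eps_Q$. The only real bookkeeping is to align the displayed pictures with the correct Leibniz summand and to track the sign that this alignment produces.
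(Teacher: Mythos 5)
Your proposal is correct and follows essentially the same route as the paper: the paper's proof likewise rests on the $\Up$-linearity of $p$, $\hat m$ and $\eps_Q$ combined with the Leibniz rule for the primitive element $y\in\g^*$ acting on the output wires, and kills the term $\phi(y\act s_0)$ after $\eps_A$ using $\pi_C(y\act s_0)=0$ (Remark \ref{rem_iota}, i.e.\ the vanishing entries of Claim \ref{claim2}), exactly as in the sample computation it displays for $\hat m$.
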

\begin{proof} One just uses the $\Up$-linearity of $p$, $\hat m$ and $\eps_Q$. For example, the computation for $\hat m$ would look like this:
$$
\input{\cesta/obr46} = \input{\cesta/obr41} - \input{\cesta/obr48} = 0 - y\act m .
$$
\end{proof}

\begin{claim}\label{claim4}
One has the following congruences modulo $\hbar^2$:
\begin{align*}
\input{\cesta/obr49}  &\equiv \input{\cesta/obr26} & \input{\cesta/obr30} &\equiv \input{\cesta/obr50}   
\end{align*}
\end{claim}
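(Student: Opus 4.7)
The plan is to exploit the fact that the Drinfeld associator satisfies $\Phi(X,Y) = 1 + (\text{terms of order} \geq 2)$, so that $\Phi_\p = \Phi(\hbar t_{12}, \hbar t_{23}) \equiv 1 \pmod{\hbar^2}$. Since the composition in $\catFhPhi$ is defined by \eqref{equa_comp_freemod} using only the associativity isomorphism in $\catPhPhi$ --- and not the braiding --- this immediately yields, for any composable morphisms $f,g$,
$$ f \cFhPhi g \equiv f \cFh g \pmod{\hbar^2}. $$
This is the single fact on which everything else will rest.

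From it I would first derive that $p^\Phi \equiv p \pmod{\hbar^2}$. Lemma \ref{lem_pPhi} expresses
$$ p^\Phi = p + (p - \tfrac{1}{2})\bigl[\alpha_1 (p \cFhPhi p - p) + \alpha_2 (p \cFhPhi p - p)^2 + \cdots\bigr], $$
and since $p \cFh p = p$ in $\catFh$ (idempotence in $\Split(\catFh)$), the preceding observation forces $p \cFhPhi p - p \equiv 0 \pmod{\hbar^2}$. Hence every summand in the bracket is $O(\hbar^2)$ and $p^\Phi \equiv p \pmod{\hbar^2}$.

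Both congruences of the claim then fall out at once. For the second, $\hat m^\Phi = \hat m \cFhPhi p^\Phi$ by definition; replacing $\cFhPhi$ by $\cFh$ and $p^\Phi$ by $p$ alters the result by $O(\hbar^2)$, and $\hat m \cFh p = \hat m$ holds because $\hat m$ already represents a morphism out of the direct summand $(Q\ctp A, p) \in \Split(\catFh)$, so $\hat m^\Phi \equiv \hat m \pmod{\hbar^2}$. For the first, whichever $\cFhPhi$-composition of $\eps_Q$ and $p^\Phi$ the diagram $\dessin{obr49}$ represents, the same substitution turns it into the corresponding $\catFh$-diagram, which then simplifies to $\dessin{obr26}$ via the counit axiom \eqref{equa_epsQ_is_counit} together with the direct-summand identities \eqref{equas_Delta_and_eps_are_maps_of_direct_summands}.

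I do not foresee any real obstacle --- the argument is essentially bookkeeping on top of Lemma \ref{lem_pPhi}. The one subtlety worth flagging is the asymmetry between associator and braiding: $\Phi_\p \equiv 1 \pmod{\hbar^2}$ but $R_\p - 1 = \hbar t/2 + O(\hbar^2)$. This is precisely why the mod-$\hbar^2$ congruence can be expected for diagrams involving only $\cFhPhi$, and could not be hoped for in diagrams that use $\tpFhPhi$, whose braiding contributes already at order $\hbar$.
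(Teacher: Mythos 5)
Your argument is correct and is exactly the route the paper intends: Claim \ref{claim4} is stated without proof, and its content is precisely $p^\Phi\equiv p$ and $\hat m^\Phi\equiv \hat m$ modulo $\hbar^2$, which you obtain from the paper's own observations that $\Phi_\p\equiv 1\bmod\hbar^2$ makes $\cFhPhi$ agree with $\cFh$ modulo $\hbar^2$ (the braiding never enters a composition), together with the explicit series of Lemma \ref{lem_pPhi} and the identities $p\cFh p=p$ and $\hat m\cFh p=\hat m$. Your hedge about the first diagram is unnecessary --- it is just the picture of $p^\Phi$ versus that of $p$ (the latter being the common value in (\ref{equa_epsQ_is_counit})) --- so the congruence $p^\Phi\equiv p\bmod\hbar^2$ established in your second paragraph is already the whole of that half of the claim.
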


\begin{cor} \label{cor_of_claim4}
The natural isomorphism $\alpha^{-1}: \YPhi(M\ctp A) \to M$ is given (modulo $\hbar^2$) by the formula (see the remark \ref{rem_explicit_iso}):
$$ \alpha^{-1} \Bigg( \input{\cesta/obr16} \Bigg) \equiv \input{\cesta/obr18} \mod \hbar^2 . $$
\end{cor}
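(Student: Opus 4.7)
The plan is to reduce the twisted inverse formula to the un-twisted one of Remark \ref{rem_explicit_iso} via Claim \ref{claim4}. Recall that $\alpha$ is characterised by $\alpha(m) = \hat m^\Phi$, so $\alpha^{-1}$ is defined by $\alpha^{-1}(\hat m^\Phi) = m$. Given an arbitrary $f \in \YPhi(M\ctp A)$, I would write $f = \hat m^\Phi$ for the unique $m \in M\hh$ and then aim to recover $m$ from $f$ via the ``evaluation recipe'' $(\eps_A \ctp \id_M)\circ f \circ s_0$ up to an error of order $\hbar^2$.

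By the second congruence in Claim \ref{claim4}, one has $\hat m^\Phi \equiv \hat m \mod \hbar^2$ as morphisms $Q \to M \ctp A$ in $\catPh$. Pre-composing with $s_0$ and post-composing with $\eps_A \ctp \id_M$ --- both continuous $\kh$-linear operations, which consequently preserve the $\hbar^2$-congruence --- yields
$$ (\eps_A \ctp \id_M)\circ \hat m^\Phi \circ s_0 \;\equiv\; (\eps_A \ctp \id_M)\circ \hat m \circ s_0 \mod \hbar^2. $$
The right-hand side equals $m$ by Remark \ref{rem_explicit_iso}, which states precisely that this composition recovers $m$ from $\hat m$ in the un-twisted setting.

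Substituting $f = \hat m^\Phi$ on the left-hand side and $m = \alpha^{-1}(f)$ on the right-hand side then delivers the asserted formula. The substantive work is buried in Claim \ref{claim4}, whose $\hbar^2$-control traces back to $p^\Phi \equiv p \mod \hbar$ together with $\Phi \equiv 1 \mod \hbar^2 \cdot t_{12}t_{23}$; once that claim is granted, the corollary is a two-line naturality chase whose only subtlety is verifying that the composition operations involved respect $\hbar$-adic order, which is automatic as all maps in sight are continuous $\kh$-linear. I expect the only possible obstacle to be bookkeeping the category in which each composition takes place (switching between $\catPh$ and $\catPhPhi$), but since these categories agree as linear categories and the congruence of Claim \ref{claim4} holds already in $\catPh$, the composition used in the evaluation recipe is unambiguous.
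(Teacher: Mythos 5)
Your argument is correct and matches what the paper intends: the paper states this corollary without a separate proof, treating it as immediate from Claim \ref{claim4} (namely $\hat m^\Phi\equiv\hat m \mod \hbar^2$) together with the evaluation recipe of Remark \ref{rem_explicit_iso}, which is precisely the two-line chase you carried out (write $f=\hat m^\Phi$, apply the $\kh$-linear operations of pre-composing with $s_0$ and post-composing with $\id_M\ctp\eps_A$, and use that these preserve congruences modulo $\hbar^2$).
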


\begin{claim}\label{claim5}
\begin{align*}
  \input{\cesta/obr51} &\equiv \input{\cesta/obr52} \mod \hbar^2
\end{align*}
\end{claim}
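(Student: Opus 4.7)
The plan is to reduce the identity to a computation in $\catFh$ (rather than $\catFhPhi$) at the level of mod $\hbar^2$, where the coalgebra identities from Claim \ref{claim2} and Corollary \ref{cor_p_m_y} can be applied directly. The key observation is that the associator $\Phi_\p = \Phi(\hbar t_{12},\hbar t_{23})$ is $\equiv 1 \mod \hbar^2$ (since $\Phi(X,Y) = 1 + (\text{terms of degree} \geq 2)$), so every re-bracketing in $\catPhPhi$ agrees with the one in $\catPh$ modulo $\hbar^2$. The braiding $R_\p = \exp(\hbar t/2) \equiv 1 + \tfrac{\hbar}{2}\,\Omegap \mod \hbar^2$ is the only source of genuine first-order corrections that needs to be tracked.

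First, I would expand $p^\Phi$, $\hat m^\Phi$, $\Delta_Q^\Phi$ using Claim \ref{claim4} and its corollary, replacing them modulo $\hbar^2$ by their simpler counterparts $p$, $\hat m$, $\Delta_Q$ up to corrections that land in the kernel of the relevant maps. Then, applying the defining formula \eqref{equa_def_DeltaQPhi}, since $r \equiv s \equiv p \equiv p^\Phi \mod \hbar$, the series for $r^{-1}$ and $s^{-1}$ truncate to identity plus a first-order term, and I would expand these out.

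Second, I would rewrite each diagrammatic crossing in $\catPhPhi$ as the corresponding crossing in $\catPh$ plus its first-order correction of the form $\tfrac{\hbar}{2}(\Omegap \act \argument)$, using the decomposition $\Omegap = e_i \otimes e^i + e^j \otimes e_j \in (\g \ctp \g^*)+(\g^*\ctp\g)$. The key computational input is Corollary \ref{cor_p_m_y}: whenever a component $e^i \in \g^*$ acts on a strand entering $p$, $\hat m$, or $\eps_Q$, Corollary \ref{cor_p_m_y} forces the result to simplify (either to zero or to a $\g$-action). This is exactly the cancellation mechanism that makes $\Omegap$-corrections survive on the $\g$-side only.

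Third, I would assemble these simplifications and compare with the right-hand side. The main obstacle will be bookkeeping: correctly counting which braidings contribute a $\tfrac{\hbar}{2}\,\Omegap$ correction and which are absorbed by the vanishing identities of Claim \ref{claim2} (e.g.\ $\pi_C(y\act s_0) = 0$ for $y \in S^{>0}\g^*$). In particular, the $\g^*\ctp\g$ half of $\Omegap$ is expected to contribute nontrivially through the identity $\dessin{obr46} = - y\act m$, while the $\g\ctp\g^*$ half is killed by the lower application of $\eps_Q$ or $p$ via Corollary \ref{cor_p_m_y}. This asymmetric vanishing is what should make the two sides agree modulo $\hbar^2$.
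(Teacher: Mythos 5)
Your general strategy is the right one and is in fact the engine of the paper's proof: kill the associator modulo $\hbar^2$, expand each braiding as $1+\tfrac{\hbar}{2}\,\Omegap\act{}$, and dispose of the first-order terms with Claim \ref{claim2} and Corollary \ref{cor_p_m_y}; your treatment of the $\Delta'_Q$-part and of $s$ would go through. But your predicted outcome is wrong for this particular claim. You expect the $\g^*\ctp\g$ half of $\Omegap$ to ``contribute nontrivially'' through the identity $\hat m$-composed-with-$(y\act\argument)=-y\act m$, and you say this asymmetry is what makes the two sides agree. Since the right-hand side of the claim is an $\hbar$-independent diagram, agreement modulo $\hbar^2$ forces \emph{every} first-order term on the left to vanish or cancel; you exhibit no cancellation, and a surviving $-y\act m$ term would simply falsify the claim. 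In the correct computation the only braiding that requires real work is the one sitting inside $r$ of (\ref{equa_f_and_g}), and \emph{both} halves of its first-order correction die: one because $\eps_A$ is $\Ug$-linear into the trivial module, the other by Corollary \ref{cor_p_m_y}. The $-y\act m$ identity is the mechanism of the later twist computation, where a first-order term genuinely survives; importing it here indicates you are conflating the two computations.

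The second gap is that ``expand $r^{-1}$ and $s^{-1}$ to first order'' leaves the essential step undone. By (\ref{equa_def_DeltaQPhi}) what has to be proved is $r\equiv s\equiv p^\Phi$ modulo $\hbar^2$ (so that no first-order correction appears at all), and this is a congruence of \emph{morphisms} in $\catFhPhi(Q\ctp A,Q\ctp A)$, whereas Claim \ref{claim2} and Corollary \ref{cor_p_m_y} are identities of \emph{elements}, i.e.\ of composites with $s_0$ and $\eps_A$ at the boundary of the diagram. The paper bridges this by representability: $r$ and $p^\Phi$ both lie in $\YPhi(Q\ctp A)$, which is identified with $Q\hh$ by evaluation at $s_0$ followed by $\eps_A$ (Corollary \ref{cor_of_claim4}), so the morphism congruence $r\equiv p^\Phi \bmod \hbar^2$ reduces to the single element identity (\ref{equa_xxccvbbh}), and only there does the braiding expansion together with the $\Ug$-linearity of $\eps_A$ and Corollary \ref{cor_p_m_y} apply. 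Without this step (or some substitute) your element-level lemmas cannot control the correction terms of $r^{-1}$, which act in the middle of the diagram rather than on the bottom strand, so the proposed expansion cannot be completed as stated.
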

\begin{proof}
Looking at (\ref{eq_Delta_prime}) we see that
\begin{equation}\label{equa_DeltaPrime_and_Delta}
 \input{\cesta/obr53} = \input{\cesta/obr54}  \equiv \input{\cesta/obr52} \mod \hbar^2 \ .
\end{equation}
So, it is enough to see that $\Delta_Q^\Phi \equiv \Delta'_Q \mod \hbar^2$. By the relation (\ref{equa_def_DeltaQPhi}) it is thus sufficient to show that $r$ and $s$ (see (\ref{equa_f_and_g})) are congruent to $p^\Phi$ modulo $\hbar^2$. On pictures (in $\catPhPhi$), $r$ and $s$ look like
\begin{equation} \label{equa_f_and_g2}
r:= \input{\cesta/obr34}\quad \text{and}\quad  s: = \input{\cesta/obr35} \ .
\end{equation}
One easily sees $s\equiv p \mod \hbar^2$, since (modulo $\hbar^2$) we can  replace $\Delta'_Q$ in the picture (\ref{equa_f_and_g2}) by the expression in (\ref{equa_DeltaPrime_and_Delta}) and then use (\ref{equa_epsQ_is_counit}). 

The diagram defining $r$ contains the braiding which is not congruent to 1 modulo $\hbar^2$ so it becomes more involved.  Note that $r \in \catFhPhi(Q\ctp A, Q\ctp A)$ satisfies $r \cFhPhi p^\Phi = r$ and thus $r\in \YPhi( Q\ctp A)$. We have an isomorphism between $\YPhi( Q\ctp A)$ and $Q$  which is described explicitly ($\mod \hbar^2$) in corollary (\ref{cor_of_claim4}).  Thus to show that $r\equiv p^\Phi \mod \hbar^2$ it is enough to check that
\begin{equation}\label{equa_xxccvbbh}
\input{\cesta/obr55} \equiv \input{\cesta/obr56} \mod \hbar^2 .
\end{equation}
On the right hand side, we replace $p^\Phi$ by $p$ (claim \ref{claim4}) and then it becomes just $$ (claim \ref{claim2}).
Using (\ref{equa_DeltaPrime_and_Delta}) and then the lemma \ref{lem_epsPrime} one gets (diagrams in $\catPhPhi$, congruence modulo $\hbar^2$):
$$\input{\cesta/obr57} \equiv \input{\cesta/obr58} = \input{\cesta/obr59} = \input{\cesta/obr60}$$

Thus we can compute the left hand side of (\ref{equa_xxccvbbh}) as (diagrams in $\catCTVkh$\footnoteremember{foot_cross}{On a diagram in $\catCTVkh$ by $\input{\cesta/obr61}$ we mean $\input{\cesta/obr62}$. It is meaningful only if both strands are $\Up$-modules. In our case it is so, since they are images of objects in $\catP_h$ via the functor $\catPh \xto{\forg} \catCTVkh$. } ):
$$  \input{\cesta/obr63} = \input{\cesta/obr64} \equiv \input{\cesta/obr65} =  + \frac{\hbar}{2} \cdot \sum_i  \Bigg[ \input{\cesta/obr66} \input{\cesta/obr67}  + \input{\cesta/obr68} \input{\cesta/obr69}   \Bigg]$$
Now the first summand in the bracket is 0, since $\eps_A = $ is $\Ug$-linear and the second is 0 by the corollary \ref{cor_p_m_y}.
\end{proof}

Now we are ready to calculate our twist. Let's recall what we mean by it. We have two functors $F_1$ and $F_2$ and a natural isomorphism $\alpha$ between them. $F_1$ is a strict\footnote{By ``strict'' we mean that the quasi-monoidal structure is given by the identity isomorphisms in $\catCTVkh$.} quasi-monoidal functor and let's denote the quasi-monoidal structure on $F_2$ by $\lambda_{M,N} : F_2(M)\otimes F_2(N) \to F_2(M\otimes N)$.  Then we want to compute
\begin{align*}
F_1(M)\otimes F_1(N) & \xto{\alpha_M\otimes \alpha_N} F_2(M)\otimes F_2(N) \xto{\lambda_{M,N}}  F_2(M\otimes N) \to \\
&\quad \xto{\alpha^{-1}_{M\otimes N}} F_1(M\otimes N) = F_1(M) \otimes F_1(N) \  .
\end{align*}
The composition $\lambda_{M,N} \circ ( \alpha_M \otimes \alpha_N  )$ assigns to $m\otimes n \in F_1(M) \otimes F_1(N)$ the following
$$\input{\cesta/obr70} \quad\text{--- diagram in $\catPhPhi$. }$$
By the corollary \ref{cor_of_claim4}, $\alpha^{-1}_{M\otimes N}$ assigns to it (modulo $\hbar^2$)
$$ \input{\cesta/obr71} \quad
\begin{tikzpicture}
\node[text width=8cm]
{ where the framed part of the diagram is in $\catPhPhi$ and the rest in $\catCTVkh$.
};
\end{tikzpicture}
$$
Since the associativity isomorphisms in $\catPhPhi$ viewed in $\catCTVkh$ are identity modulo $\hbar^2$, we can (modulo $\hbar^2$) consider the above diagram as a diagram entirely in $\catCTVkh$. Using the claim \ref{claim4} we are thus interested in (diagrams in $\catCTVkh$\footnoterecall{foot_cross}): 
\begin{align}
\input{\cesta/obr72} & = \input{\cesta/obr73} = \input{\cesta/obr74} = \nonumber \\
&= \input{\cesta/obr75} +\hbar \Bigg( \input{\cesta/obr76} + \input{\cesta/obr77} \Bigg) +o(\hbar) \label{equa_twist1}
\end{align}
We can use $\input{\cesta/obr78} =  \begin{tikzpicture}[baseline=0.0cm]
\node at (0.0,-0.5) {};
\draw [line width=1pt] (0.0,-0.3) ..controls +(0.0,0.0) and +(0.0,0.0) .. (0.0,0.0);
\node at (0.0,0.5) {};
\draw [line width=1pt] (0.0,0.0) ..controls +(0.0,0.0) and +(0.0,0.0) .. (0.0,0.3);
\end{tikzpicture}
$ and\footnote{The first summand is $0$ by $\Ug$-linearity of $\eps_A$.}
$$\input{\cesta/obr80} =  \sum_i \Bigg( \input{\cesta/obr81} \input{\cesta/obr82} \Bigg)+  \sum_i \Bigg( \input{\cesta/obr83} \input{\cesta/obr84} \Bigg) = 0 +   \sum_i \Bigg( \input{\cesta/obr83} \input{\cesta/obr84} \Bigg)$$
to get that (\ref{equa_twist1}) is equal to
$$ m\otimes n - \frac{\hbar}{2} \sum_i \Bigg( \input{\cesta/obr85}  \input{\cesta/obr86} \Bigg)  - \frac{\hbar}{2} \sum_i \Bigg( \input{\cesta/obr87}  \input{\cesta/obr88} \Bigg) $$
Now by the corollary \ref{cor_p_m_y}, the first sum becomes $\frac{\hbar}{2} \sum_i (e^i \act m) \otimes (e_i \act n)$ and the second is one $0$. We thus finally get our twist 
\begin{equation}\label{formula_twist}
F(m\otimes n) \equiv (1+\frac{\hbar}{2} \sum_i e^i \otimes e_i )\act (m\otimes n) \mod \hbar^2 \ . 
\end{equation}

\section{Putting things together}

\subsection{A deformation of $\Ug[[\hbar]]$.}
We now use the $\kh$-module isomorphism from the subsection \ref{subsection_bijection}
$$ \Ug[[\hbar]] \xto{\simeq }  \End(\catPhPhi \xto{\Yoneda^\Phi} \catCTVkh)$$
to equip $\Ug[[\hbar]]$ with a quasi-bialgebra structure (the right hand side is a quasi-bialgebra by the lemma \ref{lem_quasibi_from_functor}).

We would like to see that this way we get a quantization of the quasi-Lie bialgebra $\bg$, i.e. that the classical limit of $\EndY$ is $\bg$. For that we use a trick of Etingof and Kazhdan. Since $\catPhPhi\xto{\ctp A} \catFhPhi$ is a strong monoidal functor, the lemma \ref{lem_functoriality} gives us a quasi-bialgebra homomorphism 
\begin{equation}\label{hom_incl} 
\Big( \Ugh = \EndY \Big) \to \EndYA \ .
\end{equation}
We need first to understand the second quasi-bialgebra.

\subsection{ Quasi-bialgebra $\EndYA$ and its classical limit.} As a functor, $\YA$ is equal to (isomorphic to) the forgetful functor $\Forg$, just their quasi-monoidal structures are different. By the remark \ref{rem_Drinfeld_construction}, the classical limit of $\EndForg$ is the quasi-Lie bialgebra associated to (see the definition \ref{defn_associated_quasi_lie_bialgebra}) $(p,\Omegap)$. We denote it by $\bp$. Since $\EndYA$ is obtained from $\EndForg$ via twisting by (see the formula(\ref{formula_twist}))
$$ F = 1+ \hbar \half e^i\otimes e_i + o((\hbar)\ ,$$
we get from the theorem \ref{thm_classical_limit} of Drinfeld that the classical limit $\bpprim$ is obtained from $\bp$ via twisting by
$$ f= \half(e_i\otimes e^i - e^i \otimes e_i )\ . $$

\subsection{$\EndY$ is a quantization of $\bg$.}
Let's denote by $\bgprim$ the classical limit of $\Ugh = \EndY$. The goal is to show that $\bgprim=\bg$. The quasi-bialgebra homomorphism (\ref{hom_incl}) induces a quasi-Lie bialgebra homomorphism
$$ \bgprim \to \bpprim $$
that, thought about as a linear map $\g\to \p$, is just the ordinary inclusion by the lemma \ref{lem_incl}. In other words, the quasi-Lie bialgebra structure $\bgprim$ is just the restriction of the quasi-Lie bialgebra structure $\bpprim$. And this is precisely $\bg$ by the lemma \ref{lem_twisting_p}.

\appendix
\section{Topological vector spaces}\label{section_Top_vect_spaces}
We consider our base field $\field$ to have discrete topology. By saying ``topological vector space'', we also mean that the topology is linear, i.e. it has a basis of neighborhoods of 0 consisting of vector subspaces. Most results mentioned here can be found in \cite{TopVec}.

\subsection{Completion}
\begin{defn}
By a completion $\hat M$ (also denoted $\compl(M)$) of a topological vector space $M$ we mean 
$$\hat M := \varprojlim_U M/U $$
where we take the limit with respect to the partially ordered set of open vector subspaces of $M$. (Note that the spaces $M/U$ have discrete topology.)
\end{defn}

\forme{
This algebraic definition is equivalent to the topological one:

\begin{defn}
Let $M$ be a topological vector space.
\begin{itemize}
\item A Cauchy filter is such a filter $\calF$ that $\forall U$ --- neighborhood of 0, $\exists A\in \calF$ s.t. $(A-A) \subset U$.
\item Two Cauchy filters $\calF, \calG$ are called equivalent if $\forall U $ --- neighborhood of 0, $\exists A\in F$; $\exists B\in G$ s.t. $(A - B) \subset U$.
\item The completion of $M$ is the set of Cauchy filters modulo the above equivalence. For any open $U\subset M$ we define $\hat U$ as the set of all Cauchy filters containing $U$. The system of subsets $\hat U \subset M$ gives a topology on $\hat M$.
\end{itemize}
\end{defn}
}

\begin{lem}\label{lem_density_and_limits}
Let $\catJ$ be a category and $F:\catJ\to \catTop$ a functor. We denote by $\pi_j$ the canonical maps $\pi_j : \lim F \to F(j); \; j\in \ob(\catJ) $. Let $M\in \ob(\catTop)$ and let's have a natural (in $j\in \ob(\catJ)$) transformation $\phi_j: M\to F(j)$. Let's denote by $\tilde\phi: M\to \lim F$ the unique continuous map satisfying $\phi_j = \pi_j\circ \tilde\phi; \; \forall j\in \ob(\catJ)$. 

If all $\phi_j$ have a dense image and $\catJ$ is cofiltered\footnote{Actually we need just that $\forall a,b \in  \ob(\catJ)$ we can find an object $c$ and morphisms $\alpha:c\to a$; $\beta:c \to b$. } then $\tilde\phi$ has also a dense image.
\end{lem}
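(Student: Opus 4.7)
The plan is to use the standard description of the topology on $\lim F$ as the subspace topology inherited from the product $\prod_{j\in\ob(\catJ)} F(j)$. A basis of open neighborhoods of a point $x \in \lim F$ is then given by finite intersections $\bigcap_{i=1}^n \pi_{j_i}^{-1}(U_i)$ where each $U_i$ is an open neighborhood of $\pi_{j_i}(x)$ in $F(j_i)$. To show that $\tilde\phi(M)$ is dense, it suffices to find, for each such basic neighborhood of an arbitrary $x$, an element $m\in M$ with $\tilde\phi(m)$ lying inside it.

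The first step will be to reduce a finite intersection to a single index using the cofiltered hypothesis. Given $j_1,\dots,j_n$, pick an object $k \in \ob(\catJ)$ with morphisms $\alpha_i : k \to j_i$; then $\pi_{j_i} = F(\alpha_i) \circ \pi_k$, and the set
\begin{equation*}
V := \bigcap_{i=1}^n F(\alpha_i)^{-1}(U_i) \subset F(k)
\end{equation*}
is open and contains $\pi_k(x)$. By construction $\pi_k^{-1}(V) \subset \bigcap_{i=1}^n \pi_{j_i}^{-1}(U_i)$, so it suffices to hit $\pi_k^{-1}(V)$.

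The second step is then immediate: by hypothesis $\phi_k(M)$ is dense in $F(k)$, so there is $m \in M$ with $\phi_k(m) \in V$. Naturality of $\phi$ (or equivalently the defining property $\phi_k = \pi_k \circ \tilde\phi$) gives $\pi_k(\tilde\phi(m)) = \phi_k(m) \in V$, i.e. $\tilde\phi(m) \in \pi_k^{-1}(V)$, which proves density.

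I do not foresee any real obstacle. The only subtlety worth flagging is that the cofiltered assumption is used precisely to collapse finite intersections of basic opens into a single one; the weaker form noted in the footnote (existence of common predecessors for pairs, extended by induction to finite collections) is exactly what is needed.
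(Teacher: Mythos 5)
Your proof is correct and follows essentially the same route as the paper: both arguments use that the topology on $\lim F$ is generated by the sets $\pi_j^{-1}(U)$, invoke the (pairwise, extended inductively) cofilteredness via $\pi_{j_i}=F(\alpha_i)\circ\pi_k$ to collapse a finite intersection of such sets into a single $\pi_k^{-1}(V)$, and then conclude from the density of $\phi_k(M)$ in $F(k)$. The only cosmetic difference is that the paper phrases the reduction as "the subbase is in fact a base" by treating pairwise intersections, whereas you treat a general finite intersection directly.
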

\begin{proof}
The topology on $\lim F$ is generated by a subbase
$$\calS := \set{\; \pi_j^{-1}(U)\;\; \big| \quad  j\in \ob(\catJ) \;\text{ and }\; U\subset F(j) \;\text{open}  \;}\ . $$
Let's show that $\calS$ is actually a base. Take $a,b\in \ob(\catJ)$ and $A\subset F(a),\, B\subset F(b)$ open subsets. We want to show that $\pi_a^{-1}(A) \cap \pi_b^{-1}(B) \in \calS$. Since $\catJ$  is cofiltered, $\exists j \in \ob(\catJ)$ and morphisms $ \alpha \in\catJ(j,a); \; \beta \in \catJ(j,b)$. The commutative diagram 
$$
\begin{tikzpicture}[label distance =-0.5 mm]	
	\node(lx) at (4,-0.5){};
	\node(ly) at (3.5, 1.5){};
	\node(lz) at (0, 2.5){};
	\pgfresetboundingbox;

	\node(a000) {$\lim F$};
	
	\node(a001) at ($(a000)+(lz)$)  {$F(j) $};
	\node(a101) at ($(a001)+(lx)$) {$F(b) $};
	\node(a011) at ($(a001)+(ly)$) {$F(a)$};

	\draw[->] (a001)-- node[above left] {$F\alpha$} (a011);

	\draw[<-] (a001)-- node[left] {$\pi_j$} (a000);
	\draw[<-] (a101)-- node[right] {$\pi_b$} (a000);	
	\draw[<-] (a011)-- node[left, near end] {$\pi_a$} (a000);
	
	\draw[->, white, line width = 10pt] (a001)-- (a101);	
	\draw[->] (a001)-- node[above, near end] {$F\beta$} (a101);
\end{tikzpicture}
$$
gives us 
$$ \pi_a^{-1}(A) = \pi_j^{-1} \Big( (F\alpha)^{-1} (A) \Big) \quad \text{and}\quad \pi_b^{-1}(B) = \pi_j^{-1} \Big( (F\beta)^{-1} (B) \Big)\ .$$ 
Thus we get
$$ \pi_a^{-1} (A) \cap \pi_b^{-1}(B) \ =\  \pi_j^{-1} \Big( (F\alpha)^{-1} (A) \cap (F\beta)^{-1} (B)  \Big)\, \in\  \calS\ .$$

Now to show that $\Ima(\tilde\phi)$ is dense in $\lim F$, we want for any $\pi_j^{-1}(U) \in \calS$ find an element $m\in M$ s.t. $\tilde\phi(m) \in \pi_j^{-1}(U)$. That is, we want an $m \in M$ s.t. $\phi_j(m)\in U$. And it exists because $\phi_j$ has a dense image.
\end{proof}

\begin{cor}\label{cor_dense_i}
The canonical map $M\xto{i} \hat M$ has a dense image.
\end{cor}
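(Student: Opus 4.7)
The plan is to apply Lemma \ref{lem_density_and_limits} directly to the inverse system defining $\hat M$. First I would identify the indexing category: let $\catJ$ be the poset of open vector subspaces of $M$, viewed as a category with a unique morphism $U\to V$ whenever $U\subseteq V$, and let $F\colon\catJ\to\catTop$ be the functor $U\mapsto M/U$, where the structure map $M/U\to M/V$ attached to $U\subseteq V$ is the obvious (continuous) quotient. By definition, $\hat M=\lim F$.

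Next I would check that $\catJ$ is cofiltered in the weak sense required by the lemma: given two open vector subspaces $U_1,U_2\subseteq M$, their intersection $U_1\cap U_2$ is again an open vector subspace and is contained in both, so it provides an object $U$ of $\catJ$ with morphisms $U\to U_1$ and $U\to U_2$. The natural maps $\phi_U\colon M\to M/U$ are the quotient projections; they are continuous, natural in $U$, and in particular surjective, hence have dense image. The induced map $\tilde\phi\colon M\to\lim F=\hat M$ is by construction exactly the canonical map $i$.

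The hypotheses of Lemma \ref{lem_density_and_limits} are therefore satisfied, and the conclusion is precisely that $i\colon M\to\hat M$ has dense image. There is really no obstacle here: the only non-obvious ingredient, the weak cofilteredness of $\catJ$, is immediate from the fact that a finite intersection of open vector subspaces is an open vector subspace, which is exactly the defining property of a linear topology.
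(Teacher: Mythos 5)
Your proof is correct and is exactly the argument the paper intends: the corollary is stated right after Lemma \ref{lem_density_and_limits} and follows by applying it to the system $U\mapsto M/U$ over the (cofiltered, via intersections) poset of open vector subspaces, with the surjective quotient maps $M\to M/U$ supplying the dense images.
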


\begin{defn}
A topological vector space $M$ is called complete if $M = \hat M$. Note that a complete vector space is automatically Hausdorff. 
\end{defn}

\begin{defn}
Denote by $\catV$ the category of vector spaces and linear maps, by $\catTV$ the category of topological vector spaces and continuous linear maps and by $\catHTV$, $\catCTV$ its full subcategories of Hausdorff vector spaces and of complete vector spaces. All these categories are enriched over $(\catV, \otimes)$. 
\end{defn}

\begin{prop}\label{prop_universal_completion}
Let $M \in \ob(\catTV)$.
The natural map $M\xto{i} \hat M$ has the following universal property. If $N$ is complete and $f:M \to N$ is continuous linear, then there is a unique continuous $\hat f: \hat M \to N$ that satisfies $f = \hat f \circ i$. 
$$
\begin{tikzpicture}[label distance =-0.5 mm]	
	\node(lx) at (2,0){};
	\node(ly) at (0, -1.5){};
	\pgfresetboundingbox;

	\node(a00) {$M$};
	\node(a01) at ($(a00)+(lx)$) {$ \hat M $};
	\node(a11) at ($(a00)+(lx)+(ly)$) {$N$};	
	
	\draw[->] (a00)-- node[above] {$i$} (a01);
	\draw[->] (a00)-- node[left,below] {f} (a11);
	\draw[->] (a01)-- node[right] {$\exists! \hat f$} (a11);
\end{tikzpicture}
$$

\end{prop}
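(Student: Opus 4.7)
My plan is to exploit the two inverse-limit descriptions at hand: on the source side, $\hat M = \varprojlim_U M/U$ over open subspaces $U \os M$ (by definition of the completion), and on the target side $N = \varprojlim_V N/V$ over open subspaces $V \os N$ (since $N$ is complete). Everything then reduces to manipulating these two limits.

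For existence, I would proceed as follows. Since $f$ is continuous, for every $V \os N$ the preimage $U_V := f^{-1}(V)$ is an open subspace of $M$, and $f$ descends to a well-defined linear map $\bar f_V : M/U_V \to N/V$ (well-defined because $f(U_V) \subset V$). Composing with the canonical projection $\pi_{U_V} : \hat M \to M/U_V$ yields a continuous linear map $g_V : \hat M \to N/V$. As $V$ varies, the family $\{g_V\}$ is compatible under refinements $V' \subset V$ (one checks this using $U_{V'} \subset U_V$ together with the naturality of the projections and of the descended maps), so by the universal property of the inverse limit $N = \varprojlim_V N/V$ the family factors uniquely through a continuous linear map $\hat f : \hat M \to N$. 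Tracing $\hat f \circ i$ on each component $N/V$ gives $f \bmod V$, hence $\hat f \circ i = f$ in $N$.

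For uniqueness, suppose $\hat f_1, \hat f_2 : \hat M \to N$ both extend $f$. Then $\hat f_1$ and $\hat f_2$ agree on the image $i(M) \subset \hat M$, which is dense by Corollary \ref{cor_dense_i}. Since $N$ is complete it is in particular Hausdorff, so the equalizer $\{x \in \hat M : \hat f_1(x) = \hat f_2(x)\}$ is closed in $\hat M$; being a closed set containing a dense subspace, it is all of $\hat M$.

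I do not anticipate a real obstacle: the argument is formal once one unpacks the two limit presentations. The only point requiring mild care is checking that the $g_V$ form a compatible cone, which amounts to the naturality of the construction $V \mapsto U_V = f^{-1}(V)$ and of the induced quotient maps. The rest follows from the universal properties of inverse limits and the density statement in Corollary \ref{cor_dense_i}.
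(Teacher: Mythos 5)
Your proposal is correct and follows essentially the same route as the paper: the existence part builds the cone $\hat M \to M/f^{-1}(V) \to N/V$ into the limit presentation $N=\varprojlim_V N/V$, and uniqueness is deduced from $N$ being Hausdorff together with the density of $i(M)$ from Corollary \ref{cor_dense_i}, exactly as in the paper's proof. The only difference is that you spell out the compatibility-of-the-cone check and the closed-equalizer argument a bit more explicitly, which the paper leaves implicit.
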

\begin{proof}
The uniqueness comes from $N$ being Hausdorff and $i$ having a dense image (\ref{cor_dense_i}). Since $N = \varprojlim_V N/V $, in order to define $\hat f$ we need to define maps $\hat M \to N/V$ in some coherent way. We take them to be the compositions 
$$\hat M \to M/f^{-1}(V) \xto{f/V} N/V $$
where the first map is the canonical map $ \varprojlim M/U \to M/f^{-1}( V)$ and $f/V$ is defined by the commutativity of the diagram:
$$
\begin{tikzpicture}[label distance =-0.5 mm]	
	\node(lx) at (3,0){};
	\node(ly) at (0, -1.5){};
	\pgfresetboundingbox;

	\node(a00) {$M$};
	\node(a01) at ($(a00)+(lx)$) {$N$};
	\node(a10) at ($(a00)+(ly)$) {$M/f^{-1} V$};
	\node(a11) at ($(a00)+(lx)+(ly)$) {$N/V$};	
	
	\draw[->] (a00)-- node[above] {$f$} (a01);
	\draw[->>] (a00)-- node[left] {} (a10);
	\draw[->>] (a01)-- node[right] {} (a11);
	\draw[->] (a10)-- node[below] {$f/V$} (a11);
\end{tikzpicture}
$$

\end{proof}

\forme{
\begin{prop}
 If $M$ is Hausdorff, then $M\xto i \hat M$ is injective and $\hat M$ can be characterized as a complete vector space containing $M$ as a dense subspace in such a way that the original topology on $M$ coincides with the one induced from $\hat M$. 
\end{prop}
}

As a consequence, the forgetful functor $\catCTV \to \catTV$ is right adjoint to $M\mapsto \hat M: \catTV \to \catCTV$ and thus preserves limits. We formulate this as
\begin{prop}\label{prop_limit_complete}
A limit (in $\catTV$) of complete vector spaces is again complete.
\end{prop}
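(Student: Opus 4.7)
The plan is to deduce the statement from the universal property of completion already established in Proposition \ref{prop_universal_completion}. That proposition says exactly that completion $\widehat{(-)}:\catTV\to\catCTV$ is left adjoint to the inclusion $U:\catCTV\to\catTV$. Since right adjoints preserve all limits that exist in their domain, the functor $U$ preserves limits; equivalently, the limit in $\catCTV$ of a diagram of complete spaces (taken there) coincides with the limit of the same diagram computed in $\catTV$. From this it is immediate that if $F:\catJ\to\catTV$ factors through $\catCTV$, then $\lim F$ (computed in $\catTV$) is complete, since it already carries the complete topology of the limit in $\catCTV$.

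To make this concrete and self-contained, I would first remark that $\catTV$ has all small limits: given $F:\catJ\to\catTV$ one takes the usual set-theoretic limit $L\subset\prod_j F(j)$ equipped with the subspace topology inherited from the product (which itself is given the product topology). Then, assuming each $F(j)$ is complete, I would verify directly that $L$ is complete using the universal property: given any continuous linear $\varphi:M\to L$ from a Hausdorff $M$, and using the canonical maps $\pi_j:L\to F(j)$, each composition $\pi_j\circ\varphi:M\to F(j)$ extends uniquely to $\hat M\to F(j)$ by completeness of $F(j)$; these extensions are natural in $j$ and hence assemble into a unique continuous map $\hat M\to L$ extending $\varphi$. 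Applying this with $M=L$ and $\varphi=\id_L$ produces a continuous section $\hat L\to L$ of the canonical map $L\to\hat L$, which, together with the density of $L$ in $\hat L$ (Corollary \ref{cor_dense_i}) and the Hausdorff property of $L$ (inherited as a subspace of a product of Hausdorff spaces), forces $L=\hat L$.

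I do not expect a real obstacle here: the only points to touch with care are (i) that the product of complete spaces is complete, which is a standard verification using the fact that completion commutes with products of Hausdorff spaces, and (ii) that a closed subspace of a complete space is complete, applied to the equalizer-type subspace cutting out $\lim F$ inside $\prod_j F(j)$. Both follow by reducing to the universal property of Proposition \ref{prop_universal_completion}, and no new topological input beyond what has already been set up is required.
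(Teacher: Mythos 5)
Your proof is correct and takes essentially the same route as the paper: Proposition \ref{prop_universal_completion} exhibits completion as left adjoint to the inclusion $\catCTV\hookrightarrow\catTV$, and the proposition is then the standard closure of a full reflective subcategory under limits existing in the ambient category. Your explicit retraction argument (extending $\id_L$ over $\hat L$ via completeness of the $F(j)$, then using density of $L$ in $\hat L$ and the Hausdorff property) merely spells out the step the paper leaves implicit.
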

\begin{cor}
A topological vector space is complete if and only if it can be written as a limit of discrete vector spaces.
\end{cor}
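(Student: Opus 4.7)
The proof will be a direct combination of Proposition \ref{prop_limit_complete} with the explicit description of the completion as $\hat M = \varprojlim_U M/U$.

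For the ``only if'' direction, I would start from a complete $M$ and simply observe that $M = \hat M = \varprojlim_U M/U$, where $U$ ranges over open vector subspaces of $M$. Each quotient $M/U$ carries the discrete topology (this is already noted in the definition of completion), so this expresses $M$ as a limit of discrete vector spaces in $\catTV$.

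For the ``if'' direction, the key observation is that every discrete vector space is complete. Indeed, if $D$ is discrete then the only open vector subspace needed in the definition $\hat D = \varprojlim_U D/U$ is $U=\{0\}$ (since it is itself open), giving $\hat D = D/\{0\} = D$. Given any diagram $F:\catJ\to\catTV$ whose values are discrete, Proposition \ref{prop_limit_complete} immediately yields that $\varprojlim F$ is complete.

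There is essentially no obstacle here; the corollary is a two-line consequence of the preceding material. The only minor point worth writing out carefully is the remark that discrete spaces are complete, which is exactly what is needed to feed Proposition \ref{prop_limit_complete}. No additional machinery beyond what is already in the appendix is required.
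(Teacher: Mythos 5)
Your proof is correct and follows exactly the route the paper intends: the ``only if'' direction is the definition $M=\hat M=\varprojlim_U M/U$ with discrete quotients, and the ``if'' direction combines the observation that discrete spaces are complete with Proposition \ref{prop_limit_complete} (the paper states the corollary without proof precisely because this is the immediate argument). Nothing is missing.
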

Similarly, as the functor $M\mapsto \hat M: \catTV \to \catCTV $ is a left adjoint, we get:
\begin{prop}\label{prop_compl_colim}
The functor $M\mapsto \hat M: \catTV \to \catCTV $ preserves (small) colimits.
\end{prop}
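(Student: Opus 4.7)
The plan is to invoke the general categorical principle that left adjoints preserve all (small) colimits, so the statement reduces to identifying the completion functor as a left adjoint and then citing the adjoint functor theorem for colimits.

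First, I would recall from Proposition \ref{prop_universal_completion} that the completion functor $\compl:\catTV\to\catCTV$, $M\mapsto\hat M$, is left adjoint to the forgetful (inclusion) functor $U:\catCTV\to\catTV$. Indeed, the universal property there states exactly that continuous linear maps $M\to N$ with $N$ complete are in natural bijection with continuous linear maps $\hat M\to N$, i.e.\
\[
\Hom_{\catCTV}(\hat M, N)\;\cong\;\Hom_{\catTV}(M,U N),
\]
and this bijection is clearly natural in both $M$ and $N$, so we have an adjunction $\compl\dashv U$. This is precisely the setup already used in the excerpt to deduce Proposition \ref{prop_limit_complete} (from the right-adjoint side).

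Second, I would apply the standard fact that any left adjoint preserves (small) colimits. Given a small diagram $D:\catJ\to\catTV$ with colimit $\colim D$ in $\catTV$, we obtain natural bijections
\[
\Hom_{\catCTV}\bigl(\,\hat{\colim D},\,N\bigr)\;\cong\;\Hom_{\catTV}(\colim D,\,U N)\;\cong\;\limita_{\catJ}\Hom_{\catTV}(D(j),U N)\;\cong\;\limita_{\catJ}\Hom_{\catCTV}(\hat{D(j)},N),
\]
for any $N\in\ob(\catCTV)$. By the Yoneda lemma this identifies $\hat{\colim D}$ with $\colim(\compl\circ D)$ computed in $\catCTV$, which is the content of the proposition.

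There is no real obstacle here: the entire statement is a formal consequence of the adjunction already proved. The only thing to note is that ``colimits'' on the right-hand side are taken in $\catCTV$, not in $\catTV$, and these generally differ (for instance, colimits in $\catCTV$ are formed by completing the colimit computed in $\catTV$); the adjunction argument handles this automatically because it characterizes $\hat{\colim D}$ by its universal property in $\catCTV$.
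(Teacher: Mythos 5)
Your argument is correct and is exactly the paper's route: the text deduces Proposition \ref{prop_compl_colim} directly from the adjunction $\compl\dashv(\catCTV\hookrightarrow\catTV)$ established by the universal property in Proposition \ref{prop_universal_completion}, using that left adjoints preserve small colimits. You merely spell out the standard Hom-set/Yoneda details that the paper leaves implicit, including the (correct) caveat that the colimit on the completed side is taken in $\catCTV$.
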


\subsection{Tensor products}\label{subsection_tensor_products}
Let $M,N \in \ob(\catTV)$ be topological vector spaces (not necessarily complete). We equip $M\otimes N$ with a linear topology generated by neighborhoods of 0 of the form
$$( U\otimes N )+ (M\otimes V)  $$
where $U$ and $V$ range over the open linear subspaces of $M$ and $N$ respectively.
This turns $\catTV$ into a symmetric monoidal category with discrete $\field$ for the unit object.
Note that $M \otimes N$ can be defined as a space with a universal bilinear map $M\times N \to M\otimes N$, continuous in some sense. 

We also define $\ctp$ as 
$M\ctp N := \widehat{ M\otimes N} \ .$
It satisfies the usual universal property in $\catCTV$ and turns it into a symmetric monoidal category.

\begin{rem}\label{rem_compl_and_ctp}
From the universal properties one can show that for $M,N\in \ob(\catTV)$
$$ M\ctp N =  \hat M \ctp \hat N \ .$$
\end{rem}

\subsection{Tensor product and colimits}
Recall that in the category of modules over some commutative ring, the functor $ \argument\tp X $ is a left adjoint and thus commutes with all (small) colimits.
\begin{example} In $\catTV$, $X\tp \argument$ does not commute with infinite coproducts. As an example, take $Y$ infinite dimensional discrete. If $(y_i)_{i\in I}$ is a basis of $Y$ then $$ Y = \bigoplus_{i\in I} \field \cdot y_i \ .$$
$X\tp Y$ has a topology base of formed by subspaces of the form $V \tp Y$, where $V\subset X$ is an open subspace.
On the other hand the topology on $\bigoplus_i (X \otimes y_i)$ is generated by the subspaces of the form $\bigoplus_i  (V_i \otimes y_i)$ where $V_i\subset X$ are open subspaces. Thus $\bigoplus_i (X \otimes y_i)$ has in general much more open sets than $X \tp Y $. (Take for example $X$ infinite dimensional with the base formed by subspaces of finite codimension.)
\end{example}

\begin{prop}\label{prop_tp_and_finite_colims}
$X\tp \argument : \catTV \to \catTV $ commutes with finite colimits.
\end{prop}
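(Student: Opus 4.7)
The plan is to reduce the statement to two basic kinds of finite colimits, namely finite coproducts (including the initial object) and coequalizers. Every finite colimit in a category with finite coproducts and coequalizers can be constructed from these, so once each case is verified separately, functoriality of $X \tp \argument$ finishes the job.

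For finite coproducts, I would use the fact that the finite direct sum $\bigoplus_{i=1}^{n} M_i$ in $\catTV$ has the product topology, with a neighborhood basis of $0$ given by subspaces of the form $\bigoplus_i V_i$ where each $V_i \subset M_i$ is open. Algebraically both sides agree, so only the topologies need to be compared. A basic open in $X \tp \bigoplus_i M_i$ has the form $(U \tp \bigoplus_i M_i) + (X \tp W)$, and $W$ may be shrunk to one of the form $\bigoplus_i V_i$; this then matches the rectangular subspace $\bigoplus_i \left((U \tp M_i) + (X \tp V_i)\right)$, which is a basic open in $\bigoplus_i (X \tp M_i)$. The reverse direction is obtained by intersecting finitely many $U_i$ into a single $U$. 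The empty coproduct $X \tp 0 = 0$ is immediate.

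For coequalizers, it is enough to preserve cokernels, since the coequalizer of $f,g$ is the cokernel of $f-g$. I would argue via the universal property: $X \tp M$ represents continuous bilinear maps out of $X \times M$ (in the sense already mentioned after the definition of $\tp$). Given $f : M \to N$ with cokernel $N/f(M)$ carrying the quotient topology, continuous bilinear maps $X \times (N/f(M)) \to P$ correspond to continuous bilinear maps $X \times N \to P$ that vanish on $X \times f(M)$, hence to continuous linear maps $X \tp N \to P$ vanishing on the image of $X \tp f(M)$, hence to continuous linear maps $(X \tp N)/(X \tp f(M)) \to P$. By Yoneda this yields a canonical isomorphism $X \tp (N/f(M)) \cong (X \tp N)/(X \tp f(M))$ in $\catTV$.

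The main technical nuisance is the matching of topologies, in particular checking that a continuous bilinear map $X \times N \to P$ vanishing on $X \times f(M)$ descends continuously through the quotient topology on $N/f(M)$, and conversely. Once one unwinds the definition of the tensor product topology this is straightforward, and I do not expect a substantial obstacle — the entire content of the proof is essentially bookkeeping around the universal property.
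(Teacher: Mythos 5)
Your proof is correct, and its skeleton (reduce finite colimits to finite direct sums plus cokernels, then check each) is the same as the paper's; the direct-sum case is handled by exactly the same comparison of topology bases. Where you diverge is the cokernel step: the paper again compares the two bases of neighborhoods of $0$ on $X\tp(M/N)$ and $(X\tp M)/(X\tp N)$ directly, observing that the condition $(Y\tp M+X\tp U)\supset X\tp N$ just means $N\subset U$, whereas you argue by representability, identifying the continuous linear maps out of both candidate objects with the same class of bilinear maps and invoking Yoneda. Your route works, but its weight rests on the universal property of $\tp$, which the paper only states as ``continuous in some sense''; to make your argument airtight you should spell out that sense: a linear map $X\otimes N\to P$ is continuous for the tensor topology iff the corresponding bilinear map $b$ satisfies, for every open subspace $W\subset P$, that there are open subspaces $U\subset X$ and $V\subset N$ with $b(U\times N)\subset W$ and $b(X\times V)\subset W$ (neither joint nor separate continuity is the correct notion here). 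With that definition your descent step is indeed routine: given such $U,V$ for $b$ vanishing on $X\times f(M)$, the subspace $V+f(M)$ is open because a linear subspace containing an open subspace is open, so its image is an open subspace of the quotient witnessing the same condition for the induced bilinear map on $X\times\bigl(N/f(M)\bigr)$. What each approach buys: the paper's basis comparison is entirely self-contained and makes the equality of topologies visible at a glance, while your universal-property argument is cleaner bookkeeping and generalizes without change to cokernels of arbitrary morphisms $f:M\to N$ (the paper treats the quotient by a subspace, which suffices since the cokernel of $\id_X\tp f$ only depends on the image of $f$ as a subset).
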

\begin{proof}
It is enough to show this for direct sums and cokernels.

 To show $X\tp (M\oplus N) \cong (X\tp M)\oplus (X\tp N)$ for $M,N \in \ob(\catTV)$ we observe that the two topology bases are equivalent:
$$ \set{ \big( Y\tp(M\oplus N)\big) + \big( X\tp (U\oplus V) \big) \; |\; Y \os X, U\os M , V \os N   } $$
$$ \set{ \big( Y_1\tp M +  X \tp U  \big) \oplus \big( Y_2 \tp N + X\tp V  \big) \; |\; Y_1,Y_2 \os X, U\os M , V \os N   } $$

The topologies of $X\tp (M/N) $ and of $(X\tp M)/ (X\tp N) $ for $N\subset M \in \ob (\catTV)$ are given by the bases:
$$ \set{ \big( Y \tp (M/N) \big) + \big( X \oplus U/N ) \; |\; Y \os X, N\subset U\os M   } $$
\begin{align*}
\Bigg\{
(Y\tp M + X \tp U) / (X \tp N) \; \bigg| \; & Y \os X, N\os M \text{ such that }\\ 
& (Y \tp M + X \tp U) \supset X \tp N
\Bigg\}
\end{align*}
Since the condition $(Y \tp M + X \tp U) \supset X \tp N$ just means $N\subset U$, these two bases are easily seen to be equal.
\end{proof}

\begin{cor}\label{cor_ctp_and_finite_colims}
In the category of complete vector spaces, $X\ctp \argument : \catCTV \to \catCTV $ commutes with finite colimits for any $X\in \ob(\catCTV)$.
\end{cor}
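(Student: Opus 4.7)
The plan is to reduce the statement to Proposition~\ref{prop_tp_and_finite_colims} (finite colimits commute with $X \tp \argument$ on $\catTV$) plus Proposition~\ref{prop_compl_colim} (the completion functor $\compl : \catTV \to \catCTV$, being a left adjoint to the forgetful functor, preserves all small colimits) plus Remark~\ref{rem_compl_and_ctp} (which tells us that $M \ctp N = \hat M \ctp \hat N$, i.e. $\ctp$ factors naturally through completion on both arguments).

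First I would fix a finite diagram $M_\bullet$ in $\catCTV$ and denote by $L$ its colimit computed in $\catTV$ and by $\hat L$ the colimit computed in $\catCTV$. Since $\compl$ is left adjoint to the inclusion $\catCTV \hookrightarrow \catTV$ (Proposition~\ref{prop_universal_completion}), the colimit in $\catCTV$ is obtained by applying $\compl$ to the colimit in $\catTV$, so $\hat L = \compl(L)$.

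Next I would chain together the identifications: by definition $X \ctp \hat L = \compl(X \tp \hat L)$, and by Remark~\ref{rem_compl_and_ctp} applied to the pair $(X, L)$ we have $X \ctp L = \hat X \ctp \hat L = X \ctp \hat L$, so $X \ctp \hat L = \compl(X \tp L)$. Applying Proposition~\ref{prop_tp_and_finite_colims} gives $X \tp L = \colim^{\catTV}(X \tp M_\bullet)$, and then applying Proposition~\ref{prop_compl_colim} to this finite colimit in $\catTV$ and using $\compl(X \tp M_i) = X \ctp M_i$ yields
\[
X \ctp \hat L = \compl\bigl(\colim\nolimits^{\catTV}(X \tp M_\bullet)\bigr) = \colim\nolimits^{\catCTV}(X \ctp M_\bullet),
\]
as required.

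There is no serious obstacle here; the one subtle point is the bookkeeping between the two notions of colimit (in $\catTV$ vs.\ in $\catCTV$) and making sure the remark $M \ctp N = \hat M \ctp \hat N$ is invoked correctly so that no spurious completion is introduced or omitted. The whole argument is a short formal diagram chase once the three ingredients above are lined up.
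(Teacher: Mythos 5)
Your proof is correct and follows essentially the same route as the paper: both arguments combine Proposition~\ref{prop_tp_and_finite_colims}, Proposition~\ref{prop_compl_colim} (completion, as a left adjoint, preserves colimits, which also gives $\colim^{\catCTV}=\compl\circ\colim^{\catTV}$ on diagrams of complete spaces), and Remark~\ref{rem_compl_and_ctp}, differing only in the direction in which the chain of identifications is written. No gap.
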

\begin{proof}
Take a functor $\calI \to \catCTV: i\mapsto N_i$ where $\calI$ is a finite category and $i\in \ob(I)$. Let's denote by
$$\colim^{\catCTV} N_i \quad \text{and}\quad \colim^{\catTV} N_i$$ the colimits in $\catCTV$ and in $\catTV$ respectively. We use the previous proposition, the fact that completion commutes with colimits (Proposition \ref{prop_compl_colim}), the remark \ref{rem_compl_and_ctp} and $\compl(N_i) = N_i$ to calculate
\begin{align*}
\colim^{\catCTV} (X\ctp N_i) & = \colim^{\catCTV} \compl ( X\tp N_i) \cong \compl \colim^\catTV( X\tp N_i) =\\
&\cong \compl( X\tp \colim^\catTV N_i ) =  X\ctp \colim^\catTV N_i =\\
&=  X\ctp \compl\big( \colim^\catTV N_i \big) \cong X\ctp \colim^{\catCTV} \compl N_i = X\ctp \colim^\catCTV N_i .
\end{align*}
\end{proof}

\forme{
\subsection{Affine filters}
\begin{defn}
A filter having a base consisting of affine subspaces will be called an affine filter. As for ordinary filters, one has a notion refinement of an affine filter and affine ultrafilters.
\end{defn}
\begin{prop}
\begin{enumerate}
\item Any affine filter can be refined to an affine ultrafilter. 
\item The image of an affine ultrafilter with respect to a linear map, is again an affine ultrafilter
\end{enumerate}
\end{prop}

We can use affine filters to reformulate the definition of complete vector spaces.
\begin{defn}
An affine filter $\calF$ in a topological vector space $M$ is Cauchy if for any open subspace $U \subset M$, there exist an affine $A\in \calF$ whose associated vector space is a subspace of $U$.
\end{defn}
\begin{prop}\label{prop_Cauchy_complete}
$M$ is complete iff any Cauchy affine filter in $M$ has a unique limit.
\end{prop}
\begin{proof}
Let $M$ be complete and $\calF$ a Cauchy filter in $M$. Since $M = \limita_U M/U$, we need to show that the image of $\calF$ has a unique limit in all $M/U$. And it is so, since $\calF$ is Cauchy and thus its image in $M/U$ is just a principal filter generated by one point.

Let any Cauchy affine filter in $M$ have a unique limit. The uniqueness implies that $M$ is Hausdorff. Thus the canonical map $M \to \hat M$
is injective and we can regard $M$ as a dense subspace of $\hat M$.  Suppose $M\neq \hat M$ and fix $x$ in the complement of $M$. Take the Cauchy affine filter $\calF$ in $\hat M$ with a base consisting of sets of the form $x + U$, where $U$ are open subspaces of $M$. Then $\calF$ induces a filter Cauchy on $M$ that has no limit in $M$.   
\end{proof}

\subsection{Compact vector spaces}
\begin{defn}
A topological vector space $K$ is called linearly compact (we will abbreviate this to compact) if any affine filter has a cluster point.
\end{defn}

\begin{claim}\label{claim_compact_discrete}
If $X$ is compact and discrete then it is finite dimensional.
\end{claim}
\begin{proof}
Assume by contradiction that $X$ has a base $\set{x_i}_{i\in I}$ where $I$ is infinite. The following affine subspaces indexed by $i\in I$ 
$$A_i := \set{x\in X \; \big|\; \text{ $i$-th coordinate of $x$ is $1$}}$$
are closed (X is discrete) and have nonempty finite intersections but the intersection of all of them is empty.
\end{proof}

\begin{lem} \label{lem_c0}
For a Hausdorff topological vector space $K$ the following conditions are equivalent:
\begin{enumerate}
\item $K$ is compact;
\item $K$ is complete and $K/V$ is finite dimensional for any open subspace $V\subset K$;
\item $K$ can be written as a limit of finite dimensional discrete spaces.
\item $K$ can be written as a limit of compact Hausdorff spaces.
\end{enumerate}
\end{lem}
\begin{proof}
$(1 \Rightarrow 2)$ From the proposition \ref{prop_Cauchy_complete} we see that if $K$ is complete and Hausdorff, it is also complete. Since the image of a compact under a linear continuous map is again compact, $K/V$ is compact and discrete and thus finite dimensional.

$(2 \Rightarrow 3)$ and $(3 \Rightarrow 4)$ are obvious.

$(4 \Rightarrow 1)$  We want to show that any affine ultrafilter $\calF$ converges. We have $K = \limita_i K_i$ with $K_i$ compact. The fact that $\calF$ has a limit in $M$ just means that the image of $\calF$ (that is again an ultrafilter) has a (necessarily unique) limit in each $K_i$. And that is true, since $K_i$ are compact. 
\end{proof}
}

\subsection{Strong topology on $\catTV(M,N)$}\label{subs_strong_topology}

\begin{defn}\label{defn_bs}
A topological vector space $K$ is called \emph{totally bounded} (we abbreviate this to \emph{bounded}) iff $K/V$ is finite dimensional $\forall V\os K$. K is called \emph{compact} if it is bounded and complete.
\end{defn}

\begin{claim}\label{claim_bounded_discrete}
If $X$ is bounded and discrete then it is finite dimensional.
\end{claim}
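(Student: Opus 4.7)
The claim is essentially immediate from the definitions, so the plan is very short. The key observation is that the discreteness of $X$ supplies a particularly small open subspace to feed into the boundedness hypothesis.

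First I would note that a linear topology on $X$ is discrete precisely when $\{0\}$ is itself an open (vector) subspace of $X$: any neighborhood base of $0$ consisting of linear subspaces must then contain $\{0\}$. Thus $\{0\} \os X$.

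Next, I would apply the boundedness hypothesis (Definition \ref{defn_bs}) with the open subspace $V = \{0\}$. Since $X$ is bounded, $X/V$ is finite dimensional; but $X/\{0\} = X$, so $X$ itself is finite dimensional.

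There is no real obstacle here. The only thing worth being careful about is invoking the convention (stated at the beginning of the appendix) that ``topological vector space'' means a space with a \emph{linear} topology, so that the neighborhood filter at $0$ has a base of vector subspaces --- without this, discreteness would not immediately give $\{0\} \os X$.
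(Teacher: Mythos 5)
Your proof is correct and is exactly the immediate argument the paper has in mind: the claim is stated there without proof, since discreteness makes $\{0\}$ an open vector subspace and boundedness then forces $X \cong X/\{0\}$ to be finite dimensional. (A small remark: you do not even need the linear-topology convention for this step, since in a discrete space $\{0\}$ is open outright and is trivially a vector subspace.)
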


\begin{defn}
The \emph{strong topology} on $\catTV(M,N)$ is defined by the basis of neighborhoods of 0 of the form
$$ \set{ f:M \to N \quad \text{s.t.}\quad f(K )\subset V  } $$
where $K\bs M$ and $V\os N$.
\end{defn}
\begin{rem} \label{rem_compact-open-initial}
In other words, the strong topology is the initial linear topology with respect to the maps (the targets are discrete):
$$\hhom( M,N ) \to \hhom(K, N/V)$$
where $K\subset M$ ranges through the bounded and $V\subset N$ trough the open subspaces.  
\end{rem}

We denote $M^*:=\hhom(M,\field)$ equipped with the strong topology.

\begin{rem}
Note that if $K$ is bounded then $K^*$ is discrete. If $X$ is discrete then $X^*$ is compact.
\end{rem}

\subsection{$\catTV$-categories.}

We will equip the hom-sets $\catTV(X,Y)$ with the strong topology.
Note that the composition maps 
$$ \catTV( Y,Z ) \otimes \catTV( X,Y ) \to \catTV(X,Z)  $$
are not continuous in general and so $\catTV$ is not enriched over itself (by considering the strong topologies). One can however introduce the following notion:
\begin{comment}
We have at least
\begin{lem} \label{lem_continuity_of_composition}
The maps
\begin{align*}
g \mapsto g\circ f \quad \text{ for $f$ fixed } \\
f \mapsto g\circ f \quad \text{ for $g$ fixed }
\end{align*}
are continuous (w.r.t. the weak topologies).
\end{lem}
\begin{lem}\label{lem_continuity_of_tensor}
The map
$$ \catTV(X_1, Y_1) \otimes \catTV (X_2, Y_2) \xto{\otimes} \catTV( X_1 \tp X_2, Y_1\tp Y_2 ) $$
is continuous.
\end{lem}
This motivates the following:
\end{comment}
\begin{defn}\label{defn_TV-category}
By a $\catTV$-category we mean a monoidal category $\catC$ enriched over vector spaces whose hom-sets are also equipped with linear topologies in such a way that
\begin{itemize}
\item the tensor product maps
$$ \catC(X_1, Y_1) \otimes \catC(X_2, Y_2) \to \catC(X_1 \otimes X_2, Y_1\otimes Y_2)  $$
are continuous.
\item The composition maps are non necessarily continuous, but the maps
\begin{align*}
g \mapsto g\circ f \quad \text{ for $f$ fixed } \\
f \mapsto g\circ f \quad \text{ for $g$ fixed }
\end{align*}
are continuous.
\end{itemize}
\end{defn}

\begin{example}
$\catTV$ and $\catCTV$ are $\catTV$-categories.
\end{example}
The following lemma is the reason why we like this type of categories:
\begin{lem}
Let $\catC$ be a $\catTV$-category and $C\in\ob(\catC)$. Then the functor $\Yoneda_C$ represented by $C$ is a functor to $\catTV$. A coalgebra structure on $C$ makes $\Yoneda_C$ into a quasi-monoidal functor to $\catTV$. More generally, the same is true for functors represented by direct summands.
\end{lem}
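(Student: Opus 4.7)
The plan is to split the statement into three small verifications, each of which reduces to one of the two axioms in the definition of a $\catTV$-category.

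First, I would check that $\Yoneda_C$ actually lands in $\catTV$. For a morphism $h\colon Y\to Z$ of $\catC$ the induced map $\Yoneda_C(h)\colon \catC(C,Y)\to \catC(C,Z)$ is $g\mapsto h\circ g$, i.e.\ ``composition on the left with the fixed morphism $h$''. This is exactly one of the two one-sided composition maps whose continuity is imposed in the definition of a $\catTV$-category, so $\Yoneda_C$ lifts to a functor to $\catTV$.

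Second, I would construct the quasi-monoidal structure and verify its continuity. The comparison map $\Yoneda_C(X)\otimes \Yoneda_C(Y)\to \Yoneda_C(X\otimes Y)$ should be set up as the composition
\[
\catC(C,X)\otimes \catC(C,Y)\xrightarrow{\otimes_\catC} \catC(C\otimes C, X\otimes Y)\xrightarrow{\Delta^{*}} \catC(C, X\otimes Y),
\]
so that the first arrow is continuous by the tensor-product axiom of a $\catTV$-category and the second is precomposition with the fixed morphism $\Delta$, hence continuous by the composition axiom. The unit comparison $\field\to \Yoneda_C(I)$, $1\mapsto \epsilon$, is automatically continuous since $\field$ is discrete. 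Naturality in $X,Y$ and the quasi-monoidal coherence diagrams~\eqref{diags_quasi_functor} are purely algebraic and follow from the identities $(\epsilon\otimes\id)\circ \Delta=(\id\otimes\epsilon)\circ\Delta=\id_C$, so no further topological check is required.

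Finally, I would upgrade the above to direct summands. For $(X,p)\in\Split(\catC)$ the represented functor takes values in the vector subspace $\{f\in\catC(X,Y):f\circ p=f\}\subset \catC(X,Y)$; endowing it with the subspace topology, the continuity of $\Yoneda_{(X,p)}(h)$ is inherited from that of $\Yoneda_X(h)$ which was already established. For a quasi-coalgebra structure $(\Delta_X,\epsilon_X)$ on $(X,p)$ the very same formulas give the quasi-monoidal structure, and the compatibility relations $(p\otimes p)\circ \Delta_X\circ p=\Delta_X$ and $\epsilon_X\circ p=\epsilon_X$ ensure that the image of each comparison map lands in the correct sub-hom-space.

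I do not expect any real obstacle: the content is a direct verification that the two $\catTV$-category axioms — continuity of $\otimes_\catC$ on hom-spaces, and separate continuity of composition — exactly cover the two kinds of topological checks that appear. The only point requiring care is to factor the monoidal comparison map into one pure tensor-product step and one pure precomposition step so that each factor is handled by a single axiom; the rest is formal.
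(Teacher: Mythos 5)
Your argument is correct and is exactly the routine verification the paper has in mind: the paper states this lemma without proof, having already defined the comparison maps $\catC(C,X)\otimes\catC(C,Y)\xrightarrow{\otimes}\catC(C\otimes C,X\otimes Y)\xrightarrow{\Delta^*}\catC(C,X\otimes Y)$ in Section \ref{subsec_monoidal_forgetful}, and your factorization handles each step by precisely the corresponding axiom of Definition \ref{defn_TV-category} (separate continuity of composition for functoriality and for $\Delta^*$, continuity of the tensor maps for the first arrow, discreteness of $\field$ for the unit), with the direct-summand case reduced to subspace topologies as intended. No gap; this matches the paper's approach.
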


\subsection{Some properties of strong topology}
The goal of this subsection is Claim \ref{claim_ch} that basically says that under some conditions one has $\Hom(M, N) = N \ctp M^* $.

\begin{claim}\label{claim_ca}
If $X$ is discrete then $M \ctp X = \limita_V \big( (M/V) \tp X  \big)$; $V\os N$.
\end{claim}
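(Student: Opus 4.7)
The plan is to unwind the definition of $M \ctp X$ and exploit discreteness of $X$ to identify a convenient cofinal family of open neighborhoods of $0$ in $M \tp X$.

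Recall that the topology on $M \tp X$ has a basis of $0$-neighborhoods of the form $(U \tp X) + (M \tp V)$ with $U \os M$ and $V \os X$. Since $X$ is discrete, $\{0\} \os X$, so taking $V = 0$ shows that every $U \tp X$ with $U \os M$ is itself a basic open subspace. Moreover, $U \tp X \subset (U \tp X) + (M \tp V)$ for every $V \os X$, so the subfamily $\{\, U \tp X : U \os M\,\}$ is cofinal in the filter of open subspaces of $M \tp X$.

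Since completion is an inverse limit over the filter of open subspaces, and passing to a cofinal subfamily does not change the limit, we obtain
$$M \ctp X \;=\; \widehat{M \tp X} \;=\; \varprojlim_{U \os M} \,(M \tp X)/(U \tp X).$$
Now the algebraic tensor product with a fixed vector space preserves cokernels (Proposition \ref{prop_tp_and_finite_colims} applied to the cokernel $U \hookrightarrow M \twoheadrightarrow M/U$, tensored with $X$), giving $(M \tp X)/(U \tp X) \cong (M/U) \tp X$. Both $M/U$ and $X$ are discrete, so the resulting topology on $(M/U) \tp X$ is also discrete, and it agrees with the quotient topology as topological vector spaces. Substituting this identification into the inverse limit yields the claimed formula.

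The only subtle point is the cofinality argument — i.e.\ verifying that taking the limit over the smaller family $\{U \tp X\}$ agrees with the limit over all open subspaces of $M \tp X$; this is a purely formal consequence of cofinality in the indexing poset, so there is no real obstacle. The rest is bookkeeping about the tensor topology.
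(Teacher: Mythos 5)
Your proof is correct. The paper in fact states Claim \ref{claim_ca} without any proof, and your argument is exactly the natural way to fill that gap: since $X$ is discrete, $V=0$ is allowed in the basic neighborhoods $(U\tp X)+(M\tp V)$, so the subspaces $U\tp X$ with $U\os M$ form a coinitial family of open subspaces of $M\tp X$; passing to this family in $\widehat{M\tp X}=\varprojlim_W (M\tp X)/W$ and identifying $(M\tp X)/(U\tp X)\cong (M/U)\tp X$ (which is Proposition \ref{prop_tp_and_finite_colims} applied to the quotient $M\twoheadrightarrow M/U$, and is discrete, matching the discrete quotients in the definition of completion) gives the claim. The only cosmetic remark is that the statement's ``$V\os N$'' is a typo for ``$V\os M$'', which you have implicitly corrected.
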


\begin{claim}\label{claim_cb}
If $K$ is  bounded and $N$ is complete then (as topological vector spaces) 
$$\catTV(K,N)  = \varprojlim_{V} \catTV(K, N/V)$$
where $V$ ranges through the open subspaces of $N$.  
\end{claim}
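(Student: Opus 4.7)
The strategy is to first produce a canonical bijection between the two sides as abstract vector spaces, and then check that the strong topology on $\catTV(K,N)$ coincides with the inverse-limit topology by comparing bases of neighborhoods of $0$.

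First, since $N$ is complete, $N=\varprojlim_V N/V$ in $\catTV$, where $V$ ranges over open subspaces of $N$. Applying the (additive, set-valued) functor $\catTV(K,\argument)$ and using the universal property of the inverse limit gives a natural linear bijection
$$\catTV(K,N)\;\longto\;\varprojlim_V\catTV(K,N/V),\qquad f\mapsto\bigl(\pi_V\circ f\bigr)_V,$$
whose inverse sends a compatible family $(f_V)$ to the unique map $K\to N$ it induces. This step is formal and uses only completeness of $N$, not boundedness of $K$.

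Next I will identify a convenient basis of neighborhoods of $0$ for the strong topology on $\catTV(K,N)$. By definition, a basis is given by the sets $\{f:f(L)\subset V\}$ with $L\bs K$ and $V\os N$. Because $K$ is itself bounded, $K$ is a bounded subspace of $K$, and any set of the above form contains $\{f:f(K)\subset V\}$. Hence the family
$$\bigl\{\,f\in\catTV(K,N)\;:\;f(K)\subset V\,\bigr\},\qquad V\os N,$$
is already a basis of neighborhoods of $0$ for the strong topology on $\catTV(K,N)$.

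On the other side I use the same kind of argument to see that each $\catTV(K,N/V)$ is in fact \emph{discrete}: since $K$ is bounded and $N/V$ is discrete, taking $L=K$ and $W=\{0\}$ in the definition of the strong topology on $\catTV(K,N/V)$ shows that $\{0\}$ is open. Consequently the inverse-limit topology on $\varprojlim_V\catTV(K,N/V)$ is generated by the preimages $\pi_V^{-1}(\{0\})$, and since the open subspaces of $N$ are directed under intersection, these preimages are already closed under finite intersections and thus form a basis of neighborhoods of $0$. Under the bijection of the first paragraph, $\pi_V^{-1}(\{0\})$ corresponds exactly to $\{f\in\catTV(K,N):\pi_V\circ f=0\}=\{f:f(K)\subset V\}$. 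Comparing the two bases gives the desired homeomorphism.

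The calculation is essentially bookkeeping; the only subtle point is recognizing that boundedness of $K$ forces the auxiliary hom-spaces $\catTV(K,N/V)$ to be discrete, which is what collapses the inverse-limit topology down to something described by the single condition $f(K)\subset V$ and thereby matches the strong topology on $\catTV(K,N)$.
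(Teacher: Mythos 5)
Your proof is correct and takes essentially the same route as the paper: the set-level identification is the universal property of $N=\varprojlim_V N/V$, and your comparison of neighborhood bases (using that $K$ bounded makes the single condition $f(K)\subset V$ cofinal and makes each $\catTV(K,N/V)$ discrete) is just an explicit unpacking of the paper's Remark \ref{rem_compact-open-initial}, which is exactly what the paper's proof cites for the equality of topologies.
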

\begin{proof}
The equality of sets follows from $N= \limita_V N/V $ and the definition of the limit.
The equality of topologies comes from Remark \ref{rem_compact-open-initial}.
\end{proof}

\begin{claim}\label{claim_cc}
If $K$ is bounded and $X$ discrete then $\catTV(K,X) = X\tp K^*$. (Since both $X$ and $K^*$ are discrete, it equals also $X\ctp K^*$.) 
\end{claim}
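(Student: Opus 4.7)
The plan is to first establish the algebraic equality $\catTV(K,X) = X\otimes K^*$ and then observe that both sides carry the discrete topology, so there is nothing further to check.

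For the algebraic equality, I will argue as follows. Since $X$ is discrete, the singleton $\{0\}$ is an open subspace of $X$, so for any continuous linear $f: K \to X$ the kernel $\Ker(f)$ is an open subspace of $K$. Since $K$ is bounded, the quotient $K/\Ker(f)$ is finite-dimensional, hence $f$ has finite-dimensional image. Choosing a basis $x_1,\ldots,x_n$ of $\Ima(f)$, we can write $f = \sum_i x_i \otimes \varphi_i$ uniquely with $\varphi_i \in K^*$. This sets up the natural map $X\otimes K^* \to \catTV(K,X)$ and its inverse, and both are seen to be mutually inverse linear isomorphisms.

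Next I will verify that both topologies are discrete. For $\catTV(K,X)$, the definition of the strong topology allows us to take the bounded subspace $K' := K$ itself (using that $K$ is bounded) and the open subspace $V := 0 \os X$ (using that $X$ is discrete); the resulting neighborhood $\{f : f(K) \subset 0\}$ is exactly $\{0\}$, so $\catTV(K,X)$ is discrete. For $X \otimes K^*$, we note that by the same reasoning $K^* = \catTV(K,\field)$ is discrete (this is also recorded in the remark after Definition \ref{defn_bs}). The tensor product topology on $X\otimes K^*$ has a basis of $0$-neighborhoods of the form $(U\otimes K^*) + (X\otimes V)$ with $U \os X$ and $V \os K^*$; taking $U=0$ and $V=0$ (both open since $X$ and $K^*$ are discrete) gives the neighborhood $0$, so $X\otimes K^*$ is discrete.

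Since both sides are discrete vector spaces and the natural map is a linear bijection, we get $\catTV(K,X) = X\otimes K^*$ as topological vector spaces. The parenthetical claim that this also equals $X\ctp K^*$ is immediate from Remark \ref{rem_compl_and_ctp} together with the fact that a discrete vector space is complete (it is a limit over itself), so completion does nothing. There is no real obstacle here; the only subtlety is keeping track of the interplay between the definition of boundedness (giving finite-dimensional quotients) and discreteness (allowing $V = 0$ as an open subspace), which is precisely what forces the image of every morphism to be finite-dimensional.
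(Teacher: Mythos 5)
Your proof is correct and follows essentially the same route as the paper: discreteness of $X$ makes $\Ker(f)$ open, boundedness of $K$ makes $K/\Ker(f)$ finite-dimensional, so every continuous map has finite rank and lies in the image of $X\tp K^*$, while both sides carry the discrete topology so only the vector-space identification matters. The extra details you supply (explicit discreteness of the two topologies, completeness of discrete spaces for the $\ctp$ remark) are just filled-in versions of what the paper asserts without comment.
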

\begin{proof}
Since both $\catTV(K,X)$ and $ X\tp K^*$ are discrete, it is enough to show the equality of vector spaces. One has a natural monomorphism $X\tp K^* \to\catTV(K,X) $. We just need to show it is surjective. Take $f \in \catTV(K,X)$. $\Ker(f)$ is open and thus $K/\Ker(f)$ is finite dimensional discrete. Thus the bottom arrow of the following commutative diagram is an isomorphism:
$$
\begin{tikzpicture}[label distance =-0.5 mm]	
	\node(lx) at (4.5,0){};
	\node(ly) at (0, 2){};
	\pgfresetboundingbox;

	\node(a00) {$X\tp \big(K/\Ker(f)\big)^* $};
	\node(a01) at ($(a00)+(lx)$) {$\Hom\big(K/\Ker(f), X \big)$};
	\node(a10) at ($(a00)+(ly)$) {$X \tp K^*$};
	\node(a11) at ($(a00)+(lx)+(ly)$) {$\catTV(K,X) $};	
	
	\draw[->] (a00)-- node[below] {$\cong$} (a01);
	\draw[right hook->] (a00)-- (a10);
	\draw[right hook->] (a01)--  (a11);
	\draw[->] (a10)--  (a11);
\end{tikzpicture}
$$
One can regard $f$ as an element in $\Hom\big(K/\Ker(f), X \big)$ and a simple diagram chase finishes the argument.
\end{proof}

\begin{claim}\label{claim_cd}
If $K$ is bounded and $N$ complete then $\catTV(K,N) = N\ctp K^*$. 
\end{claim}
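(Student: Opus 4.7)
The plan is to chain together the three preceding claims (\ref{claim_ca}, \ref{claim_cb}, \ref{claim_cc}) to express both sides as the same projective limit. Throughout, $V$ will range over the open vector subspaces of $N$.

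First, since $K$ is bounded and $N$ is complete, Claim~\ref{claim_cb} gives a topological isomorphism
\begin{equation*}
\catTV(K,N) \;\cong\; \varprojlim_{V} \catTV(K, N/V).
\end{equation*}
For each $V \os N$ the quotient $N/V$ is discrete, so Claim~\ref{claim_cc} identifies
\begin{equation*}
\catTV(K, N/V) \;\cong\; (N/V) \tp K^* \;\cong\; (N/V) \ctp K^*,
\end{equation*}
naturally in $V$. Therefore
\begin{equation*}
\catTV(K,N) \;\cong\; \varprojlim_{V} \bigl((N/V)\ctp K^*\bigr).
\end{equation*}

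Next, observe that $K^*$ is discrete because $K$ is bounded (see the remark right after the definition of the strong topology on the dual). Applying Claim~\ref{claim_ca} with $M = N$ and $X = K^*$ yields
\begin{equation*}
N \ctp K^* \;\cong\; \varprojlim_{V} \bigl((N/V)\tp K^*\bigr) \;\cong\; \varprojlim_{V} \bigl((N/V)\ctp K^*\bigr),
\end{equation*}
where the second isomorphism uses again that $(N/V)\tp K^*$ is already complete (indeed, $N/V$ and $K^*$ are both discrete, so their algebraic tensor product is discrete, hence complete). Chaining these identifications gives the desired topological isomorphism $\catTV(K,N) \cong N \ctp K^*$.

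The main thing to verify is that the isomorphisms at each level are genuinely natural in $V$, so that they assemble into an isomorphism of the two inverse systems (and hence of their limits) rather than just a levelwise bijection. This is essentially automatic: the isomorphism in Claim~\ref{claim_cc} sends $x \otimes \varphi \in (N/V) \tp K^*$ to the map $k \mapsto \varphi(k)\cdot x$, which commutes with the quotient maps $N/V \to N/V'$ for $V \subset V'$, and the identification in Claim~\ref{claim_cb} is by construction compatible with these same quotient maps. No further topological subtlety arises, so there is no real obstacle beyond bookkeeping.
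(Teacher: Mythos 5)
Your proof is correct and follows essentially the same route as the paper: identify $\catTV(K,N)$ with $\varprojlim_V \catTV(K,N/V)$ via Claim~\ref{claim_cb}, apply Claim~\ref{claim_cc} levelwise, and recognize the resulting limit as $N\ctp K^*$ via Claim~\ref{claim_ca}. The extra remarks on discreteness of $K^*$ and naturality in $V$ are fine but the paper treats them as implicit bookkeeping.
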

\begin{proof}
$$ \hhom(K,N) = \limita_V \hhom( K, N/V ) = \limita_V \Big( (N/V) \ctp K^* \Big) = N \ctp K^*.  $$
The first equality is from Claim \ref{claim_cb}, the second from Claim \ref{claim_cc} and the last from Claim \ref{claim_ca}
\end{proof}

\begin{claim}\label{claim_ce}
Let $M = \bigoplus_i M_i $ for $M_i$ Hausdorff, $i\in I$ and let $K$ be a bounded subspace of $M$. Then $K$ is contained in a sum of finitely many $M_i$. 
\end{claim}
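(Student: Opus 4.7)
The plan is to argue by contradiction: assuming $K$ is not contained in any finite partial sum, I will construct an open subspace $V\os M$ such that $K/(K\cap V)$ is infinite-dimensional, which directly contradicts $K\bs M$ (since $K\cap V\os K$).

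First I would build, inductively on $n$, a strictly increasing chain of finite subsets $F_1\subsetneq F_2\subsetneq\cdots\subset I$, distinguished indices $i_n\in F_n\setminus F_{n-1}$ (with $F_0:=\emptyset$), and elements $x_n\in K$ with $\mathrm{supp}(x_n)\subset F_n$ and $p_{i_n}(x_n)\neq 0$, where $p_j:M\to M_j$ is the $j$-th projection. At step $n$ our hypothesis $K\not\subset\bigoplus_{i\in F_{n-1}}M_i$ produces an $x_n\in K$ whose support is not contained in $F_{n-1}$; I then pick any $i_n\in\mathrm{supp}(x_n)\setminus F_{n-1}$ and set $F_n:=F_{n-1}\cup\mathrm{supp}(x_n)$. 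The key property of this triangular construction is that $p_{i_m}(x_n)=0$ whenever $n<m$, since $\mathrm{supp}(x_n)\subset F_{n}\subset F_{m-1}$ while $i_m\notin F_{m-1}$.

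Next I would exploit that each $M_i$ is Hausdorff with linear topology, so the intersection of its open linear subspaces is $\{0\}$. Hence for every $n$ one can pick $U_{i_n}\os M_{i_n}$ with $p_{i_n}(x_n)\notin U_{i_n}$; for the remaining indices set $U_i:=M_i$. Then $V:=\bigoplus_{i\in I}U_i$ is a basic open subspace of $M$ for the direct-sum topology, and $M/V\cong\bigoplus_n M_{i_n}/U_{i_n}$.

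Finally, I would verify that the images $[x_n]\in M/V$ are linearly independent, from which the contradiction follows, since $K/(K\cap V)$ embeds into $M/V$. Given a finite relation $\sum_{n\in F}a_n[x_n]=0$, let $N=\max F$; inspecting the $i_N$-coordinate and using $p_{i_N}(x_n)=0$ for $n<N$ reduces the equation to $a_N\cdot[p_{i_N}(x_N)]=0$ in the vector space $M_{i_N}/U_{i_N}$, and since $[p_{i_N}(x_N)]\neq 0$ this forces $a_N=0$. Induction on $|F|$ kills the remaining coefficients. The main obstacle is setting up the induction in the first step so that the triangular vanishing $p_{i_m}(x_n)=0$ for $n<m$ holds; once that is arranged, the choice of $U_{i_n}$'s via the Hausdorff hypothesis and the top-down extraction of coefficients are straightforward.
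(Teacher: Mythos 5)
Your argument is correct, and it takes a somewhat different route from the paper's. You work directly with the definition of boundedness: the triangular family $x_n\in K$ (with $p_{i_m}(x_n)=0$ for $n<m$ and $p_{i_n}(x_n)\neq 0$), together with the box-type subspace $V=\bigoplus_i U_i$ — which is indeed a basic open subspace for the topology the paper puts on coproducts — yields infinitely many linearly independent classes in $K/(K\cap V)$, and $K\cap V\os K$ gives the contradiction. The paper instead sets $J=\set{j\mid \pi_j K\neq 0}$, assumes $J$ infinite, uses the Hausdorff hypothesis to choose a nonzero continuous functional $f_j:M_j\to\field$ which is nonzero on $\pi_j(K)$, and pushes $K$ forward along $\bigoplus_j f_j$ into the discrete space $\bigoplus_{j\in J}\field$; there the image is bounded and discrete, hence finite dimensional by Claim \ref{claim_bounded_discrete}, which contradicts all the projections being nonzero. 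The two proofs use Hausdorffness in exactly the same way (to separate a nonzero coordinate from an open subspace, equivalently to produce a nonzero continuous functional), but yours stays inside $M$ and needs neither Claim \ref{claim_bounded_discrete} nor the implicit fact that continuous linear images of bounded subspaces are bounded, at the cost of the inductive triangular construction; the paper's reduction to the discrete scalar case is shorter once those two facts are granted. If you write yours up, the only step worth making explicit is that $V=\bigoplus_i U_i$ with $U_i\os M_i$ is open in $\bigoplus_i M_i$, which is precisely how the direct-sum topology is described in the paper.
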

\begin{proof}
From the universal property of $\bigoplus$ we see that the projections $\pi_m: \bigoplus_i M_i \to M_m$ are continuous for each $m$. We want to show that the set
$$J := \set{ m\in I \; |\; \pi_m K \text{ is nonzero} } $$
is finite. Suppose it is not the case. We can replace each $M_j$, $j\in J$ by $\pi_j (K)$ and thus suppose that $\pi_j(K) = M_j$. Further we choose whatever nonzero continuous functional $f_j : M_j \to \field$ (here we need $M_j$ Hausdorff). This will give us a continuous map $\bigoplus_{j\in J} f_j : \bigoplus_{i\in I} M_i \to \bigoplus_{j\in J} \field_j$ (here all $\field_i =\field$) and thus we can replace all  $M_i$ by $\field_j$ and $K$ by its image. Finally we are in a situation where we have a bounded vector subspace $K$ of $\bigoplus_{j\in J} \field_j$ s.t. $\pi_j (K) =\field$. Since $K$ is a subspace of a discrete space, it is itself discrete and thus finite dimensional (claim \ref{claim_bounded_discrete}). But then only finitely many of the projections $\pi_j (K)$ can be nonzero.
\end{proof}

\begin{claim}\label{claim_cf}
Let $M_i, N \in \ob(\catHTV)$. Then  $\hhom(\bigoplus_i M_i , N) = \prod_i \hhom (M_i, N) $ as topological vector spaces.
\end{claim}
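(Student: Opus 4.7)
The plan is to produce an explicit map $\Phi: \hhom(\bigoplus_i M_i, N) \to \prod_i \hhom(M_i, N)$, check that it is a bijection at the level of underlying vector spaces, and then verify that it is a homeomorphism when both sides are given their strong and product topologies. The bijection itself is immediate: by the universal property of the direct sum in $\catTV$, a continuous linear $f:\bigoplus_i M_i\to N$ is determined by its restrictions $f_i := f|_{M_i}\in \hhom(M_i,N)$, and conversely any family $(f_i)_i$ assembles to a continuous $f$. So we take $\Phi(f):=(f|_{M_i})_i$ and only need to match the topologies.

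Continuity of $\Phi$ is straightforward. A basic neighborhood of $0$ in $\prod_i \hhom(M_i,N)$ has the form $\bigcap_{j\in J} p_j^{-1}(W_j)$ with $J$ finite and $W_j=\{g\in \hhom(M_j,N) : g(K_j)\subset V_j\}$ for $K_j\bs M_j$ and $V_j\os N$. Viewing each $K_j$ as a bounded subspace of $\bigoplus_i M_i$ via the inclusion $M_j\hookrightarrow \bigoplus_i M_i$, the preimage under $\Phi$ is $\bigcap_{j\in J} \{f : f(K_j)\subset V_j\}$, which is a finite intersection of basic open neighborhoods of $0$ in the strong topology on $\hhom(\bigoplus_i M_i, N)$.

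For openness of $\Phi$, consider a basic open neighborhood $U(K,V)=\{f: f(K)\subset V\}$ with $K\bs \bigoplus_i M_i$ and $V\os N$. Here is where the Hausdorff hypothesis enters: Claim \ref{claim_ce} tells us that $K$ is contained in $\bigoplus_{j\in J} M_j$ for some finite $J\subset I$. The projections $\pi_j:\bigoplus_i M_i\to M_j$ are continuous (from the universal property, as noted in the proof of Claim \ref{claim_ce}) and linear, so $K_j:=\pi_j(K)$ is bounded in $M_j$ for every $j\in J$. Now for any $k\in K$ we have $k=\sum_{j\in J}\pi_j(k)$, hence $f(k)=\sum_{j\in J} f_j(\pi_j(k))\in \sum_{j\in J} f_j(K_j)$. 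Since $V$ is a \emph{vector} subspace (our topologies are linear), $\sum_{j\in J} V = V$, so the basic open set
\[
W:=\bigcap_{j\in J}\bigl\{(f_i)_i \in \prod_i\hhom(M_i,N) : f_j(K_j)\subset V\bigr\}
\]
is mapped by $\Phi^{-1}$ into $U(K,V)$. Thus $\Phi$ is open, and the two topologies agree.

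There is no serious obstacle here; the whole argument is essentially formal once one has Claim \ref{claim_ce}. The one point where a little care is needed is the openness direction, where one must exploit the linearity of the topology on $N$ (so that finitely many translates by $V$ still sit inside $V$) in order to convert a bound of the form $f(K)\subset V$ into componentwise bounds $f_j(K_j)\subset V$. Everything else is a direct application of the universal property of $\bigoplus$ and the definitions of the strong and product topologies.
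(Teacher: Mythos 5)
Your proof is correct and follows essentially the same route as the paper: the identification of underlying vector spaces via the universal property of $\bigoplus$, and then a comparison of the two bases of neighborhoods of $0$, with Claim \ref{claim_ce} supplying the key direction (a bounded $K\bs\bigoplus_i M_i$ sits inside finitely many summands, so $K\subset K_{i_1}\oplus\ldots\oplus K_{i_m}$ with $K_{i_j}=\pi_{i_j}(K)$ and the strong-basic set $U_{K,V}$ contains a product-basic set). The paper phrases the other direction as the identity $U'_{K_{i_1},\ldots,K_{i_m};V}=U_{K_{i_1}\oplus\ldots\oplus K_{i_m},V}$ rather than as a finite intersection of strong-basic sets, but this is only a cosmetic difference.
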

\begin{proof}
Surely, we have the equality of vector spaces (just use the universal property of $\bigoplus$). So we have two topologies $\calT$, $\calT'$ on the same space $\hhom(\bigoplus_i M_i , N)$. The basis of neighborhoods of 0 in $\calT$ is formed by the subspaces
$$ U_{K,V} := \set{   f\in \hhom(\bigoplus_i M_i , N)\; \Big|\quad f(K)\subset V   } $$
where $K\bs\bigoplus M_i$ and $V\os N$.
In $\calT'$, the basis of neighborhood of 0 is given by the sets:
$$ U'_{K_{i_1},\ldots, K_{i_m} ;V} := \set{   f\in \hhom(\bigoplus_i M_i , N)\; \Big|\quad f( \bigoplus_{j=1}^m K_{i_j} ) \subset V   } $$
where $\set{ i_1, \ldots, i_m }$ is a finite subset of the index set, $K_{i_j} \bs M_{i_j}$ and $V\os N$. 

Since $U'_{K_{i_1},\ldots, K_{i_m} ;V} = U_{K_{i_1} \oplus \ldots \oplus K_{i_m}, V}$, we see that $\calT' \subset \calT$. To show $\calT \subset \calT'$ we need to any $U_{K,V}$ find a smaller $U'_{K_{i_1},\ldots, K_{i_m} ;V'}$. From the claim \ref{claim_ce} we have indices $i_1,\ldots, i_m$, s.t. $K\subset \bigoplus_{j=1}^m M_{i_j}$ and thus it is enough to take $K_{i_j} := \pi_{i_j}( K)$ (here $\pi_k : \bigoplus_i M_i \to M_k$ denotes the natural projection) and $V':=V$.
\end{proof}

\begin{claim}\label{claim_cg}
Let $X_i$ be discrete topological vector spaces indexes by $i\in I $ and $N$ be a complete vector space. Then $\prod_i ( N \ctp X_i ) = N \ctp \prod_i X_i$ as topological vector spaces. 
\end{claim}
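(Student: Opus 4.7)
The plan is to compute both sides of the claimed equality as the same explicit inverse limit of discrete vector spaces. This reduces the problem to bookkeeping about the topologies involved.

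First I would identify a basis of open subspaces of $\prod_i X_i$. Since each $X_i$ is discrete, every vector subspace is open, and the product topology gives a basis of neighbourhoods of $0$ of the form $U_F := \ker\!\bigl(\prod_i X_i \to \prod_{i\in F} X_i\bigr)$ as $F$ ranges over the finite subsets of $I$. Thus $\prod_i X_i = \varprojlim_F \prod_{i\in F} X_i$, and each quotient $\prod_{i\in F} X_i = \bigoplus_{i\in F} X_i$ is discrete (finite product of discretes).

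Next I would compute the topology of $N \otimes \prod_i X_i$. By the definition in Section~\ref{subsection_tensor_products}, a basis of neighbourhoods of $0$ is formed by $V \otimes \prod_i X_i + N \otimes U_F$ for $V \os N$ and $F$ finite. The corresponding quotient is
\begin{equation*}
(N \otimes \textstyle\prod_i X_i) \big/ (V \otimes \prod_i X_i + N \otimes U_F) \;\cong\; (N/V) \otimes \prod_{i\in F} X_i \;\cong\; \bigoplus_{i\in F} (N/V) \otimes X_i,
\end{equation*}
using that ordinary tensor product commutes with finite direct sums. Since the $(V,F)$ form a directed set, taking the completion gives
\begin{equation*}
N \ctp \textstyle\prod_i X_i \;=\; \varprojlim_{V,F} \bigoplus_{i\in F} (N/V) \otimes X_i.
\end{equation*}

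On the other hand, Claim~\ref{claim_ca} gives $N \ctp X_i = \varprojlim_V (N/V) \otimes X_i$ (each $X_i$ is discrete), so each $N \ctp X_i$ is complete. Since each $(N/V) \otimes X_i$ is discrete, the product topology on $\prod_i (N/V) \otimes X_i$ is exactly $\varprojlim_F \bigoplus_{i\in F} (N/V) \otimes X_i$. Because limits commute with limits (and, thanks to Proposition~\ref{prop_limit_complete}, we may take the product outside the $\varprojlim_V$ safely), we obtain
\begin{equation*}
\textstyle\prod_i (N \ctp X_i) \;=\; \varprojlim_V \prod_i (N/V) \otimes X_i \;=\; \varprojlim_V \varprojlim_F \bigoplus_{i\in F} (N/V) \otimes X_i \;=\; \varprojlim_{V,F} \bigoplus_{i\in F} (N/V) \otimes X_i,
\end{equation*}
which matches the expression for $N \ctp \prod_i X_i$. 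Finally I would verify that the canonical map $N \ctp \prod_i X_i \to \prod_i (N\ctp X_i)$ coming from the universal property of products is exactly the isomorphism identified above, so the identification is natural. The only subtlety is making sure the two systems of quotients are indexed by the same cofinal directed set and that the transition maps agree; this is routine but is the place where one must be most careful.
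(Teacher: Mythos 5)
Your proposal is correct and follows essentially the same route as the paper: both sides are identified with the double inverse limit $\varprojlim_{V,F}\bigoplus_{i\in F}(N/V)\otimes X_i$, using the basic opens $V\otimes\prod_i X_i + N\otimes U_F$, Claim \ref{claim_ca}, commutation of tensor with finite sums, and commutation of limits. The paper merely organizes this as a single chain of equalities transforming $N\ctp\prod_i X_i$ into $\prod_i(N\ctp X_i)$, whereas you compute the two sides separately to the common limit; the content is the same.
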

\begin{proof}
The subspaces $\prod_{k\in I\setminus J} X_k$, where $J$ ranges through finite subsets of $I$, form a base of neighborhoods of 0 in $\prod_i X_i$. Thus ($V$ in the limits are open subspaces of $N$ and $J$ are finite subsets of $I$ )
\begin{align*}
N\ctp \prod_i X_i & \xlongequal 1 \limita_{V,J} N/V \tp \Big( \prod_{i\in I} X_i / \prod_{k\in I\setminus J} X_k  \Big) \xlongequal 2 \limita_J \limita_V \Big( N/V \tp \prod_{j\in J} X_j \Big) =\\
&\xlongequal 3 \limita_J \prod_{j\in J} \limita_V \Big( N/V \tp X_j \Big) \xlongequal 4 \prod_{i\in I} N \ctp X_i .
\end{align*}
In 4 we used Claim \ref{claim_ca}.
\end{proof}

\begin{claim}\label{claim_ch}
Let $M = \bigoplus_i K_i$ where $K_i$ are bounded and Hausdorff and let $N$ be complete. Then $\catTV(M,N) = N\ctp M^*$.
\end{claim}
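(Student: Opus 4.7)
The plan is to chain together the four preceding claims in the obvious way. Assembling them:
\[
\catTV(M,N)\;=\;\catTV\bigl(\textstyle\bigoplus_i K_i,\,N\bigr)\;=\;\prod_i \catTV(K_i,N)\;=\;\prod_i (N\ctp K_i^*)\;=\;N\ctp\prod_i K_i^*\;=\;N\ctp M^*.
\]
Each equality will be identified with one of the previously established claims.

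First, I would use Claim \ref{claim_cf} (the $M_i$ being bounded Hausdorff are in particular Hausdorff) to obtain the topological isomorphism $\catTV(\bigoplus_i K_i,\,N)\cong\prod_i \catTV(K_i,N)$. Next, since each $K_i$ is bounded and $N$ is complete, Claim \ref{claim_cd} identifies $\catTV(K_i,N)\cong N\ctp K_i^*$, and this is natural in each slot, so passing to the product is legitimate and yields $\prod_i \catTV(K_i,N)\cong\prod_i(N\ctp K_i^*)$.

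For the third equality, I would apply Claim \ref{claim_cg}, which requires the $X_i$ to be discrete topological vector spaces; this is exactly the dual-to-bounded situation noted in the remark after the definition of the strong topology (if $K$ is bounded then $K^*$ is discrete), so $K_i^*$ qualifies. Thus $\prod_i(N\ctp K_i^*)\cong N\ctp \prod_i K_i^*$. Finally, to close the chain, it remains to identify $\prod_i K_i^*$ with $M^*=(\bigoplus_i K_i)^*$; but this is simply Claim \ref{claim_cf} specialized to $N=\field$ (noting that $\field$ is complete and Hausdorff), which gives $(\bigoplus_i K_i)^*=\catTV(\bigoplus_i K_i,\field)\cong\prod_i\catTV(K_i,\field)=\prod_i K_i^*$, again as topological vector spaces.

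There is no genuine obstacle here beyond bookkeeping: the only mild point is verifying that the hypotheses of each invoked claim really are met at each step (bounded $K_i$ for Claim \ref{claim_cd}, Hausdorff $K_i$ for Claim \ref{claim_cf}, discrete $K_i^*$ for Claim \ref{claim_cg}), which is why the statement assumes both boundedness and the Hausdorff property on the summands. Putting the four isomorphisms in sequence gives the required natural topological isomorphism $\catTV(M,N)\cong N\ctp M^*$.
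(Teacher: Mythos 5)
Your chain of equalities is exactly the paper's proof: it applies Claim \ref{claim_cf} for the direct sum, Claim \ref{claim_cd} factorwise, Claim \ref{claim_cg} to pull $N\ctp{}$ out of the product, and Claim \ref{claim_cf} again (with target $\field$) to identify $\prod_i K_i^*$ with $M^*$. The hypothesis checks you note are the same ones implicitly used there, so the proposal is correct and essentially identical to the paper's argument.
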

\begin{proof}
\begin{align*}
\hhom(M,N) &= \hhom (\bigoplus_i K_i, N) \xlongequal 1 \prod_i \hhom ( K_i , N) \xlongequal 2 \prod_i \big( N \ctp K_i^* \big) = \\
 &\xlongequal 3 N \ctp \prod_i K_i^* \xlongequal 4 N \ctp \Big( \bigoplus_i K_i \Big)^* = N\ctp M^*
\end{align*}
Here the equalities 1,2,3 and 4 come from Claims \ref{claim_cf}, \ref{claim_cd}, \ref{claim_cg} and \ref{claim_cf} respectively.
\end{proof}

\newcommand{\Sym}{\mathrm{Sym}}
\section{Universal enveloping algebra}
Let $V\in \ob(\catHTV)$. We can construct a topological vector space
$ TV := \bigoplus_{n\geq 0} V^{\tp n}$.
We can define topological $SV$ as a closed subspace of $TV$. Since $TV$ decomposes as a topological vector space into a direct sum  $TV = SV \oplus \Ker(\Sym)$ where $\Sym$ is the symmetrization map, the topology on $SV$ can be defined as the quotient topology relative to $TV\to SV$.

\begin{thm}\label{thm_UL}
Let $L$ be a topological Lie algebra. Then as topological vector spaces, $TL\cong \Ker(\pi) \oplus S L$ where $\pi: TL \to \Uni L$ denotes the natural projection. So if we equip $\Uni L$ with the quotient space topology, the $PBW$ map $SL\to \Uni L$ will be an isomorphism of topological vector spaces.
\end{thm}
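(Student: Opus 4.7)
The plan is to upgrade the classical algebraic PBW theorem to a topological statement, showing that $\sigma := \pi\circ \Sym \colon SL \to \Uni L$ is a homeomorphism when $\Uni L$ carries the quotient topology. First, I would check that $\Sym \colon SL \to TL$ is a continuous linear injection: on each summand $S^n L$ it is given by $x_1 \cdots x_n \mapsto \tfrac{1}{n!} \sum_{\sigma \in S_n} x_{\sigma(1)} \otimes \cdots \otimes x_{\sigma(n)}$, a finite linear combination of the continuous permutation endomorphisms of $L^{\otimes n}$. Hence $\sigma = \pi \circ \Sym$ is continuous, and by the algebraic PBW theorem it is a linear bijection.

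Since $\Uni L$ carries the quotient topology from $\pi$, producing a continuous inverse amounts to constructing a continuous linear map $\psi \colon TL \to SL$ with $\psi \circ \Sym = \id_{SL}$ and $\psi|_{\Ker \pi} = 0$; such $\psi$ descends uniquely to the required $\bar\psi \colon \Uni L \to SL$. The only candidate is $\psi := \sigma^{-1} \circ \pi$, so the real work is checking continuity. Since $TL = \bigoplus_{n \geq 0} L^{\otimes n}$ carries the direct-sum topology, continuity of $\psi$ reduces to continuity of each restriction $\psi_n \colon L^{\otimes n} \to S^{\leq n} L$. I would then use that, by the combinatorial proof of algebraic PBW in characteristic zero, $\psi_n(x_1 \otimes \cdots \otimes x_n)$ is a \emph{finite} universal polynomial in $x_1, \ldots, x_n$ (with rational coefficients independent of $L$): its leading component in $S^n L$ is the symmetric product $x_1 \cdots x_n$, while its corrections in each $S^k L$, $k < n$, are symmetric products of iterated Lie brackets of the $x_i$'s. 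Both the symmetric product $L^{\otimes n} \to S^n L$ and iterated brackets $L^{\otimes k} \to L$ are continuous (the latter by the topological Lie algebra hypothesis), so each $\psi_n$ is continuous, and hence so is $\psi$.

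Having constructed continuous $\psi$, descending through the quotient yields the continuous inverse of $\sigma$, proving that $\sigma$ is a topological isomorphism. Equivalently, $\Sym \circ \psi \colon TL \to TL$ is a continuous idempotent with image $\Sym(SL)$ and kernel $\Ker(\pi)$, giving the topological direct sum decomposition $TL = \Ker(\pi) \oplus \Sym(SL)$. The main obstacle is not the continuity argument itself but verifying that the PBW normal form really is produced by a finite universal polynomial in the continuous operations of $L$, rather than only by some abstract existence statement; this is guaranteed by the standard combinatorial proof of algebraic PBW, which proceeds by a finite bubble-sort using $xy - yx = [x,y]$ and never invokes an infinite process or a choice of basis in $L$.
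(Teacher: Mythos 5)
Your proof is correct, but it takes a noticeably different route from the paper's. The paper argues by induction on the filtration degree: the crux there is continuity of the projection $p^{\leq n}:T^{\leq n}L\to S^{\leq n}L$, and the delicate point is its restriction to the antisymmetric complement $A^nL$, which the paper handles by splitting $A^nL$ via Young symmetrizers into finitely many topological summands, each contained in some subspace $A_{i,j}$ of tensors antisymmetric in a fixed pair of slots, where an explicit bracket formula reduces continuity to the inductive hypothesis. You instead construct the continuous inverse of $\sigma=\pi\circ\Sym$ globally, by observing that $\sigma^{-1}\circ\pi$ restricted to $L^{\otimes n}$ is given by a universal finite formula built from permutations, brackets tensored with identities, and symmetrizations, all of which are continuous; this sidesteps the Young-symmetrizer device entirely (which the paper needs precisely because continuity on each subspace of a non-direct sum does not imply continuity on the sum), at the cost of invoking a stronger input, namely the universal basis-free PBW symmetrization formula. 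That input is true and standard, but your justification of it is the one point to tighten: the textbook bubble-sort proof of PBW orders a \emph{basis} of $L$ and produces a basis-dependent normal form, so it does not directly yield a universal continuous expression. The clean ways to get the formula are either the basis-free induction on degree (rewrite $x_1\otimes\cdots\otimes x_n$ as its symmetrization plus, modulo $\Ker(\pi)$, a universal combination of degree-$(n-1)$ tensors in which two factors are replaced by their bracket, then recurse), or naturality of $\pi$ and $\Sym$ applied to the free Lie algebra on $n$ generators together with multilinearity. With that repair, your continuity argument for each $\psi_n$, the passage through the direct-sum and quotient universal properties, and the final decomposition $TL=\Ker(\pi)\oplus\Sym(SL)$ via the continuous idempotent $\Sym\circ\psi$ are all sound, and both proofs rest on the same underlying hypothesis that the bracket is continuous as a map $L\otimes L\to L$ for the tensor topology used in the paper.
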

\begin{proof} We will show that for every $n\in \N$,  $T^{\leq n}  L = \Ker(\pi^{\leq n} ) \oplus S^{\leq n} L$ as topological vector spaces. From the ordinary PBW theorem we know that the composition 
$$ S^{\leq n} L \hookrightarrow T^{\leq n} L \xto{\pi^{\leq n}} \Unik n L$$
is a vector space isomorphism and thus we have $T^{\leq n}  L = \Ker(\pi^{\leq n} ) \oplus S^{\leq n} L$ as vector spaces. To see that this is also a direct sum of topological vector spaces we will show that the projection on the second summand
$$ p^{\leq n}: T^{\leq n}  L = \Ker(\pi^{\leq n} ) \oplus S^{\leq n} L \to S^{\leq n} L $$
is continuous. We do it by induction on $n$. Assume that $p^{\leq n-1}$ is continuous. We have a topological decomposition
$$ T^{\leq n}  L = T^{\leq n-1 } L \oplus S^{ n} L \oplus A^n L $$
where $A^n L$ is the kernel of the symmetrization map $T^n L \to T^n L$. Since the restrictions
$$ p^{\leq n} \Big|_{T^{\leq n-1} L} = p^{\leq n-1} \quad \text{and} \quad p^{\leq n}\Big|_{S^n L} = \id_{S^n L} $$
are continuous, it is enough to check the continuity of $  p \big|_{A^n L}: A^n L \to S^{\leq n} L$.

Denote by $A_{i,j}\subset A^n$ the subspace of tensors antisymmetric in the $i$-th and $j$-th indices. We will show in a moment that $p|_{A_{i,j}}$ is continuous. One has $A = \sum A_{i,j}$ as a vector space but this does not imply that $p|_{A^n}$ is also continuous. One can however find a decomposition of $A$ into a direct sum (in $\catTV$) $A= \bigoplus_\alpha A_\alpha$ of finitely many subspaces $A_\alpha$ s.t. each $A_\alpha$ is a subset of some $A_{i,j}$. Thus $p|_{A_\alpha}$ are continuous and consequently $p|_{A^n}$ also is.

One such decomposition of $A$ comes from the standard decomposition $L^{\tp n} = \bigoplus_T L^{\tp n} \cdot c_T$ using the Young symmetrizers $c_T$ corresponding to the standard tableaux $T$. Each $c_T$ is a composition of the symmetrization with respect to the rows of $T$ followed by the antisymmetrization in columns. Thus each $L^{\tp n}\cdot c_T$ different from $S^n L$ will be a subset of some $A_{i,j}$.

The last thing is the continuity of $p|_{A_{i,j}}$ which can be proven by finding explicit formulas involving only the Lie brackets and $p^{\leq n-1}$ which is continuous by the induction hypothesis. For example $A_{1,2}$ is generated by elements of the form
$(x_1\tp x_2 \tp\ldots) - (x_2\tp x_1\tp\ldots)$.
One has 
$$\pi\Big( (x_1\tp x_2 \tp\ldots) - (x_2\tp x_1\tp\ldots)\Big) = \pi\Big([x_1,x_2]\tp\ldots \Big)$$ and thus (because $\Ker(p) = \Ker(\pi)$) also
$$p \Big( (x_1\tp x_2 \tp\ldots) - (x_2\tp x_1\tp\ldots)\Big) = p^{\leq n-1}\Big([x_1,x_2]\tp\ldots \Big).$$ 
Similarly, on $A_{1,3}$ we get a formula
\begin{gather*}
 p\Big( (x_1\tp x_2\tp x_3 \tp \ldots) - (x_3\tp x_2 \tp x_1\tp\ldots) \Big) = \\
 = p^{\leq n-1}\Big(  \big( [ x_1,x_2 ] \tp x_3\tp\ldots\big) + \big(x_2\tp [ x_1, x_3 ]\tp\ldots \big) + \big( [x_2,x_3] \tp x_1\tp \ldots  \big)  \Big). 
\end{gather*}
One gets longer formulas when $i$ and $j$ are still further apart.
\end{proof}

\begin{rem}
From the proof of the theorem \ref{thm_UL} one can see that as a topological vector space $\Uni L = \colim_k \Unik k L$ and one has topological  PBW isomorphisms $S^{\leq k} L \xto\cong \Unik k L$.
\end{rem}

We will need the following generalization of the above theorem.
\begin{thm}\label{thm_UL1}
Let $L$ be a topological Lie algebra and $\pi: TL \to \Uni L$ the natural projection. Assume also that $L$ decomposes into a direct sum $L= L_1\oplus L_2$ of topological vector spaces $L_1,L_2$. Then $TL$ decomposes as a topological vector space into a sum
$$ TL = \Ker(\pi) \oplus  \Big( \bigoplus_{k,l} S^k L_1 \tp S^l L_2 \Big). $$
This implies $ \Uni L \cong   \bigoplus_{k,l} S^k L_1 \tp S^l L_2 $ as a topological vector space.
\end{thm}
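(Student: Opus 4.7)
Plan: We proceed by induction on $n$, adapting the proof of Theorem \ref{thm_UL}. For each $m \geq 0$ put
$$W^m := \bigoplus_{k+l=m} S^k L_1 \tp S^l L_2 \subset T^m L,$$
embedded via the partial symmetrizations $S^k L_i \hookrightarrow T^k L_i$ followed by the obvious inclusion $T^k L_1 \tp T^l L_2 \hookrightarrow T^{k+l} L$. We shall show by induction that
$$T^{\leq n} L \;=\; \Ker(\pi^{\leq n}) \;\oplus\; \bigoplus_{m \leq n} W^m$$
as a topological direct sum. The algebraic decomposition is the ordinary PBW theorem applied to the ordering $L_1 < L_2$: products of an ordered monomial in $L_1$-generators with an ordered monomial in $L_2$-generators form a basis of $\Uni L$. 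The substantive part of the claim is the topological one, i.e., that the projection $q^{\leq n}$ onto $\bigoplus_{m\leq n}W^m$ along $\Ker(\pi^{\leq n})$ is continuous.

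For the inductive step, the topological splitting $T^{\leq n} L = T^{\leq n-1} L \oplus T^n L$ and the identity $q^{\leq n}|_{T^{\leq n-1} L} = q^{\leq n-1}$ (continuous by induction) reduce the problem to showing $q^{\leq n}|_{T^n L}$ is continuous. We further decompose $T^n L$ as a topological direct sum $T^n L = W^n \oplus R^n$ in which $R^n$ is obtained from Young-symmetrizer-like idempotents indexed by (a) a \emph{shuffle pattern}, i.e.\ a partition of $\{1,\dots,n\}$ into positions carrying $L_1$-factors and positions carrying $L_2$-factors, together with (b) within each such pattern, the complement of the fully-symmetric Young projector applied to the $L_1$-positions and to the $L_2$-positions separately, \emph{plus} all out-of-order patterns. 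This gives a finite direct sum $R^n = \bigoplus_\alpha R^n_\alpha$, where each $R^n_\alpha$ is generated either by tensors antisymmetric in a pair of positions within a single block, or by tensors in which some $L_2$-position precedes an $L_1$-position. Since $q^{\leq n}|_{W^n} = \mathrm{id}_{W^n}$, continuity of $q^{\leq n}$ reduces to continuity of each $q^{\leq n}|_{R^n_\alpha}$, which lands in $\bigoplus_{m \leq n-1} W^m$.

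On each $R^n_\alpha$ we have an explicit continuous formula: interchanging two adjacent positions $i, i+1$ alters the tensor by $\cdots \otimes [x_i, x_{i+1}] \otimes \cdots$ modulo $\Ker(\pi)$, hence modulo $\Ker(\pi)$ drops the total degree by one. A finite sequence of such swaps (sorting the shuffle pattern and unsymmetrizing within each block) produces a formula of the form
$$q^{\leq n}|_{R^n_\alpha} \;=\; q^{\leq n-1} \circ \Psi_\alpha,$$
where $\Psi_\alpha$ is a finite linear combination of continuous operations built from permutations of tensor factors and tensor products of the Lie bracket $L \tp L \to L$; continuity thus follows from the inductive hypothesis and from $[\cdot,\cdot]$ being continuous. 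The final statement $\Uni L \cong \bigoplus_{k,l} S^k L_1 \tp S^l L_2$ follows by applying $\pi$ to the decomposition and using that $\pi$ is a topological quotient. The main obstacle, as in Theorem \ref{thm_UL}, is the first step of the inductive argument: producing the \emph{topological} direct-sum decomposition $T^n L = W^n \oplus \bigoplus_\alpha R^n_\alpha$ (not just a sum), which forces the combinatorial bookkeeping with shuffle-adapted idempotents sketched above.
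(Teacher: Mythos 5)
Your overall strategy is the paper's own: decompose $T^nL$ according to the shuffle patterns coming from $L=L_1\oplus L_2$, split the in-order blocks $T^kL_1\tp T^lL_2$ by continuous Young-type symmetrizers as in Theorem \ref{thm_UL}, and treat the remaining pieces by adjacent transpositions, each of which costs a bracket term of one degree lower, with a double induction doing the bookkeeping. So the route is essentially identical to the paper's proof of Theorem \ref{thm_UL1}.

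One intermediate claim, as written, is false and needs the paper's bookkeeping to repair. For a component $R^n_\alpha$ corresponding to an out-of-order pattern it is not true that $q^{\leq n}\big|_{R^n_\alpha}$ lands in $\bigoplus_{m\leq n-1}W^m$, nor that $q^{\leq n}\big|_{R^n_\alpha}=q^{\leq n-1}\circ\Psi_\alpha$: a swap changes $t$ by a lower-degree bracket term modulo $\Ker(\pi)$, but $t$ itself drops degree only when it is antisymmetric under that swap. For instance, with $x_1\in L_1$, $x_2\in L_2$ one has $q^{\leq 2}(x_2\tp x_1)=x_1\tp x_2+[x_2,x_1]$, which has a nonzero component in $W^2$. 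After sorting, the correct formula is
$$ q^{\leq n}\big|_{R^n_\alpha}\;=\;q^{\leq n}\big|_{\text{in-order}}\circ\sigma_\alpha\;+\;q^{\leq n-1}\circ\Psi_\alpha, $$
where $\sigma_\alpha$ is the (continuous) sorting permutation and $q^{\leq n}$ on the in-order summand is continuous by your base case; this is exactly what the paper encodes by inducting on the number of inversions, via $p^{\leq n}(t)=p^{\leq n}\big(\alpha(t)\big)+p^{\leq n-1}\big(\beta(t)\big)$ with $\alpha$ the transposition and $\beta$ the bracket map. With that correction your argument goes through and coincides with the paper's.
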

\begin{rem}
One can generalize this to any finite number of summands $L = L_1 \oplus \ldots \oplus L_k$.
\end{rem}
\begin{proof}
We want to show for all $n \in \N$ that as a topological vector space
$$ T^{\leq n}L = \Ker(\pi^{\leq n}) \oplus  \Big( \bigoplus_{\substack{k,l \\ k+l\leq n} } S^k L_1 \tp S^l L_2 \Big). $$
As before we show by induction that the projection $p^{\leq n}$ on the second factor is continuous. In the induction step we need to show the continuity of $p\big|_{T^n L}$. We decompose $T^n L$ as
\begin{equation}
  T^n L = \bigoplus_{ f: \set{1,\ldots, n} \to \set{1,2} } L_{f(1)} \tp L_{f(2)} \tp \ldots \tp L_{f(n)} .
\end{equation}
We call a pair $i<j \in \set{1,\ldots,n}$ an inversion of  $f: \set{1,\ldots, n} \to \set{1,2}$ if $f(i) > f(j)$. We will show the continuity of $p\big|_{L_{f(1)} \tp \ldots \tp L_{f(n)}}  $ by induction on the number of inversions of $f$.

As the 0-th step assume that $f$ has no inversions. This means that $L_{f(1)} \tp \ldots \tp L_{f(n)} = T^k L_1 \tp T^l L_2$ for some $k+l = n$. We decompose it as
$$ T^k L_1 \tp T^l L_2 = \big( S^k L_1 \tp S^l L_2 \big) \oplus  \big( A^k L_1 \tp S^l L_2 \big) \oplus  \big( S^k L_1 \tp A^l L_2 \big) \oplus  \big( A^k L_1 \tp A^l L_2 \big)  $$
and proceed as in the proof of the theorem \ref{thm_UL}.

As the induction step we want to prove the continuity of $p\big|_{L_{f(1)} \tp \ldots \tp L_{f(n)}}  $ assuming the continuity of $p\big|_{L_{f'(1)} \tp \ldots \tp L_{f'(n)}}  $ for all $f'$ with less inversions than $f$. From the induction on $n$ we also assume that $p^{\leq n-1}$ is continuous. We can find a $k \in \set{1,\ldots n-1}$ s.t. $f(k) = 2$ and $f(k+1) = 1$. For simplicity assume that $k = 1$. Define continuous maps
\begin{align*}
\alpha : L_2 \tp L_1 \tp L_{f(3)} \tp \ldots \tp L_{f(n)} &\to L_1 \tp L_2 \tp L_{f(3)} \tp \ldots \tp  L_{f(n)} \\
x_1 \tp x_2 \tp x_3 \ldots &\mapsto x_2 \tp x_1 \tp x_3 \tp \ldots
\end{align*}
\begin{align*}
\beta : L_2 \tp L_1 \tp L_{f(3)} \tp \ldots \tp L_{f(n)} &\to L \tp L_{f(3)} \tp \ldots \tp  L_{f(n)} \\
x_1 \tp x_2 \tp x_3 \ldots &\mapsto [x_1, x_2] \tp x_3 \tp \ldots
\end{align*}
Then for any $t\in {L_{f(1)} \tp \ldots \tp L_{f(n)}}$ we have $ t - \alpha(t) - \beta(t) \in \Ker (\pi) = \Ker(p)$ and thus 
$$ p^{\leq n}(t) = p^{\leq n} \big(\alpha(t) \big) + p^{\leq n-1} \big(\beta(t) \big).$$
But $p^{\leq n} \circ \alpha$ and $p^{\leq n-1} \circ \beta$ are continuous from the induction hypotheses.
\end{proof}

\begin{cor}\label{cor_heielrt}
Recall that $\p = \g \oplus \g^*$ as a topological vector space and $\g$ is a Lie subalgebra of $\p$. This gives us a PBW isomorphism of vector spaces (or even $\Ug$-modules) $\Ug \tp S\g^* \xto\cong \Up$. On the topological level one gets\footnote{Note that since the tensor product (of topological vector spaces) does not commute with colimits, one actually does not have $\Up \cong \Ug \tp S\g^*$. (A simpler example of this phenomenon is that $S(V_1\oplus V_2) \neq S V_1 \tp SV_2$ as topological vector spaces.)}
$$ \Up \cong \colim_{k,l} \Unik k\g \tp S^{\leq l} \g^*. $$
\end{cor}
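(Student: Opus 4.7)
The plan is to deduce this directly from Theorem \ref{thm_UL1} applied to the topological Lie algebra decomposition $\p = \g \oplus \g^*$ (where $\g$ carries the discrete and $\g^*$ the strong topology). That theorem gives, for every $n$, a topological isomorphism
\begin{equation*}
\Unik n \p \;\cong\; \bigoplus_{\substack{k,l \\ k+l \leq n}} S^k \g \tp S^l \g^*.
\end{equation*}
Taking the colimit over $n$ (and using that the standard filtration makes $\Up = \colim_n \Unik n \p$ as a topological vector space, as observed in the remark following Theorem \ref{thm_UL}) produces an isomorphism of $\Up$ with $\colim_n \bigoplus_{k+l\leq n} S^k\g \tp S^l\g^*$.

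Next, I would replace the $S^k\g$ factors by $\Unik k \g$. Since $\g$ is discrete, the ordinary PBW theorem (or equivalently Theorem \ref{thm_UL} applied to $\g$) gives a topological isomorphism $\Unik k \g \cong S^{\leq k}\g = \bigoplus_{i\leq k} S^i \g$; these isomorphisms are compatible with the inclusions $\Unik k \g \hookrightarrow \Unik {k+1}\g$ and with the inclusion $\Ug \hookrightarrow \Up$ induced by the Lie subalgebra embedding $\g \subset \p$. Hence
\begin{equation*}
\Unik k \g \tp S^{\leq l}\g^* \;\cong\; \bigoplus_{i\leq k,\ j\leq l} S^i\g \tp S^j \g^*
\end{equation*}
topologically.

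Finally, I would compare the two indexing systems. The family $\{k+l\leq n\}_{n\in\N}$ and the family $\{i\leq k,\ j\leq l\}_{(k,l)\in\N^2}$ are mutually cofinal systems of finite subsets of $\N\times\N$, so the two colimits of the same diagram agree:
\begin{equation*}
\colim_n \bigoplus_{k+l\leq n} S^k\g \tp S^l \g^* \;\cong\; \colim_{k,l} \bigoplus_{i\leq k,\ j\leq l} S^i\g \tp S^j\g^* \;\cong\; \colim_{k,l}\, \Unik k\g \tp S^{\leq l}\g^*,
\end{equation*}
which gives the claim.

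There is no serious obstacle; the only point that requires care is that $\tp$ does not commute with arbitrary colimits (as stressed in the footnote to the corollary), so one must keep the colimit outside and work only with the finite-degree pieces $\Unik n \p$, where Theorem \ref{thm_UL1} provides a genuine topological splitting. The cofinality argument in the last step is the place where one silently uses that finite direct sums commute with $\tp$ (Proposition \ref{prop_tp_and_finite_colims}), which is what makes the reindexing legitimate.
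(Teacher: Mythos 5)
Your argument is correct and is essentially the paper's own proof: the paper exhibits a commutative triangle through $\colim_{k,l} S^{\leq k}\g \tp S^{\leq l}\g^*$, identifying it with $\Up$ via Theorem \ref{thm_UL1} and with $\colim_{k,l}\Unik k\g\tp S^{\leq l}\g^*$ via Theorem \ref{thm_UL}, which is exactly your combination of the finite-level splitting, the topological PBW for $\g$, and the reindexing. You merely make explicit the cofinality of the index families $\{k+l\leq n\}$ and the rectangles, which the paper leaves implicit.
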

\begin{proof}
One has a commutative diagram
$$
\begin{tikzpicture}[label distance =-0.5 mm]	
	\node(lx) at (4,0){};
	\node(ly) at (0, 1.5){};
	\pgfresetboundingbox;

	\node(a00) {$\colim_{k,l} \Unik k\g \tp S^{\leq l} \g^*$};
	\node(a01) at ($(a00)+(lx)$) {$ \Up $};
	\node(a11) at ($(a00)+(ly)$) {$\colim_{k,l}  S^{\leq k}\g \tp S^{\leq l} \g^*$};	
	
	\draw[->] (a00)--  (a01);
	\draw[<-] (a00)-- node[left] {$ \cong$} (a11);
	\draw[<-] (a01)-- node[right, above] {$\cong$} (a11);
\end{tikzpicture}
$$
where the two downward pointing maps are topological isomorphisms by the theorems \ref{thm_UL} and \ref{thm_UL1}.
\end{proof}

\begin{rem}
$TL, SL, \Uni L$ are not complete. If we wanted them to be, we could either complete them or (equivalently) define $\hat TV := \bigoplus_{n\geq 0} V^{\ctp n}$ and then take $\hat SL, \hat \Uni L$ as its quotient spaces. We will however not need it and reserve the notation $\hat S \g$ for a different space (it will be again a completion of $S\g$ but w.r.t. a different topology).
\end{rem}

Note that the multiplication map $TV \tp TV \to TV$ will not be continuous in general (the problem is that the topological tensor product does not commute with the infinite direct sums). Our $TV$, $SV$ and $\Uni L$ will be however topological filtered algebras from the following definition.

\begin{defn}
By a filtered algebra in $\catTV$ we mean a filtered topological vector space $B = \colim_i B_i$, $B_0 \subset B_1 \subset\ldots$ that is also a filtered algebra over $\field$ and the restrictions of the multiplication
$$ \mu\big|_{B_k \tp B_l} : B_k \tp B_l \to B_{k+l} $$
are continuous for all $k,l$.
\end{defn}

\forme{
\section{Modules}
Having a topological Lie algebra $\g$ there are several reasonable notions of a topological $\g$-module $\act:\g\tp M\to M$. Here are some of them. Note that in 1,2,3 we ignore the topology on $\g$.

1. $\g$ acts on $M$ by continuous maps. This is equivalent to requiring that $\act:\g\tp M\to M$ is continuous w.r.t the topology on $\g\tp M$ where a vector subspace $V \subset \g\tp M$ is open iff $\forall x\in \g; \exists U\subset M$ an open vector subspace, s.t. $(x \tp U )\subset V$.  Equivalently we have a map $\g\to\catTV(M,M)$.

\begin{reminder}
Remind form [EK1] that a system of maps $\set{f_x :Y\to Z }_{x\in X}$ where $Y,Z$ are topological vector spaces and $X$ a set, is said to be equicontinuous if $$\forall V\os Z;\; \exists U\os Y;\; \forall x \in X:\; f_x(U)\subset V . $$
In our case, $X$ will also be a topological vector space and the system  $\set{f_x :Y\to Z }_{x\in X}$ comes from a map $f: X\tp Y \to Z$. The equicontinuity then means the continuity of $ f:X_{\text{disc}} \tp Y \to Z $.
\end{reminder}

2. The system of maps $\set{x\act\argument:M \to M}_{x\in \g }$ is equicontinuous. 
That is,\footnote{By $U\os M$ we mean that $U$ is an open vector subspace of $M$.} 
$$\forall V\os M;\;\exists U\os M:\; \g\act U \subset V .$$
Equivalently, $\act:\g_{disc}\tp M\to M$ is continuous. 

3. The system of maps $\set{x\act\argument:M \to M}_{x\in \Ug }$ is equicontinuous. Equivalently, $M$ has a base of open neighborhoods of 0 consisting of $\g$-invariant subspaces. Equivalently, $\act:\Ug_{disc}\tp M\to M$ is continuous.

4. the Equicontinuous modules from the following definition

\begin{rem}
If $\g$ is finite-dimensional discrete then 1. and 2. are equivalent. If $\g$ is discrete then 2. and 4. are equivalent.
\end{rem}

}

\section{Equicontinuous modules}
\begin{reminder}
Recall form [EK1] that a system of maps $\set{f_x :Y\to Z }_{x\in X}$ where $Y,Z$ are topological vector spaces and $X$ a set, is said to be equicontinuous if $$\forall V\os Z;\; \exists U\os Y;\; \forall x \in X:\; f_x(U)\subset V . $$
In our case, $X$ will also be a topological vector space and the system  $\set{f_x :Y\to Z }_{x\in X}$ will come from a map $f: X\tp Y \to Z$. The equicontinuity then means the continuity of $ f:X_{\text{disc}} \tp Y \to Z $ where $X_{\text{disc}}$ is the same vector space as $X$ but with discrete topology.
\end{reminder}

Let $ L$ be a topological Lie algebra. Following [EK1], an $ L$-module $M$ is called an equicontinuous $ L$-module if the map $ L \to \catTV(M,M)$ is continuous and the system of maps $\set{x\act\argument:M \to M}_{x\in  L }$ is equicontinuous.

In this definition, there are two natural choices for the topology on $\catTV(M,M)$: a) the strong topology and b) the weak topology (this is how [EK1] do it). The choice of a) can be expressed as continuity of $ L\tp M\xto\act M$ w.r.t. the topology on $ L\tp M$ where a vector subspace $U\subset  L\tp M$ is open if
\begin{itemize}
\item[i)] $\exists V\os M$ s.t. $( L\tp V) \subset U$ and
\item[ii)] $\forall K\bs M;\; \exists T\os  L$ s.t. $(T\tp K)\subset U$. 
\end{itemize}
Similarly, to get b) just replace ii) with
\begin{itemize}
\item[ii')] $\forall m\in M;\; \exists T\os  L$ s.t. $(T\tp m)\subset U$. 
\end{itemize}
Now we can see that the two possibilities are actually equivalent.
\begin{proof}
Obviously, i) + ii) $\Rightarrow$ i) + ii'), so we just need the opposite implication. Let $U\subset  L\tp M$ satisfy i) and ii') and let $K\bs M$. By i) we can find $V\os M$ s.t. $ L\tp V\subset U$. Since $V\cap K$ is open in $K$, $K/(V\cap K)$ is finite dimensional (K is bounded) and so there exists a finite number of elements $m_1,\ldots, m_k \in K$ s.t. $K = (V\cap K) + \field m_1+ \ldots + \field m_k$. By ii') we find $T_1,\ldots, T_k \os M$ s.t. $T_i\tp m_i\subset U$. If we take $T:= \bigcap_{i=1}^k T_i$ we have
\begin{align*}
T\tp K &= \;T\tp \Big( (V\cap K) + \field m_1 + \ldots + \field m_k \Big) \;\subset \\
&\subset \;  L\tp V + T_1\tp m_1 +\ldots + T_k\tp m_k \;\subset\; U
\end{align*} 
\end{proof}

\begin{notation}
We denote the above topological vector space by $ L\oslash M$. More generally, if $M,N\in\ob(\catTV)$ we define a topological vector space $M\oslash N$ whose underlying vector space is $M\tp N$ and where a subspace $U\subset M\oslash N$ is open iff 
\begin{itemize}
\item[i)] $\exists V\os N$ s.t. $M\tp V \subset U$;
\item[ii)] $\forall K\bs N$, $\exists T\os M$ s.t. $T\tp K \subset U$
\end{itemize} 
or equivalently (the same proof as above) ii) can be replaced by
\begin{itemize}
\item[ii')] $\forall n \in N$; $\exists T\os M$ s.t. $T\tp n \subset U$.
\end{itemize}
\end{notation}

Note that $M\oslash N$ is not isomorphic to $N\oslash M$ in general. 

So a succinct definition of an equicontinuous module just says that $ L\oslash M\xto\act M$ is continuous.

\begin{claim}\label{claim_two_tensor_topologies}
If $M$ is discrete or if $N$ is bounded then $M\oslash N = M\otimes N$.
\end{claim}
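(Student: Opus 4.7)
The plan is to show both topologies coincide by inclusion of bases. First I would observe that the $\otimes$-topology is always finer (or rather, contained in) the $\oslash$-topology, without using either hypothesis: given a basic open $W = (U\otimes N) + (M\otimes V)$ with $U\os M$, $V\os N$, one checks conditions (i) and (ii) for $W$ in $M\oslash N$. Condition (i) is witnessed by $V$ itself, since $M\tp V\subset W$. Condition (ii) is witnessed by $T:=U$: for any $K\bs N$ one has $U\tp K\subset U\tp N\subset W$.

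For the reverse inclusion (which needs the hypothesis), let $W\subset M\tp N$ be $\oslash$-open. I would split into the two cases.

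If $M$ is discrete, then $\set{0}\os M$. By condition (i), there exists $V\os N$ with $M\tp V\subset W$. But then $(\set 0\tp N) + (M\tp V) = M\tp V\subset W$, exhibiting a basic $\otimes$-open neighborhood of $0$ inside $W$.

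If $N$ is bounded, then in particular $N\bs N$. By condition (ii) applied to $K:=N$, there exists $T\os M$ with $T\tp N\subset W$, and by condition (i) there exists $V\os N$ with $M\tp V\subset W$. Adding gives $(T\tp N) + (M\tp V)\subset W$, a basic $\otimes$-open neighborhood of $0$ contained in $W$.

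In both cases, every $\oslash$-open subspace contains a basic $\otimes$-open, so the two topologies agree. There is no serious obstacle: the entire content is that each of the two hypotheses lets one meet condition (ii) ``uniformly'', either by shrinking $M$ to $\set 0$ (discrete case) or by taking $K$ to be the whole of $N$ (bounded case).
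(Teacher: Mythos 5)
Your proof is correct: the one-line inclusion of the $\otimes$-basis into the $\oslash$-open subspaces, plus using condition (i) with $U=\{0\}$ in the discrete case and condition (ii) with $K=N$ (together with (i)) in the bounded case, is exactly the intended verification. The paper states this claim without proof, treating precisely this routine base comparison as immediate, so your argument matches the implicit one.
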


\begin{rem} \label{rem_equi_g_module}
A definition of an equicontinuous $ L$-module that is often easy to check is the following:
\begin{itemize}
\item[i)] The system of maps $\set{x\act\argument:M \to M}_{x\in  L }$ is equicontinuous and
\item[ii)] for any $m\in M$ the map $\argument\act m:  L\to M$ is continuous.
\end{itemize}
\end{rem}

\begin{example}\label{example_Ug_is_g_module}
If $ L$ is a topological Lie algebra then $\Uni L$ is an equicontinuous $ L$-module (via left multiplication).
\end{example}
\begin{proof}
We use the definition from Remark \ref{rem_equi_g_module}. To prove i) we fix $V\os  \Uni L$ and want to find $U\os  \Uni L$ s.t. $ L\cdot U\subset V$. Since $ \Uni L$ is a filtered algebra, the multiplication maps $ L\tp \Unik k  L \to \Unik{k+1} L $ are continuous. Thus there exist $U_k\os \Unik k L$ s.t. $ L\cdot U_k \subset V$. Consequently, we can take $U:= \sum_{k=0}^\infty U_k$. It will be open in $ \Uni L$ since its intersection with any $\Unik k  L$ is open and the topology on $ \Uni L$ is the colimit of topologies on $\Unik k  L$.

To prove ii), fix an element $x\in \Uni L$. We want to show that $\argument\cdot x: L\to  \Uni L$ is continuous. And it really is, since $x$ is in some $\Unik k L$ and the multiplication map $ L\tp\Unik k  L\to  \Uni L$ is continuous.
\end{proof}

\begin{claim}\label{claim_tgottn}
Let $M$ be an equicontinuous $L$-module. Then for any $k\in \N$, the action maps 
$$ \Unik k  L \oslash M \xto\act M $$
are continuous. In other words
\begin{enumerate}
\item[a)] the system of maps $\set{ x\act\argument : M\to M }_{x \in \Unik k  L}$ is equicontinuous;
\item[b)] $\forall m\in M$, the map $\argument \act m : \Unik k  L \to M$ is continuous.
\end{enumerate}
\end{claim}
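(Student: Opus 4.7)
I would prove a) and b) simultaneously by induction on $k$; their conjunction is equivalent to the stated continuity of $\Unik k L\oslash M\to M$ by the same elementary argument used earlier in this section to show that the two variants of the definition of an equicontinuous module (with ii) or with ii')) coincide. The bases $k=0$ (scalar multiplication) and $k=1$ (the defining hypothesis on $M$) are immediate.

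For the inductive step of a), exploit the decomposition $\Unik k L=\Unik{k-1}L+L\cdot\Unik{k-1}L$. Given $V\os M$, two successive applications of the inductive a) furnish $U_1\os M$ with $\Unik{k-1}L\act U_1\subset V$ and then $U\subset U_1$, $U\os M$, with $\Unik{k-1}L\act U\subset U_1$. Writing a typical $x\in\Unik k L$ as $z+\sum_i y_iw_i$ with $z,w_i\in\Unik{k-1}L$, $y_i\in L$, the inclusions $z\act u\in\Unik{k-1}L\act U\subset U_1\subset V$ and $y_i\act(w_i\act u)\in L\act U_1\subset\Unik{k-1}L\act U_1\subset V$ for $u\in U$ combine to give $\Unik k L\act U\subset V$.

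The substantive part is b). Fix $m\in M$ and $V_0\os M$. Since $\Unik k L$ carries the quotient topology from $T^{\leq k}L=\bigoplus_{j\leq k}L^{\tp j}$ under multiplication, continuity of $\argument\act m$ reduces to continuity of each iterated-action map $\psi_j:L^{\tp j}\to M$, $(y_1,\ldots,y_j)\mapsto y_1\cdots y_j\act m$, for $j\leq k$. The cases $j\leq k-1$ are the inductive b). For $j=k$ the key is the commutation identity
\[
y_1y_2\cdots y_k = y_2y_3\cdots y_k\cdot y_1 + \sum_{r=2}^{k} y_2\cdots y_{r-1}\,[y_1,y_r]\,y_{r+1}\cdots y_k,
\]
obtained by pushing $y_1$ to the right. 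The first summand acts on $m$ as $(y_2\cdots y_k)\act(y_1\act m)$: choosing $V'\os M$ with $\Unik{k-1}L\act V'\subset V_0$ (inductive a) at level $k-1$) and then $U_1^{(0)}\os L$ with $U_1^{(0)}\act m\subset V'$ (continuity of $\argument\act m:L\to M$, part of the definition), this contribution lies in $V_0$ whenever $y_1\in U_1^{(0)}$. Each bracket summand $B_r(\vec y)\act m$ is continuous $L^{\tp k}\to M$, being the composition of the continuous substitution $\vec y\mapsto(y_2,\ldots,[y_1,y_r],\ldots,y_k)\in L^{\tp(k-1)}$ (continuous because $[,]:L\tp L\to L$ is) with the inductively continuous $\psi_{k-1}$; hence its preimage of $V_0$ contains a basic open $U_1^{(r)}\tp L^{\tp(k-1)}+L\tp W_2^{(r)}$. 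Setting $U_1:=U_1^{(0)}\cap\bigcap_r U_1^{(r)}$ and producing $W_2\os L^{\tp(k-1)}$ from equicontinuity of the $L$-action combined with inductive b) applied to $(y_2,\ldots,y_k)\mapsto y_2\cdots y_k\act m$ (to handle the summand $L\tp W_2$ directly), the open subspace $U_1\tp L^{\tp(k-1)}+L\tp W_2\os L^{\tp k}$ is mapped by $\psi_k$ into $V_0$.

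The chief obstacle is this last step. Without first rewriting $\psi_k$ through commutators, one would be forced to control $U_1\act(\Unik{k-1}L\act m)$ — impossible in general, since the orbit $\Unik{k-1}L\act m$ need not be bounded. It is the Lie bracket, which replaces an $L$-factor by its commutator with $y_1$ and so drops the multiplicative degree by one, that brings the remainder terms within reach of the inductive hypothesis.
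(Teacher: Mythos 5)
Your proof is correct and follows essentially the same route as the paper: part a) by iterating the defining equicontinuity, and part b) by pulling the action back to $T^{\leq k}L$ and using the commutator identity $y_1y_2\cdots y_k=y_2\cdots y_ky_1+\sum_r y_2\cdots[y_1,y_r]\cdots y_k$ so that the leading term is controlled by equicontinuity of $\Unik{k-1}L$ together with continuity of $\argument\act m:L\to M$, and the bracket terms by continuity of $[\,,\,]$ plus the lower-degree case. The only difference is that you write out the full induction on $k$, which the paper carries out explicitly only for $k=2$ and declares ``analogous'' for higher $k$.
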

\begin{proof}
We will show a) and b) for $k=2$, the proof for higher $k$ being analogous.

Let's start with a). We know that
\begin{equation}\label{equa_dhfkfr}
\forall U\os M; \; \exists U' \os M;\;\text{s.t.} \; ( L \act U') \subset U
\end{equation}
and want to prove
$$ \forall U\os M; \; \exists U' \os M;\;\text{s.t.} \; ( L\act  L \act U') \subset U $$
This amounts just to using (\ref{equa_dhfkfr}) two times. 

Now let's show b) for $k=2$. Since $\Unik 2  L$ is a quotient space of $T^{\leq 2}  L$, we want to show continuity of ($m\in M$ is fixed)
$$  L\tp L \to M: x\tp y \mapsto x \act y \act m . $$
This means that for a given $U\os M$ we want to find $V_1, V_2 \os  L$ s.t.
$$ V_1 \act  L \act m \subset U \quad \text{and} \quad  L\act V_2 \act m \subset U . $$
Start with $V_2$. We know we can find $U' \os M$ s.t. $  L \act U' \subset U  $. From the continuity of $\argument \act m : L \to M$ we can find $V_2 \os  L$ s.t. $V_2 \act m \subset U'$.

Now let's find $V_1$. Note that 
$$ V_1 \act  L \act m \;\subset\;  \big( L\act V_1 \act m \big) + \big(  [V_1, L] \act m  \big). $$ 
From the continuity of $ L\tp L \to M : x\tp y \mapsto [x,y] \act m$ we can find $V_1' \os  L$ s.t. $[V'_1,  L] \act m \subset U $. Now take $V_1 := V_1' \cap V_2$.
\end{proof}

\begin{claim}
If $M,N$ are equicontinuous (Hausdorff) $L$-modules then $\hat M, M\tp N, M\ctp N$ are also equicontinuous $L$-modules.
\end{claim}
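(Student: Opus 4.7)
The plan is to verify the two conditions of the definition (equivalently, the conditions (i) and (ii) of Remark \ref{rem_equi_g_module}) for each of the three constructions, handling them in the order $\hat M$, $M\tp N$, $M\ctp N$, since the third one is essentially the completion of the second.

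For $\hat M$, I first extend the $L$-action. For each fixed $x\in L$, the map $x\act\argument:M\to M$ is continuous, hence extends uniquely to a continuous map $\hat M\to\hat M$ by Proposition \ref{prop_universal_completion}. These extensions are $\field$-linear and satisfy the Lie action identities on the dense subspace $M\subset\hat M$, hence everywhere, so they define an $L$-module structure on $\hat M$. To check equicontinuity (i), take an open subspace $\hat U\os\hat M$; its intersection $U:=\hat U\cap M$ is open in $M$, and by equicontinuity of $M$ there is $U'\os M$ with $L\act U'\subset U$. Then the preimage of the closed subspace $\hat U$ under the continuous extended maps contains $U'$ for every $x\in L$, and since this preimage is closed and $U'$ is dense in $\overline{U'}\os\hat M$, we get $L\act\overline{U'}\subset\hat U$. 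For pointwise continuity (ii), given $\hat m\in\hat M$ and $\hat U\os\hat M$, choose $U'\os\hat M$ with $L\act U'\subset\hat U$ by the equicontinuity just proved. By density of $M$ in $\hat M$ (Corollary \ref{cor_dense_i}) pick $m_0\in M$ with $\hat m-m_0\in U'$; then $L\act(\hat m-m_0)\subset\hat U$, so it suffices to find $T\os L$ with $T\act m_0\subset\hat U$, which exists by the hypothesis that $M$ is equicontinuous.

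For $M\tp N$, the diagonal action is $x\act(m\tp n)=(x\act m)\tp n+m\tp(x\act n)$, which I first check to be well-defined (using continuity of $[\argument,\argument]$ and standard Lie-action bookkeeping as in Claim \ref{claim_tgottn}). Recall that a basis of neighborhoods of $0$ in $M\tp N$ consists of subspaces $W_{U,V}:=(U\tp N)+(M\tp V)$ with $U\os M$ and $V\os N$. For (i), given $W_{U,V}$, use equicontinuity of $M$ and $N$ to find $U'\os M$ with $L\act U'\subset U$, and $V'\os N$ with $L\act V'\subset V$; replacing $U'$ by $U'\cap U$ and $V'$ by $V'\cap V$ we may assume $U'\subset U$ and $V'\subset V$. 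Then for $m\in U'$, $n\in N$ one has $(x\act m)\tp n\in U\tp N$ and $m\tp(x\act n)\in U'\tp N\subset U\tp N$, and symmetrically for $M\tp V'$, giving $L\act W_{U',V'}\subset W_{U,V}$. For (ii), any $k\in M\tp N$ is a finite sum $\sum_{i=1}^r m_i\tp n_i$, and by pointwise continuity in $M$ and $N$ one finds $T_i,T_i'\os L$ with $T_i\act m_i\subset U$ and $T_i'\act n_i\subset V$; then $T:=\bigcap_i(T_i\cap T_i')$ is still open (finite intersection) and satisfies $T\act k\subset W_{U,V}$.

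For $M\ctp N$, by definition $M\ctp N=\widehat{M\tp N}$, so combining the two previous paragraphs (first apply the $\tp$-statement, then the $\hat{\phantom{M}}$-statement) yields the result. The main technical point I expect to be the main obstacle is the pointwise continuity (ii) for $\hat M$: one has to exploit equicontinuity to reduce a statement about an arbitrary element $\hat m\in\hat M$ to one about its approximations in $M$, which is exactly the place where the ``equi'' in equicontinuity earns its keep; the rest is essentially bookkeeping with the topologies on tensor products and completions established in Appendix \ref{section_Top_vect_spaces}.
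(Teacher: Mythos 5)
The paper states this claim without proof, so there is no argument of the authors to compare yours against; judged on its own, your verification is correct, and it is the natural one: check conditions (i) and (ii) of Remark \ref{rem_equi_g_module} for $\hat M$ and for $M\tp N$, then obtain $M\ctp N=\widehat{M\tp N}$ by combining the two. The only step you assert without justification is ``$\overline{U'}\os\hat M$'' in the equicontinuity argument for $\hat M$: what you need is that the closure of $i(U')$ in $\hat M$ is an \emph{open} subspace, equivalently that $i(U')$ is dense in the open subspace $\ker\bigl(\hat M\to M/U'\bigr)$. This is true and short --- if $\hat u$ lies in that kernel and $W\os M$ with $W\subset U'$, then any $m\in M$ with $i(m)\in\hat u+\ker\bigl(\hat M\to M/W\bigr)$ automatically lies in $U'$, so $i(U')$ meets every basic neighbourhood of $\hat u$ --- but since this openness is exactly the point where equicontinuity survives completion, it deserves to be spelled out. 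Two minor remarks: well-definedness of the diagonal action on $M\tp N$ is purely algebraic and involves no continuity of the bracket; and to chain the two steps for $M\ctp N$ you should either note that $M\tp N$ is again Hausdorff (a nonzero tensor $\sum m_i\tp n_i$ with both families linearly independent stays nonzero in some $(M/U)\tp(N/V)$, because finite-dimensional subspaces of Hausdorff linearly topologized spaces are discrete), or observe that your completion argument only ever uses Hausdorffness of $\hat M$, never of the module being completed. The rest --- shrinking $U'\subset U$, $V'\subset V$ for the diagonal action, and reducing pointwise continuity on $\hat M$ to a single approximant $m_0\in M$ via the equicontinuity already established --- is exactly where the ``equi'' is needed and is handled correctly.
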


\odpad{
\section{Equicontinuous $L$-modules}
As before, $L$ will be a fixed topological Lie algebra.

\begin{reminder}
Remind form [EK1] that a system of maps $\set{f_x :Y\to Z }_{x\in X}$ where $Y,Z$ are topological vector spaces and $X$ a set, is said to be equicontinuous if $$\forall V\os Z;\; \exists U\os Y;\; \forall x \in X:\; f_x(U)\subset V . $$
In our case, $X$ will also be a topological vector space and the system  $\set{f_x :Y\to Z }_{x\in X}$ comes from a map $f: X\tp Y \to Z$. The equicontinuity then means the continuity of $ f:X_{\text{disc}} \tp Y \to Z $.
\end{reminder}

\begin{defn}
A topological vector space $M$ with an $L$-module structure is called an equicontinuous $L$-module if
\begin{itemize}
\item[i)]  the map $ L \xto\act \catHTV(M,M)$ is continuous w.r.t. the weak topology on $\catHTV(M,M)$
\item[ii)] The system of maps $\set{x\act M \to M }_{x\in L}$ is equicontinuous.
\end{itemize}
\end{defn} 

\begin{prop}
We can replace i) by
\begin{itemize}
\item[i')]  $\forall k\in \N$; the map $\Unik{k} L \xto\act \catHTV(M,M)$ is continuous
\end{itemize}
or by
\begin{itemize}
\item[i'')]  the map $\Uni L \xto\act \catHTV(M,M)$ is continuous.
\end{itemize}
Similarly, ii) can be replaced by
\begin{itemize}
\item[ii')] The system of maps $\set{x\act M \to M }_{x\in\Unik k L}$ is equicontinuous for any $k\in \N$
\end{itemize}
but not by
\begin{itemize}
\item[ii'')] The system of maps $\set{x\act M \to M }_{x\in \Uni L}$ is equicontinuous.
\end{itemize}
\end{prop}

\begin{rem}
The condition i) can be replaced by the continuity of $\Uni L \xto\act \catHTV(M,M)$. \todo{ I don't know whether it is also equivalent to the continuity of $ L \xto\act \catHTV(M,M)$. (This is what $EK$ use.)}

The condition ii) implies the equicontinuity of the system $\set{x\act M \to M }_{x\in \Unik k L}$ for all $k\in \N$ but not the equicontinuity of $\set{x\act M \to M }_{x\in \Uni L}$.
\end{rem}
}

\newcommand{\HUgp}{\Hom^C_\Ug(\Up, M)}

\section{Some properties of $\srdiecko:\catH\to\catP$}
We gather here some topological claims needed in Section \ref{section_our_categories}. Throughout, we consider $\Ug$ and $S\g^*$ as subspaces in $\Up$ (see also Corollary \ref{cor_heielrt}).
\begin{claim}\label{claim_baza_v_srdiecku}
 Let $M$ be an equicontinuous $\g$-module.
Then the two systems of subspaces
$$ V_{k,U} := \set{ f\in  \HUgp \; \big| \; f(\Uni^{\leq k}\p) \subset U}; \quad k\in \N;\; U\os M$$
and
$$ \tilde V_{k,U} := \set{ f\in  \HUgp \; \big| \; f(S^{\leq k}\g^*) \subset U}; \quad k\in \N;\; U\os M$$
form two bases of neighborhoods of 0 in $\HUgp$ (with the strong topology).
\end{claim}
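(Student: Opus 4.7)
The plan is to use the restriction isomorphism $\Hom^C_\Ug(\Up, M) \xto{\cong} \Hom^C_\field(S\g^*, M)$ of Claim~\ref{claim_fhitr} to transfer the question to $\Hom^C_\field(S\g^*, M)$, where the $\tilde V_{k,U}$-system will be handled by a direct boundedness argument. I will then deduce the $V_{k,U}$-statement by comparison, using PBW together with equicontinuity of the $\Ug$-action on $M$.

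First I will show that $\{\tilde V_{k,U}\}$ is a basis of neighborhoods of $0$. Under the isomorphism of Claim~\ref{claim_fhitr}, $\tilde V_{k,U}$ corresponds to $\{g : g(S^{\leq k}\g^*)\subseteq U\}$, so it suffices to establish two facts in $\Hom^C_\field(S\g^*, M)$ with its strong topology: (a) each $S^{\leq k}\g^*$ is bounded in $S\g^*$, so that $\tilde V_{k,U}$ really is a strong-topology neighborhood of $0$; (b) every bounded $K \bs S\g^*$ is contained in some $S^{\leq k}\g^*$, so that every strong basic neighborhood contains a $\tilde V_{k,U}$. For (a), $\g^*$ is bounded (discrete $\g$ gives $\g^* \cong \prod_i \field\, e^i$, which is in fact compact), hence each $(\g^*)^{\tp j}$ is bounded (its basic open-subspace quotients are tensor products of finite-dimensional discrete spaces), and $S^j\g^*$ is bounded as its continuous image; the finite sum $S^{\leq k}\g^* = \bigoplus_{j\leq k} S^j\g^*$ is then bounded as well. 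For (b), each $S^j\g^*$ is Hausdorff (quotient of the Hausdorff space $(\g^*)^{\tp j}$ by the closed kernel of symmetrization), and $S\g^* = \bigoplus_j S^j\g^*$ as a topological vector space, so Claim~\ref{claim_ce} applies directly.

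Second, I will compare the two systems. The inclusion $V_{k,U}\subseteq \tilde V_{k,U}$ is immediate from $S^{\leq k}\g^*\subseteq \Uni^{\leq k}\p$. For the reverse containment, PBW for $\p = \g\oplus\g^*$ (Corollary~\ref{cor_heielrt}, placing the $\g$-factors first) yields $\Uni^{\leq k}\p \subseteq \Unik{k}\g \cdot S^{\leq k}\g^*$ as subspaces of $\Up$, because any ordered PBW monomial of total degree at most $k$ has at most $k$ factors from each of $\g$ and $\g^*$. By Claim~\ref{claim_tgottn}, the family $\{z\act\argument : M\to M\}_{z\in \Unik{k}\g}$ is equicontinuous, so for any $U\os M$ there exists $U'\os M$ with $\Unik{k}\g\act U' \subseteq U$. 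Then for $f \in \tilde V_{k,U'}$, the $\Ug$-linearity of $f$ yields
$$ f(\Uni^{\leq k}\p) \subseteq f(\Unik{k}\g\cdot S^{\leq k}\g^*) = \Unik{k}\g\act f(S^{\leq k}\g^*) \subseteq \Unik{k}\g\act U' \subseteq U, $$
so $\tilde V_{k,U'}\subseteq V_{k,U}$. Combined with the first step this shows that $\{V_{k,U}\}$ defines the same filter of neighborhoods of $0$ as $\{\tilde V_{k,U}\}$, hence is also a basis for the strong topology.

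The argument is mostly bookkeeping and uses no new ideas beyond those already assembled in the appendix; the only points requiring care are verifying the Hausdorffness and boundedness of the graded summands $S^j\g^*$ (needed to invoke Claim~\ref{claim_ce}) and correctly bounding the filtration degree on the $\g$-side when invoking Claim~\ref{claim_tgottn}.
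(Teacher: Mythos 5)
Your reduction to $\Hom^C_\field(S\g^*,M)$ via Claim \ref{claim_fhitr} is circular within the logical structure of the paper: the paper proves Claim \ref{claim_fhitr} (specifically the openness of the restriction map, which is exactly the part you need in order to transport a neighbourhood basis back to $\HUgp$) by invoking Claim \ref{claim_baza_v_srdiecku}. Bijectivity (Claim \ref{claim_pfhg}) and continuity of restriction are available independently, but the homeomorphism property is not, before the present claim is established. Concretely, what your argument then lacks is any control of the bounded subspaces of $\Up$ itself: the strong topology on $\HUgp$ has the basic neighbourhoods $V'_{K,U}:=\set{f \mid f(K)\subset U}$ with $K\bs\Up$, $U\os M$, and your step (b) only shows that bounded subspaces of $S\g^*$ are absorbed by some $S^{\leq k}\g^*$. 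Without knowing that every $K\bs\Up$ is contained in some $\Uni^{\leq n}\p$, the (correct) fact that $\set{V_{k,U}}$ and $\set{\tilde V_{k,U}}$ generate the same filter does not yet identify that filter with the filter of strong neighbourhoods of $0$ in $\HUgp$.

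The repair is small and turns your argument into the paper's: apply Claim \ref{claim_ce} not to $S\g^*=\bigoplus_j S^j\g^*$ but to $\Up$ itself, using the topological PBW isomorphism $\Up\cong S\p=\bigoplus_k S^k\p$ (Theorem \ref{thm_UL}), under which $\Uni^{\leq n}\p$ corresponds to $\bigoplus_{k\leq n}S^k\p$; this yields that every $K\bs\Up$ lies in some $\Uni^{\leq n}\p$, hence $V_{n,U}\subset V'_{K,U}$. Combined with your other two observations, which are correct and coincide with the paper's steps --- namely that $S^{\leq k}\g^*$ is bounded in $\Up$, so each $\tilde V_{k,U}$ is itself a strong neighbourhood, and that $\tilde V_{n,U'}\subset V_{n,U}\subset \tilde V_{n,U}$ via the PBW inclusion $\Uni^{\leq n}\p\subset\Unik{n}\g\cdot S^{\leq n}\g^*$ and equicontinuity (Claim \ref{claim_tgottn}) --- this closes the cycle of refinements $\set{V'_{K,U}}\to\set{\tilde V_{k,U}}\to\set{V_{k,U}}\to\set{V'_{K,U}}$ exactly as in the paper, with no appeal to Claim \ref{claim_fhitr}.
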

\begin{proof}
By the definition of the strong topology we know that the system 
$$ V'_{K,U} := \set{ f\in  \HUgp \; \big| \; f(K) \subset U} ; \quad K\bs \Up;\; U\os N$$
forms a basis of neighborhoods of 0 in $\HUgp$.
We will show that in the sequence of bases 
$$\set{V'_{K,U}}; \;  \set{\tilde V_{k,U}}; \; \set{V_{k,U}}; \; \set{V'_{K,U}}; \;$$ 
each base is finer that the next one.

1. $\set{V'_{K,U}}$ is finer than $\set{\tilde V_{k,U}}$. This is clear since $S^{\leq k} \g^*$ is bounded ($\g^*$ is compact and the tensor product of bounded spaces is bounded).

2. $\set{\tilde V_{k,U}}$ is finer than $\set{V_{k,U}}$.
Fix $V_{n,U}$. Since $M$ is an equicontinuous $\Ug$-module, we can find an open subspace $U'$ s.t. $(\Uni^{\leq n} \g \act U') \subset U$. We want to show that $\tilde V_{n,U'} \subset V_{n,U}$. Really, $\Uni^{\leq n} \p\  \subset \ (\Uni^{\leq n}\g \cdot S^{\leq n} \g^*)  $ and thus for any $f \in \tilde V_{n,U'}$ we have
$$ f(\Uni^{\leq n} \p) \;\subset \; \Uni^{\leq n} \g \act f(S^{\leq n} \g^*) \;\subset \; \Uni^{\leq n} \g \act U' \;\subset\; U . $$

3.  $\set{V_{k,U}}$ is finer than  $\set{V'_{K,U}}$.  Fix $V'_{K,U}$. We will find an $n$ s.t. $ K\subset \Uni^{\leq n} \p $, which will imply $V_{n,U}\subset V'_{K,U}$. Note that under the topological isomorphism $\bigoplus_{k=0}^\infty S^k \p = S\p \xto\cong \Up$, $\Uni^{\leq n} \p$ corresponds to $\bigoplus_{k=0}^n S^k \p$. So one can use the claim \ref{claim_ce}.
\end{proof}
\begin{cor}\label{cor_srdiecko_equicont}
If $M$ is an equicontinuous $\g$-module then $\srdiecko M$ is an equicontinuous $\p$-module. 
\end{cor}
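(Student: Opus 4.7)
The plan is to verify the two conditions from Remark \ref{rem_equi_g_module} for $\srdiecko M = \Hom^C_\Ug(\Up, M)$ to be an equicontinuous $\p$-module: (i) the family of maps $\{z\act(-) : \srdiecko M \to \srdiecko M\}_{z\in\p}$ is equicontinuous, and (ii) for each fixed $s \in \srdiecko M$ the map $\p \to \srdiecko M,\ z\mapsto z\act s$ is continuous. The workhorse is the basis of neighborhoods of zero $V_{k,U} = \{f \in \srdiecko M : f(\Up^{\leq k}) \subset U\}$ supplied by Claim \ref{claim_baza_v_srdiecku}; once this basis is available, both conditions reduce to purely formal manipulations using the filtration on $\Up$.

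For (i), given $V_{k,U}$, I claim that $V_{k+1,U}$ is mapped into $V_{k,U}$ by every $z \in \p$. Indeed, for $s \in V_{k+1,U}$, $z \in \p$ and $x \in \Up^{\leq k}$, we have $(z\act s)(x) = s(xz)$, and $xz \in \Up^{\leq k+1}$ by the filtration on $\Up$, so $s(xz) \in U$. Nothing here depends on $s$ or on $z$ individually, which is precisely what equicontinuity of the family requires.

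For (ii), fix $s \in \srdiecko M$ and $V_{k,U}$; we must produce an open subspace $T \os \p$ with $s(\Up^{\leq k}\cdot T) \subset U$. The multiplication $\mu: \Up^{\leq k} \tp \p \to \Up^{\leq k+1}$ is continuous since $\Up$ is a filtered algebra in $\catTV$, and the restriction $s|_{\Up^{\leq k+1}} : \Up^{\leq k+1} \to M$ is continuous because $s$ is continuous and $\Up$ carries the colimit topology. Thus the composition $s\circ\mu: \Up^{\leq k}\tp \p \to M$, $x\tp z \mapsto s(xz)$, is continuous, and by the definition of the tensor-product topology there exist open subspaces $W_1 \os \Up^{\leq k}$ and $T \os \p$ with $(s\circ\mu)\bigl(W_1\tp \p + \Up^{\leq k}\tp T\bigr) \subset U$. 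In particular $s(\Up^{\leq k}\cdot T) \subset U$, i.e.\ $T\act s \subset V_{k,U}$, as required.

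No serious obstacle arises: the only nontrivial input is Claim \ref{claim_baza_v_srdiecku}, which was already established. Notably, one does not need to invoke boundedness of $S^{\leq k}\g^*$ or the finer decomposition of $\Up^{\leq k}$ from Corollary \ref{cor_heielrt}; the continuity of multiplication in the filtered algebra $\Up$ together with the basis $\{V_{k,U}\}$ is enough.
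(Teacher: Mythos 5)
Your proof is correct, and part (i) coincides with the paper's (both take $V'=V_{k+1,U}$ and use $(z\act s)(x)=s(xz)$ with the filtration of $\Up$). For part (ii) you take a somewhat different, more inlined route than the paper. The paper fixes $f$ and proves continuity of $\argument\act f$ on all of $\Up$: it pulls the open set back, setting $U':=f^{-1}(U)\os\Up$, and then finds $V'\os\Up$ with $\Unik{k}\p\cdot V'\subset U'$ by invoking the fact that $\Up$ is an equicontinuous $\p$-module under left multiplication (Example \ref{example_Ug_is_g_module}, extended from $\p$ to $\Unik{k}\p$ as in Claim \ref{claim_tgottn}). You instead work only on $\p$, which is all Remark \ref{rem_equi_g_module} asks for, and read off the required $T\os\p$ directly from the continuity of $s\circ\mu:\Unik{k}\p\tp\p\to M$ and the definition of the tensor-product topology. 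The two arguments rest on the same underlying input --- continuity of the filtered multiplication $\Unik{k}\p\tp\p\to\Unik{k+1}\p$, which is exactly what the proof of Example \ref{example_Ug_is_g_module} uses --- so yours simply bypasses the auxiliary equicontinuous-module statement at the cost of proving the slightly weaker (but sufficient) continuity statement on $\p$ rather than on $\Up$. The only point left implicit in your write-up is that restricting $\mu|_{\Unik{k}\p\tp\Unik{1}\p}$ to $\Unik{k}\p\tp\p$ is legitimate, i.e.\ that $\p\hookrightarrow\Unik{1}\p$ is continuous; this is immediate from the topological PBW theorem (Theorem \ref{thm_UL}), so it is a cosmetic omission, not a gap.
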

\begin{proof} We use the definition of an equicontinuous module from Remark \ref{rem_equi_g_module}. To prove i) we need for any $V_{k,U}$ from Claim \ref{claim_baza_v_srdiecku} find a $V'\os \srdiecko M$ s.t. $\p\act V'\subset V_{k,U}$. It is enough to take $V'= V_{k+1,U}$.

To check ii) we fix $f\in \srdiecko M$ and want to show that $\argument\act f : \Up\to \srdiecko M$ is continuous. That is, for any any $V_{k,U}$ from Claim \ref{claim_baza_v_srdiecku} we need to find $V'\os \Up$ s.t. $V'\act f \subset V_{k,U}$ what just means $f(\Unik k \p\cdot V')\subset U$. Now $f$ is continuous so $U':=f^{-1}(U)$ is an open subspace of $\Up$. So we just need to find $V'\os \Up$ s.t. $\Unik k \p \cdot V' \subset U'$. And we can really find it since $\Up$ is an equicontinuous $\p$-module (example \ref{example_Ug_is_g_module}). 
\end{proof}

\begin{claim}\label{claim_pfhg} 
Let $M$ be an equicontinuous $\g$-module. An $f\in \Hom_\Ug(\Up,M)$ is continuous iff its restriction $f\big|_{S\g^*}$ is.
\end{claim}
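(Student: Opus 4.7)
I will prove the nontrivial ``if'' direction; the converse is immediate, since $S\g^*$ carries the subspace topology from $\Up$. The plan is to use the topological PBW decomposition from Corollary \ref{cor_heielrt},
$$\Up \cong \colim_{k,l} \Unik k \g \tp S^{\leq l}\g^*,$$
which reduces continuity of $f:\Up\to M$ to continuity of its restriction to each piece $\Unik k \g \tp S^{\leq l}\g^*$ (where the map into $\Up$ is multiplication).

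The key observation is that $\Ug$-linearity of $f$ lets me factor this restriction as
$$\Unik k \g \tp S^{\leq l}\g^* \xto{\id \tp (f|_{S^{\leq l}\g^*})} \Unik k \g \tp M \xto{\act} M.$$
The first arrow is continuous, since $f|_{S^{\leq l}\g^*}$ inherits continuity from the given $f|_{S\g^*}$ and tensor products of continuous linear maps are continuous. For the composition itself, I plan to check continuity directly on basic open neighborhoods of $0$: given $U\os M$, equicontinuity of the action of $\Unik k \g$ on $M$ (Claim \ref{claim_tgottn}) supplies $U'\os M$ with $\Unik k \g \act U' \subset U$, and then the open subspace $V := (f|_{S^{\leq l}\g^*})^{-1}(U') \os S^{\leq l}\g^*$ ensures that $\Unik k \g \tp V$ is sent into $U$.

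The main obstacle is that the action map $\Unik k \g \tp M \to M$ need not be continuous in the bare $\tp$-topology; Claim \ref{claim_tgottn} only gives continuity with respect to the finer $\oslash$-topology. To bypass this, I would exploit that $K := f(S^{\leq l}\g^*) \subset M$ is bounded, being the continuous image of the bounded set $S^{\leq l}\g^*$ (cf.\ the proof of Claim \ref{claim_baza_v_srdiecku}; continuous linear maps preserve boundedness since $f(K)/(f(K)\cap V')$ is a quotient of the finite-dimensional $K/(f^{-1}(V')\cap K)$). Boundedness of $K$ lets me apply the $\oslash$-definition to extract an open $W\os \Unik k \g$ with $W \act K \subset U$. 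Then the genuinely $\tp$-open subspace
$$W \tp S^{\leq l}\g^* \;+\; \Unik k \g \tp V$$
of $\Unik k \g \tp S^{\leq l}\g^*$ maps into $U$ under the composition, yielding the desired continuity and completing the proof.
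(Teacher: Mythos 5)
Your argument is correct and follows the paper's proof almost verbatim: the same reduction via the topological PBW decomposition of Corollary \ref{cor_heielrt} and the same factorization of $f^{(k,l)}$ as $(\id\tp f')$ followed by the action map. The only divergence is the last step: the "obstacle" you work around does not actually arise here, because $\g$, and hence $\Unik k\g$, is discrete, so Claim \ref{claim_two_tensor_topologies} gives $\Unik k\g \oslash M = \Unik k\g \tp M$ and the action is already continuous for the plain $\tp$-topology --- this is how the paper concludes. Your substitute argument (boundedness of $f(S^{\leq l}\g^*)$ plus condition (ii) in the definition of $\oslash$ to produce the open subspace $W\tp S^{\leq l}\g^* + \Unik k\g\tp V$) is nevertheless valid, and has the minor virtue of not using discreteness of $\Unik k\g$, at the cost of being more roundabout than necessary in this setting.
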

\begin{proof}
Suppose $f':=f\big|_{S\g^*}$ is continuous. One has (corollary \ref{cor_heielrt}) $$\Up \cong \colim_{k,l}\; \Unik k \g \tp S^{\leq l} \g^* $$
so to verify the continuity of $f$ it is enough to check the continuity of all $f^{(k,l)}:= f\big|_{\Unik k \g \tp S^{\leq l} \g^*}$. We can reconstruct $f^{(k,l)}$ from $f'$ as a composition
\begin{equation}\label{equaaaaaanfjfkf}
 f^{(k,l)}: \Unik k \g \tp S^{\leq l} \g^* \xto{\id \tp f'} \Unik k \g \tp M \xto\act M . 
\end{equation}
Now $\id\tp f'$ is continuous since $f'$ is. The second map is continuous as a map $\Unik k \g \oslash M \xto\act M$ but since $\Unik k \g$ is discrete, $\Unik k\g \oslash M = \Unik k \g \tp M $ as topological vector spaces (claim \ref{claim_two_tensor_topologies}). Hence both maps in (\ref{equaaaaaanfjfkf}) are continuous and so the composition also is.
\end{proof}

\begin{claim}\label{claim_fhitr}
Let $M\in \ob(\catH)$. The restriction map $\srdiecko M = \Hom^C_\Ug (\Up, M) \to \Hom^C_\field( S\g^*, M)$ is an isomorphism of topological vector spaces.
\end{claim}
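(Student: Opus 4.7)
The plan is to first establish the bijection on the underlying vector spaces, then identify explicit bases of neighborhoods of $0$ on both sides and observe that they match up under the restriction map.

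For the bijection, I would use the PBW decomposition $\Ug \otimes S\g^* \xto{\cong} \Up$ given by $z \otimes x \mapsto z \cdot \sigma(x)$. Any $\Ug$-linear map $f : \Up \to M$ is determined by its restriction to $S\g^*$, via $f(z \cdot \sigma(x)) = z \act f(\sigma(x))$, and conversely any $\field$-linear $g : S\g^* \to M$ extends uniquely to a $\Ug$-linear $f : \Up \to M$ by this formula. Hence restriction gives a bijection between $\Hom_\Ug(\Up, M)$ and $\Hom_\field(S\g^*, M)$ at the level of vector spaces. By Claim~\ref{claim_pfhg}, such an $f$ is continuous if and only if $g = f|_{S\g^*}$ is, so the restriction map indeed sends $\Hom^C_\Ug(\Up, M)$ bijectively onto $\Hom^C_\field(S\g^*, M)$.

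For the topologies, Claim~\ref{claim_baza_v_srdiecku} gives a basis of neighborhoods of $0$ in $\srdiecko M = \Hom^C_\Ug(\Up, M)$ of the form
$$\tilde V_{k, U} = \set{ f \in \Hom^C_\Ug(\Up, M) \;\big|\; f(S^{\leq k}\g^*) \subset U }, \quad k \in \N,\ U \os M.$$
On the other side, $\Hom^C_\field(S\g^*, M)$ carries the strong topology, with basis $\{g : g(K) \subset U\}$ for $K \bs S\g^*$ and $U \os M$. Since $S\g^* = \bigoplus_{k \geq 0} S^k\g^*$ is a direct sum of Hausdorff (in fact compact) subspaces, Claim~\ref{claim_ce} implies that any bounded $K \subset S\g^*$ is contained in finitely many summands, hence in some $S^{\leq k}\g^*$; since $S^{\leq k}\g^*$ itself is bounded (a finite tensor product of compact spaces), the subspaces $\{g : g(S^{\leq k}\g^*) \subset U\}$ already form a basis of neighborhoods of $0$.

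These two bases correspond to each other under the restriction bijection: $\tilde V_{k, U}$ is precisely the preimage of $\{g : g(S^{\leq k}\g^*) \subset U\}$. This shows that the restriction map is a homeomorphism, hence an isomorphism in $\catTV$. I don't anticipate any significant obstacle here, since all the technical work (the equivalence of the bases in Claim~\ref{claim_baza_v_srdiecku} and the reconstruction of continuous $\Up$-linear maps from continuous restrictions in Claim~\ref{claim_pfhg}) has already been done.
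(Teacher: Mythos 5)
Your proposal is correct and follows essentially the same route as the paper: bijectivity via the PBW decomposition together with Claim \ref{claim_pfhg}, and the topological identification via the neighborhood base $\tilde V_{k,U}$ of Claim \ref{claim_baza_v_srdiecku}. The only difference is that you spell out (using Claim \ref{claim_ce}) why the sets $\set{g : g(S^{\leq k}\g^*)\subset U}$ form a base of the strong topology on $\Hom^C_\field(S\g^*,M)$, a point the paper's proof treats as the "easy" openness check.
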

\begin{proof} 
The restriction map is bijective (claim \ref{claim_pfhg}) and continuous. So it is enough to check that it is also open. This is easy if we use the base of neighborhood of 0 in $\Hom^C_\Ug (\Up, M)$ formed by the sets $\tilde V_{k,U}$ from the claim \ref{claim_baza_v_srdiecku}.
\end{proof}

\begin{claim}\label{claim_cSg}
The topological algebra $(S\g^*)^*$  is equal to $\cSg := \limita_k S^{\leq k}\g$ where $S^{\leq k} \g$ is discrete with algebra structure $S^{\leq k} \g := S\g / S^{>k}\g$.
\end{claim}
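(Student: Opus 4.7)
The plan is to decompose both sides into graded pieces, dualize termwise, and then match the algebra structures.

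\textbf{Step 1.} First I would observe that $\g^* = \prod_i \field e^i$ (product topology, each factor discrete) is a bounded Hausdorff space: any open linear subspace contains $\prod_{i \notin F} \field e^i$ for some finite $F$, so the quotients of $\g^*$ by its open subspaces are finite-dimensional. The topological tensor product of two bounded spaces is again bounded, because the topology basis $(U \otimes N) + (M \otimes V)$ gives the quotients $(M/U) \otimes (N/V)$, which are finite-dimensional when both factors are. Consequently each $S^k \g^*$ is bounded and Hausdorff. Using $S\g^* = \bigoplus_{k\geq 0} S^k \g^*$ (with the direct-sum topology, which agrees with the subspace topology inherited from $T\g^*$), I am in a position to apply Claim \ref{claim_cf}.

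\textbf{Step 2.} Apply Claim \ref{claim_cf} to obtain, as topological vector spaces,
$$(S\g^*)^* \;=\; \Big(\bigoplus_{k\geq 0} S^k \g^*\Big)^* \;=\; \prod_{k\geq 0} (S^k \g^*)^*.$$
Now I would identify $(S^k \g^*)^* \cong S^k \g$ as discrete topological vector spaces. Discreteness is automatic: since $S^k\g^*$ is bounded, the strong-topology neighborhood $\{f : f(S^k\g^*) = 0\} = \{0\}$ of $0$ is already open. As abstract vector spaces, any continuous $f : S^k\g^* \to \field$ must vanish on some open subspace, hence factors through a finite-dimensional quotient of the form $S^k\big(\prod_{i \in F} \field e^i\big)$; such a functional is then a finite linear combination of elements of $S^k \g$ via the canonical pairing, and conversely every element of $S^k\g$ defines a continuous functional on $S^k\g^*$.

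\textbf{Step 3.} Combining Steps 1--2 gives $(S\g^*)^* \cong \prod_{k\geq 0} S^k \g$ with the product-of-discretes topology. This is canonically identified with $\cSg = \limita_k S^{\leq k}\g$, since $S^{\leq k}\g = \bigoplus_{j \leq k} S^j \g$ and the inverse system is obtained by truncating the product. Finally I verify compatibility of the algebra structures: the multiplication on $(S\g^*)^*$ is dual to the shuffle coproduct $S\g^* \to S\g^* \otimes S\g^*$ (elements of $\g^*$ primitive), and under our identification the component $S^{i+j}\g^* \to S^i\g^* \otimes S^j\g^*$ of this coproduct is precisely the transpose (with respect to the pairing of Step 2) of the product $S^i\g \otimes S^j\g \to S^{i+j}\g$ in $\cSg$.

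\textbf{Main obstacle.} The technical heart is Step 2, namely checking that the natural map $S^k\g \to (S^k\g^*)^*$ is a topological isomorphism onto the full strong dual. The surjectivity uses that a continuous functional on $S^k\g^*$ must factor through a finite-dimensional quotient (using the product topology on $\g^*$ and the definition of the topological tensor power); the matching of topologies uses that boundedness of $S^k\g^*$ forces the dual to be discrete, while $S^k\g$ on the right also carries the discrete topology. Everything else is essentially formal manipulation with direct sums, products, and duals.
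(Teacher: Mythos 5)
Your proof is correct and follows essentially the same route as the paper: both hinge on Claim \ref{claim_cf} to turn the dual of the direct sum into a product, then identify the duals of the (graded or truncated) pieces with symmetric powers of $\g$ and match the algebra structure against the coproduct on $S\g^*$. The only difference is cosmetic: the paper gets $(S^{\leq k}\g^*)^*=S^{\leq k}\g$ by citing Claim \ref{claim_dual_dualu}, whereas you reprove the graded identification $(S^k\g^*)^*\cong S^k\g$ directly from boundedness of $S^k\g^*$ and factorization of continuous functionals through finite-dimensional quotients.
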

\begin{proof}
$S^{\leq k} \g^*$ is a sub-coalgebra of $S\g^*$. This induces algebra homomorphisms $(S\g^*)^*\to (S^{\leq k}\g^*)^*$ and thus an algebra homomorphism $(S\g^*)^*\to \limita_k (S^{\leq k}\g^*)^*$. This map is a topological isomorphism by
$$(S\g^*)^* = (\bigoplus_k S^k \g^*)^* =  \prod_k (S^{\leq k}\g^*)^* = \limita_k \prod_{i=0}^k  (S^{k}\g^*)^* = \limita_k (S^{\leq k}\g^*)^*$$
where the second and the last equalities come from Claim \ref{claim_cf}.
One has further
 $$(S^{\leq k} \g^*)^* = (\compl( S^{\leq k} \g^*))^* = ((S^{\leq k} \g))^*)^* = S^{\leq k} \g $$
where the second and third equality come from the following claim
\begin{claim}\label{claim_dual_dualu}
If $X,Y$ are discrete topological vector spaces, then
\begin{enumerate}
\item[a)] $X^* \ctp Y^* = (X\tp Y)^*$
\item[b)] $(X^*)^* = X$.
\end{enumerate}
\end{claim}
\end{proof}

\begin{claim}\label{claim_some_continuity}
In the assignment (\ref{map_srdiecko_tensor}),
the composition on the right hand side is continuous and the assignment itself is continuous w.r.t the strong topologies.
\end{claim}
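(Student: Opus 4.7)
My plan splits the claim into two parts: (a) the composition $s'(z) := \sum S(z_{(1)}) \act s(z_{(2)})$ is a continuous map $\Up \to M$ for each fixed $s$, and (b) the assignment $s \mapsto s'$ is continuous between strong topologies. I would do (b) first, as it is direct. By Claim \ref{claim_baza_v_srdiecku}, a basic neighbourhood of $0$ in the strong topology on $\srdiecko M_b$ has the form $\tilde V_{n,U} = \{t : t(S^{\leq n}\g^*) \subset U\}$. Given such a neighbourhood, equicontinuity of the $\p$-action on $M$ (Claim \ref{claim_tgottn}) yields $U' \os M$ with $\Unik n \p \act U' \subset U$. Since $S\g^*$ is a sub-coalgebra of $\Up$ preserved by the degree filtration, and the antipode acts as $(-1)^k$ on $S^k\g^* \subset \Up$, one has $\Delta(S^{\leq n}\g^*) \subset S^{\leq n}\g^* \otimes S^{\leq n}\g^*$ and $S(S^{\leq n}\g^*) \subset \Unik n \p$. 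Hence $s(S^{\leq n}\g^*) \subset U'$ implies $s'(S^{\leq n}\g^*) \subset \Unik n \p \act U' \subset U$, so the preimage of $\tilde V_{n,U}$ contains $\tilde V_{n,U'}$.

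For (a) I would first verify by a direct antipode-axiom computation (using $\sum S(g_{(1)})g_{(2)} = \epsilon(g)$ and the $\Ug$-linearity of $s$) that $s'(gz) = \epsilon(g) s'(z)$ for $g\in \Ug$, so $s'$ is $\Ug$-linear with respect to the trivial $\Ug$-action on $M_b$. The trivial action is trivially equicontinuous, so Claim \ref{claim_pfhg} reduces continuity of $s'$ to continuity of $s'|_{S\g^*}$. On each piece $S^n\g^*$ I would use the factorization
$$S^n\g^* \xto{\Delta} S^{\leq n}\g^* \otimes S^{\leq n}\g^* \xto{S \otimes s} S^{\leq n}\g^* \otimes M \xto{\act} M.$$
The subtle point is that the action $\act$ is in general \emph{not} continuous on $\Unik n \p \otimes M$ in the ordinary tensor topology --- only on the $\oslash$ topology, via equicontinuity (Claim \ref{claim_tgottn}). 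I would factor $\act$ through the continuous composition $S^{\leq n}\g^* \oslash M \to \Unik n \p \oslash M \xto{\act} M$, and check that $S \otimes s$ is continuous into the $\oslash$ topology on $S^{\leq n}\g^* \otimes M$. The latter holds because $s(S^{\leq n}\g^*) \subset M$ is the continuous linear image of the bounded subspace $S^{\leq n}\g^*$, hence itself bounded; condition (ii) in the definition of $\oslash$ then yields $T \os S^{\leq n}\g^*$ with $T \otimes s(S^{\leq n}\g^*) \subset U$, and pulling back by $S$ (an automorphism of $S^{\leq n}\g^*$, since $S|_{S^k\g^*} = (-1)^k \id$) gives an open subspace of $S^{\leq n}\g^* \otimes S^{\leq n}\g^*$ mapped into $U$.

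The main obstacle is the topological bookkeeping in (a): threading between the ordinary tensor topology (produced by the coproduct $\Delta$) and the $\oslash$ topology (required by equicontinuity of the action). The bridge is precisely the boundedness of $s(S^{\leq n}\g^*)$, which lets the asymmetric condition (ii) in the $\oslash$ definition be invoked uniformly in the second tensor factor.
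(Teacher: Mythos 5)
Your proposal is correct and follows essentially the same route as the paper: reduce to the bounded pieces $S^{\leq n}\g^*$ (the paper does this via the restriction isomorphism of Claim \ref{claim_fhitr}, you via Claims \ref{claim_baza_v_srdiecku} and \ref{claim_pfhg}), use equicontinuity of the $\Unik{n}\p$-action (Claim \ref{claim_tgottn}) to get continuity of $s\mapsto s'$, and use boundedness of $s(S^{\leq n}\g^*)$ to reconcile the $\otimes$- and $\oslash$-topologies in the fixed-$s$ step --- the paper packages this last point by restricting the second tensor factor to $s(S^{\leq k}\g^*)$ and invoking Claim \ref{claim_two_tensor_topologies}, which is the same idea as your direct check. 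One small repair in your part (a): a subspace of the form $S^{-1}(T)\otimes S^{\leq n}\g^*$ is not by itself open in the tensor-product topology, so condition (ii) of the $\oslash$-topology alone does not finish the argument; you must also use condition (i), taking $V\os M$ with $S^{\leq n}\g^*\otimes V$ contained in the given open set and adding the summand $S^{\leq n}\g^*\otimes s^{-1}(V)$, so that the preimage contains a basic open subspace $\bigl(S^{-1}(T)\otimes S^{\leq n}\g^*\bigr)+\bigl(S^{\leq n}\g^*\otimes s^{-1}(V)\bigr)$.
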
 \
\begin{proof}
By the claim \ref{claim_fhitr} it is enough to check the same for the map
\begin{align*}
\Hom^C_\field(S\g^*, M)  &\longto \Hom^C_\field(S\g^*, M) \\
s &\longmapsto \Bigg( S\g^* \xto{\Delta} S\g^* \otimes S\g^* \xto{S\otimes s} S\g^* \otimes M \xto{\act} M \Bigg) .
\end{align*}
 Since the topology on $\Hom^C_\field(S\g^*, M)$ is initial w.r.t. the maps $\Hom^C_\field(S\g^*, M) \to \Hom^C_\field(S^{\leq k}\g^*, M)$, it is actually enough to look at the map
\begin{align*}
\Theta:\Hom^C_\field(S\g^*, M)  &\longto \Hom^C_\field(S^{\leq k}\g^*, M) \\
s &\longmapsto \Bigg( S^{\leq k}\g^* \xto{\Delta} S^{\leq k}\g^* \otimes S^{\leq k}\g^* \xto{S\otimes s} S^{\leq k}\g^* \otimes M \xto{\act} M \Bigg) .
\end{align*}
First we fix $s$ and check the continuity of $\Theta s$. We can write $\Theta s$ as a composition
$$ \Theta s : S^{\leq k}\g^* \xto{\Delta} S^{\leq k}\g^* \otimes S^{\leq k}\g^* \xto{S\otimes s} S^{\leq k}\g^* \otimes s(S^{\leq k}\g^*) \xto{\act} M $$
The first two maps are continuous and the third also is because it is continuous as a map $S^{\leq k}\g^* \oslash s(S^{\leq k} \g^*) \xto{\act} M$ and $S^{\leq k}\g^* \oslash s(S^{\leq k}\g^*) = S^{\leq k}\g^* \tp s(S^{\leq k}\g^*)$ by Claim \ref{claim_two_tensor_topologies} ($s(S^{\leq k}\g^*)$ is bounded).

Now we check continuity of $s\mapsto \Theta s$. Since $S^{\leq k} \g^*$ is bounded, we have a basis of neighborhoods of 0 in $\Hom^C_\field \big( S^{\leq k} \g^*, M \big)$:
$$ V_U:= \set{ f: S^{\leq k} \g^* \to M \text{ s.t. } \Ima(f) \subset U} $$
indexed by open subspaces $U\os M$. Fix one such $V_U$. We want to find $U' \os M$ s.t. 
$$ s(S^{\leq k} \g^*) \subset U' \implies (\Theta s)(S^{\leq k} \g^*) \subset U . $$
For that, it is enough to take $U'\os M$ satisfying $\big(S^{\leq k} \g^* \act U' \big) \subset U$ which exists by the part a) of Claim \ref{claim_tgottn}.
\end{proof}

\section{Categories of $\kh$-modules}\label{section_kh_modules}
We equip $\kh$ with the $\hbar$-adic topology. This way it becomes an algebra in $\catCTV$.

\begin{defn}
We denote $\catTVkhp:= \LM{\kh}(\catTV)$ --- the category of topological $\kh$-modules with $\otimes_\kh$ as a monoidal structure.

For a topological $\kh$-module $M$ we denote by $\tilde M := \complh M:= \varprojlim_k M/\hbar^k M$ its $\hbar$-adic completion.
If $M  = \complh M $ we call it $\hbar$-complete.  

By $\catTVkh$ we denote the full subcategory of $\catTVkhp$ consisting of $\hbar$-complete modules. We equip $\catTVkh$ with the $\hbar$-completed tensor product
$$ M \hctp N := \complh( M\tp_\kh N )\ .$$

$\catCTVkh$  will denote the category of complete topological $\kh$-modules. It has a monoidal product
$$ M\ctp_\kh N := (M \tp_\kh N )\hat \ .  $$
\forme{Is not $\catCTVkh = \LM{\kh}(\catCTV)$ ?}
\end{defn}
One can see $\complh : \catTVkhp \to \catTVkh$ as the left adjoint functor to the forgetful $\catTVkh \to \catTVkhp $ as a consequence of the following:
\begin{prop}
The natural map $M\xto{i} \complh M$ has the following universal property. If $N$ is complete and $f:M \to N$ is continuous linear, then there is a unique continuous $\kh$-linear map $\tilde f: \complh M \to N$ that satisfies $f = \tilde f \circ i$.
\end{prop}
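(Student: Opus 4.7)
The plan is to mimic the proof of Proposition \ref{prop_universal_completion} for ordinary completion, working now with the $\hbar$-adic topology throughout and insisting that all maps be $\kh$-linear. The strategy splits as usual into uniqueness (via density plus Hausdorffness) and existence (via the limit presentation of both source and target).

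For uniqueness, I would first observe that $N$ being $\hbar$-complete means $N = \varprojlim_k N/\hbar^k N$, which forces $\bigcap_k \hbar^k N = 0$, i.e.\ $N$ is Hausdorff in its $\hbar$-adic topology. Next I would show that $i(M) \subset \complh M$ is $\hbar$-adically dense. This is exactly the content of Lemma \ref{lem_density_and_limits} applied to the cofiltered diagram $k \mapsto M/\hbar^k M$ (with backward arrows): the natural maps $M \to M/\hbar^k M$ are surjective and hence have dense image, the index category is trivially cofiltered, and so the induced map into the limit has dense image as well. Uniqueness then follows: any two continuous $\kh$-linear extensions of $f$ must agree on the dense subspace $i(M)$ and, by Hausdorffness of $N$, must agree everywhere.

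For existence, I would use that $f$ being $\kh$-linear sends $\hbar^k M$ into $\hbar^k N$, so $f$ descends to a $\kh$-linear map $f_k : M/\hbar^k M \to N/\hbar^k N$, continuous because $M/\hbar^k M$ carries the quotient topology. Composing with the canonical projection $\pi_k : \complh M \to M/\hbar^k M$ yields a compatible family of continuous $\kh$-linear maps into the system $\{N/\hbar^k N\}$. The universal property of the limit $N = \varprojlim_k N/\hbar^k N$ then produces the desired continuous $\kh$-linear $\tilde f : \complh M \to N$. The identity $\tilde f \circ i = f$ is automatic from the construction, since $\tilde f \circ i$ and $f$ induce the same maps $M \to N/\hbar^k N$ for every $k$.

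The main obstacle, modest as it is, is checking that Lemma \ref{lem_density_and_limits} genuinely applies — i.e.\ that each quotient map is dense (obvious, they are surjective) and that the index category is cofiltered (trivial, it is a linear order with a minimum at the opposite end). Everything else is formal bookkeeping: $\kh$-linearity is preserved by quotients, limits, and pre/post-composition with $\kh$-linear maps, and continuity is automatic from the universal properties of $M/\hbar^k M$ and $\varprojlim_k N/\hbar^k N$.
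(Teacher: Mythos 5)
Your proposal is correct and follows essentially the same route as the paper's proof: uniqueness from the $\hbar$-adic density of $i(M)$ in $\complh M$ (Lemma \ref{lem_density_and_limits}) together with $\hbar$-adic separatedness, and existence by descending $f$ to continuous $\kh$-linear maps $M/\hbar^k M\to N/\hbar^k N$ and invoking the presentation $N=\varprojlim_k N/\hbar^k N$, exactly as in Proposition \ref{prop_universal_completion}. If anything, your uniqueness step is phrased a bit more carefully than the paper's one-liner, since you place the Hausdorffness hypothesis where it is actually needed, namely on the target $N$.
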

\begin{proof}
From the lemma \ref{lem_density_and_limits} we see that the map $M\xto{i} \complh M$ has a dense image w.r.t. the $\hbar$-adic topology on $\complh M$. Since $\complh M$ is Hausdorff in $\hbar$-adic topology, this implies that $\tilde f$ is unique if it exists. 

The proof of the existence is the same as in the proposition \ref{prop_universal_completion}.
\end{proof}

\begin{prop} \label{prop_complh_tensor}
If $M,N \in \ob(\catTVkhp)$ then
$$\complh \Big( \complh(M) \tp_\kh \complh(N) \Big) = \complh ( M\tp_\kh N )\ . $$
In other words, $\complh: ( \catTVkhp , \tp_\kh) \to (\catTVkh, \hctp) $ is a strong monoidal functor.
\end{prop}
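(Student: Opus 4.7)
The plan is to produce a natural isomorphism $\phi_{M,N}:\complh(M\tp_\kh N)\xto{\cong}\complh(\complh M\tp_\kh\complh N)$ in $\catTVkh$ by checking that the two sides agree modulo every power of $\hbar$ and then assembling via the inverse-limit description of $\complh$. Since both sides are by definition $\varprojlim_k(-)/\hbar^k(-)$, it suffices to produce a coherent system of isomorphisms at each finite level; the strong monoidal coherences then follow from the naturality of the ingredients together with right exactness of $\tp_\kh$ in each argument.

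The key auxiliary step is: for every $X\in\ob(\catTVkhp)$ and every $k\ge 0$, the canonical map $\iota_X:X\to\complh X$ induces a topological isomorphism
\[
X/\hbar^k X\;\xto{\cong}\;\complh X/\hbar^k\complh X.
\]
Surjectivity of $\complh X\twoheadrightarrow X/\hbar^k X$ is immediate from the fact that the transition maps $X/\hbar^{j+1}X\to X/\hbar^j X$ are surjective. Injectivity of $X/\hbar^k X\to \complh X/\hbar^k\complh X$ is clear since the $k$-th projection $\complh X\to X/\hbar^k X$ kills $\hbar^k\complh X$. Matching the kernels requires a Mittag-Leffler lifting: given $(m_j)\in\ker(\complh X\to X/\hbar^k X)$, one has $m_j\in\hbar^k X/\hbar^j X$ for $j\ge k$, and one inductively constructs a compatible sequence $(\widetilde m_j)\in\complh X$ with $\hbar^k\widetilde m_j=m_{j+k}$, using only that $\hbar^k:X/\hbar^{j-k}X\twoheadrightarrow\hbar^k X/\hbar^j X$ is surjective. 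The topological agreement is automatic: the continuous projection $\complh X\to X/\hbar^k X$ factors through $\complh X/\hbar^k\complh X$ and provides a continuous inverse to $\iota_X$.

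Combining this with right exactness of $\tp_\kh$ in each variable (the topological-$\kh$-module analogue of Proposition \ref{prop_tp_and_finite_colims}), we obtain for every $k$ a natural identification
\[
(X\tp_\kh Y)/\hbar^k\;\cong\;(X/\hbar^k X)\tp_{\kh/\hbar^k}(Y/\hbar^k Y).
\]
Applied once to $(M,N)$ and once to $(\complh M,\complh N)$, and composed with the auxiliary isomorphism above, this yields compatible isomorphisms $(M\tp_\kh N)/\hbar^k\cong(\complh M\tp_\kh\complh N)/\hbar^k$ for every $k$. Passing to the inverse limit gives $\phi_{M,N}$; naturality in $M,N$ and coherence with the associator and unit of $\tp_\kh$ follow term by term.

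The main obstacle is the identification $\hbar^k\complh X=\ker(\complh X\to X/\hbar^k X)$, which must be established without any finiteness hypothesis on $X$ and in the topological setting. A secondary subtlety is verifying that the right-exactness identification preserves topologies, i.e.\ that the quotient topology on $(X\tp_\kh Y)/\hbar^k$ agrees with the topological tensor product over the discrete ring $\kh/\hbar^k$; this is the only point where the precise definition of $\tp$ from Appendix \ref{subsection_tensor_products} enters.
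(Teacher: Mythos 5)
Your argument is correct, but it takes a genuinely different route from the paper. The paper's proof is a one-line appeal to the universal property of $\hbar$-adic completion established in the proposition immediately preceding: both $\complh\bigl(\complh(M)\tp_\kh\complh(N)\bigr)$ and $\complh(M\tp_\kh N)$ corepresent continuous $\kh$-linear maps out of $M\tp_\kh N$ into $\hbar$-complete modules (the identification of maps out of $\complh M\tp_\kh\complh N$ with maps out of $M\tp_\kh N$ again coming from that universal property applied in each tensor factor), so they are canonically isomorphic and the monoidal coherences are automatic. You instead compare the two $\hbar$-adic towers level by level: the crux is your auxiliary claim $X/\hbar^k X\cong\complh X/\hbar^k\complh X$, which is genuinely nontrivial for arbitrary topological $\kh$-modules and holds precisely because the ideal $(\hbar)$ is principal; your inductive lifting does go through, with the correction at each stage performed on the graded piece via the surjectivity of $\hbar^k:\hbar^jX/\hbar^{j+1}X\to\hbar^{j+k}X/\hbar^{j+k+1}X$ (equivalently, the usual series argument writing a kernel element as $\hbar^k$ times a convergent series). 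You then need the base-change identification $(X\tp_\kh Y)/\hbar^k\cong(X/\hbar^kX)\tp_{\kh/\hbar^k}(Y/\hbar^kY)$ including the quotient topologies, which is an extension of Proposition \ref{prop_tp_and_finite_colims} not spelled out in the paper but straightforward since both sides are the same quotient of $X\tp Y$ with the quotient topology. What each approach buys: the paper's argument is shorter, purely formal, and sidesteps all completion pathologies; yours is more computational but yields the independently useful fact that $\hbar$-adic completion does not change the quotients mod $\hbar^k$, and makes the levelwise structure of the isomorphism explicit.
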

\begin{proof}
Both sides can be characterized by the same universal property.
\end{proof}

\begin{claim} \label{claim_h_topology_has_fewer_open_sets}
Let $M\in \ob(\catTVkhp )$ be a topological $\kh$-module and $U\subset M$ an open vector subspace. Then there exists an $n\in \N$ s.t. $\hbar^n M \subset U$. 
\end{claim}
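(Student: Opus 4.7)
The plan is to unpack the continuity of the $\kh$-action $\mu\colon\kh\otimes M\to M$ (where $\otimes$ is the topological tensor product in $\catTV$) and combine it with the explicit description of open subspaces of $\kh$ in the $\hbar$-adic topology. Since $M\in\ob(\catTVkhp)=\LM{\kh}(\catTV)$, the structure map $\mu$ is continuous by definition, which is the only hypothesis we have to exploit.

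Concretely, given an open vector subspace $U\subset M$, I will first apply continuity of $\mu$ at $0$: there must exist a basic open neighborhood of $0$ in $\kh\otimes M$ mapped into $U$. Recalling the description of the tensor-product topology in the subsection on tensor products, a base of such neighborhoods consists of sets of the form $(V\otimes M)+(\kh\otimes U')$ with $V\os\kh$ and $U'\os M$. Choosing one inside $\mu^{-1}(U)$ yields in particular the inclusion $V\cdot M\subset U$ (the other summand $\kh\otimes U'$ plays no further role).

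The second step is to translate $V\os\kh$ into something usable. Since $\kh$ carries the $\hbar$-adic topology, the family $\{\hbar^n\kh\}_{n\geq 0}$ is a fundamental system of open neighborhoods of $0$, so any open subspace $V\subset\kh$ automatically contains some $\hbar^n\kh$. Combining with the previous inclusion, $\hbar^n M=\hbar^n\kh\cdot M\subset V\cdot M\subset U$, which is the desired conclusion.

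There is no real obstacle here; the only subtle point is to remember that the topology on $\kh\otimes M$ is not the product topology but the one recalled in the appendix, so that the relevant basic neighborhoods really do split into the two summands $V\otimes M$ and $\kh\otimes U'$. Once that is invoked, the argument is a two-line combination of continuity of the action with the definition of the $\hbar$-adic topology on $\kh$.
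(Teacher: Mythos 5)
Your argument is correct and is exactly the paper's proof, merely written out in full: the paper's two-line proof also invokes continuity of the structure map $\kh\otimes M\to M$ together with the definition of the tensor-product topology, which is what you unpack via the basic neighborhoods $(V\otimes M)+(\kh\otimes U')$ and the fact that every open subspace of $\kh$ contains some $\hbar^n\kh$. Nothing is missing.
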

\begin{proof}
The $\kh$-module structure $\kh \tp M \to M$ is continuous. Now just use the definition of the topology on the tensor product (see the subsection \ref{subsection_tensor_products}). 
\end{proof}

\begin{cor}
If $M$ is complete then it is $\hbar$-complete.
\end{cor}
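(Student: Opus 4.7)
The plan is to show that the natural map $\phi: M \to \varprojlim_k M/\hbar^k M$ is a topological isomorphism, with Claim \ref{claim_h_topology_has_fewer_open_sets} providing essentially all the content. That claim asserts that every open subspace $U \subset M$ contains some $\hbar^n M$; equivalently, the $\hbar$-adic neighborhoods of $0$ are cofinal in the system of open neighborhoods of $0$ (the $\hbar$-adic topology refines the original topology).

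First I would dispatch injectivity: $\bigcap_k \hbar^k M$ is contained in every open subspace $U$ of $M$ by Claim \ref{claim_h_topology_has_fewer_open_sets}, hence in $\bigcap_U U = \{0\}$, where the last equality uses that any complete vector space is Hausdorff. So $\ker \phi = 0$.

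For surjectivity, given $(y_k)_k \in \varprojlim_k M/\hbar^k M$, I would inductively choose lifts $x_k \in M$ with $[x_k] = y_k$ in $M/\hbar^k M$ and $x_{k+1} - x_k \in \hbar^k M$. The key observation is that Claim \ref{claim_h_topology_has_fewer_open_sets} makes $(x_k)$ a Cauchy sequence in the original topology of $M$: for any open subspace $U \subset M$, picking $n$ with $\hbar^n M \subset U$ gives $x_k - x_j \in \hbar^{\min(j,k)} M \subset U$ whenever $j,k \geq n$. By completeness, $x_k$ converges to some $x \in M$. Since $q_n(x_k) = y_n$ for all $k \geq n$ (where $q_n: M \to M/\hbar^n M$ is the quotient map) and $q_n$ is continuous, passing to the limit gives $q_n(x) = y_n$ for every $n$, i.e.\ $\phi(x) = (y_k)$. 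The topological compatibility then follows from the fact (again via the claim) that the $\hbar^n M$ and the open subspaces are cofinal in one another as bases of neighborhoods of $0$.

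The main technical obstacle lies in the very last equality $q_n(x) = y_n$: a priori the continuity argument only gives equality in the Hausdorffification of $M/\hbar^n M$, i.e.\ equality modulo $\overline{\hbar^n M}$ rather than modulo $\hbar^n M$. This is unproblematic precisely when $\hbar^n M$ is closed in $M$, which is the case in all applications considered here (for example, on topologically free $\kh$-modules, where $\hbar^n M$ is the kernel of a continuous map to $M/\hbar^n M$ equipped with a Hausdorff topology).
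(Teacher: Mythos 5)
Your surjectivity step contains a genuine gap, and it is exactly the one you flag at the end: from $x_k\to x$ and $q_n(x_k)=y_n$ for $k\geq n$ you can only conclude $x-x_n\in\bigcap_U\bigl(\hbar^nM+U\bigr)=\overline{\hbar^nM}$, because $M/\hbar^nM$ need not be Hausdorff. Your way out (assuming $\hbar^nM$ is closed) does not rescue the proof: the corollary is stated, and used, for an arbitrary complete topological $\kh$-module, with no closedness hypothesis, and the assertion that closedness holds ``in all applications considered here'' is neither proved nor true a priori (it is fine for topologically free modules, but that is not the generality of the statement). The good news is that the gap can be closed inside your own argument, with no extra hypothesis: writing $x_{j+1}-x_j=\hbar^jm_j$ and $s^{(n)}_k:=\sum_{j=n}^{k-1}\hbar^{\,j-n}m_j$, the sequence $(s^{(n)}_k)_k$ is again Cauchy in the original topology of $M$ (Claim \ref{claim_h_topology_has_fewer_open_sets} once more), hence converges to some $s^{(n)}\in M$ by completeness; since multiplication by $\hbar^n$ is continuous and $M$ is Hausdorff, $x-x_n=\lim_k\hbar^ns^{(n)}_k=\hbar^ns^{(n)}\in\hbar^nM$, which is exactly the missing statement. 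So your limit $x$ does hit $(y_k)$ on the nose, but proving this needs more than continuity of the quotient maps.

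For comparison, the paper argues in the opposite direction and sidesteps the issue entirely: using Claim \ref{claim_h_topology_has_fewer_open_sets} it assembles the compatible maps $\complh M\to M/\hbar^kM\to M/U$ into a canonical map $\complh M\to\hat M=M$, observes that the composite $M\to\complh M\to M$ is $\id_M$, and then gets the other composite equal to $\id_{\complh M}$ because both endomorphisms of $\complh M$ are $\kh$-linear (hence automatically continuous for the $\hbar$-adic topology), they agree on the $\hbar$-dense image of $M$, and $\complh M$ is $\hbar$-separated. Finally, your closing remark on ``topological compatibility'' overclaims: the Claim gives only one inclusion of neighbourhood bases --- every open subspace contains some $\hbar^nM$, but $\hbar^nM$ need not be open --- so the two filtrations are not mutually cofinal in general; fortunately only bijectivity of the canonical map $M\to\complh M$ is what the corollary asserts and what the paper needs.
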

\begin{proof}
Using the claim we have for any fixed $U$ a map $$\tilde M \to M/ \hbar^k \to M/U \ .$$
This defines a canonical map $\tilde M \to \hat M$. Obviously, the canonical map $M\to \hat M$ factorizes as $M\to \tilde M \to \hat M$. If $M$ is complete then $M = \hat M$ and  so we have 
\begin{equation} \label{equa_one_composition}
(M \to \tilde M \to M) = \id_M\ .
\end{equation}
To finish the proof we need to see also
\begin{equation} \label{equa_another_composition}
(\tilde M \to M \to \tilde M) = \id_{\tilde M} \ .
\end{equation}
Since $M \to \tilde M$ has a $\hbar$-dense image and $\tilde M$ is $\hbar$-Hausdorff, it is enough to check (\ref{equa_another_composition}) when both sides are precomposed with $M\to \tilde M$. That is, we should see $ ( M \to \tilde M \to M \to \tilde M ) = (M \to \tilde M) $ which is a consequence of (\ref{equa_one_composition}).
\end{proof}

\begin{prop} \label{prop_Mh_complete}
We have a strong-monoidal functor preserving finite colimits
$$ (\catTV, \tp) \to (\catTVkh , \hctp) : M\mapsto M\hh := \complh ( M\tp \kh ) \ .$$ 
If $M \in \catCTV$ then $ M\hh \in \catCTVkh$.
\end{prop}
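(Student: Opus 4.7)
The plan is to factor the functor as $M \mapsto M \otimes \kh \mapsto \complh(M \otimes \kh)$ and verify each claimed property separately. First, the functor $(-) \otimes \kh \colon (\catTV, \otimes) \to (\catTVkhp, \otimes_\kh)$ is strong monoidal, because tensoring with the free rank-one $\kh$-module gives $(M \otimes \kh) \otimes_\kh (N \otimes \kh) \cong (M \otimes N) \otimes \kh$ naturally, and $\field \otimes \kh = \kh$. Then Proposition \ref{prop_complh_tensor} says $\complh \colon (\catTVkhp, \otimes_\kh) \to (\catTVkh, \hctp)$ is strong monoidal. Composing, $(-)\hh$ is strong monoidal into $(\catTVkh, \hctp)$, with $\field\hh = \complh(\kh) = \kh$.

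For preservation of finite colimits I would argue as follows. The functor $(-) \otimes \kh$ preserves finite colimits by Proposition \ref{prop_tp_and_finite_colims}. The functor $\complh$ is left adjoint to the inclusion $\catTVkh \hookrightarrow \catTVkhp$ (as established in the $\hbar$-adic analogue of Proposition \ref{prop_universal_completion}), hence preserves all (small) colimits. Therefore the composite $(-)\hh$ preserves finite colimits.

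For the last claim, suppose $M \in \catCTV$; I want to show $M\hh \in \catCTVkh$, i.e.\ that $M\hh$ is complete in $\catCTV$ (not just $\hbar$-complete). Write
\[
M\hh = \varprojlim_k \bigl( (M \otimes \kh)/\hbar^k(M \otimes \kh) \bigr).
\]
Since $(-)\otimes\kh$ preserves cokernels in the first variable, the $k$-th quotient is naturally isomorphic to $M \otimes (\kh/\hbar^k\kh)$. The space $\kh/\hbar^k\kh$ is a discrete finite-dimensional vector space with basis $1,\hbar,\ldots,\hbar^{k-1}$, so by Proposition \ref{prop_tp_and_finite_colims} one has $M \otimes (\kh/\hbar^k\kh) \cong M^{\oplus k}$ as topological vector spaces. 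Finite direct sums of complete spaces are complete (as limits of complete spaces, by Proposition \ref{prop_limit_complete}), so each quotient is complete, and then $M\hh$, being a limit of complete spaces in $\catTV$, is complete by Proposition \ref{prop_limit_complete} again.

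The only slightly delicate point is the identification $(M \otimes \kh)/\hbar^k (M \otimes \kh) \cong M \otimes (\kh/\hbar^k \kh)$ as topological vector spaces; this rests on the right-exactness of $M \otimes (-)$ in $\catTV$ and the fact that the quotient topology on the left agrees with the tensor product topology on the right. The rest is bookkeeping.
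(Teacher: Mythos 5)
Your proof is correct and follows essentially the same route as the paper: factor $(-)\hh$ through $(-)\otimes\kh$ and $\complh$, use Propositions \ref{prop_free_is_monoidal}/\ref{prop_complh_tensor} for strong monoidality, Proposition \ref{prop_tp_and_finite_colims} plus the left-adjointness of $\complh$ for finite colimits, and write $M\hh=\varprojlim_k\bigl(M\otimes(\field[\hbar]/\hbar^k)\bigr)$ as a limit of complete spaces via Proposition \ref{prop_limit_complete}. The only difference is that you spell out the topological identification of $(M\otimes\kh)/\hbar^k(M\otimes\kh)$ with $M\otimes(\kh/\hbar^k\kh)$, which the paper uses implicitly.
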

\begin{proof}
The first part is clear since our functor is the composition
$$  \catTV \xto{\argument \otimes \kh } \catTVkhp \xto \complh \catTVkh \ .$$
and both these functors are strong monoidal (Propositions \ref{prop_free_is_monoidal} and \ref{prop_complh_tensor}) and preserve finite colimits (Proposition \ref{prop_tp_and_finite_colims} and $\complh$ is a left adjoint).

To prove the last statement, assume that $M$ is a complete vector space. We can write
$$ M\hh = \complh( M\otimes \kh ) = \varprojlim_k \Big(  M \otimes \big( \field[\hbar] / \hbar^k \big) \Big) \ .$$
Since $\field[\hbar] / \hbar^k$ is finite dimensional, we get $M\hh$ as a limit of complete vector spaces. Consequently $M\hh$ si complete by  the proposition \ref{prop_limit_complete}.
\end{proof}

\subsection{$\catTVkh$-categories} \label{subsection_TVkh-categories}
If we replace $\field$-vector spaces by $\kh$-modules in the definition \ref{defn_TV-category} we get a notion of a $\catTVkh$-category. We now show how to construct $\catTVkh$-categories from $\catTV$-categories.

Let $\catC$ be a $\catTV$-category We denote by $\catC\hh$ the category with the same objects as $\catC$, with the hom-sets
$$ \catC\hh(X,Y) := \Big( \catC (X,Y) \Big) \hh  $$
and with the obvious composition of morphisms and monoidal structure.
\begin{prop}
If $\catC$ was a $\catTV$-category then $\catC\hh$ is a $\catTVkh$-category.
\end{prop}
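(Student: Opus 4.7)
The plan is to lift the $\catTV$-category structure on $\catC$ to $\catC\hh$ by applying the strong monoidal functor $(\argument)\hh : \catTV \to \catTVkh$ of Proposition \ref{prop_Mh_complete}. First I would observe that by the very definition $\catC\hh(X,Y) = (\catC(X,Y))\hh$ is an $\hbar$-complete topological $\kh$-module, so the hom-sets live in $\catTVkh$ as required. The identity morphism of $X$ in $\catC$, viewed via the canonical map $\catC(X,X) \to \catC(X,X)\hh$, serves as the identity of $X$ in $\catC\hh$.

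Next I would define composition. The composition $\circ : \catC(Y,Z) \otimes \catC(X,Y) \to \catC(X,Z)$ in $\catC$ is only separately continuous, not jointly continuous, so I cannot just apply $(\argument)\hh$ to it as a morphism in $\catTV$. Instead, I would first extend $\kh$-bilinearly to a map
$$\circ \, :\, (\catC(Y,Z) \otimes \kh)  \times (\catC(X,Y) \otimes \kh) \longto \catC(X,Z) \otimes \kh$$
still separately continuous. Then I define the composition of $g \in \catC\hh(Y,Z)$ and $f \in \catC\hh(X,Y)$ as the $\hbar$-adic limit $g \circ f := \lim_n (g_n \circ f_n)$, where $g_n, f_n$ are elements of $\catC(Y,Z) \otimes \kh$, $\catC(X,Y) \otimes \kh$ converging $\hbar$-adically to $g$, $f$. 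The identity
$$g_n \circ f_n - g_m \circ f_m = (g_n - g_m) \circ f_n + g_m \circ (f_n - f_m)$$
together with $\kh$-bilinearity shows the sequence is $\hbar$-adically Cauchy in $\catC\hh(X,Z)$, so the limit exists by $\hbar$-completeness and is easily checked to be independent of the chosen approximating sequences. Associativity and unitality follow by density from the corresponding properties in $\catC$.

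The key step -- and the main obstacle -- is verifying separate continuity of this composition in the full topologies (not just the $\hbar$-adic ones). Fix $g \in \catC\hh(Y,Z)$ and let $U$ be an open sub-$\kh$-module of $\catC\hh(X,Z)$. By Claim \ref{claim_h_topology_has_fewer_open_sets} there exists $N$ with $\hbar^N \catC\hh(X,Z) \subset U$. Choose $g_0 \in \catC(Y,Z) \otimes \kh$ with $g - g_0 \in \hbar^N \catC\hh(Y,Z)$. Then for any $f$,
$$g \circ f = g_0 \circ f + (g - g_0) \circ f,$$
where $(g - g_0) \circ f \in \hbar^N \catC\hh(X,Z) \subset U$ by $\kh$-bilinearity. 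It remains to show $f \mapsto g_0 \circ f$ is continuous $\catC\hh(X,Y) \to \catC\hh(X,Z)$: this map is the continuous $\kh$-linear extension, via the universal property of $(\argument)\hh$, of the continuous map $f \mapsto g_0 \circ f$ on $\catC(X,Y) \otimes \kh$ (continuous because composition with a fixed morphism is continuous in $\catC$, hence after tensoring with $\kh$). Symmetry gives continuity in the other argument.

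Finally, for the compatibility of composition with the monoidal product: the continuous map $\catC(X_1,Y_1) \otimes \catC(X_2,Y_2) \to \catC(X_1 \otimes X_2, Y_1 \otimes Y_2)$ in $\catTV$ becomes, after applying the strong monoidal functor $(\argument)\hh : (\catTV, \otimes) \to (\catTVkh, \hctp)$ of Proposition \ref{prop_Mh_complete}, a continuous $\kh$-linear map
$$\catC\hh(X_1,Y_1) \hctp \catC\hh(X_2,Y_2) \longto \catC\hh(X_1 \otimes X_2, Y_1 \otimes Y_2),$$
which is precisely the tensor product map required by the $\catTVkh$-category axioms. Functoriality of $(\argument)\hh$ ensures compatibility of this assignment with composition, completing the verification.
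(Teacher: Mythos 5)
The paper offers no proof of this proposition at all --- the composition and monoidal structure on $\catC\hh$ are simply declared ``obvious'' --- so your write-up can only be judged against the definitions, and on those terms it is correct. You rightly identify the one point that is not purely formal: composition in a $\catTV$-category is only separately continuous, hence does not give a continuous map out of the topological tensor product of hom-spaces and cannot simply be transported by the functor $(\argument)\hh$ of Proposition \ref{prop_Mh_complete}. Your remedy --- define composition by $\hbar$-adic approximation from $\catC(\argument,\argument)\otimes\kh$, then prove separate continuity by splitting $g=g_0+(g-g_0)$ with $g_0$ in the uncompleted part and $g-g_0\in\hbar^N\catC\hh(Y,Z)$, invoking Claim \ref{claim_h_topology_has_fewer_open_sets} and the continuity of composition with the fixed morphism $g_0$ (a finite sum of maps of the form $h_i\circ\argument$ tensored with multiplication by elements of $\kh$), extended through the universal property of $\complh$ --- is exactly the kind of argument the paper's machinery is built for, and it works; just take $U$ to be an open vector subspace, as in that claim, so that $U+U=U$. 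The tensor-product axiom, by contrast, needs no such care, since the relevant map in $\catC$ is genuinely continuous and strong monoidality of $(\argument)\hh$ turns it into a continuous map out of $\hctp$, as you say. The few statements you leave at the level of assertion (independence of the approximating sequences, associativity and unitality by $\hbar$-adic density, compatibility of $\otimes_{\catC\hh}$ with composition) are routine and of the same nature as what the paper itself suppresses, so there is no genuine gap.
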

\begin{rem}
If $\catC$ is a preabelian category then $\catC\hh$ si not necessarily a preabelian category. This poses a problem if we want to construct the category of $A$-modules for a commutative algebra $A$ in $\catC\hh$. However, we can still form $\RFree A (\catC\hh)$.
\end{rem}
\begin{observation}
Let $A$ be a commutative algebra in $\catC$. Then 
$$ \RFree A \big(\catC\hh \big) = \Big( \RFree A (\catC) \Big) \hh\ . $$
\end{observation}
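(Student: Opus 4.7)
The plan is to unpack both sides and observe that they coincide on the nose, level by level (objects, hom-sets, identities, composition, tensor product).

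For objects, one has
$$\ob\big(\RFree A(\catC\hh)\big) = \ob(\catC\hh) = \ob(\catC) = \ob(\RFree A(\catC)) = \ob\big((\RFree A(\catC))\hh\big),$$
so there is nothing to check. For hom-sets, one computes directly from the definitions of $\RFree A$ (Section \ref{subsection_FreeMod}) and of $(\cdot)\hh$ (Section \ref{subsection_TVkh-categories}):
$$\RFree A(\catC\hh)(X\tp A,\, Y\tp A)\ =\ \catC\hh(X,\, Y\tp A)\ =\ \big(\catC(X,Y\tp A)\big)\hh,$$
while on the other side
$$\big(\RFree A(\catC)\big)\hh(X\tp A,\, Y\tp A)\ =\ \big(\RFree A(\catC)(X\tp A, Y\tp A)\big)\hh\ =\ \big(\catC(X,Y\tp A)\big)\hh,$$
which matches.

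For identities, composition and the monoidal product of morphisms, recall that all of these are defined by the diagrams \eqref{equa_comp_freemod} and \eqref{equa_tp_freemod}, which are built out of three ingredients of the underlying category: composition of morphisms, tensor product of morphisms, and the multiplication on $A$. The passage $\catC \rightsquigarrow \catC\hh$ keeps each of these ingredients by $\hbar$-adically continuously $\kh$-bilinearly extending them; equivalently, the structure maps of $\catC\hh$ are obtained from those of $\catC$ by applying $\complh$ (together with $\otimes_\kh$), and the relevant compatibility is exactly Proposition \ref{prop_complh_tensor} (strong monoidality of $\complh$). Therefore the formulas \eqref{equa_comp_freemod} and \eqref{equa_tp_freemod} interpreted inside $\catC\hh$ yield precisely the continuous $\kh$-bilinear extensions of the same formulas interpreted inside $\catC$, which is by definition the composition and monoidal product on $(\RFree A(\catC))\hh$.

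Combining the three steps gives the equality of $\catTVkh$-categories (indeed of their monoidal structures). The only step where anything real happens is the third one, and the potential obstacle would be some noncommutation of $(\cdot)\hh$ with the tensor product of hom-spaces used to define composition/tensor of morphisms; but this noncommutation does not occur because composition and $\tp$ in $\catC\hh$ are by construction the $\kh$-bilinear $\hbar$-adically continuous extensions of their counterparts in $\catC$, so applying $(\cdot)\hh$ after $\RFree A$ gives the same data as applying it before.
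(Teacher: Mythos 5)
Your proof is correct: the paper states this as an observation with no written proof, and your level-by-level unpacking (objects, hom-sets via $\RFree A(\catC\hh)(X\tp A,Y\tp A)=\bigl(\catC(X,Y\tp A)\bigr)\hh$, and composition/tensor as the continuous $\kh$-bilinear extensions of the formulas \eqref{equa_comp_freemod} and \eqref{equa_tp_freemod}) is exactly the intended verification. Nothing further is needed.
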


%%%%%%%%%%%%%%%%%%%%%%%%
\bibliographystyle{amsplain}

\begin{thebibliography}{99}

\bibitem{Drinfeld1} V. G. Drinfeld: Quasi-Hopf algebras. (Russian) Algebra i Analiz 1 (1989), no. 6, 114--148; translation in Leningrad Math. J. 1 (1990), no. 6, 1419--1457.

\bibitem{Drinfeld2} V. G. Drinfeld: On quasitriangular quasi-Hopf algebras and on a group  closely connected with $\mathrm{Gal}(\overline \Q/\Q)$. (Russian) Algebra i Analiz 2 (1990), no. 4, 149--181; translation in Leningrad Math. J. 2 (1991), no. 4, 829--860.


\bibitem{EnriquezHalbout} B. Enriquez, G. Halbout: Quantization of quasi-Lie bialgebras. J. Amer. Math. Soc. 23 (2010), no. 3, 611--653.

\bibitem{EK1}  P. Etingof, D. Kazhdan: Quantization of Lie bialgebras. I. Selecta Math. (N.S.) 2 (1996), no. 1, 1--41.


\bibitem{TopVec}  K\"othe, Gottfried Topological vector spaces. I. Die Grundlehren der mathematischen Wissenschaften, Band 159 Springer-Verlag New York Inc., New York 1969.

\end{thebibliography}

%%%%%%%%%%%%%%%%%%%%%%%%%

\end{document}